\title{Categories of singularities of invertible polynomials}
\author{Oleksandr Kravets}
\date{}
\newtheorem{theorem}{Theorem}[section]
\newtheorem*{theorem*}{Theorem}
\newtheorem{prop}[theorem]{Proposition}
\newtheorem{lemma}[theorem]{Lemma}
\newtheorem{corollary}[theorem]{Corollary}
\newtheorem{conjecture}[theorem]{Conjecture}
\newtheorem*{conjecture*}{Conjecture}
\newtheorem{Theorem}{Theorem}
\newtheorem*{ConjectureO}{Orlov's Conjecture}
\theoremstyle{definition}
\newtheorem{definition}[theorem]{Definition}
\newtheorem*{definition*}{Definition}
\newtheorem{remark}[theorem]{Remark}
\newtheorem{recipe}[theorem]{Recipe}
\newtheorem*{remark*}{Remark}
\newtheorem{example}[theorem]{Example}
\newtheorem*{example*}{Example}
\numberwithin{equation}{section}
\DeclareMathOperator{\Hom}{Hom}
\DeclareMathOperator{\Ext}{Ext}
\def\HH{H}
\DeclareMathOperator{\Cone}{Cone}
\DeclareMathOperator{\Ker}{Ker}
\DeclareMathOperator{\krdim}{kr.dim}
\DeclareMathOperator{\Perf}{Perf}
\DeclareMathOperator{\id}{id}
\DeclareMathOperator{\ev}{ev}
\DeclareMathOperator{\coev}{coev}
\def\kk{{\mathbf{k}}}
\def\Cc{{\mathbf{C}}}
\def\ZZ{{\mathbb{Z}}}
\def\:{\colon}
\def\w{{\vec w}}
\def\Rw{{R/(w)}}
\def\op{\oplus}
\def\la{\left<}
\def\ra{\right>}
\def\<{\left<}
\def\>{\right>}
\def\phi{\varphi}
\def\eps{\varepsilon}
\def\TT{{\mathcal T}}
\def\AA{{\mathcal A}}
\def\BB{{\mathcal B}}
\def\CC{{\mathcal C}}
\def\DD{{\mathcal D}}
\newenvironment{mycube}{
\tdplotsetmaincoords{60}{20}%
\begin{tikzpicture}
		[scale=2, tdplot_main_coords,
			cube/.style={very thin,fill=red, opacity=.5},
            cube front edge/.style={very thin, opacity=.5},
            cube hidden/.style={very thin,dashed},
            grid dotted/.style={very thin,densely dotted},
            dots/.style={thick,loosely dotted},
            faceB/.style={thick,black,fill=blue, opacity = .5},
            faceG/.style={thick,black,fill=green, opacity = .5},
            faceY/.style={thick,black,fill=yellow, opacity = .5},
			grid/.style={very thin,gray},
			axis/.style={->,very thin},
            label arrow/.style={->, very thin, >=stealth', shorten >=2pt, shorten <=-2pt},
            arrow/.style={->,>=stealth',shorten >=2pt,semithick,black},
            arrow bold/.style={->, >=stealth', shorten >= 2pt, shorten <= 2pt, thick, black},
        ]
}{
\end{tikzpicture}
}
\newcommand*{\drawCoordinateAxes}[3]
{	
	\draw[axis,tdplot_main_coords] (#1,#2,#3) -- (#1+0.5,#2,#3) node[anchor=west]{$x$};
	\draw[axis,tdplot_main_coords] (#1,#2,#3) -- (#1,#2+0.5,#3) node[anchor=south]{$y$};
	\draw[axis,tdplot_main_coords] (#1,#2,#3) -- (#1,#2,#3+0.5) node[anchor=east]{$z$};
}
\newcommand{\boldCircle}{[black,thick,fill] circle [radius=0.5pt]}
\newcommand{\denoteBoldObject}[3]{\draw #1 \boldCircle node [anchor = #2] {{#3}};}
\newcommand{\denoteBoldObjectWithArrow}[4]{\draw [label arrow, darkgray, densely dashed] #1 node[anchor = #2, black]{{#3}} -- #4;
\draw #4 \boldCircle ;
}
\newcommand{\denoteCommonRedObjects}
{
    \denoteBoldObject{(2,2,0.5)}{west}{$\kk(-(p-2)\x-(q-2)\y)$}
    \denoteBoldObjectWithArrow{(2.25,2,1.25)}{west}
        {$\kk(-(p-2)\x-(r-2)\z)$}{(2,0.5,2)}
    \denoteBoldObject{(2,2,2)}{west}{$\kk(-(p-2)\x-(q-2)\y-(r-2)\z)$}
    \denoteBoldObject{(0.5,2,2)}{south west}{$\kk(-(q-2)\y-(r-2)\z)$}
}
\newcommand{\denoteManyRedObjects}
{
    \denoteBoldObject{(0.5,0.5,0.5)}{north east}{$\underline{\kk}$}
    \denoteBoldObject{(1,0.5,0.5)}{north}{$\kk(-\x)$}
    \denoteBoldObject{(2,0.5,0.5)}{west}{$\kk(-(p-2)\x)$}
    \denoteBoldObject{(2,1,0.5)}{west}{$\kk(-(p-2)\x-\y)$}

    \denoteCommonRedObjects

    \denoteBoldObject{(0.5,0.5,2)}{east}{$\kk(-(r-2)\z)$}
    \denoteBoldObject{(0.5,0.5,1)}{east}{$\kk(-\z)$}
    \denoteBoldObjectWithArrow{(0.25,0.5,0.6)}{east}
        {$\kk(-\y)$}{(0.5,1,0.5)}
}
\newcommand{\denoteMediumObjectsChain}
{
    \denoteCommonRedObjects
    \denoteBoldObject{(0.5,0.5,2)}{south east}{$\kk(-(r-2)\z)$}
    
    \denoteBoldObject{(0,0,2)}{east}{$\My(\x-(r-2)\z)$}
    \denoteBoldObjectWithArrow{(-0.5,0,1.5)}{east}{$\underline{\kk}$}{(0.5,0.5,0.5)}
    \denoteBoldObjectWithArrow{(-0.5,0,1)}{east}{$\lefteqn{\underline{\phantom{\Mz}}}\My$}{(0.5,0,0.5)}
    \denoteBoldObject{(0,0,0.5)}{east}{$\My(\x)$}
        
    \denoteBoldObject{(0,0,0)}{east}{$\Mz(\y+\x)$}

    \denoteBoldObject{(2,0.5,0)}{west}{$\Mz(-(p-2)\x)$}
    \denoteBoldObject{(1.5,0,0)}{north west}{$\Mz(-(p-3)\x+\y)$}
    \denoteBoldObject{(2,2,0)}{west}{$\Mz(-(p-2)\x-(q-2)\y)$}
    \denoteBoldObjectWithArrow{(1,-0.25,0)}{north}{$\underline{\Mz}$}{(0.5,0.5,0)}
    \denoteBoldObject{(0.5,0,0)}{north east}{$\Mz(\y)$}
    \denoteBoldObjectWithArrow{(2.25,1.25,0)}{west}{$\My(-(p-2)\x)$}{(2,0,0.5)}
}
\newcommand{\denoteMediumObjectsLoop}
{
    \denoteCommonRedObjects
    
    \denoteBoldObject{(0,0,2)}{east}{$\My(\x-(r-2)\z)$}
    \denoteBoldObjectWithArrow{(-0.25,0,1.5)}{east}{$\underline{\kk}$}{(0.5,0.5,0.5)}
    \denoteBoldObjectWithArrow{(0.25,-0.25,0)}{north east}{$\lefteqn{\underline{\phantom{\Mz}}}\My$}{(0.5,0,0.5)}
    \denoteBoldObject{(0,0,0.5)}{east}{$\My(\x)$}
        

    \denoteBoldObject{(2,0.5,0)}{west}{$\Mz(-(p-2)\x)$}
    \denoteBoldObject{(2,0,0)}{west}{$\Mz(-(p-2)\x+\y)$}
    \denoteBoldObject{(2,2,0)}{west}{$\Mz(-(p-2)\x-(q-2)\y)$}
    \denoteBoldObjectWithArrow{(1,-0.25,0)}{north west}{$\underline{\Mz}$}{(0.5,0.5,0)}
    \denoteBoldObjectWithArrow{(2.25,1.25,0)}{west}{$\My(-(p-2)\x)$}{(2,0,0.5)}

    \denoteBoldObject{(0,0,0)}{east}{$\Mxyz(\x+\y+\z)$}

    \denoteBoldObject{(0.5,0,0)}{north}{$\Mz(\y)$}

    \denoteBoldObject{(0,2,2)}{east}{$\Mx(-(q-2)\y-(r-2)\z)$}
    \denoteBoldObject{(0,0.5,2)}{south east}{$\Mx(-(r-2)\z)$}
    \denoteBoldObjectWithArrow{(-0.5,0,1)}{east}
        {$\underline{\Mx}$}{(0,0.5,0.5)}

}
\newcommand{\denoteManyRedObjectsSplitChain}
{
    \denoteBoldObject{(2,0.5,0.5)}{west}{$\kk(-(p-2)\x)$}
    \denoteBoldObject{(2,1,0.5)}{west}{$\kk(-(p-2)\x-\y)$}

    \denoteCommonRedObjects

    \denoteBoldObject{(0.5,0.5,2)}{south east}{$\kk(-(r-2)\z)$}
}
\newcommand{\denoteManyObjectsSplitChain}
{
    \denoteBoldObject{(2,0,0.5)}{north west}
        {$\My(-(p-2)\x)$}

    
    \denoteManyRedObjectsSplitChain


    \denoteBoldObject{(0,0,2)}{east}{$\My(\x-(r-2)\z)$}
    \denoteBoldObjectWithArrow{(-0.2,0.1,1.5)}{east}
        {$\My(-(r-2)\z)$}{(0.5,0,2)}
    \denoteBoldObjectWithArrow{(1.2,0.1,0.13)}{north west}
        {$\underline{\kk}$}{(0.5,0.5,0.5)}
    \denoteBoldObject{(0,0,1)}{east}{$\My(\x-\z)$}
    \denoteBoldObjectWithArrow{(0,0.1,0.2)}{north east}
        {$\My(-\z)$}{(0.5,0,1)}
    \denoteBoldObject{(0,0,0.5)}{east}{$\My(\x)$}
    \denoteBoldObject{(0.5,0,0.5)}{north}
        {$\lefteqn{\underline{\phantom{\Mz}}}\My$}
}
\newcommand{\denoteManyRedObjectsSplitLoop}
{
    \denoteBoldObject{(2,0.5,0.5)}{west}{$\kk(-(p-2)\x)$}
    \denoteBoldObject{(2,1,0.5)}{west}{$\kk(-(p-2)\x-\y)$}

    \denoteCommonRedObjects

}
\newcommand{\denoteManyObjectsSplitLoop}
{
    \denoteBoldObject{(2,0,0.5)}{north west}
        {$\My(-(p-2)\x)$}

    
    \denoteManyRedObjectsSplitLoop


    \denoteBoldObjectWithArrow{(0.5,0.2,0)}{north}
        {$\underline{\kk}$}{(0.5,0.5,0.5)}

    \denoteBoldObject{(0,0,2)}{east}{$\Mxy(\x+\y-(r-2)\z)$}
    \denoteBoldObjectWithArrow{(-0.6,0.3,1.2)}{east}
        {$\My(-(r-2)\z)$}{(0.5,0,2)}
    \denoteBoldObject{(0,0,1)}{east}{$\Mxy(\x+\y-\z)$}
    \denoteBoldObject{(0,0,0.5)}{east}{$\Mxy(\x+\y)$}
    \denoteBoldObject{(0.5,0,0.5)}{east}
        {$\lefteqn{\underline{\phantom{\Mz}}}\My$}
    \denoteBoldObject{(1,0,0.5)}{north}
        {$\My(-\x)$}

    \denoteBoldObject{(0,2,2)}{east}{$\Mx(-(q-2)\y-(r-2)\z)$}
    \denoteBoldObject{(0,0.5,2)}{south east}{$\Mx(-(r-2)\z)$}
    \denoteBoldObjectWithArrow{(-0.1,0.2,0)}{north}
        {$\underline{\Mx}$}{(0,0.5,0.5)}

}
\newcommand{\denoteManyObjectsChainNotStrong}{
    \denoteCommonRedObjects
    \denoteBoldObject{(0.5,0.5,2)}{south east}{$\kk(-(r-2)\z)$}
    
    \denoteBoldObject{(0,0,2)}{east}{$\My(\x-(r-2)\z)$}
    \denoteBoldObjectWithArrow{(-0.5,0,1)}{east}{$\underline{\kk}$}{(0.5,0.5,0.5)}
    \denoteBoldObject{(0,0,0.5)}{east}{$\My(\x)$}
    \denoteBoldObject{(0.5,0,0.5)}{north east}
        {$\lefteqn{\underline{\phantom{\Mz}}}\My$}
        
    \denoteBoldObject{(2,0.5,0)}{west}{$\Mz(-(p-2)\x)$}
    \denoteBoldObject{(2,0,0)}{west}{$\Mz(-(p-2)\x+\y)$}
    \denoteBoldObject{(2,2,0)}{west}{$\Mz(-(p-2)\x-(q-2)\y)$}
    \denoteBoldObjectWithArrow{(0,0,-0.25)}{east}{$\underline{\Mz}$}{(0.5,0.5,0)}
    \denoteBoldObjectWithArrow{(0.5,0,-0.25)}{north}{$\Mz(\y)$}{(0.5,0,0)}
}
\newcommand{\drawBlueFace}
{
    \draw[faceB] (0.5,0,0) -- (2,0,0) -- (2,2,0) -- (0.5,2,0) -- cycle;
}
\newcommand{\drawBlueFaceShifted}
{
    \draw[faceB] (0,0,0) -- (1.5,0,0) -- (2,0.5,0) -- (2,2,0) -- (0.5,2,0) -- (0.5,0.5,0) -- cycle;
}
\newcommand{\arrowsBlueToGreen}
{
    \foreach \x in {0.5,1,...,2}
        \draw[arrow] (\x,0,0) -- (\x,0,0.5);
    \foreach \x in {0.5,1,...,1.5}
        \draw[arrow] (\x,0,0) -- (\x+0.5,0,0.5);
}
\newcommand{\arrowsBlueToGreenShifted}
{
    \foreach \x in {0,0.5,...,1.5}
        \draw[arrow] (\x,0,0) -- (\x,0,0.5);
    \foreach \x in {0,0.5,...,1.5}
        \draw[arrow] (\x,0,0) -- (\x+0.5,0,0.5);
}
\newcommand{\arrowsInsideBlue}
{
    \foreach \x in {0.5,1,1.5}
        \foreach \y in {0,0.5}
        {
            \draw[arrow] (\x,\y,0) -- (\x+0.5,\y,0);
        }
}
\newcommand{\arrowsInsideBlueShifted}
{
    \foreach \x in {0.5,1,1.5}
        \foreach \y in {0,0.5}
        {
            \draw[arrow] (\x+\y-0.5,\y,0) -- (\x+\y,\y,0);
		}
}
\newcommand{\arrowsBlueToRed}
{
    \foreach \x in {0.5,1,...,2}
    {
        \draw[arrow] (\x,0,0) -- (\x,0.5,0.5);
        \draw[arrow] (\x,0.5,0) -- (\x,0.5,0.5);
    }

    \foreach \x in {0.5,1,1.5}
        \foreach \y in {0,0.5}
        {
            \draw[arrow] (\x,\y,0) -- (\x+0.5,0.5,0.5);
        }

    \foreach \y in {1,1.5,2}
    {
        \draw[arrow] (2,\y,0) -- (2,\y,0.5);
        \draw[arrow] (2,\y-0.5,0) -- (2,\y,0.5);
    }
}
\newcommand{\arrowsBlueToRedShifted}
{
    \foreach \x in {0.5,1,1.5} 
        \draw[arrow] (\x,0,0) -- (\x,0.5,0.5);
    \foreach \x in {0.5,1,...,2}
        \draw[arrow] (\x,0.5,0) -- (\x,0.5,0.5);

    \foreach \x in {0.5,1,1.5}
        \foreach \y in {0,0.5}
        {
            \draw[arrow] (\x,\y,0) -- (\x+0.5,0.5,0.5);
        }
    \draw[arrow] (0,0,0) -- (0.5,0.5,0.5);

    \foreach \y in {1,1.5,2}
    {
        \draw[arrow] (2,\y,0) -- (2,\y,0.5);
        \draw[arrow] (2,\y-0.5,0) -- (2,\y,0.5);
    }
}
\newcommand{\drawRedCube}
{
	\draw[cube hidden] (0.5,2,0.5) -- (2,2,0.5);
	\draw[cube hidden] (0.5,0.5,0.5) -- (0.5,2,0.5);
	\draw[cube hidden] (0.5,2,0.5) -- (0.5,2,2);

	\draw[cube] (0.5,0.5,0.5) -- (2,0.5,0.5) -- (2,2,0.5) -- (2,2,2) -- (0.5,2,2) -- (0.5,0.5,2) -- cycle;

    \draw[cube front edge] (0.5,0.5,2) -- (2,0.5,2) -- (2,2,2);
    \draw[cube front edge] (2,0.5,0.5) -- (2,0.5,2);
}
\newcommand{\drawRedVertixWithAllArrows}
{
    \draw[arrow] (0.5,0.5,0.5) -- (1,0.5,0.5);
    \draw[arrow] (0.5,0.5,0.5) -- (0.5,1,0.5);
    \draw[arrow] (0.5,0.5,0.5) -- (0.5,0.5,1);
    \draw[arrow] (0.5,0.5,0.5) -- (1,1,0.5);
    \draw[arrow] (0.5,0.5,0.5) -- (0.5,1,1);
    \draw[arrow] (0.5,0.5,0.5) -- (1,0.5,1);
    \draw[arrow] (0.5,0.5,0.5) -- (1,1,1);

    \draw[grid dotted] (0.5,1,1) -- (0.5,0.5,1) -- (1,0.5,1) -- (1,0.5,0.5) -- (1,1,0.5) -- (1,1,1) -- cycle;
    \draw[grid dotted] (0.5,1,1) -- (0.5,1,0.5) -- (1,1,0.5);
    \draw[grid dotted] (1,0.5,1) -- (1,1,1);

    \draw[arrow] (0.5,0.5,2) -- (1,0.5,2);
    \draw[arrow] (0.5,0.5,2) -- (1,1,2);
    \draw[arrow] (0.5,0.5,2) -- (0.5,1,2);
    \draw[dots] (1.15,1.15,2) -- (1.8,1.8,2);
    \draw[arrow] (1,0.5,2) -- (1,1,2);
    \draw[dots] (1.2,0.75,2) -- (1.8,0.75,2);
    \draw[arrow] (0.5,1,2) -- (1,1,2);
    \draw[dots] (0.75,1.2,2) -- (0.75,1.8,2);

    \draw[arrow] (2,0.5,0.5) -- (2,1,0.5);
    \draw[arrow] (2,0.5,0.5) -- (2,0.5,1);
    \draw[arrow] (2,0.5,0.5) -- (2,1,1);
    \draw[dots] (2,1.15,1.15) -- (2,1.8,1.8);
    \draw[arrow] (2,1,0.5) -- (2,1,1);
    \draw[dots] (2,1.2,0.75) -- (2,1.8,0.75);
    \draw[arrow] (2,0.5,1) -- (2,1,1);
    \draw[dots] (2,0.75,1.2) -- (2,0.75,1.8);

    \draw[arrow] (2,0.5,2) -- (2,1,2);
    \draw[dots] (2,1.2,2) -- (2,1.8,2);

    \draw[arrow] (0.5,2,2) -- (1,2,2);
    \draw[dots] (1.2,2,2) -- (1.8,2,2);

    \draw[arrow] (2,2,0.5) -- (2,2,1);
    \draw[dots] (2,2,1.2) -- (2,2,1.8);

}
\newcommand{\drawRedArrowsExtra}
{
\begin{scope}[shift={(0.5,0,0)}]
    \draw[arrow] (0.5,0.5,0.5) -- (1,0.5,0.5);
    \draw[arrow] (0.5,0.5,0.5) -- (0.5,1,0.5);
    \draw[arrow] (0.5,0.5,0.5) -- (0.5,0.5,1);
    \draw[arrow] (0.5,0.5,0.5) -- (1,1,0.5);
    \draw[arrow] (0.5,0.5,0.5) -- (0.5,1,1);
    \draw[arrow] (0.5,0.5,0.5) -- (1,0.5,1);
    \draw[arrow] (0.5,0.5,0.5) -- (1,1,1);

    \draw[grid dotted] (1,1,0.5) -- (1,1,1);
    \draw[grid dotted] (0.5,1,1) -- (1,1,1);
    \draw[grid dotted] (1,0.5,1) -- (1,1,1);
    \draw[grid dotted] (1,0.5,0.5) -- (1,0.5,1);
    \draw[grid dotted] (0.5,0.5,1) -- (1,0.5,1);
    \draw[grid dotted] (0.5,1,0.5) -- (1,1,0.5);
    \draw[grid dotted] (1,0.5,0.5) -- (1,1,0.5);
\end{scope}
\begin{scope}[shift={(0,0,0.5)}]
    \draw[arrow] (0.5,0.5,0.5) -- (1,0.5,0.5);
    \draw[arrow] (0.5,0.5,0.5) -- (0.5,1,0.5);
    \draw[arrow] (0.5,0.5,0.5) -- (0.5,0.5,1);
    \draw[arrow] (0.5,0.5,0.5) -- (1,1,0.5);
    \draw[arrow] (0.5,0.5,0.5) -- (0.5,1,1);
    \draw[arrow] (0.5,0.5,0.5) -- (1,0.5,1);
    \draw[arrow] (0.5,0.5,0.5) -- (1,1,1);

    \draw[grid dotted] (1,1,0.5) -- (1,1,1);
    \draw[grid dotted] (0.5,1,1) -- (1,1,1);
    \draw[grid dotted] (1,0.5,1) -- (1,1,1);
    \draw[grid dotted] (0.5,1,0.5) -- (0.5,1,1);
    \draw[grid dotted] (0.5,0.5,1) -- (0.5,1,1);
    \draw[grid dotted] (1,0.5,0.5) -- (1,0.5,1);
    \draw[grid dotted] (0.5,0.5,1) -- (1,0.5,1);
\end{scope}
    \draw[dots] (1.2,0.5,1.2) -- (1.8,0.5,1.8);
}
\newcommand{\arrowsGreenToRed}
{
    \foreach \x in {0,0.5}
    \draw[arrow] (\x,0,2) -- (\x+0.5,0.5,2);
    \foreach \x in {0.5,1,2}
    \draw[arrow] (\x,0,2) -- (\x,0.5,2);
    \draw[dots] (1.2,0.25,2) -- (1.8,0.25,2);

    \foreach \z in {0.5,1,1.5}
    {
        \draw[arrow] (0, 0, \z) -- (0.5, 0.5, \z); 
        \draw[arrow] (0.5, 0, \z) -- (0.5, 0.5, \z); 

        \draw[arrow] (0,0,\z) -- (0.5,0.5,\z+0.5); 
        \draw[arrow] (0.5,0,\z) -- (0.5,0.5,\z+0.5); 
    }

    \foreach \z in {0.5,1}
        \draw[arrow] (2,0,\z) -- (2,0.5,\z); 
    \draw[arrow] (2,0,0.5) -- (2,0.5,1); 
    \draw[dots] (2,0.25,1.2) -- (2,0.25,1.8);

}
\newcommand{\arrowsGreenToRedReduced}
{
    \foreach \x in {0.5}
    \draw[arrow] (\x,0,2) -- (\x+0.5,0.5,2);
    \foreach \x in {0.5,1,2}
    \draw[arrow] (\x,0,2) -- (\x,0.5,2);
    \draw[dots] (1.2,0.25,2) -- (1.8,0.25,2);

    \foreach \z in {0.5,1,1.5}
    {
        \draw[arrow] (0.5, 0, \z) -- (0.5, 0.5, \z); 

        \draw[arrow] (0.5,0,\z) -- (0.5,0.5,\z+0.5); 
    }

    \foreach \z in {0.5,1}
        \draw[arrow] (2,0,\z) -- (2,0.5,\z); 
    \draw[arrow] (2,0,0.5) -- (2,0.5,1); 
    \draw[dots] (2,0.25,1.2) -- (2,0.25,1.8);
}
\newcommand{\arrowsInsideGreen}{
    \foreach \z in {0.5,1,1.5}
        \foreach \x in {0,0.5,...,2}
            \draw[arrow] (\x,0,\z) -- (\x,0,\z+0.5);
}
\newcommand{\arrowsInsideGreenReduced}{
    \foreach \z in {0.5,1,1.5}
        \foreach \x in {0.5,1,1.5,2}
            \draw[arrow] (\x,0,\z) -- (\x,0,\z+0.5);
}
\newcommand{\drawGreenFace}
{
    \draw[faceG] (0,0,0.5) -- (2,0,0.5) -- (2,0,2) -- (0,0,2) -- cycle;
    \draw[faceG] (0.5,0,0.5) -- (0.5,0,2);
}
\newcommand{\drawGreenFaceReduced}
{
    \draw[faceG] (0.5,0,0.5) -- (2,0,0.5) -- (2,0,2) -- (0.5,0,2) -- cycle;
    \draw[faceG] (0.5,0,0.5) -- (0.5,0,2);
}
\newcommand{\drawYellowFace}
{
    \draw[faceY] (0,0.5,0) -- (0,2,0) -- (0,2,2) -- (0,0.5,2) -- cycle;
}
\newcommand{\drawYellowFaceReduced}
{
    \draw[faceY] (0,0.5,0.5) -- (0,2,0.5) -- (0,2,2) -- (0,0.5,2) -- cycle;
}
\newcommand{\arrowsInsideYellowReduced}
{
    \foreach \z in {0.5,1,1.5}
    {
        \foreach \y in {0.5,1,2}
            \draw[arrow] (0,\y,\z) -- (0,\y,\z+0.5);
    }
}
\newcommand{\arrowsInsideYellow}
{
    \foreach \z in {0,0.5,...,2}
        \draw[arrow] (0,0.5,\z) -- (0,1,\z);
    \foreach \y in {1,1.5}
        \draw[arrow]  (0,\y,2) -- (0,\y+0.5,2);
}
\newcommand{\arrowsGreenToYellow}
{
    \foreach \z in {0.5,1,...,2}
       \draw[arrow] (0,0,\z) -- (0,0.5,\z);
    \foreach \z in {0.5,1,1.5}
        \draw[arrow] (0,0,\z) -- (0,0.5,\z+0.5);
}
\newcommand{\arrowsYellowToRed}
{
    \draw[arrow] (0,0.5,0.5) -- (0.5,0.5,0.5);
    \draw[arrow] (0,0.5,0) -- (0.5,0.5,0.5);

    \foreach \y in {0.5,1,1.5}
    {
        \draw[arrow] (0,\y,2) -- (0.5,\y,2);
        \draw[arrow] (0,\y,2) -- (0.5,\y+0.5,2);
    }
    \draw[arrow] (0,2,2) -- (0.5,2,2);
    \draw[arrow] (0,2,0.5) -- (0.5,2,0.5);
}
\newcommand{\arrowsYellowToRedReduced}
{

    \foreach \y in {0.5,1,1.5}
    {
        \draw[arrow] (0,\y,2) -- (0.5,\y,2);
        \draw[arrow] (0,\y,2) -- (0.5,\y+0.5,2);
    }
    \foreach \z in {0.5,1,1.5}
    {
        \draw[arrow] (0,0.5,\z) -- (0.5,0.5,\z);
        \draw[arrow] (0,0.5,\z) -- (0.5,0.5,\z+0.5);
    }
    \draw[arrow] (0,2,2) -- (0.5,2,2);
    \draw[arrow] (0,2,0.5) -- (0.5,2,0.5);
}
\newcommand{\arrowsYellowToBlue}
{
    \draw[arrow] (0,0.5,0) -- (0.5,0.5,0);
    \draw[arrow] (0,0.5,0) -- (0.5,1,0);
}
\newcommand{\drawCornerArrows}
{
    \draw[arrow] (0,0,0) -- (0.5,0,0);
    \foreach \x in {0,0.5}
    {
        \draw[arrow] (0,0,0) -- (\x,0.5,0);
        \foreach \y in {0,0.5}
        {
            \draw[arrow] (0,0,0) -- (\x,\y,0.5);
        }
    }
}
\newcommand{\arrowsZaxisAndToRed}
{
    \foreach \z in {0.5,1,1.5}
    {
        \draw[arrow] (0,0,\z) -- (0,0,\z+0.5);
        \draw[arrow] (0,0,\z) -- (0.5,0.5,\z+0.5);
    }
    \foreach \z in {0.5,1,...,2}
        \draw[arrow] (0,0,\z) -- (0.5,0.5,\z);
}
\newcommand{\CubeThreeDimSplit}{{
\begin{mycube}
    \drawCoordinateAxes{-0.5}{1}{2}

    \drawRedCube
    \drawRedVertixWithAllArrows
    \drawRedArrowsExtra

    \denoteManyRedObjects
\end{mycube}
}}
\newcommand{\CubeThreeDimSplitChain}{{
\begin{mycube}
    \drawCoordinateAxes{-1.25}{1}{2}


    \drawRedCube
    \drawRedVertixWithAllArrows
    
    \arrowsGreenToRed
    \drawGreenFace
    \arrowsInsideGreen

    \denoteManyObjectsSplitChain
    
\end{mycube}
}}
\newcommand{\CubeThreeDimSplitLoop}{
\begin{mycube}

    \drawCoordinateAxes{4}{1.5}{0}
    
    \drawYellowFaceReduced
    \arrowsInsideYellowReduced
    \arrowsYellowToRedReduced

    \arrowsZaxisAndToRed

    \drawRedCube
    \drawRedVertixWithAllArrows
    
    \arrowsGreenToRedReduced
    \drawGreenFaceReduced
    \arrowsInsideGreenReduced

    \denoteManyObjectsSplitLoop
    
\end{mycube}
}
\newcommand{\CubeThreeDimChainNotStrong}{
\begin{mycube}
    \drawCoordinateAxes{-1.25}{1}{2}

    \drawBlueFace
    \arrowsBlueToRed
    \arrowsInsideBlue

    \drawRedCube
    \drawRedVertixWithAllArrows
    
    \arrowsGreenToRed
    \drawGreenFace
    \arrowsInsideGreen
    \arrowsBlueToGreen

    \denoteManyObjectsChainNotStrong
    
    \draw[arrow bold] (2, 0, 0) .. controls (1.5,-0.5,0) and (0,-0.5,0) .. (0, 0, 0.5);
\end{mycube}
}
\newcommand{\CubeThreeDimChain}{
\begin{mycube}
    \drawCoordinateAxes{4.5}{1}{0}

    \drawBlueFaceShifted
    \arrowsBlueToGreenShifted
    \arrowsBlueToRedShifted
    \arrowsInsideBlueShifted

    \drawRedCube
    \drawRedVertixWithAllArrows

    \arrowsGreenToRed
    \drawGreenFace
    \arrowsInsideGreen

    \denoteMediumObjectsChain
\end{mycube}
}
\newcommand{\CubeThreeDimLoop}{
\begin{mycube}
    \drawCoordinateAxes{4.5}{1}{0}


    \drawYellowFace
    \arrowsInsideYellow
    \arrowsYellowToRed
    \arrowsYellowToBlue
    \arrowsGreenToYellow

    \drawBlueFace
    \arrowsBlueToGreen
    \arrowsBlueToRed
    \arrowsInsideBlue

    \drawRedCube
    \drawRedVertixWithAllArrows

    \drawCornerArrows

	\arrowsGreenToRed
    \drawGreenFace
    \arrowsInsideGreen

    \denoteMediumObjectsLoop

    \draw[black,thick,fill] (0,0,0) circle [radius=0.5pt] node [anchor=east] {$\Mxyz(\x+\y+\z)$};
\end{mycube}
}
\mathchardef\mhyphen="2D
\newcommand*{\rmodgrgeneric}[3]{\mathrm{#1}^{#2}\mathrm{\mhyphen}{#3}}
\newcommand*{\rmodgr}[2]{\rmodgrgeneric{mod}{#1}{#2}}
\def\modLA{{\rmodgr LA}}
\newcommand*{\bull}[1]{{#1}^\bullet}
\def\Homb{\bull{\Hom}}
\def\Hb{\bull{\HH}}
\def\D{{\mathbf{D}}}
\def\Db{{\D^b}}
\def\Dsg{\D_{\mathrm{sg}}}
\newcommand*{\dbof}[1]{\Db({#1})}
\newcommand*{\dbofgr}[2]{\Db(\rmodgr{#1}{#2})}
\newcommand*{\dsgof}[2]{\Dsg^{#1}({#2})}
\def\dbmodLA{\dbof{\modLA}}
\def\dblrw{\dbof{\rmodgr L\Rw}}
\def\dsgLA{\dsgof LA}
\def\dsgA{\dsgof{}A}
\def\dsglrw{\dsgof L\Rw}
\def\dsgw{\dsgof{L_w}{R/(w)}}
\def\dsglrww{\dsgw}
\DeclareMathOperator{\Fuk}{Fuk}
\def\Fukto{\Fuk^\to}
\def\x{{\vec x}}
\def\y{{\vec y}}
\def\z{{\vec z}}
\def\w{{\vec w}}
\def\Aff#1{{\mathbb{A}_\kk^#1}}
\def\Affn{{\mathbb{A}_\kk^n}}
\def\SS{{\mathcal S}}
\def\RR{\mathbf{R}}
\def\LL{\mathbf{L}}
\def\ainfonly{{$A_{\infty}$}}
\def\dgonly{{\textbf{dg}}}
\def\ainf{\ainfonly\=/}
\def\dg{\dgonly\=/}
\begin{document}

\maketitle

\begin{abstract}
  We study the categories of singularities coming from Landau-Ginzburg models given by the invertible polynomials. Such categories appear on the B-side of the Berglund-H{\"u}bsch mirror symmetry. We provide an efficient method of computing morphism spaces in these categories and explicitly construct full strongly exceptional collections in the cases of small dimensions ($n\le 3$). Finally, we use this construction in order to prove Orlov's conjecture stating that such collections can be chosen to have block decompositions of size one more than the number of variables.
\end{abstract}

\setcounter{tocdepth}{2}
\tableofcontents

\section{Introduction}

\def\behu{Berglund-H{\"u}bsch}
\def\dgainf{\dgonly/\ainf}

The categories of graded singularities were introduced by Orlov in \cite{orlov03} and \cite{orlov05}. These categories naturally appear on the algebraic side of the Homological Mirror Symmetry correspondence (HMS) for equivariant Landau-Ginzburg models and are often referred to as categories of matrix factorizations. We are particularly interested in the case of singularities given by the so-called invertible polynomials (see Definition~\ref{def:invertibles}). There is a particularly explicit formulation of mirror symmetry for invertible polynomials, the so-called \behu{} duality as follows:

\begin{conjecture*}[{see the survey \cite{BeHu-survey}}]
  Let $w\in\Cc[x_1,\dots,x_n]$ be an invertible polynomial. Then there exists an equivalence of categories:
  $$ \dsgof{L_w}{\Cc[x_1,\dots,x_n]/(w)} \simeq \Db\Fukto(w^t), $$
  where on the left side we have a graded derived category of singularities of $w$ (see Definitions~\ref{def:dsg-gr} and~\ref{def:max-grading-group}), and on the right side we have a derived directed Fukaya category of a dual invertible polynomial $w^t$ considered as a superpotential on the ambient space $\mathbb{A}^n_\Cc$.
\end{conjecture*}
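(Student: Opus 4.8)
The statement is an instance of Homological Mirror Symmetry, so the natural route is the one pioneered by Seidel and refined by many authors: reduce the equivalence to a comparison of generating objects together with the $A_\infty$-structures on their endomorphism algebras. Concretely, the plan is to produce a generator on each side whose endomorphism $A_\infty$-algebra can be computed explicitly, then exhibit a quasi-isomorphism between the two algebras; by the general machinery of $A_\infty$-enhancements and Morita theory this promotes to the desired triangulated (indeed $A_\infty$-) equivalence.

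On the B-side I would take as generator the direct sum of the objects in the full strongly exceptional collection constructed earlier in the paper (available at least for $n \le 3$). The efficient description of morphism spaces developed here computes the graded vector spaces $\Ext^\bullet$ between members of the collection, so the endomorphism algebra is known additively; what remains is the higher products $\mu_k$ of the resulting $A_\infty$-structure. For a \emph{strongly} exceptional collection these are heavily constrained by directedness and by the $L_w$-grading, which should pin most of them down. On the A-side, $\Db\Fukto(w^t)$ is the derived directed Fukaya category of the Lefschetz fibration given by $w^t$ on $\mathbb{A}^n_\Cc$; by Seidel's theory it carries a distinguished collection of Lagrangian thimbles/vanishing cycles that is exceptional, with endomorphism algebra computed by Floer cohomology and products given by counts of holomorphic polygons.

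The core of the proof is to identify these two collections: match the objects (grading included), match the $\Ext$-spaces with the Floer cohomology groups, and---the crucial step---match the $A_\infty$-products, i.e.\ show that the holomorphic-disc counts for $w^t$ reproduce the $\mu_k$ computed algebraically on $\dsgof{L_w}{\Cc[x_1,\dots,x_n]/(w)}$. A structurally appealing way to organize this is through the Thom--Sebastiani decomposition: invertible polynomials are built as sums of atomic pieces of Fermat, chain, and loop type, so one hopes to verify the correspondence on each atom and then glue via a Künneth/Thom--Sebastiani formula that holds compatibly on both the category of singularities and the directed Fukaya category.

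I expect the main obstacle to be precisely the $A_\infty$-comparison: controlling the Fukaya-categorical structure---the disc counts, the signs, and the grading conventions relating the $\ZZ$-grading by Maslov index to the $L_w$-grading---and proving it agrees with the combinatorial structure extracted on the category of singularities. A secondary difficulty is establishing that the chosen collections are \emph{full} on each side (split-generation on the Fukaya side and completeness of the exceptional collection on the B-side), and then removing the restriction $n \le 3$, since the explicit collections are only constructed there; beyond that one would need the inductive atomic reduction above or a more structural comparison to reach the general case.
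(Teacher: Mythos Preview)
The statement you are attempting to prove is labeled a \emph{Conjecture} in the paper, and the paper does not prove it. There is therefore no ``paper's own proof'' to compare your proposal against. The paper's contribution is restricted to the B-side: it constructs full strongly exceptional collections in $\dsgof{L_w}{\Cc[x_1,\dots,x_n]/(w)}$ for $n\le 3$ and shows they admit block decompositions of size $n+1$. It does not touch the Fukaya side at all, does not compute any Floer cohomology or disc counts, and does not claim any equivalence with $\Db\Fukto(w^t)$.

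Your outline is a reasonable sketch of the Seidel-style program for proving such an HMS statement, and indeed the paper cites works (Ueda, Futaki--Ueda, Habermann--Smith, Kajiura--Saito--Takahashi) that carry out exactly this kind of matching in special cases. But you should recognize that what you have written is a description of an open research program, not a proof. In particular, the step you flag as the ``main obstacle''---matching the $A_\infty$-structures on the two sides---is the entire content of the conjecture once generators are in hand, and nothing in the paper provides tools for the symplectic half of that comparison. Your proposal is also not a proof for $n\le 3$: even there, the paper supplies only the B-side collection, and the matching with vanishing cycles for $w^t$ would have to be done separately (as in the cited works for certain subcases).
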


A general method of proving Homological Mirror Symmetry conjectures originating from Seidel consists of picking a finite set of generators in each of the algebraic and symplectic categories and showing that the \dgainf structures restricted to these generators are quasi-isomorphic (see \cite{seidel-origin}).
For example, when a category admits a full exceptional collection (see Definition~\ref{def:full/strong-collection}), we can choose the objects of this collection to be our generators. Moreover, when such collection is also \textit{strong}, the restricted \dgainf structure is formal and thus can be recovered from the $\HH^0$-level, that is from the triangulated structure of the corresponding categories (see Proposition~\ref{prop:strong-coll=>unique-dg-enhanc}). In such case, in order to prove the desired mirror symmetry equivalence, one just needs to match the objects of the exceptional collections between the algebraic and symplectic sides and to compare the morphisms on the triangulated/$\HH^0$-levels.

The above method is particularly applicable for the \behu{} duality since the symplectic category possesses a strong exceptional collection by definition. However, one still needs to find such collection on the algebraic side and do the necessary matching in order to prove the above conjecture. This was done by Ueda for simple elliptic singularities in \cite{ueda}, by Futaki-Ueda for Brieskorn-Pham singularities in \cite{futaki-ueda} and recently by Habermann-Smith for $n\le 2$ in \cite{hab-smith}. Also, Kajiura-Saito-Takahashi in \cite{KST} constructed full strongly exceptional collections on the algebraic side for ADE-singularities in the case $n=3$.

\subsection{Main results}

In this paper, we are working with the algebraic side of \behu{} duality. Namely, in Section~3, we explicitly construct full strongly exceptional collections in the corresponding categories for $n\le 3$ thus proving the following theorem.
\begin{Theorem}
\label{thm1}
  Let $w\in \Cc[x_1,\dots,x_n]$ be an invertible polynomial, where $n\le 3$. Then the maximally graded category of singularities $\dsgof{L_w}{\Cc[x_1,\dots,x_n]}$ admits an explicit full strongly exceptional collection.
\end{Theorem}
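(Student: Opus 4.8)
The plan is to reduce the general statement to a short list of ``atomic'' invertible polynomials and then to construct the collection by hand in each atomic case. By the Kreuzer--Skarke classification, every invertible polynomial is, after a linear change of coordinates, a Thom--Sebastiani sum $w = w_1 \oplus \cdots \oplus w_k$ of polynomials in disjoint groups of variables, where each $w_j$ is of Fermat, chain, or loop type. My first step is to show that this decomposition is compatible with the categories at hand: for $w = w_1 \oplus w_2$ the defining matrix is block-diagonal, so $L_w$ is the amalgam of $L_{w_1}$ and $L_{w_2}$ along the common class of $w$, and the graded category of singularities factors as an external tensor product $\dsgof{L_w}{\Cc[\underline x,\underline y]} \simeq \dsgof{L_{w_1}}{\Cc[\underline x]} \otimes_\kk \dsgof{L_{w_2}}{\Cc[\underline y]}$. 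Because the external product of two full strong exceptional collections is again one---morphism spaces tensor, so exceptionality, strongness, and fullness are all inherited after fixing a lexicographic order---it suffices to exhibit an explicit full strong exceptional collection for each atomic $w_j$.

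For $n \le 3$ this leaves only finitely many atomic types: the Fermat monomial $x^p$, the two-variable chain $x^p y + y^q$ and loop $x^p y + y^q x$, and the three-variable chain $x^p y + y^q z + z^r$ and loop $x^p y + y^q z + z^r x$. The Fermat case is the base case: $\dsgof{L_w}{\Cc[x]}$ is generated by the residue-field twists $\kk, \kk(-\x), \dots, \kk(-(p-2)\x)$, which already form the desired collection. In the remaining cases my candidate objects are suitable graded shifts of the residue field $\kk$ together with a family of cokernel modules $M_S$ indexed by subsets $S$ of the variables (the modules cutting out the partial monomials of $w$), arranged on the lattice $L_w$; this is precisely the combinatorial content recorded by the lattice-cube diagrams. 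To certify that a given arrangement is a strong exceptional collection I use the explicit description of morphism spaces established earlier: each object has an explicit Koszul- or matrix-factorization-type resolution, so every $\Hom^k(E_i,E_j)$ can be read off, and I verify (i) $\Hom^\bullet(E_i,E_i) = \kk$, (ii) $\Hom^k(E_i,E_j) = 0$ for $k \neq 0$, and (iii) the vanishing of $\Hom^0(E_i,E_j)$ for $i > j$. Fullness then follows because each $M_S$ is built from twists of $\kk$ by finitely many triangles, so the collection generates the same thick subcategory as the twists of $\kk$, and the latter generate $\dsgof{L_w}{\Cc[x_1,\dots,x_n]}$ for an isolated singularity.

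I expect the main difficulty to lie in strongness, not in exceptionality or fullness. A naive placement of the $M_S$ and the twists of $\kk$ on the lattice---the configuration drawn in the ``not strong'' diagram---already generates and can be ordered to be exceptional, but it carries nonzero higher $\Ext$ groups between certain pairs. Killing these requires replacing some objects by extensions, i.e.\ sliding them along the lattice and amalgamating the $M_S$ into composite modules such as $M_{xy}$ and $M_{xyz}$, in such a way that the offending morphisms cancel while generation is preserved. The loop cases should be the most delicate, since their cyclic monomials make $L_w$ carry genuine torsion and force the lattice arrangement to wrap around; controlling exactly which twists occur and confirming that no higher morphism survives is where the real bookkeeping sits. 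Once each atomic case is settled, the Thom--Sebastiani reduction of the first step assembles the pieces into a full strong exceptional collection for an arbitrary invertible $w$ with $n \le 3$, as required; by Proposition~\ref{prop:strong-coll=>unique-dg-enhanc} such a collection is exactly what the Berglund--H\"ubsch application needs.
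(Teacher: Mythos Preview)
Your overall strategy matches the paper's: classify, build collections from grading shifts of $\kk$ together with structure-sheaf modules of coordinate subspaces, verify exceptionality and strongness by explicit morphism computations, and deduce fullness from the generation criterion. Two points deserve comment.

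First, the Thom--Sebastiani reduction. You invoke the tensor decomposition $\dsgof{L_w}{}\simeq\dsgof{L_{w_1}}{}\otimes\dsgof{L_{w_2}}{}$ to reduce the split cases to atomic ones. The paper does \emph{not} do this: it treats all nine types for $n\le 3$ directly, including the three split cases for $n=3$, and only observes informally (Remark~\ref{remark:thom-sebastiani:2->3}) that the resulting split collections have the shape one would expect from a product. Your route is legitimate---the equivalence is known---and it economizes on the split cases, but it imports a nontrivial external result that the paper deliberately avoids in favor of self-contained computation.

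Second, and more seriously, your fullness argument has a gap. You write that ``each $M_S$ is built from twists of $\kk$ by finitely many triangles, so the collection generates the same thick subcategory as the twists of $\kk$.'' The implication runs the wrong way: knowing $M_S\in\langle\kk(l):l\in L_w\rangle$ tells you only that your collection sits \emph{inside} the subcategory generated by all twists, not that it generates them. What you actually need is the reverse: starting from the specific finite set of $\kk$-twists in your collection together with the $M_S$'s, you must manufacture \emph{every} $\kk(l)$ (modulo $\w$) via explicit triangles. This is the substantial combinatorial work the paper carries out in Lemmas~\ref{lemma:main:n=2:checking-fullness} and~\ref{lemma:prop:main:n=3:generates:a,b,c}--\ref{lemma:prop:main:n=3:generates:e}: one writes down short exact sequences such as $0\to\kk(-i\x)\to M_{x,i+1}\to M_{x,i}\to 0$, extracts ``generation rules'' from them, and then applies these rules in a carefully chosen order---using the explicit structure of $\lwbar$---to reach every residue class. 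In the loop cases this bookkeeping is genuinely delicate (for the 3-loop one must first generate auxiliary objects like $M_x(\y+\z)$, $M_y(\x+\z)$, $M_z(\x+\y)$ before the missing $\kk$-shifts become accessible). Your sketch underestimates this step; without it the fullness claim is unsupported.
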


Our explicit collections also follow a very specific pattern which may be used for constructing the desired collections for higher $n$ (see \S\ref{sec3:higher-n}). One just needs to reuse our methods and to perform the number of computations for verifying the necessary conditions for the candidate collections.

Orlov also conjectured that the categories participating in \behu{} duality have a very particular form. Namely, these categories admit full strongly exceptional collections that can be split into a small number of blocks each consisting of pairwise orthogonal objects as in the following.
\begin{ConjectureO}
\label{conj:orlov}
  For an invertible polynomial $w\in \Cc[x_1,\dots,x_n]$ and the corresponding maximal grading group $L_w$, the graded category of singularities $\dsgof{L_w}{\Cc[x_1,\dots,x_n]/(w)}$ admits a full strongly exceptional collection consisting of at most $n+1$ blocks (see Definition~\ref{def:block}).
\end{ConjectureO}
\begin{remark*}
  The number of blocks is $n+1$ and depends only on the number of variables~$n$ but not on the particular invertible polynomial.
\end{remark*}

In Section~4, we apply mutations to the explicit collections from the proof of Theorem~1 and prove the above conjecture for $n\le 3$.

\begin{Theorem}
\label{thm2}
    Orlov's conjecture holds for $n\le 3$.
\end{Theorem}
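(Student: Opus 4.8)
The plan is, for each invertible polynomial $w$ in $n\le 3$ variables, to take the explicit full strongly exceptional collection produced in the proof of Theorem~\ref{thm1} and to transform it, through a prescribed sequence of left and right mutations $\LL$ and $\RR$, into a collection whose objects split into at most $n+1$ blocks in the sense of Definition~\ref{def:block}. Because mutations preserve both fullness and exceptionality and act through the braid group on the set of exceptional collections, the two properties I must establish afresh are that the mutated collection is still \emph{strong} and that its objects can be grouped into $n+1$ families which are completely orthogonal internally and semiorthogonal between families. I would structure the case analysis along the Kreuzer--Skarke classification, which presents every invertible polynomial as a Thom--Sebastiani sum of chain- and loop-type atomic polynomials, with Fermat monomials counting as chains of length one. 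For $n\le 3$ this is a short list: Fermat, chains and loops of length at most three, and their sums in dimensions one, two and three, with the cases $n\le 2$ treated first as the simplest instances.

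For the grouping itself I would use the natural stratification of the generators of Theorem~\ref{thm1} by ``type'': the objects supported on the residue field, those involving a single distinguished matrix-factorization direction, those involving two directions, and so on up to the unique object supported in all $n$ directions. This stratification has precisely $n+1$ levels, matching the cube diagrams, where for $n=3$ the four levels are realized by the interior cube together with its three boundary faces, and matching the target count $n+1$, to which each atomic chain or loop of length $k$ contributes $k$ and Thom--Sebastiani summation adds these contributions before adding one. The point is that in the raw collection of Theorem~\ref{thm1} the individual levels are \emph{not} internally orthogonal: there are morphisms among the residue-field objects sitting in one level. The role of the mutations is exactly to sweep these intra-level morphisms away, turning them into inter-level morphisms, so that afterwards each level becomes a block. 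I expect each atomic type to carry its own mutation recipe and the Thom--Sebastiani sums to be assembled from these recipes one direction at a time.

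Checking that the resulting collection is full strongly exceptional with the stated block decomposition is where the efficient method for computing morphism spaces developed earlier in the paper carries the load. Once the mutated objects are written down explicitly, I would compute $\Ext^\bullet$ for every ordered pair of generators and verify three things: that all higher Ext groups vanish, giving strongness; that within a single block the degree-zero $\Hom$ groups also vanish in both directions, giving internal orthogonality; and that between blocks the $\Hom$ groups are concentrated in degree zero and flow in a single direction, giving semiorthogonality of the blocks.

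The main obstacle I anticipate is keeping the collection strong across the mutations. Strongness is not preserved by mutation in general, and the configuration drawn as the ``chain, not strong'' cube is precisely a cautionary case in which the most economical regrouping reintroduces higher Ext groups; the fix is to adopt a different mutation order, reflected in the ``split'' variants of the chain and loop diagrams, at the cost of additional bookkeeping. Tracking which morphisms survive each mutation across all chain, loop and Fermat combinations in dimension three is the bulk of the work, but every individual verification reduces to the mechanical morphism-space computation, so no single step is conceptually hard once the correct mutation sequence has been pinned down.
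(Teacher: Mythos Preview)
Your broad strategy---start from the collections of Theorem~\ref{thm1} and mutate---matches the paper, but your proposed block grouping is not the one that works, and this is a genuine gap rather than a stylistic difference.

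You propose to stratify by \emph{object type} (shifts of $\kk$, then of $M_x,M_y,M_z$, then of $M_{xy}$, etc.), expecting $n+1$ levels. This fails already for Brieskorn--Pham polynomials: there every object is a shift of $\kk$, so your stratification has a single level, and you would need to manufacture $n+1$ blocks from a collection with no type distinction at all. More generally, the $\kk$-level alone carries $(p-1)(q-1)(r-1)$ objects with a dense mesh of morphisms, and ``sweeping these intra-level morphisms away'' by mutation is a much harder and less structured problem than you suggest---you give no indication of which mutations would accomplish it. (Your reading of the ``chain, not strong'' and ``split'' figures is also off: the former concerns the \emph{initial} collection before any mutation, and the latter are for different polynomials, not alternative mutation orders.)

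The paper's approach is orthogonal to yours in a literal sense: it slices the $n$-dimensional lattice by the diagonal hyperplanes $\sum a_i = b$, which are \emph{already} pairwise-orthogonal blocks because all arrows go in directions $(\varepsilon_1,\dots,\varepsilon_n)\in\{0,1\}^n$. This gives many blocks (the number depends on the exponents), but with the crucial property that blocks at distance $>n$ are orthogonal. The paper then proves an abstract lemma: if $(\AA,\BB,\CC)$ is strong with $\CC$ a block orthogonal to $\AA$, then right-mutating $\BB$ over $\CC$ and shifting preserves strongness. Iterating this merges the last block with the one $n+1$ steps back, reducing the count by one each time until $n+1$ blocks remain. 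No Ext groups are recomputed; strongness is preserved formally at every step. This is both the missing idea in your proposal and the reason the paper's argument is uniform across all nine polynomial types.
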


\begin{remark*}{}
  In \cite{zametka}, we announced the results of this paper and described the explicit collections for the main cases of 3-chain and 3-loop polynomials. We apologize that it took so long to finish and submit this paper.
\end{remark*}

\subsection{Method of proof}

Our construction in the proof of Theorem~1 is based on the idea that the desired collections can be formed out of a very small number of initial objects by shifting them via the group action. This was firstly observed by Ueda in \cite{ueda} and reused by Futaki-Ueda in \cite{futaki-ueda} who studied the so-called Brieskorn-Pham polynomials $x_1^{p_1} + x_2^{p_2}+\ldots + x_n^{p_n}$ which form the simplest type of the invertible polynomials. Namely, they constructed explicit full strongly exceptional collections in these categories from the shifts of a \textit{single} object, the structure field, by arranging them into an $n$-dimensional cube (see Proposition~\ref{prop:futaki-ueda-split}). In this paper we generalize this construction to other types of invertible polynomials by adding new objects and their shifts to the $n$-dimensional lattice.

We use a well-known classification of invertible polynomials (see Example~\ref{ex:invertible-classification-n<=3}) and construct an exceptional collection for each type of $w$ individually. We do this in Proposition~\ref{prop:main:n=2} for the case of $n=2$ and in Proposition~\ref{prop:main:n=3} for $n=3$. The most important cases are the cases of so-called \textit{chain} and \textit{loop} polynomials, because all other invertible polynomials can be decomposed into them and the total exceptional collections can be described in terms of the collections for these cases.

Our proof in each case consists of two parts. The first part is checking that the collections are indeed exceptional (and strong). The second part is checking that the collections are full, that is that they generate the whole category.

In order to check that the collections are exceptional, we need to compute the morphisms between the given objects in the categories of singularities. An efficient way of doing so is to use the idea by Buchweitz that the categories of singularities, or the categories of matrix factorizations, can be described via certain stabilization properties. Namely, the morphisms in the categories of singularities can be obtained as the stabilizations of the morphisms in the corresponding derived categories of the coherent sheaves (see Proposition~\ref{prop:stabilization}). Buchweitz formulated this observation in Remark~1.3(a) of \cite{buchweitz}. However, the statement as written there omits the necessary conditions (see counter-example in Remark~\ref{remark:counter-example-for-Buchweitz}) and the full proof is missing as well. Section~2 is dedicated to filling this gap. Namely, we use Orlov's ideas to refine some general results about the categories of singularities, and then specialize them to the cases of Gorenstein and hypersurface rings.

In order to prove the generation statements, we follow an idea of Ueda from \cite{ueda}. Namely, in each case we show explicitly that the objects of our collections generate all the shifts of the structure field and then conclude the statement by applying the well-known generation criterion (see Proposition~\ref{prop:gen-criterion-invertibles-collections}). In general case (when $w$ is not Brieskorn-Pham), these statements are harder to prove since one needs to deal with many different objects simultaneously in a context of a more complex grading group. Our computations proving these statements take up significant part of Section~3.

\paragraph{Recent findings.}
While this paper was still in preparation, a few papers appeared which are related to this work. Habermann and Smith in \cite{hab-smith} constructed similar collections in the case of $n=2$. Takahashi and Aramaki in \cite{takahashi-new} considered the case of chain polynomials for arbitrary~$n$. They have built full exceptional collections in these cases which are not strong. Hirano and Ouchi in \cite{hirano-ouchi} also considered the chain case for arbitrary~$n$. They haven't constructed explicit collections but have built certain semi-orthogonal decompositions of the categories of singularities which agree with the pattern of our collections on Figures~\ref{fig:2chain} and~\ref{fig:n=3:3-chain-not-strong}.

\paragraph{Acknowledgements.} We thank Dmitri Orlov for suggesting the problem and for his undivided attention to this work, Mohammed Abouzaid for making useful suggestions and encouraging us to finish the paper, Ludmil Katzarkov, David Favero, Matthew Ballard and Alexander Efimov for helpful discussions, and Yankı Lekili for helpful discussions and sharing the reference for \cite{hab-smith}.

\section{Morphisms in the categories of singularities}
\label{sec:morphisms-dsg}

The goal of this section is to show how to efficiently compute morphisms in categories of hypersurface singularities.

In \S\ref{sec2:definition}, we recall the definition of the graded categories of singularities. In~\S\ref{sec2:explicit-morphisms}, we introduce the notion of \textit{descending sequences} in the bounded derived category of coherent sheaves and use this notion to explicitly express morphisms in the corresponding category of singularities. Our proofs are based on Orlov's ideas.
In \S\ref{sec:gr/ungr}, we recall the well-known correspondence between the graded and ungraded worlds necessary for the morphism computations.
In \S\ref{sec:stabilization-gorenstein-rings}, we specialize to the case of Gorenstein rings. We refine Orlov's results and essentially prove Buchweitz's remark in Propositions~\ref{prop:stabilization} and~\ref{prop:Buchweitz's-Remark}.
In \S\ref{sec:hypersurface-singularities}, we further specialize to the case of hypersurface singularities. We recall the quasi-periodicity property and use it to give an explicit description of the morphisms in categories of singularities in terms of Ext's of modules.
In \S\ref{sec:morphisms-applications}, we summarize the obtained results as an algorithmic recipe for computing the morphisms in the categories of hypersurface singularities (see Recipe~\ref{recipe}).

\subsection{Preliminaries}
\label{sec2:definition}

Let $A$ be an arbitrary Noetherian ring which is not necessarily commutative and which is equipped with an $L$-grading $A = \bigoplus_{l\in L} A_l$, where $L$ is an arbitrary abelian group. Let $\modLA$ denote the category of the right $L$-graded finitely generated $A$-modules. The bounded derived category $\dbmodLA$ then consists of the bounded complexes of modules. A \textit{perfect} complex in $\dbmodLA$ is a complex quasi-isomorphic to a bounded complex of projective modules. The perfect complexes in $\dbmodLA$ form a triangulated subcategory which we denote by $\Perf^L(A)$.

\begin{definition}[Orlov, see \cite{orlov05}]
\label{def:dsg-gr}
  A \textit{graded category of singularities} $\dsgLA$ is defined as the quotient of triangulated categories $\dbmodLA/\Perf^L(A)$.
\end{definition}

The objects of $\dsgLA$ are represented by the objects of $\dbmodLA$, that is by the bounded complexes. The morphisms in $\dsgLA$ from $M$ to $N$ are formed by the roofs $M \xrightarrow{a} T \xleftarrow{s} N$, where $a$ is an arbitrary morphism in $\dbmodLA$ and $s$ belongs to the localizing set $\SS = \{s\:X \to Y \mid \Cone(s) \in \Perf^L(A)\}$ of morphisms in $\dbmodLA$. These roofs are further subject to the usual equivalence relations coming from the definition of localization.

\begin{definition}
In the ungraded case, $L=\{0\}$, we will denote the category of singularities $\dsgLA$ simply by $\dsgA$ and call it the \textit{ungraded category of singularities}.
\end{definition}

\begin{remark}
We refer the reader to \cite{orlov03} and \cite{neeman-book} for the details on triangulated categories and their quotients.
\end{remark}

\subsection{Descending sequences and explicit characterization of the morphisms}
\label{sec:morphisms-details}
\label{sec2:explicit-morphisms}

Using ideas from \cite[\S1.3]{orlov05}, we provide an explicit characterization of morphisms in the categories of singularities in terms of the direct systems of morphisms in the corresponding bounded derived categories (see Theorem~\ref{theorem:dsg-morphisms:db->dsg}).

Namely, we firstly show that any bounded complex can be ``descended'' towards lower degrees via the isomorphisms in the corresponding singularity category. Then we show that in order to compute the morphisms between $\bull M$ and $\bull N$ in the singularity category $\dsgLA$, we should just replace $\bull N$ by its ``descending sequence'' and then take the direct limit of the corresponding sequence of morphisms in the bounded derived category $\dbmodLA$.

\begin{definition}
  Let $\bull M$ be a (non-zero) bounded complex situated in degrees $\le b$ with a non-zero component at $b$. By a \textit{descending morphism} of $\bull M$ we mean a morphism $p\: \bull M \to \bull N$ in the bounded derived category $\dbmodLA$, such that $\bull N$ is situated in degrees $\le b-1$ and $\Cone(p)$ is a perfect complex. We call $\bull N$ the \textit{descent} of $\bull M$.
  (By a descending morphism of a zero complex $\bull M = \bull 0$, we mean a zero morphism from this complex to itself $0\:\bull 0 \to \bull 0$.)
\end{definition}

\begin{remark}
  When we say that the complex $\bull M$ is \textit{situated in degrees} $\le b$ for some $b$, we mean that $M^i=0$ for $i>b$. This is equivalent to the condition that $\HH^i(\bull M)=0$ for $i>b$, if we allow replacing the complexes by the quasi-isomorphic ones.
\end{remark}

\begin{remark}
  The definition implies that any descending morphism $p$ from the category $\dbmodLA$ becomes an isomorphism in $\dsgLA$.
\end{remark}

\begin{remark}
  If $\bull M$ is situated in degrees $\le b$, then for any descending morphism $p$ of $\bull M$, an object $\Cone(p)$ is automatically situated in degrees $\le b-1$.
\end{remark}

In \cite{orlov05}, Orlov proved the following result.

\begin{lemma}
[{see \cite[Lemma~1.10]{orlov05}}]
\label{lem:everything-is-a-shift-of-a-module}
  Any object in $\dsgLA$ is isomorphic to a shift of a module.
\end{lemma}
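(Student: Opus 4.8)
The plan is to prove this exactly as Orlov does: reduce an arbitrary bounded complex to a single module by replacing it with a suitable projective resolution and then discarding a perfect summand. Let $\bull M$ be an object of $\dsgLA$, represented by a bounded complex of finitely generated $L$-graded modules, and let $a$ be the lowest degree in which $\bull M$ has cohomology. Since $A$ is (graded) Noetherian and the terms are finitely generated, I would first pass to a quasi-isomorphic complex $\bull P \to \bull M$ in $\dbmodLA$ in which every $P^i$ is a finitely generated graded projective module and $P^i = 0$ for $i$ large; such a bounded-above resolution exists because each graded syzygy of a finitely generated module is again finitely generated. Replacing $\bull M$ by $\bull P$ is harmless, since a quasi-isomorphism is already an isomorphism in $\dbmodLA$, hence a fortiori in the quotient $\dsgLA$.

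The next step is to split $\bull P$ by a brutal (stupid) truncation. Fixing an integer $n \le a$ and writing $\sigma_{\ge n}\bull P$ for the quotient complex retaining the terms in degrees $\ge n$ and $\sigma_{<n}\bull P$ for the complementary subcomplex, there is a short exact sequence of complexes
$$0 \to \sigma_{<n}\bull P \to \bull P \to \sigma_{\ge n}\bull P \to 0.$$
The quotient $\sigma_{\ge n}\bull P$ is a \emph{bounded} complex of finitely generated projectives, so it lies in $\Perf^L(A)$. Consequently, in the triangle induced by this short exact sequence, $\sigma_{\ge n}\bull P \cong 0$ in $\dsgLA$, and therefore the inclusion $\sigma_{<n}\bull P \to \bull P$ becomes an isomorphism in the singularity category.

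It then remains to recognise $\sigma_{<n}\bull P$ as a shift of a module. Since $\bull P$ resolves $\bull M$, it is acyclic in all degrees $<a$, in particular in all degrees $\le n-1$. Hence the truncated complex $\sigma_{<n}\bull P = (\cdots \to P^{n-2} \to P^{n-1} \to 0)$, whose top term sits in degree $n-1$, is acyclic except in that top degree, where its cohomology is $\HH^{n-1}(\sigma_{<n}\bull P) = \Im(P^{n-2}\to P^{n-1})^{\perp}$; more precisely, acyclicity of $\bull P$ at degree $n-1$ gives $\HH^{n-1}(\sigma_{<n}\bull P)\cong \Im(P^{n-1}\to P^n) =: \Omega$, a finitely generated syzygy module. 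Thus $\sigma_{<n}\bull P$ is quasi-isomorphic to $\Omega[-(n-1)]$, and combining with the previous step yields $\bull M \cong \bull P \cong \Omega[-(n-1)]$ in $\dsgLA$, a shift of a module, as claimed.

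The argument is essentially formal once the resolution is in hand, and the triangulated bookkeeping (perfectness of the bounded projective part, acyclicity of the tail) is routine. The one point that genuinely needs care — and which I regard as the main, if mild, obstacle — is finiteness: one must ensure the bounded-above projective resolution can be chosen with finitely generated graded terms, so that $\sigma_{<n}\bull P$ and the module $\Omega$ are honest objects of $\dbmodLA$ rather than of some larger completion. This is precisely where the graded Noetherian hypothesis on $A$ enters.
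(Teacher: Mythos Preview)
Your argument is correct and is essentially the standard Orlov proof the paper invokes: replace the complex by a bounded-above projective resolution, use the stupid truncation to kill a perfect piece, and recognise the remaining tail as (a resolution of) a single syzygy module. The paper's own explicit version of this construction appears in the proof of Lemma~\ref{lem:descend}, where the same idea is phrased via the smart truncation $\bull T = (K \to P^c \to \dots \to P^b)$ rather than the unbounded tail $\sigma_{<n}\bull P$, but the content is identical.
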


Actually, the argument in the proof of the last lemma gives rise to the construction of a particular descending morphism from any complex to a shift of a module, as formulated in the following lemma.

\begin{lemma}
\label{lem:descend}
    Any bounded complex $\bull M$ can be descended. Moreover, its descent $\bull N$ can be chosen to be a module situated in an arbitrarily low degree. In other words, $\bull N = K[n]$, where $n\gg 0$ and $K$ is a module depending on $n$.
\end{lemma}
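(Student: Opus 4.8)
The plan is to realize the descending morphism in a single stroke by resolving $\bull M$ and then brutally truncating the resolution far below its cohomological support; this is Orlov's truncation idea from the proof of Lemma~\ref{lem:everything-is-a-shift-of-a-module} made explicit. First I would replace $\bull M$ by a quasi-isomorphic bounded-above complex of finitely generated projective $L$-graded modules. Since $A$ is Noetherian and all modules are finitely generated, a projective resolution $q\: \bull P \xrightarrow{\sim} \bull M$ exists in $\modLA$ with $P^i = 0$ for $i>b$ (resolve starting from the top term). Being a quasi-isomorphism, $q$ is invertible in $\dbmodLA$, and $\bull P$ has the same cohomology as $\bull M$, hence is acyclic outside the range $[a,b]$ where $a$ is a lower bound for $\bull M$; in particular $\bull P$ is exact at every position $i<a$.

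Next I would fix any $n$ with $-n < a$ (so $n\gg 0$) and split $\bull P$ via the brutal truncations in degrees $>-n$ and $\le -n$. The subcomplex $\sigma^{>-n}\bull P = (P^{-n+1}\to\cdots\to P^b)$ is a finite complex of projectives, hence an object of $\Perf^L(A)$, and the quotient complex $\sigma^{\le -n}\bull P = (\cdots\to P^{-n-1}\to P^{-n}\to 0)$ is situated in degrees $\le -n$. The short exact sequence of complexes $0 \to \sigma^{>-n}\bull P \to \bull P \to \sigma^{\le -n}\bull P \to 0$ shows that the quotient map $\pi\: \bull P \to \sigma^{\le -n}\bull P$ has $\Cone(\pi) \cong \sigma^{>-n}\bull P[1]$, a perfect complex.

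The key computation is that $\sigma^{\le -n}\bull P$ has cohomology concentrated in its top degree $-n$. Indeed, for $i\le -n-1$ it agrees with $\bull P$ near degree $i$, so $\HH^i(\sigma^{\le -n}\bull P)=\HH^i(\bull P)=0$ because $i<a$; and at the top $\HH^{-n}(\sigma^{\le -n}\bull P)=\operatorname{Coker}(d^{-n-1})=:K$, a finitely generated $L$-graded module (a syzygy of $\bull M$). Hence the canonical surjection onto the top cohomology $\sigma^{\le -n}\bull P \to K[n]$ is a quasi-isomorphism. Composing, I obtain a morphism $p\: \bull M \xrightarrow{q^{-1}} \bull P \xrightarrow{\pi} \sigma^{\le -n}\bull P \xrightarrow{\sim} K[n]$ in $\dbmodLA$ whose cone is an extension of a perfect complex by an acyclic one, hence perfect. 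Since $-n \le b-1$, this $p$ is a descending morphism whose descent $\bull N = K[n]$ is a single module placed in the arbitrarily low degree $-n$, proving both assertions simultaneously. (If $\bull M$ is acyclic it already lies in $\Perf^L(A)$ and one takes $\bull N=0$, per the zero-complex convention.)

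The main obstacle here is purely bookkeeping rather than conceptual: I must verify carefully that $\sigma^{\le -n}\bull P$ is acyclic strictly below the truncation degree so that its cohomology genuinely collapses to the single module $K$, and that the cone of the composite $p$ is perfect as an iterated extension of $\sigma^{>-n}\bull P[1]$ and an acyclic complex. All of this has to be carried out compatibly with the $L$-grading; but since the resolution $\bull P$, both truncations, and the cokernel $K$ are taken inside $\modLA$, the grading introduces no additional difficulty.
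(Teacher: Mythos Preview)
Your proof is correct and follows essentially the same route as the paper: resolve $\bull M$ by projectives, take the stupid truncation at a degree below the cohomological support, and observe that the leftover piece is quasi-isomorphic to a single syzygy module while the truncated part is a bounded complex of projectives, hence perfect. The only cosmetic difference is that the paper passes through the smart truncation $T^\bullet = (K \to P^c \to \dots \to P^b)$ before extracting $K[-c+1]$, whereas you identify $\sigma^{\le -n}\bull P \simeq K[n]$ directly; these are equivalent presentations of the same argument.
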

\begin{proof}
\def\bmin{c}
\def\bmax{b}
\def\pii{{\overline{\pi}}}
  If $\bull M = \bull 0$, the statement is obvious. Otherwise, let us pick $\bmin$ and $\bmax$ such that $\bull M$ is situated in degrees $[\bmin,\bmax]$ with a non-zero component at $\bmax$. Consider its projective resolution $\pi\:\bull P\to\bull M$ situated in degrees $\le\bmax$ and its smart truncation $\bull T = (K\to P^\bmin \to\dots\to P^\bmax)$ situated in degrees $[\bmin-1,\bmax]$, where $K = P^{\bmin-1}/\Ker d_P^{\bmin-1}$. Restriction of $\pi$ to degrees $[c,b]$ gives rise to a chain map $\pii\:\bull T\to\bull M$. Since $\pii$ is a quasi-isomorphism of complexes, it has an inverse $\pii^{-1}\:\bull M\to \bull T$ in the derived category $\dbmodLA$.

Now, consider also the stupid truncation $\sigma^{\ge\bmin} \bull P = (P^\bmin \to\dots\to P^\bmax)$ of $\bull P$ and the corresponding short exact sequence of complexes
$$ 0 \to \sigma^{\ge\bmin} \bull P \to \bull T \xrightarrow{a} K[-\bmin+1] \to 0. $$
The latter gives rise to the isomorphism $\Cone(a) \simeq \sigma^{\ge\bmin} \bull P[1]$ in the derived category, thus implying that $\Cone(a)$ is a perfect complex.

Define $p$ as the following composition of morphisms in $\dbmodLA$:
$$ p := \left( \bull M \xlongrightarrow{\pii^{-1}} \bull T \xlongrightarrow{a} K[-c+1] \right). $$
Since $\pii^{-1}$ is an isomorphism in $\dbmodLA$, the complex $\Cone(p)$ is isomorphic to $\Cone (a)$ in $\dbmodLA$, hence is perfect as well.

Since the complex $K[-c+1]$ is situated in degree $c-1$ with $c\le b$, the morphism $p\: \bull M \to K[-c+1]$ satisfies both conditions of being a descending morphism of $\bull M$. Moreover, we could choose $c$ to be arbitrarily low. This concludes the proof.
\end{proof}

\begin{remark}
  If we apply the construction of Lemma~\ref{lem:descend} to a module, then the descent will be given by this module's first \textit{syzygies} (see \cite{lam} for definition).
\end{remark}

\begin{definition}
  By a \textit{descending sequence} $(N_i, p_i)_{i\ge0}$ of a complex $\bull N$, we mean a sequence of descending morphisms starting at $\bull N$:
  $$ (\bull N = N_0) \xrightarrow{p_0} N_1 \xrightarrow{p_1} N_2 \xrightarrow{p_2} N_3 \xrightarrow{p_3} \dots $$
\end{definition}

\begin{lemma}
    Any complex $\bull N$ admits a descending sequence.
\end{lemma}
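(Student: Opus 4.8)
The plan is to obtain the desired sequence by simply iterating Lemma~\ref{lem:descend}. First I would set $N_0 := \bull N$ and apply Lemma~\ref{lem:descend} to produce a descending morphism $p_0\colon N_0 \to N_1$ whose target $N_1$ is (a shift of) a module. The crucial observation is that the descent produced by that lemma is again a bounded complex — in fact a single module placed in one degree — so it is itself a legitimate input for Lemma~\ref{lem:descend}. Hence I can feed $N_1$ back into the lemma to obtain $p_1\colon N_1 \to N_2$, then feed $N_2$ in to obtain $p_2$, and so on.

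More precisely, I would argue by recursion: assuming $N_i$ has been constructed as a bounded complex, Lemma~\ref{lem:descend} yields a descending morphism $p_i\colon N_i \to N_{i+1}$ with $N_{i+1}$ again a bounded module, and I need only check that $p_i$ satisfies the two defining properties of a descending morphism. Both are guaranteed by Lemma~\ref{lem:descend}: the cone of $p_i$ is perfect by construction, and the target is situated in strictly lower degrees because the lemma lets us choose the descent to lie in an arbitrarily low degree (in particular below the top degree of $N_i$). Concatenating these morphisms produces exactly the chain
$$ (\bull N = N_0) \xrightarrow{p_0} N_1 \xrightarrow{p_1} N_2 \xrightarrow{p_2} \dots $$
required by the definition of a descending sequence.

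The only point that needs a word of care is the possibility that some $N_i$ equals the zero complex; in that case I would invoke the convention from the definition of a descending morphism and extend the sequence by the zero morphism $0\colon \bull 0 \to \bull 0$, so that the recursion never terminates. I do not expect a genuine obstacle here: the statement is a direct corollary of Lemma~\ref{lem:descend}, and its entire content is the closure property that the output of that lemma is always of the same type as its input, so the construction can be repeated indefinitely.
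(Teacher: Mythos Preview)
Your proposal is correct and matches the paper's own proof, which simply states that the lemma follows from the inductive application of Lemma~\ref{lem:descend}. Your added remarks about the zero complex and the closure property are accurate but not strictly needed; the paper treats this as an immediate corollary.
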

\begin{proof}
  Follows from the inductive application of Lemma~\ref{lem:descend}.
\end{proof}

By refining the result of \cite[Proposition 1.11]{orlov05}, we can characterize the morphisms in $\dsgLA$ in terms of the descending morphisms and sequences in $\dbmodLA$ via the following construction.

Let $(N_i, p_i)_{i\ge 0}$ be a descending sequence of $\bull N$ and $\bull M$ be another bounded complex. For this data, let us define the direct system $(\Omega_i, \phi_{ij})$ as follows. We define the spaces $\Omega_i$ for $i\ge 0$ as \\[-2ex]
    \begin{equation}
    \label{eq:omega-space}
        \Omega_i := \Hom_\dbmodLA(\bull M, N_i).
    \end{equation}
    Then we define the morphisms $\phi_{ij}\:\Omega_i\to \Omega_j$ for $0\le i \le j$ as the compositions with the given descending morphisms:
    \begin{equation}
        \label{eq:omega-space-phi}
        \phi_{ij}\:\Hom_\Db(\bull M,N_i)\to\Hom_\Db(\bull M,N_j),\; f\mapsto p_{j-1} \dots p_{i+1} p_i f,
    \end{equation}
    \\[-5ex]
    \begin{equation*}
        \label{def:omega-space-phi-2}
     \begin{tikzcd}
        \bull M \ar[d, dashed, ""{name=1, pos=0.65}] 
        \ar[rd, dashed, ""'{name=2}, ""{name={2r}, near end}]
        \ar[rrd, dashed, bend left = 5, ""'{name=3, pos=0.6}, ""{name={3r}, near end}]
        \ar[rrrd, bend left=10, dashed, ""'{name=4, pos=0.73}, ""{name=4r, pos=0.85}]
        \ar[rrrrd, dotted, bend left=15, ""'{name=5, pos=0.835}]
        \\
        N_0 \ar[r,"p_0"'] & N_1 \ar[r, "p_1"'] & N_2 \ar[r, "p_2"'] & N_3 \ar[r, "p_3"'] & \cdots
        \arrow[Rightarrow, from=1, to=2, "\phi_{01}"']
        \arrow[Rightarrow, from={2r}, to=3, "\phi_{12}"', near end]
        \arrow[Rightarrow, from={3r}, to=4, "\phi_{23}"', near end]
        \arrow[Rightarrow, from={4r}, to={5}, "\phi_{3\dots}"', near end]
    \end{tikzcd}
    \end{equation*}

\begin{theorem}\label{theorem:dsg-morphisms:db->dsg}
For any bounded complexes $\bull M$ and $\bull N$ and for any descending sequence $(N_i, p_i)_{i\ge0}$ of $\bull N$, we have the following isomorphism:\\[-1ex]
  $$\Hom_\dsgLA(\bull M,\bull N) \simeq \varinjlim_{i} \Hom_\dbmodLA(\bull M,N_i),$$\\[-2ex]
where the limit in the right hand side stands for the direct limit of the system $(\Omega_i, \phi_{ij})$ constructed above.
\end{theorem}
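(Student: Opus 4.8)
The plan is to exhibit an explicit comparison map and prove it is an isomorphism, using the calculus of fractions for the Verdier quotient $\dsgLA=\dbmodLA/\Perf^L(A)$ together with a vanishing forced by the descending sequence. I would first note that each composite $q_i:=p_{i-1}\cdots p_0\colon \bull N\to N_i$ has perfect cone: each $\Cone(p_k)$ is perfect, $\Perf^L(A)$ is thick, and the octahedral axiom propagates perfectness along composites. Hence $q_i$ is invertible in $\dsgLA$, and I can define $\Psi_i\colon\Omega_i\to\Hom_\dsgLA(\bull M,\bull N)$ by $f\mapsto q_i^{-1}\circ f$ (applying the localization functor to $f$). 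Since $q_j=(p_{j-1}\cdots p_i)\,q_i$, a direct check gives $\Psi_j\circ\phi_{ij}=\Psi_i$, so the $\Psi_i$ descend to a map $\Psi\colon\varinjlim_i\Omega_i\to\Hom_\dsgLA(\bull M,\bull N)$. Everything then reduces to showing $\Psi$ is bijective.

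The technical core is a single vanishing statement: for any perfect complex $P$ and all $j\gg0$ one has $\Hom_\dbmodLA(P,N_j)=0$, and likewise with $N_j$ replaced by $N_j[1]$. Indeed, by Lemma~\ref{lem:descend} I may take $N_j\simeq K_j[n_j]$ with $n_j\to\infty$, so $N_j$ is concentrated in the single, arbitrarily negative degree $-n_j$; representing $P$ by a bounded complex of projectives sitting in a fixed window $[\alpha,\beta]$ and computing $\Hom_\dbmodLA(P,N_j)=\HH^0(\Homb_A(P,N_j))$ degreewise, the only possible contribution $\Hom_A(P^{-n_j},K_j)$ vanishes as soon as $-n_j<\alpha$. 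This is the step I expect to be the crux: it is not deep, but the whole argument hinges on it, and it is exactly where the finiteness hidden in ``perfect'' is used.

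For surjectivity I would show the sequence $(q_i)$ is cofinal among all $s\colon\bull N\to T$ with $\Cone(s)$ perfect, which is how roofs present morphisms in $\dsgLA$. Given such a roof $\bull M\xrightarrow{a}T\xleftarrow{s}\bull N$, applying $\Hom_\dbmodLA(-,N_i)$ to the triangle on $s$ shows that the obstruction to factoring $q_i=h\circ s$ lies in $\Hom_\dbmodLA(\Cone(s),N_i[1])$, which vanishes for $i\gg0$ by the lemma. Then $h=q_i s^{-1}$ is invertible in $\dsgLA$, and $f:=h\circ a\in\Omega_i$ satisfies $\Psi([f])=q_i^{-1}f=s^{-1}h^{-1}ha=s^{-1}a$, the class of the original roof. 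For injectivity, if $\Psi([f])=0$ then $f=0$ in $\dsgLA$, so by the standard Verdier criterion (see \cite{neeman-book}) $f$ factors as $\bull M\xrightarrow{u}P\xrightarrow{v}N_i$ through a perfect $P$; then $\phi_{ij}(f)=(p_{j-1}\cdots p_i)\,v\,u$ with $(p_{j-1}\cdots p_i)v\in\Hom_\dbmodLA(P,N_j)=0$ for $j\gg0$, so $[f]$ dies in the limit. The two points needing the most care in the write-up are justifying the ``factors through the subcategory'' criterion for zero morphisms and ensuring the degree thresholds can be met by a single large index.
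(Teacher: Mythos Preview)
Your proof is correct and follows essentially the same route as the paper's: build the comparison map from roofs $\bull M\xrightarrow{f}N_i\xleftarrow{q_i}\bull N$, and prove bijectivity via the vanishing $\Hom_\Db(P,N_j)=0$ for perfect $P$ and $j\gg0$. One small slip: in the vanishing step you write ``by Lemma~\ref{lem:descend} I may take $N_j\simeq K_j[n_j]$,'' but the descending sequence is \emph{given}, not chosen, so you cannot invoke that lemma to replace $N_j$ by a shifted module. The fix is immediate and is what the paper does: by the definition of a descending morphism, if $\bull N$ sits in degrees $\le b$ then $N_j$ sits in degrees $\le b-j$, and your degree-window computation for $\Hom_\Db(P,N_j)$ goes through verbatim once $b-j<\alpha$. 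For injectivity you use the ``factors through a perfect object'' criterion, while the paper uses the equivalent roof-theoretic formulation (there exists $t\colon N_i\to T$ with $\Cone(t)$ perfect and $tf=0$); both are standard consequences of the Verdier calculus and lead to the same conclusion.
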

\begin{proof}
  Let us define $s_i\:\bull N \to N_i$ recursively for $i\ge 0$ by setting $s_0$ to be the identity morphism $s_0 = \id_{\bull N}$ and $s_{i+1} = p_i s_i$ for $i\ge 0$.
  All $s_i$ belong to the localizing set $\SS$ from the definition of $\dsgLA$ since they are the compositions of $p_i$ which belong to $\SS$ by definition.
  
  Let us define the maps $\psi_i\: \Hom_\dbmodLA(\bull M, N_i) \to \Hom_\dsgLA(\bull M, \bull N)$ by sending a morphism $a\: \bull M \to N_i$ in $\dbmodLA$ to the morphism in $\dsgLA$ represented by the roof $\bull M \xrightarrow{a} N_i \xleftarrow{s_i} \bull N$. Since the roofs $\bull M \xrightarrow{a} N_i \xleftarrow{s_i} \bull N$ and $\bull M \xrightarrow{p_i a} N_{i+1} \xleftarrow{p_i s_i} \bull N$ represent the same morphism in $\dsgLA$, the system of maps $(\psi_i)_{i\ge0}$ is compatible with $(\phi_{ij})_{0\le i\le j}$. This situation can be schematically depicted as follows:
  $$ \left( \begin{tikzcd}
    \bull M \ar[d, dashed, "\Db"']
    \ar[rd, dashed, near start] 
    \ar[rrd, bend left=10, dashed, near start, "\scalebox{0.75}{$\Db$}"']
    \ar[rrrd, bend left=15, dashed, "\scalebox{0.75}{$\Db$}"']
    \ar[rrrrd, bend left=15, dotted]\\
    N_0 \ar[r, "p_0"] & N_1 \ar[r, "p_1"] & N_2 \ar[r, "p_2"] & N_3 \ar[r, dotted] & \cdots \\
    \bull N \ar[u, equal] \ar[ru, "s_1"] \ar[rru, bend right=10, "s_2"] \ar[rrru, bend right=15, "s_3"] \ar[rrrru, bend right=15, dotted]
  \end{tikzcd}  \right) 
  \xlongrightarrow{(\psi_i)_{i\ge 0}}
  \begin{tikzcd}
    \bull M \ar[dd, dashed, "\Dsg"] \\ \phantom{N_0} \\ \bull N
  \end{tikzcd}
  $$
Taking the limit $\varinjlim_i (\psi_i)_{i\ge 0}$, we then get the map
$$ \Psi\:\varinjlim_{i} \Hom_\dbmodLA(\bull M,N_i) \to \Hom_\dsgLA(\bull M,\bull N). $$\\[-3ex]
We will now show that this map $\Psi$ is bijective and thus will obtain the proof.

Firstly, let us prove that $\Psi$ is surjective. Consider an arbitrary element in $\Hom_\dsgLA(\bull M,\bull N)$ represented by a roof $\bull M \xrightarrow{a} T \xleftarrow{t} \bull N$, where $C = \Cone(t)$ is perfect. By definition, for any projective module $P$, we have $\Hom_\Db(P,X) = 0$ as soon as the complex $X$ is situated in negative degrees. This implies that for the perfect complex $C$, we have $\Hom_\Db(C,X) = 0$ as soon as the complex $X$ is situated in low enough degrees depending on $C$. In particular, for large enough~$i$, we have both $\Hom_\Db(C,N_i) = 0$ and $\Hom_\Db(C,N_i[1]) = 0$, since $N_i$ form a descending sequence. Then applying functor $\Hom_\Db(-,N_i)$ to the distinguished triangle $C[-1] \to \bull N \xrightarrow{t} T \to C$ in $\dbmodLA$, we see that the composition with $t$ defines an isomorphism $\Hom_\Db(\bull N,N_i) \simeq \Hom_\Db(T,N_i)$. This means that $s_i = ut$ for some $u\: T\to N_i$. Thus our initial roof is equivalent to the roof $\bull M \xrightarrow{ua} N_i \xleftarrow{s_i=ut}\bull N$. The latter is contained in the image of $\psi_i$ and thus in the image of $\Psi$.

\begingroup
\def\j{{j}}
Now, let us prove that $\Psi$ is injective. Consider an element $a$ such that $\Psi(a)=0$. This element is represented by a morphism $a_\j\:\bull M \to N_\j$ in $\dbmodLA$ for some $j$. Then $\psi_\j(a_\j) = \Psi(a) = 0$. The latter means that the roof $\bull M \xrightarrow{a_\j} N_\j \xleftarrow{s_\j} \bull N$ is equivalent to a zero roof $\bull M \xrightarrow{0} N_\j \xleftarrow{s_\j} \bull N$. This in turn means that there exists a morphism $t\:N_\j \to T$ in $\dbmodLA$ such that $ta_\j = 0$ and $\Cone(t)$ is perfect. Applying the same argument for $t$ as in the proof of surjectivity, while replacing $\bull N$ by $N_\j$, we get an isomorphism $\Hom_\Db(N_\j,N_i) \simeq \Hom_\Db(T,N_i)$ for large enough $i$ defined by composing with $t$. In particular, for the composition $s_{i\j} := p_{j-1}\dots p_i \: N_i \to N_\j$, there exists $u\: T \to N_i$ such that $s_{i\j} = ut$. Then $s_{i\j} a_\j = ut a_\j = u0 = 0$ which means that $a_\j$ is equivalent to zero in the direct limit, hence $a=0$.
\endgroup
\end{proof}

\subsection{Graded/ungraded correspondence}
\label{sec:gr/ungr}

\def\Lgr{{L\textrm{-gr}}}
\def\HomLgr{\Hom^\Lgr}
\def\HomLgrA{\Hom_{\modLA}}

For an $L$-graded ring~$A$ and two $L$-graded $A$-modules~$M$ and~$N$, we can consider two kinds of the morphism spaces from $M$ to $N$. The first kind respects the grading and is denoted by $\HomLgrA(M,N)$. We will call its elements the \textit{graded morphisms}. The second kind ignores the grading and is denoted by $\Hom_A(M,N)$. We will call its elements the \textit{ungraded morphisms}. These two kinds of spaces are related via the following well-known correspondence.

\begin{lemma}[{e.g., see \cite{lam}}]
    If $A$ is Noetherian and $M$ is finitely generated, then the ungraded morphisms from $M$ to $N$ decompose into the graded morphisms from $M$ to the various shifts of $N$ with respect to the $L$-grading:
    \begin{equation*}
        \Hom_A(M,N) = \bigoplus_{l\in L} \HomLgrA(M,N(l)),
    \end{equation*}
    where the grading on the module $N(l)$ is defined by setting $N(l)_m := N_{l+m}$ for all $l,m\in L$.
\end{lemma}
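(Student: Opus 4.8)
The plan is to exhibit explicit maps in both directions and to check that they are mutually inverse. In one direction, the inclusion $\bigoplus_{l\in L}\HomLgrA(M,N(l)) \hookrightarrow \Hom_A(M,N)$ is immediate: a graded morphism $g\colon M\to N(l)$ is, after forgetting the grading, simply an $A$-linear map $M\to N$ which raises degree by $l$, i.e. $g(M_m)\subseteq N(l)_m = N_{m+l}$. Moreover the sum of such maps over distinct $l$ is direct inside $\Hom_A(M,N)$: if a finite combination $\sum_i g_{l_i}$ vanishes, then evaluating on a homogeneous $m\in M_d$ gives $\sum_i g_{l_i}(m)=0$ with the terms $g_{l_i}(m)\in N_{d+l_i}$ living in pairwise distinct degrees, so each term, and hence each $g_{l_i}$, is zero.

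The substance is the reverse direction: every ungraded $f\colon M\to N$ decomposes into homogeneous pieces. For each $l\in L$ I would define a map $f_l\colon M\to N$ on a homogeneous element $m\in M_d$ by $f_l(m):=(f(m))_{d+l}$, the homogeneous component of $f(m)$ in degree $d+l$, and extend additively. First I would verify that $f_l$ is $A$-linear: for homogeneous $a\in A_e$ one has $am\in M_{d+e}$, and multiplication by a degree-$e$ element shifts all homogeneous components by $e$, so $f_l(am)=(f(am))_{d+e+l}=(af(m))_{d+e+l}=a\,(f(m))_{d+l}=a\,f_l(m)$; the general case follows by additivity. By construction $f_l(M_d)\subseteq N_{d+l}$, so $f_l$ is a genuine graded morphism $M\to N(l)$.

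Next I would check that $f=\sum_{l}f_l$ and that this sum is finite. On a homogeneous $m\in M_d$ we have $f(m)=\sum_{l}(f(m))_{d+l}=\sum_l f_l(m)$, a finite sum because the single element $f(m)\in N$ has only finitely many nonzero homogeneous components; additivity then yields the equality on all of $M$. The finiteness of the family $(f_l)_{l\in L}$ is exactly where the hypothesis that $M$ is finitely generated enters: choose a finite homogeneous generating set $m_1,\dots,m_k$ (available since one may replace any finite generating set by the finite collection of the homogeneous components of its elements). Only finitely many degrees occur among the components of $f(m_1),\dots,f(m_k)$, so $f_l$ vanishes on every generator, hence is the zero map, for all but finitely many $l$. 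This shows $f\in\bigoplus_l\HomLgrA(M,N(l))$.

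Finally I would observe that the two constructions are mutually inverse: decomposing $f$ into its components $(f_l)_l$ and summing recovers $f$, while decomposing a graded morphism $g\colon M\to N(l)$ returns $g$ in the $l$-th slot and $0$ elsewhere. The main obstacle, and essentially the only nontrivial point, is the finiteness claim that guarantees a direct sum rather than a direct product; this is precisely what finite generation of $M$ supplies, the Noetherian assumption serving only to keep us within the category $\modLA$ of finitely generated graded modules.
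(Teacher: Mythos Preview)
Your proof is correct and is the standard argument for this well-known fact. The paper does not actually prove this lemma: it simply cites it as a known result (referencing \cite{lam}), so you have supplied a complete proof where the paper offers none.
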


The above correspondence naturally transfers to the setting of bounded derived categories where it compares the morphisms in the graded bounded derived category $\dbmodLA$ with the morphisms in the ungraded one $\dbof{A}$ as follows.

\begin{lemma}
\label{lemma:gr/ungr-dbmod}
For any two bounded complexes $\bull M$ and $\bull N$ of finitely generated $L$-graded $A$-modules, the morphisms between them in the graded and ungraded bounded derived categories are related as follows:
\begin{equation*}
  \Hom_{\dbof A}(\bull M,\bull N) = \bigoplus_{l\in L}\Hom_\dbmodLA(\bull M, \bull N(l)).
\end{equation*}
\end{lemma}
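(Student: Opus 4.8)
The plan is to lift the module-level decomposition of the preceding lemma to the derived category by computing both sides as the $\HH^0$ of a single Hom-complex built from one graded projective resolution. First I would use that $A$ is Noetherian and that $\bull M$ is a bounded complex of finitely generated modules to choose a bounded-above complex $\bull P$ of finitely generated $L$-graded projective $A$-modules together with a quasi-isomorphism $\bull P \to \bull M$ in $\dbmodLA$. The forgetful functor to the category of ungraded $A$-modules is exact and carries graded projectives to projectives, so forgetting the $L$-grading turns $\bull P$ into a bounded-above complex of finitely generated projective $A$-modules still quasi-isomorphic to $\bull M$. Hence one and the same $\bull P$ computes both the graded and the ungraded derived Hom, and I obtain the identifications
$$ \Hom_{\dbof A}(\bull M,\bull N) \simeq \HH^0\big(\Homb_A(\bull P,\bull N)\big), \qquad \Hom_\dbmodLA(\bull M,\bull N(l)) \simeq \HH^0\big(\Homb_{\modLA}(\bull P,\bull N(l))\big). $$

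Next I would analyze these Hom-complexes degree by degree. In cohomological degree $n$ the term of $\Homb_A(\bull P,\bull N)$ is $\prod_p \Hom_A(P^p,N^{p+n})$; since $\bull P$ is bounded above and $\bull N$ is bounded, only finitely many $p$ contribute, so each such product is in fact a finite direct sum. Applying the module-level lemma to each pair $(P^p,N^{p+n})$ — which is legitimate precisely because every $P^p$ is finitely generated over the Noetherian ring $A$ — yields $\Hom_A(P^p,N^{p+n}) = \bigoplus_{l\in L} \Hom_{\modLA}(P^p,N^{p+n}(l))$. The crucial point I would verify carefully is that this termwise decomposition is compatible with the differential of the Hom-complex: the differentials of $\bull P$ and of $\bull N$ are morphisms in $\modLA$, i.e. of $L$-degree zero, so pre- and post-composition with them preserve each degree-$l$ summand. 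Consequently the decomposition assembles into an isomorphism of complexes $\Homb_A(\bull P,\bull N) \simeq \bigoplus_{l\in L}\Homb_{\modLA}(\bull P,\bull N(l))$, where the degree-$l$ summand is exactly the graded Hom-complex of the shift $\bull N(l)$.

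Finally I would take $\HH^0$ of both sides. Since cohomology commutes with arbitrary direct sums, this gives
$$ \Hom_{\dbof A}(\bull M,\bull N) \simeq \HH^0\Big(\bigoplus_{l\in L}\Homb_{\modLA}(\bull P,\bull N(l))\Big) \simeq \bigoplus_{l\in L}\HH^0\big(\Homb_{\modLA}(\bull P,\bull N(l))\big) \simeq \bigoplus_{l\in L}\Hom_\dbmodLA(\bull M,\bull N(l)), $$
which is the desired identity. The main obstacle is keeping the finiteness hypotheses active throughout: the termwise use of the module-level lemma genuinely requires each $P^p$ to be finitely generated, which is what forces the choice of a \emph{finitely generated} graded projective resolution (guaranteed by the Noetherian assumption), and one must also note that only finitely many $p$ contribute in each cohomological degree so that the Hom-complex really splits as a direct sum rather than a product. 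Everything else reduces to the routine check that the degree-$l$ summand of the ungraded Hom-complex matches the graded Hom-complex of $\bull N(l)$ term by term and differential by differential.
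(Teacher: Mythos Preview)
Your argument is correct and follows essentially the same approach as the paper: take a finitely generated $L$-graded projective resolution $\bull P \to \bull M$, apply the module-level lemma termwise to obtain $\Homb_A(\bull P,\bull N) \simeq \bigoplus_{l\in L}\Homb_{\modLA}(\bull P,\bull N(l))$, and then take $\HH^0$. The paper records only these two sentences; you have spelled out the finiteness and compatibility checks that justify them.
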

\begin{proof}
  Consider an $L$-graded projective resolution $\bull P \to \bull M$ consisting of the finitely generated modules. Lemma~\ref{lemma:gr/ungr-dbmod} implies in a straightforward way that\\[-1ex]
\begin{equation*}
  \Homb_A(\bull P,\bull N) = \bigoplus_{l\in L}\Homb_\modLA
  (\bull P,\bull N(l)).
\end{equation*}
Taking $\HH^0$ from both sides gives us the desired result.
\end{proof}

Since the perfect complexes are stable under the grading-shift functors~$(l)$ in $\dbmodLA$, these functors are well-defined on the quotient categories $\dsgLA$ as well. Moreover, the explicit characterization of the morphisms obtained in~\S\ref{sec2:explicit-morphisms} allows us to automatically extend the graded/ungraded correspondence to the categories of singularities as follows.

\begin{prop}
\label{prop:gr/ungr-dsg}
  For any two bounded complexes $\bull M$ and $\bull N$ of finitely generated $L$-graded $A$-modules, the morphisms between them in the graded and ungraded categories of singularities are related as follows:
  \begin{equation*}
    \Hom_{\dsgA}(\bull M, \bull N) = \bigoplus_{l\in L} \Hom_{\dsgLA}(\bull M, \bull N(l)).
  \end{equation*}
\end{prop}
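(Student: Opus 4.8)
The plan is to reduce the statement to the explicit description of morphisms from Theorem~\ref{theorem:dsg-morphisms:db->dsg} together with the graded/ungraded correspondence at the level of bounded derived categories (Lemma~\ref{lemma:gr/ungr-dbmod}), and then to interchange a direct limit with a direct sum. The guiding idea is that a \emph{single} descending sequence of $\bull N$ can be made to work simultaneously in the graded and ungraded worlds and, after shifting, in each graded component $\bull N(l)$.

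First I would fix a descending sequence $(N_i, p_i)_{i\ge 0}$ of $\bull N$ in $\dbmodLA$, which exists by Lemma~\ref{lem:descend}. The key observation is that this same sequence serves as a descending sequence after forgetting the grading: the forgetful functor from $\modLA$ to ungraded finitely generated $A$-modules sends graded projectives to projectives, hence $\Perf^L(A)$ into the perfect complexes of $\dbof A$, and the degree bounds defining a descending morphism are insensitive to the grading. Simultaneously, for each $l\in L$ the shifted sequence $(N_i(l), p_i(l))_{i\ge 0}$ is a descending sequence of $\bull N(l)$, since $(l)$ is an exact autoequivalence of $\dbmodLA$ preserving $\Perf^L(A)$ and the degree filtration.

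Then I would apply Theorem~\ref{theorem:dsg-morphisms:db->dsg} twice. In the ungraded case ($L=\{0\}$) it gives $\Hom_{\dsgA}(\bull M,\bull N) \simeq \varinjlim_i \Hom_{\dbof A}(\bull M, N_i)$. Decomposing each term by Lemma~\ref{lemma:gr/ungr-dbmod} and pulling the direct sum outside the filtered colimit yields
\begin{align*}
  \Hom_{\dsgA}(\bull M,\bull N)
  &\simeq \varinjlim_i \bigoplus_{l\in L} \Hom_{\dbmodLA}(\bull M, N_i(l)) \\
  &\simeq \bigoplus_{l\in L} \varinjlim_i \Hom_{\dbmodLA}(\bull M, N_i(l))
  \simeq \bigoplus_{l\in L} \Hom_{\dsgLA}(\bull M, \bull N(l)),
\end{align*}
where the last isomorphism is Theorem~\ref{theorem:dsg-morphisms:db->dsg} applied to each pair $(\bull M, \bull N(l))$ with the shifted descending sequence $(N_i(l),p_i(l))$.

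The two points requiring care — and the main obstacle — are the compatibility of the decomposition with the transition maps and the interchange of the colimit with the direct sum. Because each $p_i$ is a \emph{graded} morphism, composition with $p_i$ preserves the $L$-grading decomposition of the ungraded $\Hom$ spaces; thus the ungraded direct system $(\Omega_i,\phi_{ij})$ of Theorem~\ref{theorem:dsg-morphisms:db->dsg} splits as the direct sum over $l\in L$ of the graded direct systems associated with the shifts $\bull N(l)$. Once this splitting of direct systems is established, the interchange $\varinjlim_i \bigoplus_l \simeq \bigoplus_l \varinjlim_i$ is the standard fact that filtered colimits commute with direct sums. I would spell out the first point carefully, since it is what guarantees that the chain of isomorphisms above respects the grading-shift bookkeeping; the rest is formal.
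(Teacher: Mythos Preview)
Your proposal is correct and follows essentially the same approach as the paper: fix a graded descending sequence of $\bull N$, observe it remains descending in the ungraded setting, apply Theorem~\ref{theorem:dsg-morphisms:db->dsg} in both the graded and ungraded categories, and use Lemma~\ref{lemma:gr/ungr-dbmod} termwise before passing to the limit. You have simply made explicit the compatibility of the transition maps with the $L$-decomposition and the colimit/direct-sum interchange that the paper leaves implicit.
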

\begin{proof}
  Consider some descending sequence for $\bull N$ in the graded bounded derived category $\dbmodLA$. This sequence will be descending if considered in the ungraded category $\dbof{A}$ as well. Let us now apply Theorem~\ref{theorem:dsg-morphisms:db->dsg} for both ungraded and graded settings. Using Lemma~\ref{lemma:gr/ungr-dbmod} and passing to the limit in the corresponding direct systems, we get the desired correspondence.
\end{proof}

\begin{remark}
  Lemma~\ref{lemma:gr/ungr-dbmod} and Proposition~\ref{prop:gr/ungr-dsg} show that there are natural $L$-gradings on the spaces of morphisms in the ungraded bounded derived category $\dbof A$ and in the ungraded category of singularities $\dsgA$ as soon as the objects themselves are endowed with an $L$-grading. The resulted $L$-gradings are canonical in a sense that they do not depend on the choice of projective resolution in the proof of Lemma~\ref{lemma:gr/ungr-dbmod} and on the choice of descending sequence in the proof of Proposition~\ref{prop:gr/ungr-dsg}. These gradings are also functorial.
\end{remark}

\subsection{Stabilization for Gorenstein rings}
\label{sec:stabilization-gorenstein-rings}

Here, we refine the results in \cite{orlov05} and formulate the stabilization property for Gorenstein rings, that is the rings that have finite injective dimension as modules over themselves. In Proposition~\ref{prop:stabilization}, we show that over such rings, the morphisms from $M$ to $N[i]$ in the categories of singularities $\dsgLA$ stabilize to the corresponding morphisms in the derived category $\dbmodLA$ for large enough $i$. Thus we essentially prove Buchweitz's Remark for Gorenstein rings (see Proposition~\ref{prop:Buchweitz's-Remark}).

\begin{definition}
  The \textit{injective dimension} of an $A$-module $M$ is defined to be the minimal integer $n$, such that $\Ext^i_A(X,M) = 0$ for all $i > n$ and all modules~$X$. (It is equal to~$\infty$, if such~$n$ does not exist.)
\end{definition}

\begin{definition}
  A ring~$A$ is \textit{Gorenstein} if it has finite dimension as the module over itself, that is if there exists such integer~$n$ that $\Ext^i_A(X,A) = 0$ for all $i>n$ and all modules~$X$.
\end{definition}

The following lemma is the refinement of \cite[Proposition~1.11]{orlov05}.

\begin{lemma}
\label{lemma:for-stabilization}
  Let a module $M$ satisfies the condition for some integer $n$ that $\Ext^i_A(M,A) = 0$ for all $i > n$. Then for any complex $\bull N$ situated in nonpositive degrees, we have the following isomorphisms for all $i > n$:
  $$ \Hom_\dsgLA(M,\bull N[i]) \simeq \Hom_\dbmodLA(M, \bull N[i]). $$
\end{lemma}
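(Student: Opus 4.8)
The plan is to use the explicit description of singularity-category morphisms from Theorem~\ref{theorem:dsg-morphisms:db->dsg} together with a vanishing argument. Since $\bull N$ is situated in nonpositive degrees, I would first fix a descending sequence $(N_k, p_k)_{k\ge 0}$ for $\bull N[i]$. By Theorem~\ref{theorem:dsg-morphisms:db->dsg}, the left-hand side is the direct limit $\varinjlim_k \Hom_\dbmodLA(M, N_k)$, and the first term $N_0 = \bull N[i]$ contributes the map
$$ \Hom_\dbmodLA(M, \bull N[i]) = \Omega_0 \to \varinjlim_k \Omega_k \simeq \Hom_\dsgLA(M, \bull N[i]). $$
So the content of the lemma is exactly the claim that this canonical map $\phi_{0,\infty}$ is an isomorphism for $i > n$; equivalently, that every connecting map $\phi_{k,k+1}$ in the direct system is an isomorphism, i.e. composing with the descending morphism $p_k$ induces an isomorphism $\Hom_\dbmodLA(M, N_k) \simeq \Hom_\dbmodLA(M, N_{k+1})$.

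The key step is to control $\Hom$ into the cone of each $p_k$. By definition of a descending morphism, $C_k := \Cone(p_k)$ is a perfect complex. I would apply the functor $\Hom_\dbmodLA(M, -)$ to the distinguished triangle $N_k \xrightarrow{p_k} N_{k+1} \to C_k \to N_k[1]$. The long exact sequence then shows that $\phi_{k,k+1}$ is an isomorphism provided $\Hom_\dbmodLA(M, C_k) = 0 = \Hom_\dbmodLA(M, C_k[-1])$. The crucial point is to translate the hypothesis $\Ext^j_A(M,A)=0$ for $j>n$ into such vanishing. Because $C_k$ is perfect, it is (quasi-isomorphic to) a bounded complex of finitely generated projectives, so $\Hom_\dbmodLA(M, C_k[m])$ is computed by the groups $\Ext^j_A(M, A(\ell))$ built from the grading shifts of $A$; the hypothesis makes these vanish once the complex $C_k$ is concentrated in sufficiently high degrees. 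The remaining bookkeeping is a degree count: since $\bull N[i]$ sits in degrees $\le -i$ and each descent lowers the top degree, $N_k$ lives in degrees $\le -i-k$, and so does $C_k$ after the truncation in Lemma~\ref{lem:descend}. For $i > n$ every relevant $\Ext^j_A(M,A(\ell))$ with $j$ in the accessible range is already zero, giving the required vanishing for all $k\ge 0$ at once.

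Concretely, I would phrase the vanishing via the graded/ungraded correspondence of Lemma~\ref{lemma:gr/ungr-dbmod}: for a bounded complex $\bull P$ of finitely generated projectives situated in degrees $\le -i-1$, the space $\Hom_{\dbof A}(M, \bull P[m])$ decomposes as a direct sum of graded pieces, each of which is a subquotient of groups of the form $\Ext^j_A(M, A)$ with $j \ge i > n$, hence vanishes. This handles both $\Hom_\dbmodLA(M, C_k)$ and the shift $\Hom_\dbmodLA(M, C_k[-1])$ simultaneously, since shifting by $-1$ only increases the relevant index $j$.

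The main obstacle will be the degree-tracking argument that pins down precisely where $C_k$ (and its relevant shift) is situated and confirms that the index $j$ in every contributing $\Ext^j_A(M,A)$ exceeds $n$ when $i>n$. In particular one must be careful that it is the \emph{top} degree of $C_k$, together with the fact that $M$ is a module (concentrated in degree $0$), that forces the governing $\Ext$-index to be at least $i$; the hypothesis is a vanishing condition on $\Ext^j_A(M,A)$ rather than on arbitrary $\Hom$ into perfect complexes, so one needs to reduce to the building block $A$ via the projective structure of $C_k$ and keep track of how many steps of a projective resolution of $M$ are forced to appear. Once this is in place, the long exact sequence argument and the passage to the limit are routine.
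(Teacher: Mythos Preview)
Your proposal is correct and follows essentially the same route as the paper: fix a descending sequence of $\bull N[i]$, apply Theorem~\ref{theorem:dsg-morphisms:db->dsg}, and show each transition map $\phi_{k,k+1}$ is an isomorphism by applying $\Hom_\dbmodLA(M,-)$ to the triangle through $C_k=\Cone(p_k)$ and using that $\Ext^j_A(M,A)=0$ for $j>n$ forces $\Hom_\dbmodLA(M,X)=0$ for any perfect $X$ in degrees $<-n$. Two minor remarks: the bound on $C_k$ is actually $\le -i-k-1$ (one below the top of $N_k$), not $\le -i-k$, and this follows directly from the definition of a descending morphism rather than from Lemma~\ref{lem:descend}; and the detour through Lemma~\ref{lemma:gr/ungr-dbmod} is unnecessary, since the vanishing $\Hom_\dbmodLA(M,P[j])=0$ for $j>n$ and $P$ finitely generated projective already holds in the graded category.
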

\begin{proof}
    Let us fix $i > n$ and apply Theorem~\ref{theorem:dsg-morphisms:db->dsg} for the complexes $\bull M = M$ and $\bull N[i]$, and some descending sequence $(N_j,p_j)_{j\ge 0}$ of $\bull N[i]$. Since $\bull N[i]$ is situated in degrees $\le -i$ and $-i < -n$, we conclude that all $N_j$ and $\Cone(p_j)$ are situated in degrees $< -n$ for $j\ge 0$. Let us prove that all the maps $\phi_{j,j+1}$ are isomorphisms. This will imply that the limit map $\Psi$ as well as all the maps $(\psi_j)_{j\ge0}$ from the proof of the theorem are isomorphisms as well. In particular, $\psi_0$ will give us the desired isomorphism.
    
    The condition on $M$ implies that $\Hom_\Db(M,X) = 0$ as soon as $X$ is a perfect complex situated in degrees strictly less than $-n$. In particular, this applies for $X = \Cone(p_j)$ and $X=\Cone(p_j)[-1]$ for all $j\ge 0$. Applying the functor $\Hom_\Db(M,-)$ to the distinguished triangle $\Cone(p_j)[-1] \to N_j \xrightarrow{p_j} N_{j+1} \to \Cone(p_j)$, we get a desired isomorphism $\Hom_\Db(M,N_j) \simeq \Hom_\Db(M,N_{j+1})$ for each $j \ge 0$. Hence the proof.
\end{proof}

\begin{remark}
  The result of Proposition~1.11 in \cite{orlov05} can be viewed as the boundary case of $i = n$ for the above lemma.
  In this case, the morphism $p_j$ from the proof of the lemma induces an isomorphism in the direct system starting from $j=1$. The morphism $p_0$, however, induces only a surjection, thus producing the description of $\Hom_\Dsg(M,N[i])$ as the quotient of $\Hom_\Db(M,N[i])$ as in \cite{orlov05}.
\end{remark}

\begin{prop}[Stabilization]
\label{prop:stabilization}
  Let a ring $A$ have finite injective dimension~$n$ as a module over itself. Then for any $i>n$, we have the following isomorphism for all $A$-modules $M$ and $N$:
  $$ \Hom_\dsgLA(M,N[i]) \simeq \Hom_\dbmodLA(M,N[i]). $$
\end{prop}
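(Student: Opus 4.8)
The plan is to deduce Proposition~\ref{prop:stabilization} directly from Lemma~\ref{lemma:for-stabilization} by verifying that its hypothesis is satisfied for \emph{every} module $M$ when $A$ is Gorenstein of injective dimension $n$. The key observation is that the condition in the lemma---namely $\Ext^i_A(M,A)=0$ for all $i>n$---is precisely the statement that $A$ has injective dimension at most $n$ as a module over itself, quantified over all $M$. So the Gorenstein hypothesis gives us the lemma's input for free, uniformly in $M$.

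First I would fix an $A$-module $M$ and an integer $i>n$. Since $A$ has finite injective dimension $n$, the definition of injective dimension yields $\Ext^j_A(X,A)=0$ for all $j>n$ and all modules $X$; in particular $\Ext^j_A(M,A)=0$ for all $j>n$, so $M$ satisfies the hypothesis of Lemma~\ref{lemma:for-stabilization} with the same $n$. Next I would take $\bull N = N$, a single module viewed as a complex concentrated in degree $0$, which is trivially situated in nonpositive degrees. Applying Lemma~\ref{lemma:for-stabilization} to this $M$ and this $\bull N$ then gives, for every $i>n$, the isomorphism
$$ \Hom_\dsgLA(M,N[i]) \simeq \Hom_\dbmodLA(M,N[i]), $$
which is exactly the claimed stabilization. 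Since $M$ and $N$ were arbitrary and $i>n$ was arbitrary, this completes the argument.

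There is really no substantial obstacle here: the proposition is a specialization of the lemma obtained by instantiating the general $\Ext$-vanishing hypothesis on $M$ with the uniform vanishing guaranteed by the Gorenstein condition, and by taking the complex $\bull N$ to be a module in degree zero. The only point worth a moment's care is matching the conventions---confirming that the injective dimension of $A$ over itself being $n$ really does force $\Ext^{i}_A(M,A)=0$ for all $i>n$ (this is immediate from the definition of injective dimension quoted just above), and noting that a module $N$ placed in degree $0$ satisfies the lemma's requirement of being situated in nonpositive degrees. I expect that the bulk of the conceptual work has already been done in establishing Lemma~\ref{lemma:for-stabilization}, so the proof of the proposition should be short.
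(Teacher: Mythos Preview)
Your proposal is correct and matches the paper's own proof exactly: the paper simply says ``Follows immediately from Lemma~\ref{lemma:for-stabilization},'' and your elaboration spells out precisely why---the Gorenstein hypothesis supplies the $\Ext$-vanishing condition of the lemma uniformly for every $M$, and a module $N$ in degree~$0$ is a complex in nonpositive degrees.
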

\begin{proof}
  Follows immediately from Lemma~\ref{lemma:for-stabilization}.
\end{proof}

If we replace the modules $M$ and $N$ by the complexes, we get the statement of Buchweitz's Remark as follows.

\begin{prop}[Buchweitz's Remark for Gorenstein rings]
\label{prop:Buchweitz's-Remark}
If $A$ is Gorenstein, then for any two complexes $\bull M$ and $\bull N$ we have the following isomorphisms for large enough $i$:
  $$ \Hom_\dsgLA(\bull M,\bull N[i]) \simeq \Hom_\dbmodLA(\bull M,\bull N[i]). $$
\end{prop}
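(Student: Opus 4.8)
The plan is to deduce the statement for complexes from the already-established module case of Proposition~\ref{prop:stabilization} by a d\'evissage argument. The key preliminary observation is that the isomorphism of Proposition~\ref{prop:stabilization} is nothing but the canonical map induced by the localization functor $Q\colon \dbmodLA \to \dsgLA$ of Definition~\ref{def:dsg-gr}. Indeed, tracing through Lemma~\ref{lemma:for-stabilization} and Theorem~\ref{theorem:dsg-morphisms:db->dsg}, the comparison isomorphism is realized by the map $\psi_0$, which sends a morphism $a$ to the roof with $s_0 = \id$, that is, to precisely the image of $a$ under $Q$. Thus for every pair of bounded complexes there is a canonical comparison morphism
$$ \Theta_i\colon \Hom_\dbmodLA(\bull M, \bull N[i]) \to \Hom_\dsgLA(\bull M, \bull N[i]), $$
and the goal becomes to show that $\Theta_i$ is an isomorphism for all sufficiently large $i$.

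Both sides are cohomological functors in each variable, and since $Q$ is an exact (triangulated) functor it carries distinguished triangles in $\dbmodLA$ to distinguished triangles in $\dsgLA$. Consequently $\Theta_i$ is a natural transformation compatible with the long exact sequences arising from a distinguished triangle in either argument, which is exactly the input required to run a five-lemma induction. First I would record the standard fact that any bounded complex is a finite iterated cone of shifts of modules, obtained for instance from the brutal truncations $0 \to \sigma^{\ge k+1}\bull M \to \sigma^{\ge k}\bull M \to M^k[-k] \to 0$, whose graded pieces are the term modules $M^k[-k]$.

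With these tools in hand the argument reduces both arguments to modules, one piece at a time. I would first fix a module in the first variable and induct on the number of nonzero terms of $\bull N$: the base case $\bull N = N[m]$ is Proposition~\ref{prop:stabilization} applied after the shift (so $\Theta_i$ is an isomorphism once $i + m > n$), and the inductive step feeds a brutal-truncation triangle of $\bull N$ into the five lemma, using that the neighbouring terms indexed by $i$ and $i \pm 1$ are all isomorphisms for large $i$ by the inductive hypothesis. I would then fix the complex $\bull N$ and run the same induction on the number of nonzero terms of $\bull M$, with the previous step serving as the base case. Because $A$ is Gorenstein, Proposition~\ref{prop:stabilization} supplies a \emph{uniform} threshold $n$, the injective dimension of $A$ over itself, valid for all modules simultaneously, and each complex contributes only finitely many shifted pieces; the threshold on $i$ therefore only shifts by the bounded range of degrees occupied by $\bull M$ and $\bull N$, so the resulting bound is still finite and the conclusion holds for all $i \gg 0$.

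I do not expect a serious obstacle here: the mathematical content is entirely contained in Proposition~\ref{prop:stabilization}, and what remains is bookkeeping. The one point that genuinely requires care is the first one, namely checking that the isomorphism of Proposition~\ref{prop:stabilization} coincides with the localization map $\Theta_i$; without this one has no commuting ladder of long exact sequences to which the five lemma can be applied. Beyond that, the only thing to monitor is that the shifts introduced by passing to the pieces $M^k[-k]$ and $N^j[-j]$ keep the effective index within the stable range, which is guaranteed by the boundedness of the complexes.
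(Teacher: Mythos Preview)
Your proof is correct, but the route differs from the paper's. The paper does not do a d\'evissage: instead it simply reruns the argument of Lemma~\ref{lemma:for-stabilization} with the module $M$ replaced by a complex $\bull M$ situated in nonnegative degrees (after a preliminary shift). The only input that proof needed from $M$ was the vanishing $\Hom_{\Db}(M,A[i])=0$ for $i>n$, and this continues to hold for such a $\bull M$ whenever $A$ has injective dimension $n$; the rest of the proof then goes through unchanged.

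Your approach trades this direct rerun for a reduction to the already-proved module case via brutal truncations and the five lemma. What you gain is modularity: Proposition~\ref{prop:stabilization} is used as a black box, and you never reopen the mechanics of descending sequences. The cost is that you must first verify that the comparison map is the canonical localization map (otherwise there is no ladder to which the five lemma applies), and you must track the shift bookkeeping; you handle both points correctly. The paper's argument is shorter, but yours would transport more readily to any setting where one has the module case and exactness of the quotient functor but does not wish to revisit the internals of the stabilization argument.
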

\begin{proof}
  By shifting the complexes if necessary, we can assume that $\bull M$ is situated in nonnegative degrees and $\bull N$ is situated in nonpositive degrees. Then we can repeat the argument of Lemma~\ref{lemma:for-stabilization} by replacing $M$ by $\bull M$. Indeed, such complex will also satisfy the condition that $\Hom_\Dsg(\bull M,A[i])=0$ for $i>n$ as soon as $A$ has finite injective dimension~$n$. Then to get the desired isomorphism we just need to take large enough $i$ such that the higher bound of $\bull N[i]$ is separated from the lower bound of $\bull M$ by at least $n$.
\end{proof}

\begin{remark}
\label{remark:counter-example-for-Buchweitz}
  Proposition~\ref{prop:Buchweitz's-Remark} doesn't hold if we remove the Gorenstein condition. For example, consider a field~$\kk$, the ring $A=\kk[x,y]/(xy,x^2)$ and the module~$\kk$. It is easy to see that $\Homb_{\dbof{A}}(\kk,\kk[i])$ are all finite-dimensional vector spaces over~$\kk$. However, when we descend $\kk$ by Lemma~\ref{lem:descend}, we get $\kk[1]\oplus M[1]$ as the descent (the first syzygies of $\kk$), where $M$ is the ring $\kk[y]$ considered as an $A$-module. Similarly, the descent of $M$ is $\kk[1]$. So, the terms of the descending sequence of $\kk$ will consist of the increasing number of summands of the forms $\kk[n]$ and $M[n]$. Hence when we construct the direct system as in~\eqref{eq:omega-space} and~\eqref{eq:omega-space-phi} for $\bull M=\kk$ and $\bull N=\kk[i]$ (for any~$i$), we will get the increasing sequence of vector spaces. Its limit forms an infinite-dimensional vector space $\Hom_{\dsgA}(\kk,\kk[i])$ by Theorem~\ref{theorem:dsg-morphisms:db->dsg}. The latter cannot be equal to a finite-dimensional space $\Hom_{\dbof{A}}(\kk,\kk[i])$.
\end{remark}

\subsection{Hypersurface singularities}
\label{sec:hypersurface-singularities}

Let $R$ be a commutative regular ring of finite Krull dimension $n$ equipped with an $L$-grading $R = \bigoplus_{l\in L} R_l$, and $w\in R$ be a homogeneous element. We denote by $\w \in L$ the degree of $w$, that is $w \in R_{\w}$. We further assume that $w$ is not a zero-divisor in $R$, that is that the multiplication by $w$ defines an injective map on~$R$. Under these assumptions, the quotient ring $A = \Rw$ is naturally $L$-graded and Noetherian and so we can apply the results of the previous sections.

Standard considerations of commutative algebra tell us that the injective dimension of $R/(w)$ under the above assumptions is at most $n-1$. This is summarized in the next lemma.

\begin{lemma}
\label{lemma:inj.dim(R/w)}
  Let $R$ and $w$ be as described above. Then for any $\Rw$-module $M$, we have $\Ext^i_{\Rw}(M,\Rw) = 0$, where $i \ge n$.
\end{lemma}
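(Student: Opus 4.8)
The plan is to deduce the vanishing of the self-$\Ext$ of $A=\Rw$ from the finite global dimension of the ambient regular ring $R$, using a change-of-rings comparison. The starting observation is that, since $w$ is a nonzerodivisor, the sequence
$$ 0 \to R \xrightarrow{\;w\;} R \to A \to 0 $$
is a length-one free resolution of $A$ over $R$. Hence $A$ has projective dimension $1$ as an $R$-module, and applying $\Hom_R(-,R)$ shows that $\Ext^q_R(A,R)$ is concentrated in degree $q=1$, where it equals $R/wR = A$. I will use this to transport $\Ext$-computations from $A$-modules to $R$-modules.

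First I would record the input coming from regularity: a commutative regular ring of finite Krull dimension $n$ has global dimension $n$ (Auslander--Buchsbaum--Serre). Consequently every $R$-module has projective dimension at most $n$, so $\Ext^j_R(X,R)=0$ for all $j>n$ and all $R$-modules $X$. In particular this applies to any $A$-module $M$ viewed as an $R$-module by restriction of scalars along $R\twoheadrightarrow A$.

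Next I would establish the Rees change-of-rings isomorphism
$$ \Ext^{i}_{A}(M,A) \;\simeq\; \Ext^{i+1}_{R}(M,R) \qquad (i\ge 0) $$
for every $A$-module $M$; this is the heart of the argument and the step I expect to require the most care. The cleanest route is the change-of-rings (Grothendieck) spectral sequence
$$ E_2^{p,q} = \Ext^p_A\bigl(M,\Ext^q_R(A,R)\bigr) \;\Longrightarrow\; \Ext^{p+q}_R(M,R), $$
which is available because, for $M$ an $A$-module, $\Hom_R(M,-)$ factors as $\Hom_A(M,\Hom_R(A,-))$, and $\Hom_R(A,-)$ sends injectives to injectives (being right adjoint to the exact restriction functor). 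By the computation of $\Ext^\bullet_R(A,R)$ above, the $E_2$-page is concentrated on the single row $q=1$, with $E_2^{p,1}=\Ext^p_A(M,A)$; the spectral sequence therefore degenerates and yields the stated isomorphism, shifted by one. (Alternatively, one can invoke the classical dimension-shifting form of Rees's theorem directly, which gives the same identity.)

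Finally I would combine the two facts: for $i\ge n$ we have $i+1>n$, so $\Ext^{i+1}_R(M,R)=0$, and hence $\Ext^i_A(M,A)\simeq\Ext^{i+1}_R(M,R)=0$. This is exactly the claim, and it exhibits $A$ as having injective dimension at most $n-1$ over itself. The whole argument is compatible with the $L$-grading; more simply, since by Lemma~\ref{lemma:gr/ungr-dbmod} the graded $\Ext$ groups are direct summands of the ungraded ones, vanishing of the ungraded $\Ext^i_A(M,A)$ forces vanishing of each graded summand, so the graded statement needed later follows immediately. The only genuine subtlety is the change-of-rings step; the remaining ingredients are standard vanishing bounds.
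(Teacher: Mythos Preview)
Your proof is correct and follows essentially the same approach as the paper: both invoke the Rees change-of-rings isomorphism $\Ext^i_{R/(w)}(M,R/(w))\simeq\Ext^{i+1}_R(M,R)$ and then use that the regular ring $R$ has global dimension $n$. The only difference is cosmetic: the paper cites Matsumura for the change-of-rings step, whereas you derive it via the Grothendieck spectral sequence.
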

\begin{proof}
  Indeed,
  by Lemma~2(i) in \cite[\S18]{matsumura},
  we have $\Ext^i_{\Rw}(M,\Rw) \simeq \Ext^{i+1}_R(M,R)$, where in the right-hand side we consider $M$ as an $R$-module with a trivial action of $w$. Since the ring $R$ is regular, its homological dimension equals its Krull dimension which is $n$. In particular, $\Ext^{i+1}_R$ vanishes for $i \ge n$. Hence the proof.
\end{proof}

Proposition~\ref{prop:stabilization} and Lemma~\ref{lemma:inj.dim(R/w)} imply the following.

\begin{corollary}
\label{cor:stab-for-hypersurf}
  If $R$ and $w$ are as above and $n = \krdim R$, then
  $$ \Hom_\dsglrw(M,N[i]) \simeq \Hom_\dblrw(M,N[i]) $$
  for any two $\Rw$-modules $M$ and $N$ and $i \ge n$.
\end{corollary}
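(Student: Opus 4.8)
The plan is to deduce this corollary directly by feeding the injective-dimension bound of Lemma~\ref{lemma:inj.dim(R/w)} into the stabilization result of Proposition~\ref{prop:stabilization}. The main conceptual point is simply that the ring $A = \Rw$ satisfies the hypothesis of Proposition~\ref{prop:stabilization}, so there is almost nothing left to prove once the two ingredients are aligned.

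First I would verify that $A = \Rw$ is Gorenstein with injective dimension at most $n-1$. Lemma~\ref{lemma:inj.dim(R/w)} states that $\Ext^i_{\Rw}(M,\Rw) = 0$ for all $\Rw$-modules $M$ and all $i \ge n$; taking $M$ to range over all modules, this says precisely that $A$ has finite injective dimension as a module over itself, with injective dimension bounded by $n-1$. In particular $A$ is Gorenstein in the sense of the definition given above, and we may take the integer ``$n$'' appearing in Proposition~\ref{prop:stabilization} to be any value $\ge n-1$; concretely, the hypothesis ``$\Ext^i_A(X,A) = 0$ for all $i > m$ and all $X$'' holds with $m = n-1$.

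Then I would apply Proposition~\ref{prop:stabilization} with this bound. That proposition gives, for a ring of finite injective dimension $m$ over itself, the isomorphism $\Hom_\dsgLA(M,N[i]) \simeq \Hom_\dbmodLA(M,N[i])$ for all $i > m$ and all modules $M,N$. Setting $A = \Rw$, $L = L_w$, and $m = n-1$, the range $i > n-1$ is exactly $i \ge n$, yielding
$$ \Hom_\dsglrw(M,N[i]) \simeq \Hom_\dblrw(M,N[i]) $$
for all $\Rw$-modules $M,N$ and all $i \ge n$, which is the claimed statement.

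There is essentially no obstacle here: the corollary is a packaging of the previous two results. The only thing I would be careful about is the off-by-one bookkeeping between the two notions of the bound — Lemma~\ref{lemma:inj.dim(R/w)} phrases vanishing as $i \ge n$ (i.e.\ injective dimension $\le n-1$), whereas Proposition~\ref{prop:stabilization} is stated in terms of the injective dimension $m$ itself and gives isomorphisms for $i > m$. Matching $m = n-1$ turns $i > m$ into $i \ge n$, reproducing the corollary's range exactly, so the two statements dovetail without loss.
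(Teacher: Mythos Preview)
Your proof is correct and matches the paper's approach exactly: the paper simply states that the corollary follows from Proposition~\ref{prop:stabilization} and Lemma~\ref{lemma:inj.dim(R/w)}, and your write-up spells out precisely this implication, including the correct handling of the off-by-one between ``$i \ge n$'' in the lemma and ``$i > m$'' in the proposition.
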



The category $\dsglrw$ of hypersurface singularities satisfies the following quasi-periodicity property.

\begin{lemma}[Quasi-periodicity]\label{lemma:quasi-period}
  Let $R$ and $w$ be as above. Then in the category $\dsglrw$, the double shift functor and graded $\w$-shift functors are equivalent:
  \begin{equation}\label{eq:quasi-period}
      [2] \simeq (\w).
  \end{equation} 
\end{lemma}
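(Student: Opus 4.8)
The plan is to prove the functor equivalence $[2]\simeq(\w)$ by reducing to modules and then invoking the grading-twisted two-periodicity of free resolutions over a hypersurface ring. By Lemma~\ref{lem:everything-is-a-shift-of-a-module}, every object of $\dsglrw$ is isomorphic to a shift of a module; since both $[2]$ and $(\w)$ commute with the shift functor, it suffices to produce an isomorphism $M[2]\simeq M(\w)$ natural in the $\Rw$-module $M$, and to check that it assembles into a natural transformation of endofunctors of $\dsglrw$.

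First I would identify the shift functor $[-1]$ with the first-syzygy functor $\Omega$. For a module $M$, choosing a graded free cover $F\twoheadrightarrow M$ with kernel $\Omega M$ gives a short exact sequence $0\to\Omega M\to F\to M\to0$ and hence a distinguished triangle $\Omega M\to F\to M\to\Omega M[1]$ in $\dblrw$; as observed after Lemma~\ref{lem:descend}, this is exactly the descending morphism of $M$, so $\Omega M$ is its descent. Since $F$ is free, hence perfect, it vanishes in $\dsglrw$, and the triangle yields a natural isomorphism $M\simeq\Omega M[1]$, i.e. $\Omega\simeq[-1]$, and therefore $\Omega^2\simeq[-2]$.

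The substantive step is the two-periodicity $\Omega^2 M\simeq M(-\w)$. Replacing $M$ by a sufficiently high syzygy (which by the previous step only changes $M$ by a shift in $\dsglrw$, and so is harmless), I may assume $M$ is maximal Cohen--Macaulay over the Gorenstein ring $\Rw$. Such an $M$ admits a graded matrix factorization of $w$: free graded $R$-modules $P,Q$ and degree-zero maps $P\xrightarrow{\phi}Q\xrightarrow{\psi}P(\w)$ with $\psi\phi=w\cdot\id_P$ and $\phi(\w)\psi=w\cdot\id_Q$, and $M\simeq\operatorname{coker}(\bar\phi)$ over $\Rw$. This is Eisenbud's theorem: because $w$ is a nonzerodivisor and $R$ is regular of finite dimension, multiplication by $w$ is null-homotopic on a finite $R$-free resolution of $M$, and the homotopy together with the differential produces the factorization. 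Reducing modulo $w$, the factorization becomes the minimal $\Rw$-free resolution $\cdots\to\bar Q(-\w)\xrightarrow{\bar\psi(-\w)}\bar P\xrightarrow{\bar\phi}\bar Q\to M\to0$ of $M$, which is two-periodic up to the twist $(-\w)$; reading off syzygies gives $\Omega^2 M\simeq\operatorname{coker}(\overline{\phi(-\w)})\simeq M(-\w)$, naturally in $M$. I would sanity-check the sign on $R=\kk[x]$ with $w=x^p$ and $M=\kk$, where the resolution $\cdots\xrightarrow{x}A\xrightarrow{x^{p-1}}A\xrightarrow{x}A$ gives $\Omega^2\kk\simeq\kk(-\w)$, confirming that the twist is $(-\w)$ and not $(+\w)$.

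Combining the two isomorphisms gives $M(-\w)\simeq\Omega^2 M\simeq M[-2]$ in $\dsglrw$, naturally in $M$, whence $(-\w)\simeq[-2]$ as endofunctors; inverting both sides yields $(\w)\simeq[2]$, which is \eqref{eq:quasi-period}. The main obstacle is the two-periodicity step: one must set up the matrix factorization with the correct homogeneous degrees and, crucially, verify that the induced identification $\Omega^2\simeq(-\w)$ is \emph{natural} in $M$, so that one obtains an equivalence of functors rather than a merely object-wise isomorphism. The grading bookkeeping that pins the twist to $(-\w)$ is the one place where genuine care is required; everything else is formal once $\Omega\simeq[-1]$ is in hand.
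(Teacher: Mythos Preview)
Your argument is correct in outline, but it takes a genuinely different route from the paper's proof. The paper simply invokes Orlov's equivalence between $\dsglrw$ and the graded category of matrix factorizations of $w$, and then observes that in the matrix factorization category the identity $[2]\simeq(\w)$ holds by definition. This is a one-line citation of a substantial black-box theorem.

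Your approach is more hands-on: you work directly in $\dsglrw$, identify $[-1]$ with the syzygy functor via descending morphisms, and then use Eisenbud's two-periodicity of free resolutions over a hypersurface ring to obtain $\Omega^2 M\simeq M(-\w)$. This is essentially the content that the paper develops \emph{after} Lemma~\ref{lemma:quasi-period}, in Lemma~\ref{lemma:canonical-s}, where the canonical morphism $s\colon N\to N(-\w)[2]$ is constructed by the snake-lemma argument you sketch. What your approach buys is self-containment: you avoid importing the full Orlov equivalence. What the paper's approach buys is brevity and a clean functorial statement, since in the matrix factorization model the equivalence $[2]\simeq(\w)$ is a strict identity of functors, whereas in your route the passage from ``natural isomorphism on modules'' to ``natural isomorphism of endofunctors on all of $\dsglrw$'' is the step you yourself flag as requiring genuine care. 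That step can be handled (the morphism $s$ of Lemma~\ref{lemma:canonical-s} is natural already in $\dblrw$ and descends), but it is real work that the citation sidesteps.
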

\begin{proof}
  The foundational results by Orlov state that the category of singularities is equivalent to the corresponding category of matrix factorizations (see \cite[Theorem~3.9]{orlov03} and \cite[Theorem~3.5]{orlov11}).
  However, for the matrix factorizations, such functors are equivalent by definition. Hence the proof.
\end{proof}

Combining Corollary~\ref{cor:stab-for-hypersurf} with the quasi-periodicity property we get the following.

\begin{theorem}\label{theorem:homs-dsg-hyper:ultimate}
  Let $R$, $w$ and $n = \krdim R$ be as above. Then for any two $\Rw$-modules $M$ and $N$ and any $i\in\ZZ$, we have
  $$ \Hom_\dsglrw(M,N[i]) \simeq \Hom_\dblrw(M,N(-j\w)[i+2j]) $$
  for any integer $j \ge \frac{n-i}2$.
\end{theorem}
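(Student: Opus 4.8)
The plan is to combine the two main tools established just before this theorem: the stabilization isomorphism from Corollary~\ref{cor:stab-for-hypersurf} and the quasi-periodicity equivalence $[2]\simeq(\w)$ from Lemma~\ref{lemma:quasi-period}. The idea is that even if the shift index $i$ is too small (or negative) for the stabilization bound $i\ge n$ to apply directly, we can use quasi-periodicity to trade a double-shift $[2]$ for a grading shift $(\w)$ and thereby push the effective index up until it clears the stabilization threshold. So the proof is essentially a bookkeeping argument: apply quasi-periodicity $j$ times, then apply stabilization once.

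First I would use Lemma~\ref{lemma:quasi-period}, which gives $[2j]\simeq(j\w)$ as functors on $\dsglrw$. Applying this to the object $N$ inside the morphism space, I get for every integer $j$ that
$$ \Hom_\dsglrw(M,N[i]) \simeq \Hom_\dsglrw\bigl(M,N(-j\w)[i+2j]\bigr), $$
since shifting $N$ by $(-j\w)$ and by $[2j]$ are inverse operations up to the quasi-periodicity equivalence. This step is purely formal and valid for any $j\in\ZZ$. Next I would apply Corollary~\ref{cor:stab-for-hypersurf} to the right-hand side, with the module $N(-j\w)$ (which is again an $\Rw$-module) playing the role of $N$ and with the shift index $i+2j$. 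The corollary requires $i+2j\ge n$, i.e. $j\ge\frac{n-i}{2}$, which is exactly the hypothesis on $j$. Under this condition the stabilization isomorphism converts the $\dsglrw$-morphism space into the $\dblrw$-morphism space:
$$ \Hom_\dsglrw\bigl(M,N(-j\w)[i+2j]\bigr) \simeq \Hom_\dblrw\bigl(M,N(-j\w)[i+2j]\bigr). $$
Composing the two displayed isomorphisms yields precisely the claimed formula.

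The only point requiring genuine care is ensuring the quasi-periodicity equivalence can be applied \emph{inside} the morphism functor in the second argument, and that it is compatible with treating $N(-j\w)$ as a module for the purpose of invoking the stabilization corollary. The main (mild) obstacle is therefore verifying that the grading-shift functor $(\w)$ commutes appropriately with the identifications used and that $N(-j\w)$ indeed remains an honest $\Rw$-module rather than merely a complex, so that Corollary~\ref{cor:stab-for-hypersurf}, whose statement is phrased for modules, genuinely applies. Since $(\w)$ simply relabels the grading, $N(-j\w)$ is a finitely generated $L$-graded $\Rw$-module whenever $N$ is, so this compatibility holds and the two isomorphisms splice together cleanly. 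No convergence or direct-limit analysis is needed here because all the hard analytic content has already been absorbed into the cited corollary and lemma.
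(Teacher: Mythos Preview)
Your proposal is correct and follows essentially the same approach as the paper: use quasi-periodicity (Lemma~\ref{lemma:quasi-period}) to identify $N[i]$ with $N(-j\w)[i+2j]$ in $\dsglrw$, then invoke Corollary~\ref{cor:stab-for-hypersurf} once the shift index $i+2j$ meets the threshold $n$. The paper's proof is just a two-line version of what you wrote; your additional remark that $N(-j\w)$ remains an honest module is a valid sanity check but not something the paper pauses over.
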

\begin{proof}
  Indeed, by Lemma~\ref{lemma:quasi-period}, the objects $N[i]$ and $N(-j\w)[i+2j]$ are isomorphic in $\dsglrw$, so we just apply Corollary~\ref{cor:stab-for-hypersurf} for the morphisms from $M$ to $N(-j\w)[i+2j]$.
\end{proof}

\begin{remark}
  Since Theorem~\ref{theorem:homs-dsg-hyper:ultimate} does not impose any conditions on~$i$, we can use it to compute the morphisms in $\dsglrw$ from $M$ to \textit{every} shift of~$N$. We will do this in the applications.
\end{remark}

\subsubsection{Useful lemma}

The following lemma gives us examples of modules which become zero in the corresponding category of singularities. We will use such modules a lot in Section~3 when proving that our collections are full and generating new modules from the given ones.

\begin{lemma}\label{lemma-perfect}
    Consider a finitely generated $R$-module~$K$ (which is not necessarily graded). If the multiplication by $w$ defines an injective map $K \xrightarrow{w\cdot} K$, then $K/wK$ is a perfect $\Rw$-module.
\end{lemma}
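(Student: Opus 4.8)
The plan is to show that $K/wK$ is perfect as an $\Rw$-module by exhibiting it as (quasi-isomorphic to) a bounded complex of projective, indeed free, $\Rw$-modules. The key input is the hypothesis that multiplication by $w$ is injective on $K$, which will let me resolve $K/wK$ over $\Rw$ using a resolution of $K$ over $R$.

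First I would take a finite free resolution of $K$ as an $R$-module. Since $R$ is a regular ring of finite Krull dimension $n$, its global homological dimension is $n$, so $K$ admits a finite resolution
$$ 0 \to F_m \to \cdots \to F_1 \to F_0 \to K \to 0 $$
by finitely generated free $R$-modules, with $m \le n$. The idea is then to reduce this resolution modulo $w$, i.e.\ apply the functor $-\otimes_R \Rw$, obtaining a complex $\bull{\overline{F}} = (\overline{F}_m \to \cdots \to \overline{F}_0)$ of finitely generated free $\Rw$-modules, where $\overline{F}_i = F_i/wF_i = F_i \otimes_R \Rw$. I claim this complex is a projective resolution of $K/wK$ over $\Rw$, which immediately gives that $K/wK$ is perfect.

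The heart of the argument, and the step where the injectivity hypothesis is essential, is computing the homology of $\bull{\overline{F}}$. By definition of Tor, the homology $\HH_i(\bull{\overline{F}}) = \HH_i(\bull F \otimes_R \Rw)$ computes $\mathrm{Tor}_i^R(K, \Rw)$. To evaluate this, I would instead resolve $\Rw$: the complex $(0 \to R \xrightarrow{w\cdot} R \to 0)$ is a free resolution of $\Rw$ over $R$ precisely because $w$ is a non-zero-divisor in $R$ (one of our standing assumptions). Tensoring this two-term resolution with $K$ gives $\mathrm{Tor}_\bullet^R(K,\Rw)$ as the homology of $(0 \to K \xrightarrow{w\cdot} K \to 0)$. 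Here the hypothesis that $w\cdot\colon K \to K$ is injective forces $\mathrm{Tor}_1^R(K,\Rw) = \Ker(w\cdot) = 0$, while $\mathrm{Tor}_0^R(K,\Rw) = K/wK$ and all higher Tor vanish. Therefore $\bull{\overline{F}}$ has homology concentrated in degree $0$, equal to $K/wK$, so it is a finite free resolution of $K/wK$ over $\Rw$; hence $K/wK$ is perfect.

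I expect the main obstacle to be purely bookkeeping: making sure the free $R$-modules $F_i$ can be chosen compatibly with the $L$-grading when $K$ happens to be graded, and confirming that reduction modulo $w$ preserves finite generation and freeness (both are clear since $\overline{F}_i = F_i \otimes_R \Rw$). The conceptual content is the Tor computation, and the only genuine hypothesis doing work beyond the standing assumptions on $R$ and $w$ is the injectivity of $w\cdot$ on $K$, which is exactly what kills the potentially obstructing $\mathrm{Tor}_1$. Since the lemma explicitly allows $K$ to be non-graded, I would phrase the resolution over $R$ without reference to the grading and only remark at the end that everything is compatible with the $L$-grading when present.
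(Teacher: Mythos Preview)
Your proof is correct and in fact slightly cleaner than the paper's. The paper argues by induction on the projective dimension of $K$ over $R$: it covers $K$ by a projective $P$, applies the snake lemma to multiplication by $w$ on the short exact sequence $0\to K'\to P\to K\to 0$ to get $0\to K'/wK'\to P/wP\to K/wK\to 0$, and then glues a resolution of $K'/wK'$ (from the inductive hypothesis) with $P/wP$. You instead take the whole finite free resolution at once and identify the homology of its reduction mod $w$ with $\mathrm{Tor}^R_\bullet(K,\Rw)$, which you compute from the two-term resolution of $\Rw$. Your route isolates the role of the hypothesis more transparently---it is exactly the vanishing of $\mathrm{Tor}_1^R(K,\Rw)$---while the paper's snake-lemma approach is more elementary and avoids invoking Tor by name. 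Both are standard; yours is the more conceptual packaging of the same computation.
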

\begin{proof}
    By assumption, $w$ acts injectively on the ring $R$ itself, hence it also acts injectively on any projective $R$-module $P$. It is also clear that $P/wP$ is a projective $R/(w)$-module. Thus the statement is true when the module $K$ is projective.

    Since $R$ is regular, its global homological dimension agrees with its Krull dimension $n$. In particular, any module $K$ has finite projective dimension $k$ over $R$, where $k \le n$. Let us prove the statement by induction on $k$. If $k=0$, then $K$ is a projective $R$-module and we already proved the statement in this case. Now assume that $k > 0$. Cover $K$ by a projective module $P$ and consider the kernel $K'$ of the covering map:
    $ 0 \to K' \to P \to K \to 0. $
    We see that $K'$ has projective dimension $k-1$. Moreover, since $K'$ is a submodule of a projective module, $w$ acts injectively on it as well. The induction hypothesis for $K'$ then says that $K'/wK'$ is perfect over $\Rw$, thus has a finite projective resolution.
    Applying the snake lemma to the diagram
    $$ \begin{tikzcd}
      0 \ar[r] & K' \ar[r] \ar[d, "w", hook] & P \ar[r] \ar[d, "w", hook] & K \ar[r] \ar[d, "w", hook] & 0 \\
      0 \ar[r] & K' \ar[r] & P \ar[r] & K \ar[r] & 0,
    \end{tikzcd} $$
    we get a short exact sequence $0 \to K'/wK' \to P/wP \to K/wK \to 0$ of $\Rw$-modules. By gluing it with the projective resolution for $K'/wK'$, we get a finite projective resolution for $K/wK$, thus proving that $K/wK$ is perfect as well.
\end{proof}

\subsection{Applications}
\label{sec:morphisms-applications}

The above results can be applied for making efficient computations in the graded categories of \textit{hypersurface} singularities~$\dsgof{L}{R/(w)}$, that is the categories of matrix factorizations, where $R$, $w$ and $L$ satisfy the same assumptions as in~\S\ref{sec:hypersurface-singularities}.

\subsubsection{Computing morphisms}

We can compute the morphism spaces between the objects of~$\dsgof{L}{R/(w)}$ as in the following step-by-step recipe.

\begin{recipe}
\label{recipe}
  Given two $L$-graded $R/(w)$-modules $M$ and $N$, we can compute the morphisms $\Hom_{\dsgof{L}{\Rw}}(M,N(l)[i])$ simultaneously for all $l\in L$ and $i\in\ZZ$ by performing the following 4~steps:
\begin{enumerate}[itemsep=0em, label={\textit{Step \arabic*.}}, labelwidth=\widthof{\textit{Step 10.}}, leftmargin=!, topsep=1ex]
  \item Build a graded projective resolution $\bull P$ of $M$.
  \item Compute $\Hom_{\dbof\Rw}(M,N[i]) = \HH^i(\Homb_\Rw(\bull P,N))$ for $i\in\ZZ$, while keeping track of the inherent $L$-grading, as in Lemma~\ref{lemma:gr/ungr-dbmod}.
  \item Use Theorem~\ref{theorem:homs-dsg-hyper:ultimate} and the previous computations to get the \textit{ungraded} morphisms $\Hom_{\dsgof{}\Rw}(M,N[i])$ for all $i\in \ZZ$, while keeping track of the $L$-grading.
  \item Use Proposition~\ref{prop:gr/ungr-dsg} to decompose the ungraded morphisms into the \textit{graded} ones, thus getting $\Hom_{\dsgof L\Rw}(M, N(l)[i])$ for all $l \in L$ and $i\in \ZZ$ at once.
\end{enumerate}
\end{recipe}

\begin{remark}
  Due to the estimates in Theorem~\ref{theorem:homs-dsg-hyper:ultimate}, it is enough to make computations only for $i = n$ and $i = n+1$, where $n = \krdim R$. Then we can use quasi-periodicity to recover the morphisms for all other $i$. This works because the minimal projective resolution of any module becomes 2-quasi-periodic after $i=n$.
\end{remark}

\begin{remark}
  We formulated the recipe for the case when~$M$ and~$N$ are modules so that to use the precise estimates as in the previous remark. If~$M$ and~$N$ are arbitrary objects in $\dsgof{L}{R/(w)}$, then we firstly need to represent them as the shifts of some modules by Lemma~\ref{lem:everything-is-a-shift-of-a-module} and then apply the recipe for those modules.
\end{remark}

\begin{example}
\label{example:morphisms-k[x]/(x^p)}
  Let $\kk$ be an arbitrary field. Consider the regular ring $R = \kk[x]$ and $w = x^p$, where $p \ge 2$. We equip $R$ with an arbitrary $L$-grading making $x$ homogeneous (we also assume that all scalars have zero degree). We denote $\x = \deg x \in L$ and $\w = \deg w = p \x$ to be the degrees of $x$ and $w$ respectively. This induces an $L$-grading on the quotient ring $R/(w) = \kk[x]/(x^p)$. Consider the scalar field $\kk$ as a graded $R/(w)$-module situated in $L$-degree zero and its grading shifts $\kk(l)$ for all $l\in L$ which are situated in $L$-degrees $-l$. Let us use Recipe~\ref{recipe} in order to compute the morphisms $\Hom_{\dsgof{L}{\kk[x]/(x^p)}}(\kk,\kk(l)[i])$ for different $i\in\ZZ$ and $l\in L$ all at once.
  
\vspace{0.5ex}\noindent\textit{Step 1.}
  For the sake of brevity, let us denote $A=\kk[x]/(x^p)$. Then the following is a projective resolution of $\kk$ in the category of graded $A$-modules:
  $$ \bull P = \left( \dots \to A(-2\w) \xrightarrow{x^{p-1}} A(-\w-\x) \xrightarrow{x} A(-\w) \xrightarrow{x^{p-1}} A(-\x) \xrightarrow{x} A \right) \xrightarrow{x\mapsto 0} \kk.$$

\noindent\textit{Step 2.}
  Since $\Hom_A(A(l),\kk) = \Hom_A(A,\kk(-l))=\kk(-l)$ for all $l\in L$, the corresponding $\ZZ$-$L$-graded $\Homb$-complex is
  $$ \Homb_A(\bull P,\kk) = \Bigl( \dots \gets \kk(2\w) \xleftarrow{0} \kk(\w+\x) \xleftarrow{0} \kk(\w) \xleftarrow{0} \kk(\x) \xleftarrow{0} \kk \Bigr). $$
  This complex has a natural 2-$\w$-periodic structure inherited from $\bull P$. After passing to homologies, we get the following:
  \begin{align*}
    \makebox[0pt][l]{$\Hom_{\dbof A}(\kk,\kk[i])$}\phantom{\Hom_{\dbof A}(\kk,\kk[2j+1])} &= 0, \text{ for } i<0, \\
    \makebox[0pt][l]{$\Hom_{\dbof A}(\kk,\kk[2j])$}\phantom{\Hom_{\dbof A}(\kk,\kk[2j+1])} &= \kk(j\w), \text{ for } j \ge 0, \\
    \Hom_{\dbof A}(\kk,\kk[2j+1]) &= \kk(j\w+\x), \text{ for } j \ge 0.
  \end{align*}

\vspace{0.5ex}\noindent\textit{Step 3.}
  Since $R = \kk[x]$ is a regular ring of Krull dimension $1$, we can apply Theorem~\ref{theorem:homs-dsg-hyper:ultimate} to immediately get all the needed morphisms in the ungraded category of singularities $\dsgof{}{A} = \dsgof{}{\kk[x]/(x^p)}$:
  \begin{align*}
    \makebox[0pt][l]{$\Hom_{\dsgof{}{A}}(\kk,\kk[2j])$}\phantom{\Hom_{\dsgof{}{A}}(\kk,\kk[2j+1])} &= \kk(j\w),\\
    \Hom_{\dsgof{}{A}}(\kk,\kk[2j+1]) &= \kk(j\w+\x).
  \end{align*}

\vspace{0.5ex}\noindent\textit{Step 4.}
  We can notice that $\kk(j\w)$ and $\kk(j\w+\x)$ are 1-dimensional $L$-graded vector spaces situated in $L$-degrees $-j\w$ and $-j\w-\x$ respectively. Thus, applying Proposition~\ref{prop:gr/ungr-dsg}, we get the following morphisms between the different shifts of $\kk$ in the graded category of singularities $\dsgLA=\dsgof{L}{\kk[x]/(x^p)}$:
\begin{align*}
  &\Hom_\dsgLA(\kk,\kk(l)[2j]) &&=
  \begin{cases}
    \kk & \text{if } l = -j\w\\
    0 & \text{otherwise}
  \end{cases} \\
  &\Hom_\dsgLA(\kk,\kk(l)[2j+1]) &&=
  \begin{cases}
    \kk & \text{if } l = -j\w-\x \\
    0 & \text{otherwise}
  \end{cases}
\end{align*}
\end{example}

\begin{remark}
\label{remark:n=1:k-exceptional}
In particular, we see that $\kk$ is an exceptional object in $\dsgLA$ as soon as $\x$ has an infinite order in $L$. Otherwise, if $j\x = 0$ in $L$ for some $j\ne 0$, then $\Hom_\dsgLA(\kk,\kk[2j]) = \kk$, since $-j\w=-jp\x=0$.
For example, in the case of the ``maximal'' grading group $L = \ZZ$ and $\x = 1$, the module $\kk$ and all of its shifts will be exceptional objects in $\dsgLA$. Also, we see that two different grading shifts $\kk(l_1)$ and $\kk(l_2)$ are orthogonal if and only if $l_2 - l_1$ does not belong to the set $\{j\w +\varepsilon\x\mid j\in \ZZ, \varepsilon = -1,0,1\}$. 
\end{remark}

\begin{example}
  Consider $R = \kk[x]$ and $w = x$. In this case, $A = \kk[x]/(x) \simeq \kk$, so the module $\kk$ is projective, hence it is a zero object in $\dsgLA$. We can also see it by computing the morphisms $\Hom_{\dsgLA}(\kk,\kk)$ as in Example~\ref{example:morphisms-k[x]/(x^p)}. Indeed, since $\Hom_{\dbof A}(\kk,\kk[i]) = \Ext^i(\kk,\kk)=0$ for $i>0$, the stabilization and quasi-periodicity phenomena tell us that $\Hom_{\dsgof{}A}(\kk,\kk[i]) = 0$ for any integer $i$. In particular, $\Hom_{\dsgLA}(\kk,\kk) = 0$ regardless of the grading in question, which confirms that $\kk$ is a zero object. By generation criterion of Proposition~\ref{prop:gen-criterion-invertibles-general}, this immediately implies that the whole category $\dsgLA$ is zero as well, as was expected from the fact that the corresponding subscheme $\{x=0\}$ in $\mathbb{A}^1_\kk$ is smooth (see \cite{orlov03}).
\end{example}

\subsubsection{Quasi-periodic localization}

Here, we take another look on the definition of a category of singularities as the localized bounded derived category. We notice that this localization can be performed by mere inversion of a certain natural transformation connecting the functors $[2]$ and $(\w)$ in the bounded derived category $\dbof{\rmodgr{L}{\Rw}}$. This transformation then becomes a quasi-periodicity equivalence~\eqref{eq:quasi-period} in the corresponding category of singularities $\dsgof{L}{R/(w)}$.

Indeed, by unfolding the proof of the quasi-periodicity property of Lemma~\ref{lemma:quasi-period}, we get the following statement.

\begin{lemma}
\label{lemma:canonical-s}
  For any $L$-graded $\Rw$-module~$N$, there exists a canonical descending morphism $s\:N\to N(-\w)[2]$ in $\dbof{\rmodgr{L}{\Rw}}$.
\end{lemma}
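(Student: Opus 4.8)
The plan is to realize $s$ as the \emph{Eisenbud operator} (sometimes called the CI-operator) attached to the hypersurface presentation $A = \Rw$. Concretely, I expect $s$ to be a canonical element of $\Ext^2_A(N, N(-\w)) = \Hom_{\dbof{\rmodgr L \Rw}}(N, N(-\w)[2])$, and the content of the lemma is that this class is a \emph{descending} morphism, i.e.\ that its target sits in low enough degree (automatic) and that its cone is perfect. This is exactly the datum that "unfolds" the quasi-periodicity equivalence of Lemma~\ref{lemma:quasi-period}: in $\dsglrw$ the class $s$ will become the isomorphism $N \simeq N(-\w)[2]$ witnessing $[2]\simeq(\w)$.

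First I would construct $s$. Choose a graded free resolution $\bull F \to N$ over $A$ and lift its differentials to degree-preserving maps $\tilde d$ between graded free $R$-modules reducing to $\bull F$ modulo $w$. Since $\tilde d^{2}$ reduces to $d^{2}=0$ modulo $w$ and $w$ is a nonzerodivisor in $R$, there are unique maps $\tilde t$ with $\tilde d^{2} = w\cdot\tilde t$. A degree count using $\deg w = \w$ shows that the reduction $t := \tilde t \bmod w$ lowers internal degree by $\w$ and homological degree by $2$, so it is a chain map $\bull F \to \bull F(-\w)[2]$ representing the desired $s$. For \emph{canonicity} I would recall the standard facts that $t$ is indeed a chain map, that it is independent of the chosen lift up to homotopy, and that it is functorial in $N$ and independent of the chosen resolution up to the identifications made in $\dbof{\rmodgr L \Rw}$; together these pin down $s$ as a well-defined, natural morphism.

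The main step, and the step I expect to be the real obstacle, is to prove that $\Cone(s)$ is perfect. For this I would invoke the eventual $2$-periodicity of minimal graded free resolutions over a hypersurface: there is an index $k_0$ (one may take $k_0 = n = \krdim R$, consistent with the bound in Lemma~\ref{lemma:inj.dim(R/w)}) such that for $i \ge k_0$ the resolution is genuinely periodic, $F_{i+2} = F_i(-\w)$ with matching differentials, the periodicity being encoded by a matrix factorization $(\phi,\psi)$ of $w$. Because the Eisenbud operator is canonical only up to homotopy while perfectness of the cone is homotopy-invariant, I am free to choose the representative $t$ equal to this periodicity isomorphism in every degree $i \ge k_0$; this is precisely what the identity $\tilde d^{2} = \phi\psi = w$ produces, exactly as in the periodic resolution of Example~\ref{example:morphisms-k[x]/(x^p)}. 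Then $t$ is an isomorphism of complexes in all sufficiently negative cohomological degrees, so $\Cone(t)$ is contractible there; splitting off this contractible tail exhibits $\Cone(t)$ as homotopy equivalent to a bounded complex of free $A$-modules, hence perfect.

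Finally, since the target $N(-\w)[2]$ is situated in degree $-2 \le -1$ and $\Cone(s)$ is perfect, $s$ satisfies both conditions of a descending morphism of $N$ in the sense of the earlier definition, giving the claim. As a by-product, $s$ becomes an isomorphism in $\dsglrw$, recovering Lemma~\ref{lemma:quasi-period} constructively and matching the relation $N(-\w)[2]\simeq N$ there. The only genuinely nontrivial input is the structure theory of maximal Cohen--Macaulay modules and matrix factorizations guaranteeing eventual periodicity; everything else is formal from the definitions and from the results already established in this section.
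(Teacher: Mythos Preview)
Your approach is correct but differs substantially from the paper's. The paper gives a much more elementary and self-contained argument: cover $N$ (viewed as an $R$-module with $w$ acting trivially) by a projective $R$-module $P$, write $0\to K\to P\to N\to 0$, and apply the snake lemma to the multiplication-by-$w$ endomorphism of this sequence. Since $w$ is injective on $K$ and $P$ and zero on $N$, the snake yields a four-term exact sequence $0\to N(-\w)\to K/wK\to P/wP\to N\to 0$ of $\Rw$-modules, which is precisely a Yoneda $2$-extension defining $s$; the cone is then visibly the two-term complex $(K/wK\to P/wP)$, and its perfectness follows immediately from Lemma~\ref{lemma-perfect} (applied to $K$ and to $P$). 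No structure theory of MCM modules or eventual periodicity is needed.

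Your Eisenbud-operator construction recovers the same class in $\Ext^2$, and your argument for perfectness via eventual $2$-periodicity of the minimal resolution is valid, but it imports exactly the machinery the paper is trying to avoid at this point: the paper has not established matrix-factorization periodicity in the graded setting, and Lemma~\ref{lemma-perfect} was proved precisely to make this step elementary. What your route buys is a conceptual link to the CI-operator literature and a direct identification of $s$ with the periodicity map on the tail of the resolution; what the paper's route buys is brevity, an explicit bounded cone, and no appeal to results outside the section.
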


\begin{proof}
  Consider $M$ as an $R$-module with a trivial action of $w$. Cover it by a projective $R$-module $P$ and consider the corresponding short exact sequence $\bigl(0 \to K \to P \to N \to 0\bigr)$ of $R$-modules. Applying the snake lemma to the commutative diagram
  $$ \begin{tikzcd}
    0 \ar[r] & K(-\w) \ar[r] \ar[d, "w", hook] & P(-\w) \ar[r] \ar[d, "w", hook] & N(-\w) \ar[r] \ar[d, "0"] & 0 \\
    0 \ar[r] & K \ar[r] & P \ar[r] & N \ar[r] & 0
  \end{tikzcd} $$
  we get an exact sequence
  $ ( 0 \to N(-\w) \to K/wK \to P/wP \to N \to 0 ) $ of $\Rw$-modules.
  This sequence defines a morphism $s\:N\to N(-\w)[2]$ in the derived category such that $\Cone(s)$ is isomorphic to the complex $(K/wK \to P/wP)$ situated in degrees $-2$ and $-1$. The latter complex is perfect, since both its components are perfect by Lemma~\ref{lemma-perfect}. Hence the proof.
\end{proof}

\begingroup
\def\myA{{R/(w)}}
The following is the direct corollary.

\begin{lemma}
\label{lemma:canonical-s-sequence}
  Every graded module $N$ admits the canonical descending sequence as follows:\\[-1ex]
    $$ N \xrightarrow{s} N(-\w)[2] \xrightarrow{s(-\w)[2]} N(-2\w)[4] \xrightarrow{s(-2\w)[4]} N(-3\w)[6] \to \dots $$
\end{lemma}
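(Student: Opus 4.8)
The plan is to generate the entire sequence from the single morphism produced by Lemma~\ref{lemma:canonical-s}. That lemma supplies, for the given module $N$, a canonical morphism $s\:N\to N(-\w)[2]$ in $\dbof{\rmodgr{L}{\Rw}}$ whose cone is perfect. I would set $N_k := N(-k\w)[2k]$ and take the $k$-th arrow to be $p_k := s(-k\w)[2k]\:N_k\to N_{k+1}$, obtained by applying the autoequivalence $(-k\w)[2k]$ to $s$. This is exactly the displayed sequence, and its canonicity is inherited from the canonicity of $s$.

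It then remains to check that each $p_k$ is a descending morphism in the sense of the definition. First, since $N$ is a module it is situated in degree $0$, so $N_k$ is situated in degree $-2k$; hence $N_{k+1}$, situated in degree $-2k-2$, lies in degrees $\le -2k-1$, which is the required drop by at least one degree. Second, I would observe that the grading-shift functors $(-\w)$ and the translation $[2]$ are exact autoequivalences of $\dbof{\rmodgr{L}{\Rw}}$ that preserve the subcategory $\Perf^L(\Rw)$ of perfect complexes; consequently $\Cone(p_k)\simeq \Cone(s)(-k\w)[2k]$ is perfect because $\Cone(s)$ is. Together these two facts show that each $p_k$ satisfies both conditions of a descending morphism, so $(N_k,p_k)_{k\ge 0}$ is a descending sequence of $N$.

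There is essentially no obstacle here: the construction is a formal consequence of Lemma~\ref{lemma:canonical-s} together with the elementary observation that the two functors being iterated are autoequivalences preserving perfect complexes while shifting degrees downward. The only point requiring a moment's care is bookkeeping the degree in which each $N_k$ sits, so as to confirm the ``$\le b-1$'' condition of a descending morphism; once that is noted, the result follows immediately by induction on $k$, invoking Lemma~\ref{lemma:canonical-s} at the base step and the functoriality of the shifts at each inductive step.
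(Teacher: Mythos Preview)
Your proposal is correct and follows exactly the approach the paper intends: the paper states this lemma as a ``direct corollary'' of Lemma~\ref{lemma:canonical-s}, and your argument simply spells out the obvious verification that applying the autoequivalences $(-k\w)[2k]$ to $s$ yields descending morphisms. There is nothing to add.
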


Let us now consider $\bull\Ext_\myA(M,N) := \Hom_{\dbof{\Rw}}(M,N[\bullet])$ as a $\ZZ$-graded complex. By Lemma~\ref{lemma:gr/ungr-dbmod}, the latter is also equipped with an $L$-grading, thus becoming a $\ZZ$-$L$-bigraded space. Similarly, the space $\bull\Ext_\myA(N,N)$ of the \textit{self}-extensions of $N$ has a natural structure of a $\ZZ$-$L$-graded \textit{ring}, where the multiplication is given by the composition of the corresponding morphisms. Moreover, $\bull\Ext_\myA(M,N)$ is a right module over $\bull\Ext_\myA(N,N)$. In this language, the descending morphism $s$ of Lemma~\ref{lemma:canonical-s} can be treated as a homogeneous element of $\bull\Ext_\myA(N,N)$ of bidegree $(2,-\w)$. Then Theorem~\ref{theorem:dsg-morphisms:db->dsg} for the descending sequence of Lemma~\ref{lemma:canonical-s-sequence} implies the following.

\begin{prop}
  Let $M$ and $N$ be two graded $\myA$-modules and the descending morphism $s\:N\to N(-\w)[2]$ be as above. Then we have the following isomorphism of two $\ZZ$-$L$-bigraded spaces:
  $$ \Hom_{\dsgof{}{\myA}}(M,N[\bullet]) \simeq \bull\Ext_\myA(M,N)[s^{-1}], $$
  where the right-hand side is the graded localization of the module $\bull\Ext_\myA(M,N)$ obtained by inverting the element $s$ of the ring $\bull\Ext_\myA(N,N)$.
\end{prop}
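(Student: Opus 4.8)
The claim identifies the bigraded space of singularity-category morphisms $\Hom_{\dsgof{}{\myA}}(M,N[\bullet])$ with a localization of the Ext-module obtained by inverting the single element $s$ of bidegree $(2,-\w)$. The plan is to apply Theorem~\ref{theorem:dsg-morphisms:db->dsg} to the \emph{canonical} descending sequence of Lemma~\ref{lemma:canonical-s-sequence}, and then to recognize the resulting direct limit as an algebraic localization. The key conceptual point is that every descending morphism in this particular sequence is exactly multiplication by (a shift of) the element $s$, so the transition maps $\phi_{ij}$ of the direct system coincide, term by term, with right multiplication by powers of $s$ in the $\bull\Ext_\myA(N,N)$-module structure on $\bull\Ext_\myA(M,N)$.

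\textbf{Carrying out the steps.} First I would fix a fixed cohomological shift $i$ and unwind what Theorem~\ref{theorem:dsg-morphisms:db->dsg} gives for the sequence of Lemma~\ref{lemma:canonical-s-sequence}. The $j$-th term of the descending sequence is $N_j = N(-j\w)[2j]$, so the spaces $\Omega_j$ entering the direct system are
$$ \Omega_j = \Hom_{\dbof{\Rw}}(M, N(-j\w)[2j+i]), $$
which, after re-indexing cohomological degree, are precisely the graded pieces of $\bull\Ext_\myA(M,N)$ twisted by $(-j\w)$ and shifted by $[2j]$. Next I would identify the transition map $\phi_{j,j+1}$: by construction the $(j{+}1)$-st descending morphism in the canonical sequence is $s(-j\w)[2j]$, and postcomposition with it is, by the very definition of the $\bull\Ext_\myA(N,N)$-module structure, right multiplication by the homogeneous element $s$ of bidegree $(2,-\w)$. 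Hence the whole direct system $(\Omega_i,\phi_{ij})$ is nothing but the system
$$ \bull\Ext_\myA(M,N) \xrightarrow{\,\cdot s\,} \bull\Ext_\myA(M,N)(-\w)[2] \xrightarrow{\,\cdot s\,} \cdots, $$
whose colimit is by definition the localization $\bull\Ext_\myA(M,N)[s^{-1}]$ at the multiplicative set $\{1,s,s^2,\dots\}$. Assembling these isomorphisms over all $i\in\ZZ$, and checking that they are compatible with both the $\ZZ$-grading (cohomological degree) and the $L$-grading (tracked as in Lemma~\ref{lemma:gr/ungr-dbmod} and Proposition~\ref{prop:gr/ungr-dsg}), yields the claimed isomorphism of $\ZZ$-$L$-bigraded spaces.

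\textbf{The main obstacle.} The genuinely delicate point is the bookkeeping of the two gradings simultaneously, and in particular verifying that ``postcompose with $s(-j\w)[2j]$'' is literally right multiplication by the \emph{same} element $s\in\bull\Ext_\myA(N,N)$ in each step, rather than by a family of a priori distinct shifted morphisms. This requires that the canonical morphism $s$ be natural in $N$ and compatible with the grading-shift functors $(\w)$, so that $s(-j\w)[2j]$ is the image of $s$ under the ring structure of $\bull\Ext_\myA(N,N)$ acting on itself—precisely the statement that $s$ lives in bidegree $(2,-\w)$ and its powers $s^{k}$ realize the $k$-fold composite. Once this identification of the transition maps with honest multiplication by $s$ is pinned down, the passage to the direct limit is formal: a colimit of a tower whose every map is multiplication by $s$ is, tautologically, inverting $s$. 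I would therefore devote the bulk of the argument to the naturality/compatibility check for $s$, and treat the colimit step as immediate from Theorem~\ref{theorem:dsg-morphisms:db->dsg}.
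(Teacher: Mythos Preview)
Your proposal is correct and follows exactly the paper's approach: apply Theorem~\ref{theorem:dsg-morphisms:db->dsg} to the canonical descending sequence of Lemma~\ref{lemma:canonical-s-sequence} and observe that the resulting direct system is the tower of multiplications by~$s$, whose colimit is the localization $\bull\Ext_\myA(M,N)[s^{-1}]$. The paper treats this as an immediate consequence and does not spell out the grading bookkeeping you flag as the ``main obstacle,'' but your more detailed account is faithful to the intended argument.
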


In other words, the quotient functor $\dbof{\myA}\to\dsgof{}{\myA}$ from the definition of $\dsgof{}{\myA}$ coincides with the localization functor $[s^{-1}]$.

\begin{remark}
\label{remark:db-non-zero-morphism-in-dsg-criterion}
  The above descending sequence provides a simple criterion for when a given morphism from the bounded derived category is zero in the corresponding category of singularities. Indeed, by the above argument, a morphism $a\:M\to N$ in $\dbof{R/(w)}$ is zero in $\dsgof{}{R/(w)}$ if and only if $s^i a = 0$ in $\dbof{R/(w)}$ for some $i>0$. Moreover, by Theorem~\ref{theorem:homs-dsg-hyper:ultimate}, it is enough to check this equality only for $i$ equal to the smallest integer greater or equal to the half Krull dimension of $R$.
\end{remark}

\endgroup

\section{Exceptional collections for invertible polynomials}
\label{sec3}

\def\lwbar{\overline{L}_w}
\def\lvbar{\overline{L}_v}
\def\xx{{\bar x}}
\def\yy{{\bar y}}
\def\zz{{\bar z}}

\newcommand{\mat}[1]{\left(\begin{smallmatrix}#1\end{smallmatrix}\right)}

In this section, we consider the categories of singularities $\dsgof{L_w}{\kk[x_1,\dots,x_n]/(w)}$ defined by invertible polynomials $w\in \kk[x_1,\dots,x_n]$ with $n\le 3$. We explicitly construct full strongly exceptional collections in these categories on a case by case basis using the classification of invertible polynomials from Example~\ref{ex:invertible-classification-n<=3}. We summarize these results in Propositions~\ref{prop:main:n=2} and~\ref{prop:main:n=3} for $n=2$ and $n=3$ respectively. Together with Example~\ref{ex:main:n=1} for $n=1$, this constitutes the proof of Theorem~\ref{thm1}. The corresponding collections are depicted on Figures~\ref{fig:2split}--\ref{fig:2loop} for $n=2$ and on Figures~\ref{fig:n=3:split-split}--\ref{fig:n=3:loop-split},~\ref{fig:n=3:3-chain}--\ref{fig:n=3:3-loop} for $n=3$.

In \S\ref{sec3:preliminaries}, we recall the notion of invertible polynomial and a well-known generation criterion for isolated singularities. We also recall the results of Futaki and Ueda for Brieskorn-Pham singularities.

In \S\ref{sec:n=2} and \S\ref{sec:n=3}, we explicitly construct exceptional collections for the cases of $n=2$ and $n=3$ respectively. Both sections are structured similarly. In each case, we firstly describe the necessary properties of the maximal grading groups. Then we present the objects which we will be using to form the collections. Then we compute the morphism spaces between these objects by using Recipe~\ref{recipe} of Section~2. After that we describe the explicit collections. Then we check that the collections are indeed exceptional and strong based on our previous computations. After that, we show that the given collections generate all shifts of the base field~$\kk$ considered as an object of $\dsgof{L_w}{\kk[x_1,\dots,x_n]/(w)}$. Finally, we use generation criterion and properties of the grading group to conclude the proof in each case.

In \S\ref{sec3:higher-n}, we discuss how the collections should look like in general case based on the pattern for $n\le 3$.

\subsection{Preliminaries}
\label{sec3:preliminaries}

\subsubsection{Invertible polynomials}
\label{sec:classification-of-invertibles}
\label{sec:invertible}

\begin{definition}
\label{def:invertibles}
  An \textit{invertible polynomial} is the polynomial of type\\[-2ex]
  $$ w = \sum_{i=1}^n x_1^{a_{i1}} \dots x_n^{a_{in}} \in \kk[x_1,\dots,x_n], $$\\[-1.5ex]
  where $A = (a_{ij})_{i,j=1}^n$ is an invertible $n\times n$-matrix with integer entries. We also assume that $w$ has an isolated singularity at the origin.
\end{definition}

From now on, we assume that $\kk$ is an algebraically closed field of characteristic $0$. Under this assumption, the invertible polynomials can be classified as being the split sums of the following two atomic types (see \cite{classify-quasihom}):
\begin{itemize}
    \item \textit{\textbf{$n$-chain}}: $w = x_1^{p_1}+x_1 x_2^{p_2} + \dots + x_{n-1} x_n^{p_n}$, where $n \ge 1$ and $p_i \ge 2$;
    \item \textit{\textbf{$n$-loop}}: $w = x_n x_1^{p_1}+x_1 x_2^{p_2} + \dots + x_{n-1} x_n^{p_n}$, where $n \ge 2$ and $p_i \ge 2$.
\end{itemize}

\begin{example}\label{ex:invertible-classification-n<=3}
In the case of $n\le 3$, which will be our main case of study, the above classification gives us 9 types of invertible polynomials as follows:\\[1ex]
\begingroup
    \def\makeletter#1{{\def\temp{#1}{\if\temp1a\else\if\temp2b\else\if\temp3c\else\if\temp4d\else e\fi\fi\fi\fi}}}
    \newcommand{\wcase}[2]{{(#1\def\temp{#1}\if\temp1\else\makeletter#2\fi)}}
    \newcommand{\wdcase}[3]{(#1\makeletter#2-\makeletter#3)}
\noindent
$\left.\begin{minipage}{0.55\textwidth}
\noindent(1)\quad $w = x^p$ --- 1-chain
\end{minipage}\right.\Bigr\}$\qquad $n=1$,\; $p\ge 2$
\par\vspace{0.5ex}\noindent
$\left.\begin{minipage}{0.55\textwidth}
\noindent\wcase21\quad $w = x^p + y^q$ --- 1-chain + 1-chain\\[0.5ex]
\noindent\wcase22\quad $w = x^p + xy^q$ --- 2-chain\\[0.5ex]
\noindent\wcase23\quad $w = yx^p + xy^q$ --- 2-loop
\end{minipage}\right\}$\qquad $n=2$,\; $p,q\ge 2$
\par\vspace{1.5ex}\noindent
$\left.\begin{minipage}{0.72\textwidth}
\noindent\wcase31\quad $w = x^p + y^q + z^r$ --- 1-chain + 1-chain + 1-chain\\[0.5ex]
\noindent\wcase32\quad $w = x^p + xy^q + z^r$ --- 2-chain + 1-chain\\[0.5ex]
\noindent\wcase33\quad $w = yx^p + xy^q + z^r$ --- 2-loop + 1-chain\\[0.5ex]
\noindent\wcase34\quad $w = x^p + xy^q + yz^r$ --- 3-chain\\[0.5ex]
\noindent\wcase35\quad $w = zx^p + xy^q + yz^r$ --- 3-loop
\end{minipage}\right\}$\quad$\vcenter{\noindent$n=3$,\\ $p,q,r\ge 2$}$
\endgroup
\end{example}

\begin{definition}[Maximal grading group]
\label{def:max-grading-group}
Let $w \in \kk[x_1,\dots,x_n]$ be an invertible polynomial.
\begin{enumerate}[itemsep=0em, label={(\alph*)}, labelwidth=\widthof{ (a)}, leftmargin=!, topsep=1ex]
    \item We define $L_w$ as the maximal abelian grading group on $\kk[x_1,\dots,x_n]$ which makes all~$x_i$ and $w$ homogeneous, and which is generated by the degrees of~$x_i$.
    \item We define the quotient group $\lwbar = L_w/\<\w\>$, where $\w$ denotes the degree of~$w$ in $L_w$.
\end{enumerate}
\end{definition}

We can describe the group $L_w$ explicitly as the quotient of the free abelian group generated by the symbols $(\x_i)_{1\le i\le n}$, representing the degrees of the variables $x_i$, over the relations forcing all the monomials of $w$ to have the \textit{same} degree. Similarly, the group $\lwbar$ can be described by the generators $(\xx_i)_{1\le i\le n}$ and the relations stating that all the monomials of $w$ have \textit{zero} degree.

\begin{example*}
  If $w = x^2 + xy^4 + z^5$, then $L_w = \ZZ\<\x,\y,\z\>/\<2\x=\x+4\y=5\z\>$, $\w = 2\x = \x+4\y=5\z$, and $\lwbar = L_w/\<\w\>= \ZZ\<\xx,\yy,\zz\>/\<2\xx=\xx+4\yy=5\zz=0\>$.
\end{example*}

\begin{remark*}
Since the group $L_w$ is generated by $n$ variables and $n-1$ independent relations, it has always rank $1$, that is $L_w\simeq \ZZ \oplus T$, where $T$ is a finite abelian group. Similarly, the group $\lwbar$ is always finite, since it is generated by $n$ variables and $n$ independent relations.
\end{remark*}

\subsubsection{Generation criterion}

It is well-known that if a polynomial $w\in\kk[x_1,\dots,x_n]$ has an isolated singularity at a point, then the corresponding category of singularities is generated by the structure sheaf of this point up to taking the direct summands of the objects. In particular, this holds for invertible polynomials and can be adapted for the graded setting as follows.

\begin{prop}[{see \cite[Proposition~2.3.1]{PV-gen-crit}}]
\label{prop:gen-criterion-invertibles-general}
  Let $R=\kk[x_1,\dots,x_n]$ and $w\in R$ be an invertible polynomial. Let $\AA$ denote the minimal triangulated subcategory of $\dsglrww$ containing the grading shifts $\kk(l)$ for all $l \in L_w$. Then every object of $\dsglrww$ is a direct summand of an object of $\AA$.
\end{prop}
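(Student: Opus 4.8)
The plan is to prove the equivalent statement that the thick (summand-closed) triangulated closure $\overline{\AA}$ of the objects $\kk(l)$, $l\in L_w$, is all of $\dsglrww$; since $\AA$ is already triangulated, the objects of $\overline{\AA}$ are exactly the direct summands of objects of $\AA$, which is precisely the assertion to be proved. First I would reduce arbitrary objects to modules via Lemma~\ref{lem:everything-is-a-shift-of-a-module}, and then reduce modules to cyclic prime quotients: every finitely generated graded $\Rw$-module admits a finite filtration by graded submodules whose successive quotients have the form $(R/\mathfrak{p})(l)$ for homogeneous primes $\mathfrak{p}\supseteq(w)$. Because a triangulated subcategory is closed under extensions (if two vertices of a distinguished triangle lie in $\overline{\AA}$, so does the third), it suffices to show $R/\mathfrak{p}\in\overline{\AA}$ for every such $\mathfrak{p}$.

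I would then induct on $d=\dim R/\mathfrak{p}$. The base case $d=0$ forces $\mathfrak{p}=\mathfrak{m}=(x_1,\dots,x_n)$, so $R/\mathfrak{p}=\kk$ is a generator. For $d>0$ the decisive input is the isolated-singularity hypothesis: the singular locus $V\bigl((w)+(\partial_1 w,\dots,\partial_n w)\bigr)$ is $\{\mathfrak{m}\}$, so its radical equals $\mathfrak{m}$. Since here $\mathfrak{p}\subsetneq\mathfrak{m}$ is a proper homogeneous prime containing $w$, the Jacobian ideal cannot lie in $\mathfrak{p}$ (otherwise $\mathfrak{m}=\sqrt{(w)+(\partial_i w)_i}\subseteq\mathfrak{p}$, a contradiction), so some partial derivative $f:=\partial_i w\notin\mathfrak{p}$. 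As $f$ is a nonzerodivisor on the domain $R/\mathfrak{p}$, it produces the short exact sequence
$$ 0 \to (R/\mathfrak{p})(-\deg f)\xrightarrow{\;f\;} R/\mathfrak{p} \to R/(\mathfrak{p}+(f)) \to 0, $$
whose right-hand term is supported in dimension $d-1$ and hence, after its own prime filtration, lies in $\overline{\AA}$ by the induction hypothesis.

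The heart of the argument is that multiplication by any partial derivative $\partial_i w$ is the zero morphism in $\dsglrww$. Using the equivalence with matrix factorizations invoked in the proof of Lemma~\ref{lemma:quasi-period}, differentiating the defining identity $d_0 d_1 = w\cdot\id$ of any matrix factorization yields
$$ (\partial_i d_0)\,d_1 + d_0\,(\partial_i d_1) = (\partial_i w)\,\id, $$
which is an explicit null-homotopy for multiplication by $\partial_i w$. Therefore the map $f$ above vanishes in $\dsglrww$, and the triangle attached to the short exact sequence splits its cone:
$$ R/(\mathfrak{p}+(f)) \;\simeq\; R/\mathfrak{p} \;\oplus\; (R/\mathfrak{p})(-\deg f)[1] $$
in $\dsglrww$. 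Since the left-hand side lies in $\overline{\AA}$ and $\overline{\AA}$ is closed under direct summands, we conclude $R/\mathfrak{p}\in\overline{\AA}$, which closes the induction.

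I expect the main obstacle to be the clean justification that the Jacobian ideal annihilates the singularity category — that is, upgrading the pointwise null-homotopy to a statement valid uniformly for all objects — together with the bookkeeping required to run the dimension induction inside the graded, idempotent-complete setting: checking that the prime filtration, the choice of homogeneous $f=\partial_i w$, and the degree shifts are all compatible with the $L_w$-grading. The remaining ingredients, namely representability by modules, the stability of thick subcategories under extensions and summands, and the existence of graded prime filtrations over a Noetherian ring, are routine.
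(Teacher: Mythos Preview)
Your argument is correct and is essentially the standard proof (due to Dyckerhoff, and reused by Polishchuk--Vaintrob): reduce to modules, then to graded prime quotients via a finite prime filtration, then induct on $\dim R/\mathfrak{p}$ using that the Jacobian ideal annihilates the singularity category. The null-homotopy $(\partial_i d_0,\partial_i d_1)$ for multiplication by $\partial_i w$ is exactly the right mechanism, and your degree bookkeeping is fine since $\partial_i w$ is $L_w$-homogeneous of degree $\w-\x_i$.

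Note that the paper does not give its own proof of this proposition; it simply cites \cite{PV-gen-crit}. The only thing the paper adds is the remark that the argument extends to non-hypersurface ideals $I$, characterizing the isolated-singularity condition as ``$(R/I)_{x_i}$ is regular for all $i$''. Your Jacobian approach is specific to hypersurfaces (it uses matrix factorizations and the identity $d_0d_1=w\cdot\id$), so it does not immediately cover that more general remark, but for the statement as written it is the correct and canonical argument. A minor point worth making explicit: in characteristic~$0$, Euler's relation for the quasi-homogeneous $w$ gives $w\in(\partial_1 w,\dots,\partial_n w)$, so the isolated-singularity hypothesis really does force $\sqrt{(\partial_i w)_i}=\mathfrak{m}$, which is what you need to find an $f=\partial_i w\notin\mathfrak{p}$.
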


\begin{remark}
    In the original formulation in \cite{PV-gen-crit}, the base field is $\Cc$. However, the argument works for the case of an algebraically closed field $\kk$ of characteristic $0$. It also works for arbitrary grading group $L$ making $x_i$ homogeneous, and for arbitrary $L$-homogeneous polynomial $w$ defining an isolated singularity at the origin, where the latter condition reduces to the property that the localized rings $(R/(w))_{x_i}$ are regular for all $1\le i\le n$. Moreover, we can consider non-hypersurface singularities as well. In this case, we should just replace the ring $R/(w)$ by $R/I$, where $I$ is a homogeneous ideal contained in the maximal ideal of the origin $(x_1,\dots,x_n)\subseteq R$ such that $(R/I)_{x_i}$ are regular for all $1\le i\le n$.
\end{remark}

We will use the following variation of the above statement for proving the fullness of our exceptional collections.

\begin{prop}
\label{prop:gen-criterion-invertibles-collections}
  Let $R$ and $w$ be as before. If $\dsglrww$ admits an $\Ext$-finite exceptional collection $(E_1,\dots,E_n)$ generating $\kk(l)$ for all $l\in L_w$, then this collection is full:\\[-2ex]
  $$ \dsglrww = \<E_1,\dots, E_n\>. $$
\end{prop}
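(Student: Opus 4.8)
The plan is to combine the general generation criterion of Proposition~\ref{prop:gen-criterion-invertibles-general} with a standard argument that an exceptional collection generating a set of objects already generates all their direct summands, thereby upgrading the ``direct-summand'' conclusion of the previous proposition into genuine membership in the triangulated subcategory $\<E_1,\dots,E_n\>$. First I would let $\TT = \<E_1,\dots,E_n\>$ denote the triangulated subcategory (thick or strictly triangulated, to be pinned down) generated by the collection. By hypothesis, each grading shift $\kk(l)$ lies in $\TT$ for all $l\in L_w$. Applying Proposition~\ref{prop:gen-criterion-invertibles-general}, every object $X$ of $\dsglrww$ is a direct summand of some object $Y$ built out of the shifts $\kk(l)$ by iterated cones and shifts; since $\TT$ is triangulated and contains all the $\kk(l)$, such an object $Y$ lies in $\TT$. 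Thus every object of the category is a direct summand of an object of $\TT$.

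The remaining step, which I expect to be the crux, is to promote ``direct summand of an object of $\TT$'' to ``object of $\TT$''. This is where the exceptionality and $\Ext$-finiteness hypotheses enter. For an $\Ext$-finite exceptional collection the generated subcategory $\TT$ is admissible: each $E_i$ generates an admissible subcategory (being exceptional, $\<E_i\>\simeq \Db(\kk)$ via $\Hom$), and a finite semiorthogonal assembly of admissible subcategories is admissible. Admissibility supplies the relevant projection/mutation functors, and in particular it guarantees that $\TT$ is closed under taking direct summands inside the ambient triangulated category; equivalently, $\TT$ is thick. Hence if $X$ is a direct summand of $Y\in\TT$, then $X\in\TT$ as well. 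Combining this with the previous paragraph yields $X\in\TT$ for \emph{every} object $X$, which is exactly the statement $\dsglrww = \<E_1,\dots,E_n\>$.

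The main obstacle is making the closure-under-summands step rigorous: one must verify that an $\Ext$-finite exceptional collection in $\dsglrww$ generates an idempotent-complete (thick) admissible subcategory, so that the splitting of idempotents furnished by Proposition~\ref{prop:gen-criterion-invertibles-general} stays within $\TT$. I would handle this by recalling that over a field the $\Hom$-spaces $\Hom(E_i,E_i)=\kk$ together with the exceptional (semiorthogonal) ordering make each $\<E_i\>$ equivalent to $\Perf(\kk)=\Db(\kk)$, which is idempotent complete, and that a finite semiorthogonal decomposition of idempotent-complete pieces is itself idempotent complete; alternatively one can invoke that $\dsglrww$ has finite-dimensional total morphism spaces between the generators, so the relevant category is a finite-dimensional algebra module category where idempotents split automatically. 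Once this is in place, the proof concludes immediately.
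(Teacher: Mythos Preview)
Your proposal is correct and follows essentially the same approach as the paper: the paper's proof is the one-line ``Follows from Proposition~\ref{prop:gen-criterion-invertibles-general} and Lemma~\ref{lemma:exc-collection:summand-thick=>full}'', where Lemma~\ref{lemma:exc-collection:summand-thick=>full} is precisely the thickness step you outline, proved (via Lemma~\ref{lemma:except-collect=>idemp-complete}) by showing that the subcategory generated by an $\Ext$-finite exceptional collection is idempotent complete using admissibility. Your argument for idempotent completeness (assembling from the pieces $\<E_i\>\simeq\Db(\kk)$) is a minor variant of the paper's (which passes to the idempotent completion and uses admissibility there), but the substance is the same.
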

\begin{proof}
  Follows from Proposition~\ref{prop:gen-criterion-invertibles-general} and Lemma~\ref{lemma:exc-collection:summand-thick=>full}.
\end{proof}

\begin{remark}
\label{remark:generation-up-to-quasi-periodicity}
  This statement shows that in order to prove that a given collection is full, we just need to generate a shift $\kk(l)$ out of this collection for each $l\in L_w$. In fact, due to quasi-periodicity property for the categories of hypersurface singularities, it is enough to generate such shifts for all $l$ only up to the multiples of $\w$, since as soon as $\kk(l)$ belongs to a triangulated subcategory, all $\kk(l+i\w)\simeq\kk(l)[2i]$, where $i\in\ZZ$, belong to it automatically as well. This means that in total we need to generate only a finite number of shifts corresponding to the elements of the finite group $\lwbar = L_w/\<\w\>$.
\end{remark}

\subsubsection{Brieskorn-Pham singularities}

Futaki and Ueda in \cite{futaki-ueda} and \cite{ueda} studied the categories of singularities $\dsgof{L_w}{\kk[x_1,\ldots,x_n]/(w)}$ for Brieskorn-Pham polynomials~$w$. Such polynomials have the form $x_1^{p_1}+\ldots+x_n^{p_n}$ which is the split sum of $n$ monomials of 1-chain type, and where $p_i\ge 2$ for all $i$. They observed that these categories admit full exceptional collections of very simple form as follows.

\begin{prop}[see \cite{futaki-ueda} and \cite{ueda}]
\label{prop:futaki-ueda-split}
  Let the ring be $R=\kk[x_1,\dots,x_n]$, the potential be $w=x_1^{p_1}+\ldots+x_n^{p_n}$, and the grading group be the maximal one $L_w$. Then the corresponding graded category of singularities $\dsgof{L_w}{R/(w)}$ admits a full exceptional collection consisting of $(p_1-1)\cdot\ldots\cdot(p_n-1)$ objects of the form $\kk(-(j_1 \x_1 + \ldots + j_n \x_n))$ for $0\le j_i \le p_i-2$.
\end{prop}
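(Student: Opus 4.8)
The plan is to reduce everything to the one-variable computation of Example~\ref{example:morphisms-k[x]/(x^p)} by using that $w = x_1^{p_1} + \dots + x_n^{p_n}$ is a split (Thom--Sebastiani) sum. Writing $R_i := \kk[x_i]$ and $w_i := x_i^{p_i}$, the category $\dsglrww$ is the suitably graded tensor product of the one-variable categories $\dsgof{}{R_i/(w_i)}$, under which the structure field $\kk$ becomes the external product $\kk\boxtimes\dots\boxtimes\kk$, the grading shift by $\x_i$ acts in the $i$-th factor, and the factor gradings $L_{w_i}$ are glued along the identification $\w_1 = \dots = \w_n = \w$. As usual the proof then splits into showing the proposed collection is exceptional (a morphism computation) and that it is full (a generation statement). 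I order the objects $E_{\vec a} := \kk(-(a_1\x_1 + \dots + a_n\x_n))$, $0\le a_i\le p_i-2$, by any linear extension of the componentwise partial order on the multi-indices $\vec a$ (e.g.\ by non-decreasing $\sum_i a_i$).

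For the morphisms I would combine the ungraded Künneth isomorphism
$$ \Hom_{\dsgof{}{R/(w)}}(\kk,\kk[m]) \;\simeq\; \bigoplus_{m_1+\dots+m_n=m}\;\bigotimes_{i=1}^n \Hom_{\dsgof{}{R_i/(w_i)}}(\kk,\kk[m_i]) $$
with the $L_w$-grading of Proposition~\ref{prop:gr/ungr-dsg}, reading each factor off Example~\ref{example:morphisms-k[x]/(x^p)}: the space $\Hom_{\dsgof{}{R_i/(w_i)}}(\kk,\kk[m_i])$ is one-dimensional for every $m_i\ge 0$ and sits in $L_{w_i}$-degree $-\lfloor m_i/2\rfloor\,\w_i$ (respectively $-\lfloor m_i/2\rfloor\,\w_i-\x_i$) for $m_i$ even (respectively odd). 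Passing to $E_{\vec a},E_{\vec b}$ and extracting the graded piece in degree $\sum_i(a_i-b_i)\x_i$, a degree-$m$ morphism corresponds to a choice of $(m_i)$ with $\sum_i m_i = m$ and
$$ \sum_i (a_i-b_i+\eps_i)\,\x_i \;+\; \Bigl(\sum_i \lfloor m_i/2\rfloor\Bigr)\w \;=\; 0 \quad\text{in } L_w, \qquad \eps_i\in\{0,1\},\ \eps_i\equiv m_i \ (\mathrm{mod}\ 2). $$
The step I expect to be the main obstacle is that the coarse relation $\w = p_1\x_1 = \dots = p_n\x_n$ a priori admits many solutions of this equation; the range restriction $0\le a_i,b_i\le p_i-2$ is what eliminates all but the trivial one. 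Concretely, the relation lattice of $L_w$ is $\{(p_1\mu_1,\dots,p_n\mu_n):\sum_i\mu_i=0\}$; representing $\w$ by $p_1\x_1$ and comparing coordinates, the bound $|a_i-b_i+\eps_i|<p_i$ forces every $\mu_i=0$ and $\sum_i\lfloor m_i/2\rfloor = 0$. Hence the only contributions have $m_i=\eps_i\in\{0,1\}$ with $b_i = a_i+\eps_i$, giving
$$ \Hom_{\dsglrww}(E_{\vec a},E_{\vec b}[m]) = \begin{cases} \kk & \text{if } \vec b-\vec a\in\{0,1\}^n \text{ and } m=\textstyle\sum_i(b_i-a_i),\\ 0 & \text{otherwise.}\end{cases} $$
In particular each $E_{\vec a}$ is exceptional, the collection is $\Ext$-finite, and every non-identity morphism between distinct objects strictly increases $\vec a$ componentwise, so the collection is exceptional for the chosen order.

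For fullness I would invoke Proposition~\ref{prop:gen-criterion-invertibles-collections}: it suffices to generate $\kk(l)$ for all $l\in L_w$, and by Remark~\ref{remark:generation-up-to-quasi-periodicity} only for $l$ in the finite group $\lwbar=L_w/\langle\w\rangle$. The model is $n=1$: the truncation $\kk[x]/(x^{p-1})$ carries the $x$-adic filtration with subquotients exactly $\kk,\kk(-\x),\dots,\kk(-(p-2)\x)$, so it lies in the triangulated subcategory generated by the box; moreover the sequence $0\to\kk(-(p-1)\x)\to \kk[x]/(x^p)\to \kk[x]/(x^{p-1})\to 0$ has free (hence zero in the singularity category) middle term, so $\kk(-(p-1)\x)\simeq \kk[x]/(x^{p-1})[-1]$ is generated too; together with quasi-periodicity $[2]\simeq(\w)$ (Lemma~\ref{lemma:quasi-period}) this yields every $\kk(l)$. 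In general, under the Thom--Sebastiani equivalence each shift $\kk(l)$ is an external product of one-variable shifts, so one-variable generation in each factor lifts to show the box $\{E_{\vec a}\}$ generates all $\kk(l)$; Proposition~\ref{prop:gen-criterion-invertibles-collections} then gives $\dsglrww=\langle E_{\vec a}\rangle$, completing the proof.

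Two closing remarks. First, one can bypass the Thom--Sebastiani black box and run the whole argument inside Recipe~\ref{recipe}, taking the graded projective resolution of $\kk$ over $R/(w)$ to be the twisted tensor product of the one-variable resolutions and computing $\Ext^\bullet$ and its $L_w$-grading directly; I would use whichever form keeps the grading bookkeeping cleanest. Second, the computation above shows that only the identities occur in cohomological degree $0$, so the $E_{\vec a}$ are pairwise orthogonal there; the collection is \emph{not} strong as stated (e.g.\ $\Hom(E_{\vec a},E_{\vec a+\vec e_i}[1])=\kk$), but the same formula shows that after the cohomological regrading $E_{\vec a}\rightsquigarrow E_{\vec a}[\sum_i a_i]$ all nonzero morphisms land in degree $0$, which is precisely the strong ``cube'' collection of the introduction.
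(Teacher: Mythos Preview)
Your overall strategy is sound and your final description of the Hom spaces is correct, but the ungraded K\"unneth formula you invoke does not hold as written. The tensor product $\bigotimes_i R_i/(w_i) = \kk[x_1,\dots,x_n]/(x_1^{p_1},\dots,x_n^{p_n})$ is \emph{not} $R/(w)$, so there is no module-level K\"unneth computing $\Hom_{\dsgof{}{R/(w)}}(\kk,\kk[m])$ as a sum over integer decompositions $m=\sum_i m_i$. Already for $n=2$, $m=2$ your formula (with $m_i\ge 0$) predicts a $3$-dimensional space from $(m_1,m_2)\in\{(2,0),(1,1),(0,2)\}$, whereas the actual answer is $2$-dimensional (see Table~\ref{table:all-morphisms:n=2}). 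The correct Thom--Sebastiani statement lives at the level of $2$-periodic matrix factorization categories: one sums only over parities $\eps\in\{0,1\}^n$ with $\sum_i\eps_i\equiv m\pmod 2$, the remaining $(m-\sum_i\eps_i)/2$ absorbed into a single $\w$-shift via Lemma~\ref{lemma:quasi-period}. With that correction your lattice analysis in $\lwbar\simeq\bigoplus_i\ZZ/p_i\ZZ$ goes through verbatim and yields exactly the boxed description of $\Hom(E_{\vec a},E_{\vec b}[m])$.

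The paper itself does not supply a proof for general $n$ (it cites \cite{futaki-ueda}, \cite{ueda}); for $n\le 3$ it takes precisely the alternative you sketch in your closing remarks: it writes down the Koszul resolution of $\kk$ over $R/(w)$ explicitly (\S\ref{sec:app:explicit-morphisms:n=2}, \S\ref{sec:app:explicit-morphisms:n=3}) and applies Recipe~\ref{recipe} directly, never invoking a tensor decomposition of categories. Your generation argument for fullness is correct and matches the paper's (Lemma~\ref{lemma:main:n=2:checking-fullness}, split case, and Lemma~\ref{lemma:prop:main:n=3:generates:a,b,c}): the short exact sequences with perfect middle term plus quasi-periodicity, applied one coordinate at a time. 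Your final remark on non-strongness and the regrading $E_{\vec a}\rightsquigarrow E_{\vec a}[\sum_i a_i]$ is also exactly what the paper records immediately after Proposition~\ref{prop:futaki-ueda-split}.
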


In fact, one can also see that these collections are ``almost'' strong. To obtain a strong collection, one needs to replace the object $\kk(-(j_1 \x_1 + \ldots + j_n \x_n))$ by its shift $\kk(-(j_1 \x_1 + \ldots + j_n \x_n))[j_1+\ldots+j_n]$. These collections can be also arranged into nice $n$-dimensional cubes (more precisely, rectangular boxes) in which the morphisms are going only in nonnegative directions (see Figure~\ref{fig:2split} for $n=2$ and Figure~\ref{fig:n=3:split-split} for $n=3$).

\begin{example}[well-known]
\label{ex:main:n=1}
  In the case of $n=1$, the ring is $R=\kk[x]$, the potential is $w=x^p$, $p\ge 2$, and the maximal grading group is $L_w = \ZZ\<\x\>\simeq\ZZ$. Then the strong exceptional collection in the category of singularities $\dsgof{L_w}{\kk[x]/(x^p)}$ consists of $p-1$ objects of the form $\kk(-i\x)[i]$, where $0\le i\le p-2$, that are ordered as follows:
  \begin{equation}
  \label{eq:main:n=1:collection}
    \kk \to \kk(-\x)[1] \to \kk(-2\x)[2] \to \dots \to \kk(-(p-2)\x)[p-2].
  \end{equation}
\end{example}

    Futaki and Ueda proved Proposition~\ref{prop:futaki-ueda-split} by identifying the category of singularities $\dsgof{L_w}{\kk[x_1,\dots,x_n]/(w)}$ with a certain triangulated subcategory of the bounded derived category $\dbofgr{L_w}{(\kk[x_1,\dots,x_n]/(w))}$. In this way, they have checked that the collections are indeed exceptional by computing the corresponding morphisms in $\dbofgr{L_w}{(\kk[x_1,\dots,x_n]/(w))}$. We give an independent proof for this statement for $n\le3$ by explicitly computing the morphisms in~$\dsgof{L_w}{\kk[x_1,\dots,x_n]/(w)}$ via Recipe~\ref{recipe}.
    For example, for $n=1$, we can see that the collection \eqref{eq:main:n=1:collection} is indeed strongly exceptional from our explicit computations of the morphisms in Example~\ref{example:morphisms-k[x]/(x^p)} and Remark~\ref{remark:n=1:k-exceptional}.


\subsection{\texorpdfstring{$n=2$}{n=2}}
\label{sec:n=2}

\def\lwbar{\overline{L}_w}
\def\xx{\bar x}
\def\yy{\bar y}

For $n = 2$ and $R=\kk[x,y]$, as we saw in Example~\ref{ex:invertible-classification-n<=3}, there are three different types of invertible polynomials. Let us call them \textit{2-split} for $w = x^p+y^q$, \textit{2-chain} for $w = x^p+xy^q$ and \textit{2-loop} for $w = yx^p + xy^q$, where $p,q\ge 2$.

The explicit exceptional collections are described in Proposition~\ref{prop:main:n=2} and on Figures~\ref{fig:2split}--\ref{fig:2loop}.

\subsubsection{The maximal grading groups}

The maximal grading groups $L_w$ on $\kk[x,y]/(w)$ and the corresponding quotients $\lwbar = L_w / \la\w\ra$ are described in Table~\ref{tbl:grading-groups:n=2}.

\begingroup
\newcommand{\mystack}[1]{{
\begin{array}{c}#1\end{array}
}}
\newcommand{\mystackl}[1]{{
\begin{array}{l}#1\end{array}
}}

\def\vspbot{{\rule[-1ex]{0pt}{0ex}}}
\def\vsptop{{\rule{0pt}{2.5ex}}}
\def\vspboth{{\vsptop\vspbot}}


\begin{table}[h]
\centering
$\begin{array}{|c||c|c|l|}
\hline
\vspboth
  \text{\textbf{case}} & w & L_w & \hfill\lwbar=L_w/\<\w\>\hfill \\
\hhline{|=#=|=|=|}
  \text{\textit{2-split}} & x^p+y^q
  & \mystack{\vsptop\ZZ\la\x,\y\ra/\la p\x = q\y\ra \\ \simeq \ZZ \oplus \ZZ/\text{lcd}(p,q)\ZZ}
  & \mystackl{\ZZ\la\xx,\yy\ra/\la p\xx=q\yy=0\ra \\ \simeq \ZZ/p\ZZ \oplus \ZZ/q\ZZ,\\ \text{basis: }(\xx,\yy)} \\
\hline
  \text{\textit{2-chain}} & x^p+xy^q
  & \mystack{\vsptop\ZZ\la\x,\y\ra/\la p\x = \x+q\y\ra \\ \simeq \ZZ \oplus \ZZ/\text{lcd}(p-1,q)\ZZ}
  & \mystackl{{\rule[-2ex]{0pt}{5.5ex}}\ZZ\la\xx,\yy\ra \left/\la\footnotesize{\mystack{ p\xx=\\\xx+q\yy=0}}\ra\right. \\ \simeq \ZZ/pq\ZZ,  \text{ generator: } \yy} \\
\hline
  \text{\textit{2-loop}} & yx^p+xy^q
  & \footnotesize{\mystack{\vsptop\ZZ\la\x,\y\ra/\la \y+p\x = \x+q\y\ra \\ \vsptop\simeq \ZZ \oplus \ZZ/\text{lcd}(p-1,q-1)\ZZ}}
  & \mystackl{{\rule[-2ex]{0pt}{5.5ex}}\ZZ\la\xx,\yy\ra\left/\la \footnotesize{\mystack{\yy+p\xx=\\ \xx+q\yy=0}}\ra\right. \\ \simeq \ZZ/(pq-1)\ZZ, \\\text{generators: $\xx$, $\yy$}} \\
\hline
\end{array}$
\caption{Grading groups on $\kk[x,y]/(w)$}
\label{tbl:grading-groups:n=2}
\end{table}
\endgroup

We will use the following fact about the grading groups $\lwbar$.
\begin{lemma}
\label{lem:lwbar-elements:n=2}
  In each of the three cases for $w$ described above, the sequence $(-i\xx-j\yy \mid 0\le i\le p-1,\, 0\le j\le q-1)$ contains all of the elements of the group $\lwbar$.
\end{lemma}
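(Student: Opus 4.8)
The plan is to prove the lemma by verifying, in each of the three cases, that the map $\Phi\colon(i,j)\mapsto -i\xx-j\yy$ from the index set $\{0,\dots,p-1\}\times\{0,\dots,q-1\}$ is surjective onto $\lwbar$, using the explicit presentations of $\lwbar$ recorded in Table~\ref{tbl:grading-groups:n=2}. The index set has exactly $pq$ elements, and $|\lwbar|\le pq$ in every case, so it suffices to check that every element of $\lwbar$ is attained. In the cyclic cases this will reduce to an elementary observation: a suitable set of integers forms an interval of $pq$ consecutive values, hence a complete residue system modulo the order of the group. The 2-split case is immediate: there $\lwbar\simeq\ZZ/p\ZZ\oplus\ZZ/q\ZZ$ with basis $(\xx,\yy)$, so as $i$ runs through $0,\dots,p-1$ the element $-i\xx$ runs through all of $\ZZ/p\ZZ$ and likewise $-j\yy$ through all of $\ZZ/q\ZZ$; since the group is the direct sum, $\Phi$ is even a bijection.

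For the 2-chain case, $\lwbar\simeq\ZZ/pq\ZZ$ is cyclic with generator $\yy$, and the relation $\xx+q\yy=0$ gives $\xx=-q\yy$, so that
$$ -i\xx-j\yy=(iq-j)\,\yy. $$
I would then record that, as $(i,j)$ ranges over the index set, the integer $iq-j$ takes exactly the values in the interval $\{-(q-1),\dots,(p-1)q\}$: for each fixed $i$ the expression $iq-j$ sweeps the block $\{(i-1)q+1,\dots,iq\}$, and these $p$ blocks tile the stated interval without gaps or overlaps. This interval consists of $pq$ consecutive integers, hence is a complete set of representatives modulo $pq$, so $(iq-j)\yy$ exhausts $\ZZ/pq\ZZ=\lwbar$.

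The 2-loop case is entirely analogous. Here $\lwbar\simeq\ZZ/(pq-1)\ZZ$ is cyclic with generator $\xx$, and the relation $\yy+p\xx=0$ gives $\yy=-p\xx$, so that
$$ -i\xx-j\yy=(jp-i)\,\xx. $$
The same blockwise argument shows that $jp-i$ ranges over the interval $\{-(p-1),\dots,(q-1)p\}$ of $pq$ consecutive integers. Reducing modulo $pq-1$, these $pq$ consecutive values cover every residue class (with exactly one class hit twice), so $(jp-i)\xx$ exhausts $\ZZ/(pq-1)\ZZ=\lwbar$.

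I expect no serious obstacle here; the only point requiring care is the bookkeeping in the two cyclic cases. One must correctly read off from the table which element generates $\lwbar$ and which single relation expresses the other generator in terms of it, rewrite $-i\xx-j\yy$ as an integer multiple of the chosen generator, and then confirm that the resulting integers genuinely form an interval of length $pq$. Once the interval structure is established, surjectivity follows immediately by comparing $pq$ consecutive integers against a modulus of $pq$ (chain) or $pq-1$ (loop).
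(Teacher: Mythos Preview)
Your proposal is correct and follows essentially the same approach as the paper: rewrite $-i\xx-j\yy$ as an integer multiple of a cyclic generator and observe that the resulting coefficients form an interval of $pq$ consecutive integers, hence cover all residues modulo $|\lwbar|$. The paper only spells out the loop case (using $\yy$ rather than $\xx$ as the generator) and declares the other two cases analogous, whereas you treat all three explicitly; the arguments are otherwise identical.
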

\begin{proof}
  The proof for all 3 types of $w$ is the same, so let us prove the statement only for the loop case $w=yx^p+xy^q$. In this case, $-i\xx-j\yy=-i(-q\yy)-j\yy=(iq-j)\yy$. For $0\le i\le p-1$ and $0\le j\le q-1$, the coefficient $iq-j$ takes $pq$ consecutive integer values from the interval $[-(q-1),(p-1)q]$. Thus the elements $(iq-j)\yy$ span the whole $\lwbar$, since $\yy$ is a generator of $\lwbar$ of order $pq-1$.
\end{proof}
\begin{remark}
  Note that in the loop case two of the mentioned elements are equal, namely, $-(p-1)\xx = -(q-1)\yy$, while in the split and chain cases, these are all different elements of $\lwbar$.
\end{remark}

\subsubsection{The objects}

Let us describe the objects which we will use for the exceptional collections in $\dsgof{L_w}{\kk[x,y]/(w)}$. In each of the three cases we will use the shifts of $\kk$, similarly to the case of Brieskorn-Pham polynomials in Proposition~\ref{prop:futaki-ueda-split}. In the chain and loop cases, we will additionally use the module $M_y = \kk[y]$ defined by setting the action of $x$ to $0$. (It is a well-defined $\kk[x,y]/(w)$-module, since 
$w$ is a multiple of $x$ in both chain and loop cases.) In the loop case, we will also use an analogous module $M_x = \kk[x]$ with a trivial action of $y$ and a module $M_{xy}=\kk[x,y]/(xy)$ with the actions of $x$ and $y$ inherited from $\kk[x,y]$.

\begin{remark}
  Geometrically, the module $\kk$ corresponds to the structure sheaf of the origin. The modules $M_x$ and $M_y$ correspond to the structure sheaves of the $x$- and $y$-axes. The module $M_{xy}$ corresponds to the structure sheaf of the union of $x$- and $y$-axes.
\end{remark}

\subsubsection{The morphisms}
\label{sec:app:explicit-morphisms:n=2}

We can compute the morphisms between the objects $\kk$, $M_y = \kk[y]$, $M_x=\kk[x]$ and $M_{xy}=\kk[x,y]/(xy)$ by applying Recipe~\ref{recipe}. For that we need to build some projective resolutions for these modules. Since all our modules have the form $M = \kk[x,y]/I$, where $I$ is an ideal in $\kk[x,y]$ generated by a regular sequence and $w\in I$, we can use Koszul resolutions as described in Section~2.3 of \cite{dyckerhoff} in order to build the projective resolutions as follows.

For the module $\kk = \kk[x,y]/(x,y)$, the potential $w$ decomposes as the sum $w = xw_x +yw_y$, where $w_x = x^{p-1}$ in the split and chain cases, $w_x = yx^{p-1}$ in the loop case, $w_y = y^{q-1}$ in the split case and $w_y = xy^{q-1}$ in the chain and loop cases. Then the projective resolution of $\kk$ is as follows:
\vspace{-1ex}
\begin{multline*}
    \bull P_\kk = \Biggl(
    \dots \xrightarrow{\mat{w_x & -y \\ w_y & x}}
    {\scriptstyle A(-\w-\x)\, \oplus\, A(-\w-\y) }
    \xrightarrow{\mat{x & y \\ -w_y & w_x}}
    {\scriptstyle A(-\w)\, \oplus \, A(-\x-\y)}
    \\
    \xrightarrow{\mat{w_x & -y \\ w_y & x}} A(-\x)\oplus A(-\y) \xrightarrow{\mat{x & y}} A \Biggr) \xrightarrow{1} 
    \kk, \qquad
\end{multline*}
where we use the notation $A = \kk[x,y]/(w)$ here and henceforward, for the sake of brevity.

For the module $M_y = \kk[x,y]/(x)$, we write $w = xw_x$ where $w_x = x^{p-1}+y^q$ in the chain case and $w_x = yx^{p-1}+y^q$ in the loop case. This gives the following resolution:
$$ \bull P_{M_y} = \left( \dots \to A(-2\w) \xrightarrow{w_x} A(-\w-\x) \xrightarrow{x} A(-\w) \xrightarrow{w_x} A(-\x) \xrightarrow{x} A \right) \xrightarrow{1} M_y. $$
The resolution for the module $M_x$ in the loop case can be obtained from that for $M_y$ by switching the variables $x\leftrightarrow y$ and $p\leftrightarrow q$.

For the resolution of $M_{xy}=\kk[x,y]/(xy)$ in the loop case, we use $w_{xy} = x^{p-1} + y^{q-1}$, so that $w = xy w_{xy}$:
$$ \bull P_{M_{xy}} = \left( \dots \to A(-\w-\x-\y) \xrightarrow{xy} A(-\w) \xrightarrow{w_{xy}} A(-\x-\y) \xrightarrow{xy} A \right) \xrightarrow{1} M_{xy}. $$

The rest of the computations from Recipe~\ref{recipe} are straightforward. The resulting morphisms in $\dsgof{}{\kk[x,y]/(w)}$ between the given objects and their shifts are summarized in Table~\ref{table:all-morphisms:n=2} along with the $L_w$-grading information.

\newcommand{\twoshialign}[3]{\begin{array}{#1}#2\textstyle,\\[0.5ex]#3\end{array}}
\newcommand{\twoshifts}[2]{\twoshialign{c}{#1}{#2}}

\begin{table}[h]
\hfill
\scalebox{0.75}{
$
\begin{array}{|c||c|c|c|c|}
\hline
  \twoshialign{l}{\Hom(\downarrow,\rightarrow)}
  {\Hom(\downarrow,\rightarrow[1])}
  & \kk & M_y & M_x & M_{xy} \\
\hhline{|=#=|=|=|=|}
  \kk &
    \twoshifts{\kk\oplus\kk(\x+\y-\w)}{\kk(\x)\oplus\kk(\y)} &
    \twoshifts{\kk(\x+\y-\w)}{\kk(\y)} &
    \twoshifts{\kk(\x+\y-\w)}{\kk(\x)} &
    \twoshifts{\kk(\x+\y-\w)}{\kk} \\
\hline
  M_y &
    \twoshifts{\kk}{\kk(\x)} &
    \twoshifts{\kk[y]/(y^q)}{0} &
    \twoshifts{0}{\kk(\x)} &
    \twoshifts{\scriptstyle\kk[y]/(y^{q-1}) (-\y)}{0} \\
\hline
  M_x &
    \twoshifts{\kk}{\kk(\y)} &
    \twoshifts{0}{\kk(\y)} &
    \twoshifts{\kk[x]/(x^p)}{0} &
    \twoshifts{\scriptstyle\kk[x]/(x^{p-1}) (-\x)}{0} \\
\hline
  M_{xy} &
    \twoshifts{\kk}{\kk(\x+\y)} &
    \twoshifts{\kk[y]/(y^{q-1})}{0} &
    \twoshifts{\kk[x]/(x^{p-1})}{0} &
    \twoshifts{\stackrel{\scriptstyle\kk[x]/(x^{p-1})\oplus{}}{ \scriptstyle \kk[y]/(y^{q-1}) (-\y)}}{0} \\
\hline
\end{array}
$}\hfill
\caption{Morphisms between the objects in $\dsgof{}{\kk[x,y]/(w)}$ for different $w$}
\label{table:all-morphisms:n=2}
\end{table}

\begin{remark}
\label{rem:app-morphisms:table-remark1}
  In Table~\ref{table:all-morphisms:n=2}, we have specified only the morphisms $\Hom(M,N)$ and $\Hom(M,N[1])$ for different $M$ and $N$. The morphisms between all other shifts of $M$ and $N$ can be obtained via quasi-periodicity:
$$ \Hom(M,N[2j]) = \Hom(M,N(j\w)) = \Hom(M,N)(j\w), $$
$$ \Hom(M,N[2j+1]) = \Hom(M,N[1])(j\w). $$
\end{remark}

\begin{remark}
\label{rem:app-morphisms:table-remark2}
  In Table~\ref{table:all-morphisms:n=2}, an expression like $\kk[x]/(x^{p-1})$ is just a short-hand notation for the $L_w$-graded vector space $\bigoplus_{0\le i < p-1}\kk(-i\x)$, where the monomial~$x^i$ corresponds to the summand $\kk(-i\x)$ situated in degree $i\x$.
\end{remark}

\begin{remark}
\label{rem:app-morphisms:table-remark3}
  The reader may note an asymmetricity in the expression for $\Hom(M_{xy},M_{xy})$. More symmetrical but longer way to write it down would be as $\kk\oplus\kk[x]/(x^{p-2})(-\x)\oplus\kk[y]/(y^{q-2})(-\y)\oplus\kk(\x+\y-\w)$, where $\x+\y-\w = -(p-1)\x = -(q-1)\y$.
\end{remark}

\subsubsection{Exceptional collections}

\begingroup
\def\myA{{\kk[x,y]/(w)}}
\def\mydsg{{\dsgof{L_w}{\kk[x,y]/(w)}}}

Now we are ready to form exceptional collections in $\dsgof{L_w}{\kk[x,y]/(w)}$ out of our modules and their shifts for each case of $w$.


\begin{prop}\label{prop:main:n=2}
  Let the ring be $R=\kk[x,y]$, the potential be $w=x^p+y^q$, $w=x^p+xy^q$ or $w=yx^p+xy^q$ for $p,q\ge 2$, the grading group be $L_w$ and the graded $\myA$-modules $\kk$, $M_x$, $M_y$ and $M_{xy}$ be as described above. Then the corresponding graded category of singularities $\mydsg$ admits a full strongly exceptional collection as follows:
  \begin{enumerate}[label=(\alph*)]
      \item \textbf{Split case {\normalfont(see Prop.~\ref{prop:futaki-ueda-split} for $n=2$)}:} If $w = x^p+y^q$, the collection consists of $(p-1)(q-1)$ shifts of $\kk$ of the form $\kk(-i\x-j\y)[i+j]$ for $0\le i \le p-2$, $0\le j \le q-2$ arranged as on Figure~\ref{fig:2split}.
      \item \textbf{Chain case:} If $w = x^p+xy^q$, the collection consists of $pq-q+1$ objects, namely
      \begin{itemize}[itemsep=0ex, topsep=0ex, parsep=0ex]
          \item $(p-1)(q-1)$ shifts of $\kk$ as in (a),
          \item and $p$ shifts of $M_y$ of the form $M_y(-i\x)[i]$ for $-1 \le i \le p-2$,
      \end{itemize} arranged as on Figure~\ref{fig:2chain}.
      \item \textbf{Loop case:} If $w = yx^p+xy^q$, the collection consists of $pq$ objects, namely
      \begin{itemize}[itemsep=0ex, topsep=0ex, parsep=0ex]
          \item $(p-1)(q-1)$ shifts of $\kk$ as in (a),
          \item $p-1$ shifts of $M_y$ of the form as in (b) for $0\le i \le p-2$,
          \item $q-1$ shifts of $M_x$ of the form $M_x(-j\y)[j]$ for $0\le j\le q-2$,
          \item and one extra-object $M_{xy}(\x+\y)[-1]$,
      \end{itemize} arranged as on Figure~\ref{fig:2loop}.
  \end{enumerate}
  \textbf{Legend:} For the sake of readability, we did not apply shifts $[\bullet]$ to the objects on Figures~\ref{fig:2split}--\ref{fig:2loop}. Instead, we used the notation $X \xrightarrow{[k]} Y$ to denote a one-dimensional space of morphisms from $X$ to $Y[k]$, or more generally, from $X[i]$ to $Y[i+k]$ for $i\in\ZZ$.
\end{prop}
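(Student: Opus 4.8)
The plan is to split the proof into the two independent verifications that the paper's outline calls for: first that each displayed collection is strongly exceptional, and second that it is full. The morphism spaces between $\kk$, $M_x$, $M_y$, $M_{xy}$ and all their shifts are already recorded in Table~\ref{table:all-morphisms:n=2}, so the strong-exceptional part reduces to organized bookkeeping, whereas the fullness part is the substantive work. In each of the three cases I would order the objects exactly as drawn on Figures~\ref{fig:2split}--\ref{fig:2loop}, where morphisms flow only in nonnegative lattice directions, and then read off the required vanishings.

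For the strong-exceptional verification I would reduce every $\Hom$ in the collection to a single graded piece of an entry of the table. Concretely, any two collection objects have the form $E = X(l)[a]$ and $E' = X'(l')[a']$ with $X,X'\in\{\kk,M_x,M_y,M_{xy}\}$, so
$$ \Hom_\dsgLA(E,E'[m]) \;=\; \Hom_\dsgLA\!\big(X,\,X'(l'-l)[a'-a+m]\big), $$
and by Proposition~\ref{prop:gr/ungr-dsg} this is the $(l-l')$-graded component of the ungraded space $\Hom_{\dsgof{}{A}}(X,X'[a'-a+m])$, which Remark~\ref{rem:app-morphisms:table-remark1} computes from the table via quasi-periodicity. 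The point is that the chosen cohomological shifts are calibrated to the grading shifts (for example the interior objects are $\kk(-i\x-j\y)[i+j]$), so the only surviving graded pieces sit in degree $m=0$ and point forward along the ordering of the figure, while the diagonal entries cut down to $\kk$; this is precisely what makes the collection exceptional and strong. This is a finite check per case.

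For fullness I would invoke Proposition~\ref{prop:gen-criterion-invertibles-collections}: it suffices to generate $\kk(l)$ for every $l\in L_w$, and by Remark~\ref{remark:generation-up-to-quasi-periodicity} only for $l$ ranging over a set of representatives of the finite group $\lwbar$. By Lemma~\ref{lem:lwbar-elements:n=2} these representatives may be taken to be $-i\x-j\y$ with $0\le i\le p-1$, $0\le j\le q-1$. The collection already contains the ``interior'' shifts $\kk(-i\x-j\y)$ for $0\le i\le p-2$, $0\le j\le q-2$; the remaining boundary classes (where $i=p-1$ or $j=q-1$) I would build from explicit distinguished triangles coming from short exact sequences of modules. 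A representative example in the chain and loop cases is
$$ 0 \to M_y(-\y) \xrightarrow{\;y\;} M_y \to \kk \to 0, $$
together with its grading shifts and the analogous multiplication-by-$x$ and syzygy sequences among $\kk$, $M_x$, $M_{xy}$; inducting on the lattice position one sweeps out every required $\kk(-i\x-j\y)$ from the objects of the collection. Applying Proposition~\ref{prop:gen-criterion-invertibles-collections} then yields fullness.

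I expect the fullness step to be the main obstacle, and it must be carried out case by case. The \textbf{loop case} is the hardest: it uses the extra module $M_{xy}$ and the auxiliary object $M_{xy}(\x+\y)[-1]$, and the group $\lwbar\simeq\ZZ/(pq-1)\ZZ$ is cyclic and wraps around, so one must arrange the generating triangles to reach every residue class while respecting the identification $-(p-1)\xx=-(q-1)\yy$ noted in the remark after Lemma~\ref{lem:lwbar-elements:n=2}, taking care not to double count. The \textbf{chain case} is intermediate (only $M_y$ is needed, but the boundary shifts in the $\y$-direction require several module triangles), and the \textbf{split case} is essentially Proposition~\ref{prop:futaki-ueda-split}, with the strong-exceptional refinement supplied directly by the computation above.
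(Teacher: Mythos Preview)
Your proposal is correct and follows essentially the same two-step structure as the paper: Lemma~\ref{lemma:main:n=2:checking-collections} carries out the strong-exceptional bookkeeping from Table~\ref{table:all-morphisms:n=2} and the description of $\lwbar$, and Lemma~\ref{lemma:main:n=2:checking-fullness} generates the required shifts $\kk(-i\x-j\y)$ via families of short exact sequences of the kind you indicate, after which Lemma~\ref{lem:lwbar-elements:n=2}, Proposition~\ref{prop:gen-criterion-invertibles-collections} and Remark~\ref{remark:generation-up-to-quasi-periodicity} conclude. The only organizational difference is that the paper packages the fullness argument into reusable ``generation rules'' built from the auxiliary modules $M_{x,i}=\kk[x]/(x^i)$ and $M_{y,j}=\kk[y]/(y^j)$ (together with $M_{xy}$ in the loop case), rather than inducting directly on single multiplication-by-$x$ or -$y$ triangles; this streamlines the loop case but is the same idea.
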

\endgroup

\begingroup

\def\My{M_y}
\def\Mx{M_x}
\def\Mxy{M_{xy}}

\def\myextraspace{0ex}

\begin{figure}[p]
\centering
\begin{tikzcd}[column sep = small]
\kk(-(q-2)\y) \ar[r, "{[1]}"] & \kk(-\x-(q-2)\y) \ar[r, "{[1]}"] & ... \ar[r, "{[1]}"] & \kk(-(p-2)\x - (q-2)\y) \\
... \ar[u, "{[1]}"] \ar[ru, "{[2]}"] & ... \ar[u, "{[1]}"] & ... \ar[ru, "{[2]}"] & ... \ar[u, "{[1]}"] \\ 
\kk(-\y) \ar[u, "{[1]}"] \ar[ru, "{[2]}"] \ar[r, "{[1]}"] & \kk(-\x - \y) \ar[u, "{[1]}"] \ar[ru, "{[2]}"] \ar[r, "{[1]}"] & ... \ar[ru, "{[2]}"] \ar[r, "{[1]}" pos=0.7] & \kk(-(p-2)\x - \y) \ar[u, "{[1]}"] \\
\kk \ar[u, "{[1]}"] \ar[ru, "{[2]}"] \ar[r, "{[1]}"] & \kk(-\x) \ar[u, "{[1]}"] \ar[ru, "{[2]}"] \ar[r, "{[1]}"] & ... \ar[ru, "{[2]}"] \ar[r, "{[1]}"] & \kk(-(p-2)\x ) \ar[u, "{[1]}"]
\end{tikzcd}
\vspace{\myextraspace}
\caption{\textbf{(2-split)} Exceptional collection in $\dsgof{L_w}{\kk[x,y]/(w)}$ for $w = x^p + y^q$}
\label{fig:2split}
\end{figure}

\begin{figure}[p]
\centering
\begin{tikzcd}[column sep = small]
& \kk(-(q-2)\y) \ar[r, "{[1]}"] & \kk(-\x-(q-2)\y) \ar[r, "{[1]}"] & ... \ar[r, "{[1]}"] & \kk(-(p-2)\x - (q-2)\y) \\
& ... \ar[u, "{[1]}"] \ar[ru, "{[2]}"] & ... \ar[u, "{[1]}"] & ... \ar[ru, "{[2]}"] & ... \ar[u, "{[1]}"] \\ 
& \kk(-\y) \ar[u, "{[1]}"] \ar[ru, "{[2]}"] \ar[r, "{[1]}"] & \kk(-\x - \y) \ar[u, "{[1]}"] \ar[ru, "{[2]}"] \ar[r, "{[1]}"] & ... \ar[ru, "{[2]}"] \ar[r, "{[1]}" pos=0.7] & \kk(-(p-2)\x - \y) \ar[u, "{[1]}"] \\
& \kk \ar[u, "{[1]}"] \ar[ru, "{[2]}"] \ar[r, "{[1]}"] & \kk(-\x) \ar[u, "{[1]}"] \ar[ru, "{[2]}"] \ar[r, "{[1]}"] & ... \ar[ru, "{[2]}"] \ar[r, "{[1]}"] & \kk(-(p-2)\x ) \ar[u, "{[1]}"]\\
\My(\x) \ar[ru, "{[1]}"] & \My \ar[ru, "{[1]}"] \ar[u, "{[0]}"] & \My(-\x) \ar[ru, "{[1]}"] \ar[u, "{[0]}"] & ... \ar[ru, "{[1]}"] & \My(-(p-2) \x) \ar[u, "{[0]}"]
\end{tikzcd}
\vspace{\myextraspace}
\caption{\textbf{(2-chain)} Exceptional collection in $\dsgof{L_w}{\kk[x,y]/(w)}$ for $w = x^p + xy^q$}
\label{fig:2chain}
\end{figure}

\begin{figure}[p]
\centering
\begin{adjustbox}{scale=0.9}
\begin{tikzcd}[column sep = small]
\Mx(-(q-2)\y) \ar[r, "{[0]}"] & \kk(-(q-2)\y) \ar[r, "{[1]}"] & \kk(-\x-(q-2)\y) \ar[r, "{[1]}"] & ... \ar[r, "{[1]}"] & \kk(-(p-2)\x - (q-2)\y) \\
... \ar[ru, "{[1]}"] & ... \ar[u, "{[1]}"] \ar[ru, "{[2]}"] & ... \ar[u, "{[1]}"] & ... \ar[ru, "{[2]}"] & ... \ar[u, "{[1]}"] \\
\Mx(-\y) \ar[ru, "{[1]}"] \ar[r, "{[0]}"] & \kk(-\y) \ar[u, "{[1]}"] \ar[ru, "{[2]}"] \ar[r, "{[1]}"] & \kk(-\x - \y) \ar[u, "{[1]}"] \ar[ru, "{[2]}"] \ar[r, "{[1]}"] & ... \ar[ru, "{[2]}"] \ar[r, "{[1]}" pos=0.7] & \kk(-(p-2)\x - \y) \ar[u, "{[1]}"] \\
\Mx \ar[ru, "{[1]}"] \ar[r, "{[0]}"] & \kk \ar[u, "{[1]}"] \ar[ru, "{[2]}"] \ar[r, "{[1]}"] & \kk(-\x) \ar[u, "{[1]}"] \ar[ru, "{[2]}"] \ar[r, "{[1]}"] & ... \ar[ru, "{[2]}"] \ar[r, "{[1]}"] & \kk(-(p-2)\x ) \ar[u, "{[1]}"] \\
\Mxy(\x+\y) \ar[ru, "{[1]}"] & \My \ar[ru, "{[1]}"] \ar[u, "{[0]}"] & \My(-\x) \ar[ru, "{[1]}"] \ar[u, "{[0]}"] & ... \ar[ru, "{[1]}"] & \My(-(p-2) \x) \ar[u, "{[0]}"]
\end{tikzcd}
\end{adjustbox}
\vspace{\myextraspace}
\caption{\textbf{(2-loop)} Exceptional collection in $\dsgof{L_w}{\kk[x,y]/(w)}$ for $w = yx^p + xy^q$}
\label{fig:2loop}
\end{figure}
\endgroup

\begin{remark}
  By definition, an exceptional collection is an ordered sequence of objects. When we say that the exceptional collection is arranged as in Figures~\ref{fig:2split}--\ref{fig:2loop}, we mean that the objects can be rearranged into a 1-dimensional sequence in which the morphisms go only in one direction. For example, in the given situation, one may use lexicographic ordering, since our arrows go only in the rightward and upward directions.
\end{remark}

\subsubsection{Verifying exceptionality}

In order to prove Proposition~\ref{prop:main:n=2}, we firstly need to check that our collections are indeed strong and exceptional and have morphisms exactly as described on the corresponding figures. We do this in the following lemma by using information about the morphisms from Table~\ref{table:all-morphisms:n=2} and about the group $\lwbar$ from Table~\ref{tbl:grading-groups:n=2}.

\begingroup
\def\myA{{\kk[x,y]/(w)}}
\def\mydsg{{\dsgof{L_w}{\kk[x,y]/(w)}}}
\def\mydsgungr{{\dsgof{}{\kk[x,y]/(w)}}}
\begin{lemma}
\label{lemma:main:n=2:checking-collections}
  The collections described in Proposition~\ref{prop:main:n=2} are strongly exceptional and the morphisms are as depicted on Figures~\ref{fig:2split}--\ref{fig:2loop}.
\end{lemma}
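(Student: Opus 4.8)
The plan is to reduce every morphism computation between objects of the collection to the entries of Table~\ref{table:all-morphisms:n=2}, combined with the quasi-periodicity of Remark~\ref{rem:app-morphisms:table-remark1} and the graded/ungraded correspondence of Proposition~\ref{prop:gr/ungr-dsg}. Any two objects of the collection have the form $E = M(a)[i]$ and $F = N(b)[j]$ with $M,N\in\{\kk,M_x,M_y,M_{xy}\}$ and $a,b\in L_w$, so that
$$ \Hom_{\dsgof{L_w}{\kk[x,y]/(w)}}(E, F[k]) = \bigl[\Hom_{\dsgof{}{\kk[x,y]/(w)}}(M, N[\,k+j-i\,])\bigr]_{\,b-a}, $$
the degree-$(b-a)$ graded component of the ungraded morphism space. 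By quasi-periodicity the latter is governed entirely by the two boxes $\Hom(M,N)$ and $\Hom(M,N[1])$ of Table~\ref{table:all-morphisms:n=2}, since passing from homological degree $m$ to $m+2$ shifts the $L_w$-grading by $\w$. First I would record this dictionary once and for all; then verifying strong exceptionality becomes the combinatorial task of deciding, for each relevant pair $(M,N)$ and each target degree $d=b-a$, in which homological degrees $m$ the component $[\Hom(M,N[m])]_d$ is nonzero.

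The diagonal case, exceptionality of each object, amounts to showing that the degree-$0$ part of $\Hom(M,M[m])$ equals $\kk$ for $m=0$ and vanishes otherwise. Reading off the self-$\Hom$ entries of Table~\ref{table:all-morphisms:n=2}, this reduces to checking that none of the ``extra'' degrees occurring there (for instance $\x+\y-\w$ for $\kk$, or the degrees $l\y$ with $0\le l<q$ appearing in $\Hom(M_y,M_y)=\kk[y]/(y^q)$) is trivial or coincides with a multiple of $\w$, apart from the forced degree-$0$ slot. Here I would use that $\w$ has infinite order in $L_w$ (so $j\w=0$ forces $j=0$) together with the explicit structure of $\lwbar=L_w/\la\w\ra$ from Table~\ref{tbl:grading-groups:n=2}: projecting to $\lwbar$ turns each such condition into a statement about the orders of $\xx$ and $\yy$ in a finite cyclic group, settled immediately in each of the three cases.

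For the off-diagonal pairs I would treat the backward and forward directions together. Semiorthogonality requires that whenever $E$ follows $F$ in the ordering of the figure, $\Hom(E,F[k])=0$ for all $k$; strongness and agreement with the figure require that whenever $E$ precedes $F$, the space $\Hom(E,F[k])$ is concentrated in the single homological degree determined by the internal shifts $[i],[j]$ (which are chosen precisely so that each drawn arrow becomes a degree-$0$ map between the shifted objects) and is one-dimensional exactly where an arrow is drawn. Both statements follow from the same principle: for a fixed target degree $d=b-a$ the $\w$-periodicity separates the homological degrees in which $d$ can appear, so the nonzero component occurs for at most one value of $m$; one then checks that this $m$ matches the arrow label on the figure and that the backward-direction degrees never occur. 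The concrete input is again the reduction of each degree equation modulo $\w$, using Lemma~\ref{lem:lwbar-elements:n=2} to control the shifts $-i\xx-j\yy$ indexing the copies of $\kk$.

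The main obstacle will be the bookkeeping in the loop case $w=yx^p+xy^q$, where all four modules $\kk,M_x,M_y,M_{xy}$ occur simultaneously and $\lwbar\simeq\ZZ/(pq-1)\ZZ$ is ``twisted''. There the coincidence $-(p-1)\xx=-(q-1)\yy$ noted after Lemma~\ref{lem:lwbar-elements:n=2}, together with the off-pattern extra object $M_{xy}(\x+\y)[-1]$, forces several degree equations to be resolved against one another rather than in isolation; ruling out every unwanted morphism, in particular any backward morphism into $M_{xy}(\x+\y)[-1]$ or between the $M_x$- and $M_y$-strands, is where the computation is most delicate. The split and chain cases are strictly easier, the split case being already covered by the Brieskorn-Pham computation of Futaki-Ueda recalled in Proposition~\ref{prop:futaki-ueda-split}.
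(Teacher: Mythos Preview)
Your proposal is correct and follows essentially the same approach as the paper: reduce all morphism spaces to the entries of Table~\ref{table:all-morphisms:n=2} via quasi-periodicity and the graded/ungraded correspondence, then check case by case---using the explicit description of $\lwbar$ from Table~\ref{tbl:grading-groups:n=2}---that the resulting $L_w$-degree conditions single out exactly the arrows drawn on Figures~\ref{fig:2split}--\ref{fig:2loop}. The paper organizes the casework identically (one pair for split, four for chain, sixteen for loop) and works out the $(\kk,M_y)$ and $(\kk,\kk)$ cases explicitly as representatives; your outline covers the same ground. One small caution: your phrase ``the nonzero component occurs for at most one value of $m$'' is slightly loose---quasi-periodicity gives at most one even and at most one odd $m$, and you must still rule out one of the two parities in each case---but you clearly have the right mechanism in mind.
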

\begin{proof}
  The proof consists of considering many similar cases. Let us, for example, consider the chain type polynomial $w=x^p+xy^q$, and morphism spaces from the given shifts of $\kk$ to the given shifts of $M_y$. In this case, we should prove that all such morphisms are zero as depicted on the Figure~\ref{fig:2chain}.

  From Table~\ref{table:all-morphisms:n=2}, we see that the morphisms from $\kk$ to the shifts of $M_y$ in the ungraded category $\mydsgungr$ are as follows:
\begin{equation*}
  \begin{aligned}
    &\Hom_\mydsgungr(\kk,M_y[2k]) &&= \kk(\x+\y-\w+k\w),\\
    &\Hom_\mydsgungr(\kk,M_y[2k+1]) &&= \kk(\y+k\w),
\end{aligned}
\end{equation*}
for all $k\in\ZZ$. Thus the morphisms in the corresponding graded category $\mydsg$ are as follows (see Step~4 of Recipe~\ref{recipe}):
\begin{equation*}
  \begin{aligned}
    &\Hom_\mydsg(\kk,M_y(l)[2k]) &&= \begin{cases}
    \kk,&\text{ if } l+\x+\y-\w+k\w=0,\\
    0 &\text{ otherwise},
    \end{cases}
    \\
    &\Hom_\mydsg(\kk,M_y(l)[2k+1]) &&= \begin{cases}
    \kk,&\text{ if } l+\y+k\w=0,\\
    0 &\text{ otherwise}.
    \end{cases}
\end{aligned}
\end{equation*}

We now see that the non-trivial morphisms from $\kk$ to $M_y(l)[\bullet]$ may exist only when $l+\x+\y\in\<\w\>$ or $l+\y\in\<\w\>$, that is when \\[1ex]
 $(*)$ \hfill $\bar l+\xx+\yy = 0$ or $\bar l+\yy = 0$ in $\lwbar$, \hfill {} \\[1ex]
where $\bar l$ is the class of $l$ in $\lwbar$. In our collection, however, we are only using the shifts $\kk(-i_1\x-j\y)$ for $0\le i_1 \le p-2$, $0\le j\le q-2$, and $M_y(-i_2\x)$ for $-1 \le i_2 \le p-2$. So in order to show that there are no morphisms from the shifts of $\kk$ to the shifts of $M_y$ in our collection,
we just need to check that the condition $(*)$ on $\bar l$ never holds for $l = -i_2\x -(-i_1\x-j\y) = (i_1-i_2)\x+j\y$.

To show the latter, we use information about the group $\lwbar$ from Table~\ref{tbl:grading-groups:n=2}.
Since $\xx = -q\yy$, we have $\bar l = (i_1-i_2)\xx+j\yy = (-q(i_1-i_2)+j)\yy$. Similarly, $\bar l + \xx+\yy = (-q(i_1-i_2+1)+j+1)\yy$ and $\bar l+\yy = (-q(i_1-i_2)+j+1)\yy$. Since $\yy$ has order $pq$ in $\lwbar$, the elements $\bar l+\xx+\yy$ and $\bar l+\yy$ may be equal to zero only when $pq|(-q(i_1-i_2+1)+j+1)$ or $pq|(-q(i_1-i_2)+j+1)$ respectively. In particular, this would imply that $q|(j+1)$ in either case. However, this cannot happen, since $1\le (j+1)\le q-1$ by assumption. Thus the non-zero morphisms cannot occur between the given sets of shifts of $\kk$ and $M_y$. This concludes the proof in the this case.

To complete the proof, we need to consider all types of $w$, all possible pairs of objects among $\kk$, $M_y$, $M_x$ and $M_{xy}$, and the morphisms from the shifts of the first object to the shifts of the second one. This gives us in total 1 case when $w$ has split type, 4~cases when $w$ has chain type (since we have 2~kinds of objects in the collection, $\kk$ and $M_y$), and 16~cases when $w$ has loop type (since $16=4^2$ for 4 kinds of objects). The argument however stays the same in each of the cases, so we omit it. The only difference is that if there are some arrows present between the given groups of objects on Figures~\ref{fig:2split}--\ref{fig:2loop}, then instead of showing that the condition analogous to $(*)$ never holds, we just show that it holds exactly in the cases depicted on the corresponding figures.

For example, in the case of the morphisms from the shifts of $\kk$ to themselves (for all types of~$w$), we see from Table~\ref{table:all-morphisms:n=2} that the morphisms from $\kk$ to $\kk(l)[\bullet]$ are non-trivial only when
 \\[1ex]
 $(**)$ \phantom{} \hfill one of $\bar l$, $\bar l+\xx$, $\bar l +\yy$ or $\bar l+\xx+\yy$ is zero in $\lwbar$. \hfill {} \\[1ex]
Then, comparing the shifts $\kk(-i_1\x-j_1\y)$ and $\kk(-i_2\x-j_2\y)$ for the given bounds on $i_1$, $i_2$, $j_1$, $j_2$, setting $l=(-i_2\x-j_2\y)-(-i_1\x-j_1\y)$ and using information from the last column of Table~\ref{tbl:grading-groups:n=2}, we see that $(**)$ may hold only when $i_2=i_1+\eps_x$ and $j_2=j_1+\eps_y$, where $(\eps_x, \eps_y)\in\{0, 1\}^2$. The four cases for $(\eps_x,\eps_y)$ will give us respectively the horizontal, vertical and diagonal arrows on Figures~\ref{fig:2split}--\ref{fig:2loop} (with the prescribed shifts $[\bullet]$), as well as the condition that $\kk$ is exceptional (for $\eps_x=\eps_y=0$).
\end{proof}

\subsubsection{Generating shifts of \texorpdfstring{$\kk$}{k}}

The second part of the proof of Proposition~\ref{prop:main:n=2} consists of generating a bunch of shifts of the object $\kk$ from the objects of the given collection. To do this we use an idea described in \cite[Section 4]{ueda} and \cite[Lemma~4.2]{futaki-ueda}. Namely, we consider simple short exact sequences in the category of $\kk[x,y]/(w)$-modules and use them to generate new objects from the given ones. Since we need to repeat similar steps multiple times, we also formulate them in terms of the so-called generation rules. We then apply these rules to the objects of the given collection until we generate all the desired shifts of $\kk$. Also, our Lemma~\ref{lemma-perfect} comes in handy here since it often allows us to simplify generation rules by removing zero objects. This whole procedure is detalized in the following lemma.

\begin{lemma}
\label{lemma:main:n=2:checking-fullness}
  The collections described in Proposition~\ref{prop:main:n=2} generate the modules $\kk(-i\x-j\y)$ for all $0\le i\le p-1$ and $0\le j\le q-1$.
\end{lemma}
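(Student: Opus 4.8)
The plan is to use the standard principle that a short exact sequence of $\kk[x,y]/(w)$-modules becomes a distinguished triangle in $\dsgof{L_w}{\kk[x,y]/(w)}$, so that a triangulated subcategory containing two of the three terms (together with all their grading twists and homological shifts) automatically contains the third. Writing $\CC$ for the triangulated subcategory generated by the collection of Proposition~\ref{prop:main:n=2}, I would repeatedly apply this ``two-out-of-three'' rule, starting from the objects of the collection, until every $\kk(-i\x-j\y)$ with $0\le i\le p-1$ and $0\le j\le q-1$ is shown to lie in $\CC$. Two further tools feed into this: Lemma~\ref{lemma-perfect}, which exhibits certain modules as zero objects of the singularity category (so that a triangle containing such a term collapses to an isomorphism, up to shift, between the other two), and the quasi-periodicity equivalence $N(\w)\simeq N[2]$ of Lemma~\ref{lemma:quasi-period}, which trades a grading twist by $\w$ for a homological shift and so relates twists of the auxiliary modules that sit far apart in $L_w$.

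The explicit generation rules come from a short list of exact sequences, each of which may be twisted by an arbitrary $l\in L_w$. Multiplication by the variables gives
\[ 0\to M_y(l-\y)\xrightarrow{\,y\,}M_y(l)\to\kk(l)\to0, \qquad 0\to M_x(l-\x)\xrightarrow{\,x\,}M_x(l)\to\kk(l)\to0, \]
while the two natural surjections out of $M_{xy}=\kk[x,y]/(xy)$ give
\[ 0\to M_x(l-\x)\to M_{xy}(l)\to M_y(l)\to0, \qquad 0\to M_y(l-\y)\to M_{xy}(l)\to M_x(l)\to0. \]
On the vanishing side, applying Lemma~\ref{lemma-perfect} to $\kk[x]$ (with $y$ acting by $0$) and, in the split case, to $\kk[y]$ (with $x$ acting by $0$) shows that $\kk[x]/(x^p)$ and $\kk[y]/(y^q)$ are perfect, hence zero in the singularity category; each carries its evident $x$- or $y$-adic filtration whose graded pieces are exactly $\kk(-i\x)$ for $0\le i\le p-1$, respectively $\kk(-j\y)$ for $0\le j\le q-1$. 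The basic filtration rule I will use is: if a module $Z$ lies in $\CC$ and all but one graded piece of such a filtration lie in $\CC$, then the remaining piece lies in $\CC$.

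In the split case this already suffices. Twisting $\kk[x]/(x^p)\simeq0$ by $-j\y$ gives a filtered zero object with pieces $\kk(-i\x-j\y)$, $0\le i\le p-1$; peeling off the pieces with $0\le i\le p-2$ (which lie in the collection for $0\le j\le q-2$) forces $\kk(-(p-1)\x-j\y)\in\CC$. Then twisting $\kk[y]/(y^q)\simeq0$ by $-i\x$ and peeling off the pieces with $0\le j\le q-2$ (now available for every $0\le i\le p-1$) yields $\kk(-i\x-(q-1)\y)\in\CC$, covering the whole target rectangle. The chain case follows the same outline, but the $y$-direction vanishing is unavailable and its role is played by the object $M_y$: first $\kk[x]/(x^p)\simeq0$ produces $\kk(-(p-1)\x-j\y)$ for $0\le j\le q-2$ as above; then, using the first exact sequence together with the shifts of $\kk$ just generated, one bootstraps the twists $M_y(-i\x-j\y)$ for $0\le j\le q-1$ out of the axis objects $M_y(-i\x)$, invoking quasi-periodicity (here $\w=\x+q\y$) and the collection's extra object $M_y(\x)$ to supply the boundary terms $M_y(-i\x-q\y)\simeq M_y(-(i-1)\x)[-2]$; feeding these back into the first sequence generates the missing $\kk(-i\x-(q-1)\y)$.

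The loop case is the main obstacle and must be handled most carefully. Here neither restriction of $w$ to a coordinate axis is injective, so the direct vanishing modules $\kk[x]/(x^p)$ and $\kk[y]/(y^q)$ are unavailable, and all four exact sequences above, the module $M_{xy}$, and quasi-periodicity (with both presentations $\w=\y+p\x=\x+q\y$, whence $(p-1)\x=(q-1)\y$) must be combined. The difficulty is organizational but genuine: the auxiliary objects $M_x$, $M_y$, $M_{xy}$ occur in the collection only along single axes (or at a single corner), so one must fix an order of applications in which, at each step, the two input terms have already been produced, all while respecting the coincidence $-(p-1)\x=-(q-1)\y$ recorded after Lemma~\ref{lem:lwbar-elements:n=2}. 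I would organize the generation as a staircase sweeping the rectangle, using the two $M_{xy}$-sequences to pass between the $x$- and $y$-axis families and quasi-periodicity to close up at the far corner; checking that this sweep never calls on an object before it is available is where essentially all the work lies. Once the sweep is complete, Lemma~\ref{lem:lwbar-elements:n=2} guarantees that the generated shifts represent every class of $\lwbar$, which by Remark~\ref{remark:generation-up-to-quasi-periodicity} is exactly the input required to conclude fullness through Proposition~\ref{prop:gen-criterion-invertibles-collections}.
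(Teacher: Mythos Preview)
Your approach is the paper's: short exact sequences become triangles, Lemma~\ref{lemma-perfect} kills the filtration modules $\kk[x]/(x^p)$ and (in the split case) $\kk[y]/(y^q)$, and quasi-periodicity trades $\w$-twists for shifts. The split case is correct and identical to the paper's. In the chain case your order (first $x$-expansion via $\kk[x]/(x^p)\simeq 0$, then $y$-expansion via $M_y$) is the reverse of the paper's but works; note only that for the column $i=p-1$ your descent needs the starting object $M_y(-(p-1)\x)\simeq M_y(\x)[-2]$, which comes from the \emph{other} presentation $\w=p\x$ that you did not invoke (you used only $\w=\x+q\y$). Alternatively you could re-run the $x$-expansion once more at $j=q-1$; either way the corner is easy, but it is not covered by what you wrote.

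The loop case is where the proposal is not yet a proof. You have listed the right ingredients, but ``a staircase sweep'' does not name the step that makes it close. The concrete obstruction is that the loop collection, unlike the chain one, does \emph{not} contain $M_y(\x)$ (nor $M_x(\y)$), and without $M_y(\x)$ the chain-style rule
\[
\kk(-(q-1)\y+l)\in\bigl\langle\,\kk(l),\dots,\kk(-(q-2)\y+l),\,M_y(l),\,M_y(\x+l)\,\bigr\rangle
\]
cannot even be applied at $l=0$. The paper's key step is to \emph{manufacture} $M_y(\x)$ first: iterate $0\to M_x(-\x)\to M_x\to\kk\to0$ along the bottom row $\kk,\kk(-\x),\dots,\kk(-(p-2)\x)$ to reach $M_x(-(p-1)\x)$, which by $\w=\y+p\x$ is $M_x(\x+\y)[-2]$; then the shifted triangle $0\to M_y(\x)\to M_{xy}(\x+\y)\to M_x(\x+\y)\to 0$, together with the collection's corner object $M_{xy}(\x+\y)$, produces $M_y(\x)$. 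By the $x\leftrightarrow y$ symmetry one likewise obtains $M_x(\y)$, and along the way $M_x(-(q-1)\y)$ and $M_y(-(p-1)\x)$. Only after these helper objects are in hand can the chain-style expansion be run in both directions; this is precisely the ``order of applications'' you flagged as the hard part, and it needs to be made explicit.
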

\begin{proof}
\textit{Step 1 {(short exact sequences)}.} Consider the graded $\myA$-modules $M_{x,i} = \kk[x]/(x^i)$ and $M_{y,j}=\kk[y]/(y^j)$, where $1 \le i\le p$ and $1 \le j\le q$, with the obvious actions of $x$ and $y$. In particular, we have $M_{x,1} = M_{y,1} = \kk$. Similarly to \cite{ueda} and \cite{futaki-ueda}, we consider the following short exact sequences of modules:
\vspace{-2ex}
\begin{gather}
\label{eq:exc-coll:n=2-ses-M_xi}
    0\to \kk(-i\x) \xrightarrow{x^i} M_{x,i+1} \xrightarrow{1} M_{x,i} \to 0,
    \\
\label{eq:exc-coll:n=2-ses-M_yj}
    0\to \kk(-j\y) \xrightarrow{y^j}M_{y,j+1} \xrightarrow{1} M_{y,j} \to 0,
\end{gather}
where $1\le i \le p-1$ and $1\le j \le q-1$.

In the split and chain cases, the module $M_{x,p} = \kk[x]/(x^p)$ is perfect by Lemma~\ref{lemma-perfect} (put $K = \kk[x]$ considered as a $\kk[x,y]$-module with a trivial action of $y$), hence it is a zero object in $\mydsg$. In the loop case, however, the object $M_x$ and its shift give us the following short exact sequence for $M_{x,p}$:
\vspace{-1ex}
\begin{equation}
\label{eq:exc-coll:n=2-ses-M_xp}
  0\to M_x(-p\x) \xrightarrow{x^p} M_x \xrightarrow{1} M_{x,p} \to 0. 
\end{equation}
Similarly, $M_{y,q}$ is a zero object in $\mydsg$ in the split case, and is embedded in the following short exact sequence in the chain and loop cases:
\vspace{-1ex}
\begin{equation}
\label{eq:exc-coll:n=2-ses-M_yq}
  0\to M_y(-q\y) \xrightarrow{y^q} M_y \xrightarrow{1} M_{y,q} \to 0.
\end{equation}

In the loop case, we will also use the following short exact sequences:
\vspace{-1ex}
\begin{gather}
\label{eq:exc-coll:n=2-ses-M_x-x}
    0\to M_x(-\x) \xrightarrow{x} M_x \xrightarrow{1} \kk \to 0, \\
\label{eq:exc-coll:n=2-ses-M_y-y}
    0\to M_y(-\y) \xrightarrow{y} M_y \xrightarrow{1} \kk \to 0, \\
\label{eq:exc-coll:n=2-ses-M_xy_x}
    0\to M_x(-\x) \xrightarrow{x} M_{xy} \xrightarrow{1} M_y \to 0, \\
\label{eq:exc-coll:n=2-ses-M_xy_y}
    0\to M_y(-\y) \xrightarrow{y} M_{xy} \xrightarrow{1} M_x \to 0,
\end{gather}

Each of the above short exact sequences gives rise to a distinguished triangle in $\mydsg$ which can be used for generating each object of this triangle from the other two.

\vspace{0.5ex}
\noindent\textit{Step 2 (generation rules).} Now, let us use the short exact sequences from Step~1 in order to build various \textit{generation rules}. For example, let us use sequences \eqref{eq:exc-coll:n=2-ses-M_xi} for all $1\le i\le p-1$ to show that the objects $\kk(-i\x)$ for $0\le i \le p-2$ together with $M_{x,p}$ generate $\kk(-(p-1)\x)$. Indeed, by induction on $i$ and \eqref{eq:exc-coll:n=2-ses-M_xi}, we can generate $M_{x,p-1}$ from the above shifts of $\kk$. Then by plugging in $i=p-1$ into \eqref{eq:exc-coll:n=2-ses-M_xi}, we see that $\kk(-(p-1)\x)$ is generated by $M_{x,p-1}$ and $M_{x,p}$ which concludes this argument. We can also shift the grading in the argument to show that the objects $\kk(-i\x+l)$ for $0\le i\le p-2$ together with the object $M_{x,p}(l)$ generate the object $\kk(-(p-1)\x+l)$ for any $l\in L_w$. We can express this situation as follows:
\begin{equation}
\label{eq:exc-coll:n=2-gen-rule-x}
\kk(-(p-1)\x+l) \in \Bigl\langle \kk(l), \dots, \kk(-(p-2)\x+l), M_{x,p}(l) \Bigr\rangle.
\end{equation}
Similarly, the objects $\kk(-j\y+l)$ for $0\le j\le q-2$ and $M_{y,q}(l)$ generate $\kk(-(q-1)\x+l)$ for any $l\in L_w$:
\begin{equation}
\label{eq:exc-coll:n=2-gen-rule-y}
\kk(-(q-1)\y+l) \in \Bigl\langle \kk(l), \dots, \kk(-(q-2)\y+l), M_{y,q}(l) \Bigr\rangle.
\end{equation}
These will be our first generation rules.

We have noted previously that depending on the type of $w$, either $M_{x,p}$ is a perfect module, hence a zero object in $\mydsg$, or it can be generated from $M_x$ and $M_x(-p\x)$ by \eqref{eq:exc-coll:n=2-ses-M_xp}. Also, in the latter case $w$ is loop, and we have $\y+p\x=\w$, so $M_x(-p\x)= M_x(-w+\y) \simeq M_x(\y)[-2]$. Hence $M_{x,p}$ can be generated from $M_x$ and $M_x(\y)$.
Similarly, $M_{y,q}$ is either zero (when $w$ is split) or can be generated from $M_y$ and $M_y(\x)$ (when $w$ is chain or loop and $\x+q\y=\w$). Summarizing these arguments, we see that in the ``zero'' cases for $M_{x,p}$ and $M_{y,q}$, the rules \eqref{eq:exc-coll:n=2-gen-rule-x} and \eqref{eq:exc-coll:n=2-gen-rule-y} simplify to the following:
\vspace{-1ex}
\begin{gather}
\label{eq:exc-coll:n=2-gen-rule-k-simple-x}
  \kk(-(p-1)\x+l) \in \Bigl\langle \kk(l), \dots, \kk(-(p-2)\x+l) \Bigr\rangle,\\
\label{eq:exc-coll:n=2-gen-rule-k-simple-y}
  \kk(-(q-1)\y+l) \in \Bigl\langle \kk(l), \dots, \kk(-(q-2)\y+l) \Bigr\rangle,
\end{gather}
while in the ``non-zero'' cases, we get respectively:
\vspace{-1ex}
\begin{gather}
\label{eq:exc-coll:n=2-gen-rule:k(-ix)+Mx+Mx(y)=>k+}
  \kk(-(p-1)\x+l) \in \Bigl\langle \kk(l), \dots, \kk(-(p-2)\x+l), M_x(l), M_x(\y+l) \Bigr\rangle,\\
\label{eq:exc-coll:n=2-gen-rule:k(-jy)+My+My(x)=>k+}
  \kk(-(q-1)\y+l) \in \Bigl\langle \kk(l), \dots, \kk(-(q-2)\y+l), M_y(l), M_y(\x+l) \Bigr\rangle,
\end{gather}
for any $l\in L_w$.

In the loop case, we will need one more generation rule. We firstly note that by \eqref{eq:exc-coll:n=2-ses-M_x-x}, $M_x(-\x)$ is generated by $M_x$ and $\kk$. Shifting this statement by $-i\x$ for $0\le i\le p-2$ and applying induction, we see that $M_x(-(p-1)\x)$ is generated by $M_x$ and $\{\kk(-i\x)\mid 0\le i\le p-2\}$. Then we notice that $M_x(-(p-1)\x)=M_x(-(q-1)\y)=M_x(-\w+\x+\y)\simeq M_x(\x+\y)[-2]$, since $\w=\y+p\x=\x+q\y$. Summarizing, we get the following:
\vspace{-1ex}
\begin{equation}
\label{eq:exc-coll:n=2-gen-rule-loop:Mx+k(-ix)=>Mx(x+y)}
 M_x(-(q-1)\y),\; M_x(\x+\y) \in \Bigl\langle \kk, \dots, \kk(-(p-2)\x), M_x \Bigr\rangle.
\end{equation}

\noindent\textit{Step 3 (step-by-step generation).} Using the generation rules obtained in Step~2, we can generate the desired shifts of $\kk$ for each type of $w$ from the given collection as follows.

In the split case, let us quickly recap the argument from \cite{futaki-ueda}. We already have shifts $\kk(-i\x-j\y)$ for $0\le i\le p-2$ and $0\le j\le q-2$ in our collection. Thus we can apply \eqref{eq:exc-coll:n=2-gen-rule-k-simple-x} for $l=-j\y$ to generate $\kk(-(p-1)\x-j\y)$ from the latter shifts for all $0\le j\le q-2$. Then, we apply \eqref{eq:exc-coll:n=2-gen-rule-k-simple-y} for $l=-i\x$ to generate $\kk(-i\x-(q-1)\y)$ for all $0\le i \le p-1$. This gives us the shifts $\kk(-i\x-j\y)$ for all $0\le i\le p-1$ and $0\le j\le q-1$ as was needed.

Similarly, in the chain case, we firstly use \eqref{eq:exc-coll:n=2-gen-rule:k(-jy)+My+My(x)=>k+} for $l=-i\x$ and the given shifts of $\kk$ and $M_y$ to generate $\kk(-i\x-(q-1)\y)$ for all $0\le i\le p-2$. Then, we use \eqref{eq:exc-coll:n=2-gen-rule-k-simple-x} for $l=-j\y$, $0\le j\le q-1$, to generate the rest of the shifts of $\kk$.

In the loop case, we can proceed similarly to the split and chain cases. However, for that we also need to preliminarily generate few more helper objects. Namely, we firstly use \eqref{eq:exc-coll:n=2-gen-rule-loop:Mx+k(-ix)=>Mx(x+y)} to generate $M_x(\x+\y)$ and $M_x(-(q-1)\y)$. Shifting \eqref{eq:exc-coll:n=2-ses-M_xy_y} by $\x+\y$, we see that $M_y(\x)$ is generated by $M_{xy}(\x+\y)$ and $M_x(\x+\y)$. Thus $M_y(\x)$ is also generated by our collection. Repeating this argument while substituting $x\leftrightarrow y$, $p\leftrightarrow q$, we get that $M_x(\y)$ and $M_y(-(p-1)\x)$ are also generated by our collection. Now, since we have generated $M_y(\x)$, we can repeat the first step of the argument for the chain case in order to generate $\kk(-i\x-(q-1)\y)$ for all $0\le i\le p-2$ by using \eqref{eq:exc-coll:n=2-gen-rule:k(-jy)+My+My(x)=>k+}. After that, we can similarly use \eqref{eq:exc-coll:n=2-gen-rule:k(-ix)+Mx+Mx(y)=>k+} and all $M_x(-j\y)$ for $-1\le j\le q-1$ in order to generate $\kk(-(p-1)\x-j\y)$ for all $0\le j\le q-1$.
\end{proof}
\endgroup

\subsubsection{Proof of Proposition~\ref{prop:main:n=2} and concluding remarks}

After making all necessary checks and computations, we can conclude the proof of our proposition as follows.

\begin{proof}[Proof of Proposition~\ref{prop:main:n=2}]
In Lemma~\ref{lemma:main:n=2:checking-collections}, we checked that the collections from Figures~\ref{fig:2split}--\ref{fig:2loop} are indeed exceptional and strong. In Lemma~\ref{lemma:main:n=2:checking-fullness}, we showed that these collections generate the shifts $\kk(-i\x-j\y)$ for all $0\le i\le p-1$ and $0\le j\le q-1$. Finally, by Lemma~\ref{lem:lwbar-elements:n=2}, the generation criterion of Proposition~\ref{prop:gen-criterion-invertibles-collections} and Remark~\ref{remark:generation-up-to-quasi-periodicity}, we see that the collections are full. This concludes the proof of this proposition.
\end{proof}

\begin{remark}
\label{remark:quiver-compositions:n=2}
  If one wants to finish describing the category of singularity in the spirit of Proposition~\ref{prop:strong-coll=>unique-dg-enhanc}, one should also study the compositions of the morphisms. It is actually quite easy to describe the compositions on our figures since all the spaces are 1-dimensional. For that we just need to provide generators for these vector spaces and to write down the corresponding relations. We already know that the composition of two vertical arrows is trivial because there are no non-trivial morphisms between the vertices that do not belong to the same unit square. Similarly, the composition of two horizontal arrows is also trivial. So, our relations reduce to comparing the composition of one vertical and one horizontal arrow in each square with the corresponding diagonal arrow.
  
  Let us describe the generators explicitly.
  The short exact sequence~\eqref{eq:exc-coll:n=2-ses-M_xi} for $i=1$ defines an element of $\Ext^1_{\kk[x,y]/(w)}(\kk,\kk(-\x))=\Hom_{\Db}(\kk,\kk(-\x)[1])$ in the bounded derived category of $L_w$-graded $\kk[x,y]/(w)$-modules. Let us denote it by $e_x$. The morphism~$e_x$ is actually non-zero in $\dsgof{L_w}{\kk[x,y]/(w)}$. (This can be checked, for example, by Remark~\ref{remark:db-non-zero-morphism-in-dsg-criterion}.) Thus we can choose~$e_x$ and its shifts as the generators for the horizontal morphism spaces on Figures~\ref{fig:2split}--\ref{fig:2loop}. Similarly, the sequence~\eqref{eq:exc-coll:n=2-ses-M_yj} for $j=1$ defines an element~$e_y$ of $\Hom_{\Db}(\kk,\kk(-\y)[1])$ which is a non-zero morphism in $\Dsg$. Thus we can choose it as the generator for the vertical morphisms spaces. Finally, we can check that the compositions $(e_y(-\x)[1])\circ e_x$ and $(e_x(-\y)[1]) \circ e_y $ are non-zero morphisms from $\kk$ to $\kk(-\x-\y)$ in $\Dsg$, hence either of them can be chosen as the generator for the diagonal morphism space. Moreover, the sum of these two morphisms is a~zero morphism in $\Dsg$. Thus, the only non-trivial relation that we have for the arrows between the shifts of $\kk$ is: $ e_y e_x + e_x e_y = 0$. (Here, we omitted the shifts for the sake of clarity.)
  
  Similarly, we can consider the morphism spaces from $M_x$ and $M_y$ to $\kk$. In this case, the generators are given by the standard module projections $M_x=\kk[x] \xrightarrow{x\mapsto 0} \kk$ and $M_y=\kk[y] \xrightarrow{y\mapsto 0} \kk$ which are also non-trivial morphisms in the singularity category. Let us denote these morphisms by $f_x$ and $f_y$ respectively. Then one can check that the composition $e_x \circ f_y$ gives a non-zero morphism in $\Dsg$, hence provides a generator for the diagonal arrows from $M_y$ to $\kk$. Similarly, $e_y\circ f_x$ provides a generator for the diagonal arrows from $M_x$ to $\kk$.
  
  To complete the picture, let us provide a generator for the diagonal arrow from $M_{xy}(\x+\y)$ to $\kk[1]$ in the loop case. We choose it to be a morphism defined by the extension\\[-1ex]
  $$ 0\to \kk \xrightarrow{xy} \left(\kk[x,y]/(x^2y,xy^2)\right) (\x+\y) \xrightarrow{1} M_{xy}(\x+\y) \to 0. $$
  This morphism is non-trivial in $\Dsg$, so it defines the needed generator.
\end{remark}

\begin{remark}
\label{remark:geometric-2}
  If one uses the geometric language as described in [BFK13], the graded modules will be represented by the equivariant sheaves over $\Aff2$. In particular, $\kk$ will correspond to the structure sheaf of the origin, $M_x$ to the structure sheaf of the $x$-axis, $M_y$ to the structure sheaf of the $y$-axis, and $M_{xy}$ to the structure sheaf of the union of the $x$- and $y$-axes. Then the modules $M_x$, $M_y$ and $M_{xy}$ will be well-defined over $\kk[x,y]/(w)$ exactly when their supports ($x$- and $y$-axes) will lie on the hypersurface $\{w=0\}$ in $\Aff2$.
\end{remark}

\subsection{\texorpdfstring{$n=3$}{n=3}}
\label{sec:n=3}

\newcommand{\splitsub}[1]{Split${}^{\text{\textbf{#1}}}$}
\def\threea{{3-split${}^{\text{\textbf{a}}}$}}
\def\threeb{{3-split${}^{\text{\textbf{b}}}$}}
\def\threec{{3-split${}^{\text{\textbf{c}}}$}}
\def\threed{{3-chain}}
\def\threee{{3-loop}}

Here we consider the case of $n=3$ and $R=\kk[x,y,z]$. In this case, the classification of Example~\ref{ex:invertible-classification-n<=3} gives us 5~different types of invertible polynomials $w\in\kk[x,y,z]$. Let us call the corresponding subcases as follows:\\
(a) \textit{\threea} for $w=x^p+y^q+z^r$, \qquad\; (b) \textit{\threeb} for $w=x^p+xy^q+z^r$,\\ (c) \textit{\threec} for $w=yx^p+xy^q+z^r$,\qquad (d)~\textit{3-chain} for $w=x^p+xy^q+yz^r$,\\ and (e) \textit{3-loop} for $w=zx^p+xy^q+yz^r$, where $p,q,r\ge2$ in each case.

\begin{remark*}
The first three split cases, are obtained from the three 2-dimensional cases considered in \S\ref{sec:n=2} by adding the monomial $z^r$.
\end{remark*}

In this case, we repeat all the steps done for the case of $n=2$. We will also reuse a lot of arguments for $n=2$ as well. The explicit exceptional collections are described in Proposition~\ref{prop:main:n=3} and on Figures~\ref{fig:n=3:split-split}--\ref{fig:n=3:3-loop}.

\subsubsection{The maximal grading groups}

The maximal grading groups $L_w$ on $\kk[x,y,z]$ making all of $x$, $y$, $z$ and $w$ homogeneous, and the corresponding quotient groups $\lwbar = L_w/\<\w\>$, are described in Table~\ref{table:max-grading-groups:n=3}. Similarly to the case of $n=2$, we will use the following fact about the groups~$\lwbar$.

\begingroup
\newcommand{\mystack}[1]{{
\begin{array}{c}#1\end{array}
}}
\newcommand{\mystackl}[1]{{
\begin{array}{l}#1\end{array}
}}

\def\vspbot{{\rule[-1ex]{0pt}{0ex}}}
\def\vsptop{{\rule{0pt}{2.5ex}}}
\def\vspboth{{\vsptop\vspbot}}

\def\zz{{\bar z}}


\begin{table}[h]
\hfill\scalebox{0.85}{
$\begin{array}{|c||c|c|l|}
\hline
\vspboth
  \text{\textbf{case}} & w & L_w & \hfill\lwbar=L_w/\<\w\>\hfill \\
\hhline{|=#=|=|=|}
  \text{\textit{(a) \threea}} & x^p+y^q+z^r
  & \vsptop\ZZ\la \x,\y,\z \ra\!\!\left/\!\la\!\!\footnotesize\mystack{ p\x = q\y \\ = r\z } \!\!\ra\right.
  & \mystackl{{\rule[-2ex]{0pt}{5.5ex}}\ZZ\la\xx,\yy,\zz\ra\!\!\left/\!\la\!\!\!\footnotesize{\mystack{ p\xx=q\yy=\\r\zz=0}}\!\!\ra\right. \\ \simeq \ZZ/p\ZZ \oplus \ZZ/q\ZZ \oplus \ZZ/r\ZZ,  \\
   \text{basis: $(\xx,\yy,\zz)$}
}\\
\hline
  \text{\textit{(b) \threeb}} & x^p+xy^q+z^r
  & \vsptop\ZZ\la \x,\y,\z \ra\!\!\left/\!\la\!\! \footnotesize\mystack{p\x = r\z\\ = \x+q\y}\!\!\ra\right.
  & \mystackl{{\rule[-2ex]{0pt}{6.5ex}}\ZZ\la\xx,\yy,\zz\ra \!\!\!\left/\!\!\la\!\!\footnotesize{\mystack{ p\xx=\\\xx+q\yy=\\r\zz=0}}\!\!\!\ra\right. \\ \simeq \ZZ/pq\ZZ\oplus \ZZ/r\ZZ,\\ \text{basis: $(\yy,\zz)$}
  }\\
\hline
  \text{\textit{(c) \threec}} & yx^p+xy^q+z^r
  & \vsptop\ZZ\la \x,\y,\z \ra\!\!\left/\!\!\la\!\! \footnotesize{\mystack{\y+p\x = \\ \x+q\y =\\ r\z }}\!\!\ra\right.
  & \mystackl{{\rule[-2ex]{0pt}{6.5ex}}\ZZ\la\xx,\yy,\zz\ra\!\!\left/\!\!\la\!\!\footnotesize{\mystack{\yy+p\xx=\\ \xx+q\yy= \\ r\zz = 0}}\!\!\ra\right. \\ \simeq \ZZ/(pq-1)\ZZ\oplus \ZZ/r\ZZ,\\\text{basis: $(\xx,\zz)$ or $(\yy,\zz)$}} \\
\hline
  \text{\textit{(d) 3-chain}} & x^p+xy^q+yz^r
  & \vsptop\ZZ\la \x,\y,\z \ra\!\!\left/\!\!\la\!\! \footnotesize{\mystack{p\x = \\ \x+q\y = \\ \y+r\z }}\!\!\ra\right.
  & \mystackl{{\rule[-2ex]{0pt}{6.5ex}}\ZZ\la\xx,\yy,\zz\ra\!\!\left/\!\!\la\!\!\footnotesize{\mystack{p\xx= \\ \xx+q\yy= \\ \yy+r\zz = 0}}\!\!\ra\right. \\ \simeq \ZZ/pqr\ZZ,\text{ generator: $\zz$}} \\
\hline
  \text{\textit{(e) 3-loop}} & zx^p+xy^q+yz^r
  & \vsptop\ZZ\la \x,\y,\z \ra\!\!\left/\!\!\la\!\! \footnotesize{\mystack{\z+p\x = \\ \x+q\y = \\ \y+r\z }}\!\!\ra\right.
  & \mystackl{{\rule[-2ex]{0pt}{6.5ex}}\ZZ\la\xx,\yy,\zz\ra\!\!\left/\!\!\la\!\!\footnotesize{\mystack{\zz+p\xx= \\ \xx+q\yy= \\ \yy+r\zz = 0}}\!\!\ra\right. \\ \simeq \ZZ/(pqr+1)\ZZ,\\\text{generators: $\xx$, $\yy$, $\zz$}} \\
\hline
\end{array}$
}\hfill
\caption{Maximal grading groups on $\kk[x,y,z]/(w)$}
\label{table:max-grading-groups:n=3}
\end{table}
\endgroup

\def\zz{\bar z}
\begin{lemma}
\label{lem:lwbar-elements:n=3}
  In the three 3-split cases and the 3-chain case, the sequence $(-i\xx-j\yy-k\zz \mid 0\le i\le p-1,\, 0\le j\le q-1,\, 0\le k\le r-1)$ contains all of the elements of the group $\lwbar$. In the 3-loop case, this sequence contains all of the elements of $\lwbar$ except for $\xx+\yy+\zz$.
\end{lemma}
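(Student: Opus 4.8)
The plan is to follow the template of Lemma~\ref{lem:lwbar-elements:n=2}: in each case I use the presentation of $\lwbar$ from the last column of Table~\ref{table:max-grading-groups:n=3} to rewrite the element $-i\xx-j\yy-k\zz$ in terms of a fixed generator, and then run a counting argument. The five cases naturally split into the three 3-split cases, where $\lwbar$ is a product of cyclic groups, and the 3-chain and 3-loop cases, where $\lwbar$ is cyclic.

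For the three 3-split cases (a)--(c), I would exploit the fact that the $z$-relation decouples. In each of these, the relations defining $\lwbar$ split into relations among $\xx,\yy$ only (which are exactly the defining relations of the corresponding two-variable group $\overline{L}_{w'}$ for $w'=x^p+y^q$, $x^p+xy^q$, resp.\ $yx^p+xy^q$) together with the single relation $r\zz=0$. Hence $\lwbar \simeq \overline{L}_{w'}\oplus\ZZ/r\ZZ$, with $\langle\xx,\yy\rangle$ the first summand and $\langle\zz\rangle$ the second. I would then invoke Lemma~\ref{lem:lwbar-elements:n=2} to see that $(-i\xx-j\yy\mid 0\le i\le p-1,\,0\le j\le q-1)$ already sweeps $\overline{L}_{w'}$, while $(-k\zz\mid 0\le k\le r-1)$ visits every element of $\ZZ/r\ZZ$; since the two factors are independent, the triples $-i\xx-j\yy-k\zz$ cover the whole product, settling (a)--(c).

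The cyclic cases, 3-chain (d) and 3-loop (e), carry the real content. In both I first solve the defining relations for $\xx,\yy$ in terms of the generator $\zz$: from $\xx+q\yy=0$ and $\yy+r\zz=0$ one gets $\yy=-r\zz$ and $\xx=qr\zz$, so that
$$ -i\xx-j\yy-k\zz=(-iqr+jr-k)\,\zz, $$
while the remaining relation ($p\xx=0$ in case (d), $\zz+p\xx=0$ in case (e)) is precisely what pins down the order of $\zz$ as $pqr$, resp.\ $pqr+1$. It then remains to prove the purely numerical claim that, as $(i,j,k)$ ranges over $[0,p-1]\times[0,q-1]\times[0,r-1]$, the integer $-iqr+jr-k$ takes exactly $pqr$ \emph{consecutive} values. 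I would prove this by the substitution $j\mapsto q-1-j$, under which $iqr-jr+k$ becomes $iqr+j'r+k-(q-1)r$; the expression $iqr+j'r+k$ (with $0\le k<r$, $0\le j'<q$, $0\le i<p$) is a standard mixed-radix numeral and sweeps $\{0,1,\dots,pqr-1\}$ bijectively, so $-iqr+jr-k$ sweeps a block of $pqr$ consecutive integers. This consecutive-values count is the one step I expect to require care; the substitution is the trick that makes it transparent.

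Finally I read off the two conclusions. In the 3-chain case $\zz$ has order exactly $pqr$, so $pqr$ consecutive integers realize every residue and the sequence covers all of $\lwbar$. In the 3-loop case $\zz$ has order $pqr+1$, so $pqr$ consecutive integers (all distinct modulo $pqr+1$, since the block is shorter than the modulus) miss exactly one residue, namely the integer just beyond the block. The maximum of $-iqr+jr-k$ is $(q-1)r$, so the missing coefficient is $(q-1)r+1$; computing $\xx+\yy+\zz=qr\zz-r\zz+\zz=((q-1)r+1)\zz$ identifies the unique omitted element as $\xx+\yy+\zz$, exactly as claimed.
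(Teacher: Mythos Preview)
Your proof is correct and follows essentially the same approach as the paper, which simply says ``The same as in Lemma~\ref{lem:lwbar-elements:n=2}.'' You have filled in all the details that the paper leaves implicit: reducing the split cases to the two-variable lemma via the product decomposition, and in the cyclic cases expressing everything in terms of the generator $\zz$, verifying the consecutive-values count via the mixed-radix substitution, and explicitly identifying the missing element in the 3-loop case as $\xx+\yy+\zz$.
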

\begin{proof}
  The same as in Lemma~\ref{lem:lwbar-elements:n=2}.
\end{proof}

\def\My{M_y}
\def\Mx{M_x}
\def\Mz{M_z}
\def\Mxy{M_{xy}}
\def\Mxyz{M_{xyz}}

\subsubsection{The objects}
\label{sec:n=3:objects}

Similarly to the case of $n=2$, we will use the shifts of the following modules for the exceptional collections in $\dsgof{L_w}{\kk[x,y,z]/(w)}$. First of all, we use~$\kk$ considered as the module over $\kk[x,y,z]/(w)$ with the trivial actions of $x$, $y$ and~$z$. Then, we consider the $\kk[x,y,z]$-modules $M_x = \kk[x]$, $M_y = \kk[y]$, $M_z = \kk[z]$, $\Mxy=\kk[x,y]/(xy)$ and $\Mxyz = \kk[x,y,z]/(xy,yz,xz)$ with the obvious actions of $x$, $y$ and $z$. Depending on whether $w$ acts on them trivially or not, these modules may or may not be well-defined over the quotient $\kk[x,y,z]/(w)$. This situation is summarized in the following list:
\begin{enumerate}[label=(\alph*), itemsep=0pt, parsep=0pt, topsep=1ex]
    \item $w=x^p+y^q+z^r$ --- only $\kk$ is well-defined,
    \item $w=x^p+xy^q+z^r$ --- $\kk$ and $M_y$ are well-defined,
    \item $w=yx^p+xy^q+z^r$ --- $\kk$, $M_x$, $M_y$ and $\Mxy$ are well-defined,
    \item $w=x^p+xy^q+yz^r$ --- $\kk$, $M_y$ and $M_z$ are well-defined,
    \item $w=zx^p+xy^q+yz^r$ --- $\kk$, $M_x$, $M_y$, $M_z$, $\Mxy$ and $\Mxyz$ are well-defined.
\end{enumerate}

\begin{remark}
   In the proof of fullness for the 3-loop case, we will also use the modules $M_{xz}=\kk[x,z]/(xz)$ and $M_{yz} = \kk[y,z]/(yz)$.
\end{remark}

\subsubsection{The morphisms}
\label{sec:app:explicit-morphisms:n=3}

We follow the same method as in the case of $n=2$. Firstly, we provide the projective resolutions for our modules and then follow Recipe~\ref{recipe}.

All of the modules listed in \S\ref{sec:n=3:objects}, except for $\Mxyz$, are the quotients of $\kk[x,y,z]$ over a regular ideal, so we can apply the procedure from \cite{dyckerhoff} for them the same way as in the case of $n=2$ and use Koszul resolutions as follows.

\newcommand*{\splitintwo}[2]{\raisebox{0.3ex}{$\stackbin[#2]{#1}{}$}}

\newcommand{\arrc}[1]{\begin{array}{c}#1\end{array}}
\newcommand{\arrcscript}[1]{{\scriptsize\arrc{#1}}}

For $\kk$, we notice that $\kk = \kk[x,y,z]/(x,y,z)$ where $(x,y,z)$ is a regular sequence. Decomposing $w=w_x x + w_y y + w_z z$, where $w_x = x^{p-1}$ or $zx^{p-1}$, $w_y = y^{q-1}$, $xy^{q-1}$ or $zy^{q-1}$, and $w_z = z^{r-1}$ or $yz^{r-1}$, depending on the type of $w$, we get the following projective resolution:
\begin{gather*}
P^\bullet_\kk = \left(\raisebox{-1ex}{$
    \dots
    \xrightarrow{\mat{x & y & z & 0 \\ -w_y & w_x & 0 & z \\ -w_z & 0 & w_x & -y \\ 0 & -w_z & w_y & x}} \!
  \arrcscript{A(-2\w) \, \oplus \\ A(-\w-\x-\y) \, \oplus \\ A(-\w-\x-\z) \,\oplus \\ A(-\w-\y-\z)} \!
    \xrightarrow{\mat{w_x & -y & -z & 0 \\ w_y & x & 0 & -z \\ w_z & 0 & x & y \\ 0 & w_z & -w_y & w_x}} \!\!
    \arrcscript{A(-\w-\x) \, \oplus \\ A(-\w-\y) \, \oplus \\ A(-\w-\z) \, \oplus \\ A(-\x-\y-\z)}
  $}
  \right. \\
    \xrightarrow{\mat{x & y & z & 0 \\ -w_y & w_x & 0 & z \\ -w_z & 0 & w_x & -y \\ 0 & -w_z & w_y & x}} \!
    \arrcscript{A(-\w) \,\oplus\\ A(-\x-\y) \,\oplus\\ A(-\x-\z) \,\oplus \\ A(-\y-\z)} \!
    \xrightarrow{\mat{w_x & -y & -z & 0 \\ w_y & x & 0 & -z \\ w_z & 0 & x & y}} \!
   \arrcscript{A(-\x) \, \oplus \\ A(-\y) \, \oplus \\ A(-\z)} \!
    \xrightarrow{\mat{x & y & z}}
  A \Biggr)
    \xrightarrow{1}
  \kk,
\end{gather*}
which is quasi-periodic on the left.

For $M_z$, we have $M_z = \kk[z] = \kk[x,y,z]/(x,y)$, where $(x,y)$ is the regular sequence in $\kk[x,y,z]$. We decompose $w = w_x x + w_y y$ with $w_x = c_1 x^{p-1}$ and $w_y = c_2 y^{q-1} + z^r$, where $c_1 = 1$ or $z$,
and $c_2 = 1$, $x$ or $z$, depending on the case. The projective resolution is then as follows:
\begin{multline*}
P^\bullet_{M_z}=\Biggl(\dots
    \xrightarrow{\mat{w_x & -y \\ w_y & x}} \,
  {\text{\footnotesize $ A(-\w-\x) \oplus A(-\w-\y)$}} \,
    \xrightarrow{\mat{x & y \\ -w_y & w_x}} \,
  {\text{\footnotesize $ A(-\w) \oplus A(-\x-\y)$}} \\
    \xrightarrow{\mat{w_x & -y \\ w_y & x}} \,
  { A(-\x) \oplus A(-\y)}
    \xrightarrow{\mat{x & y}}
  A \Biggr)
  \xrightarrow{1} 
  M_z.\qquad\qquad
\end{multline*}
The projective resolutions for $M_y = \kk[y] = \kk[x,y,z]/(x,z)$, $M_x = \kk[x] = \kk[x,y,z]/(y,z)$ and $M_{xy} = \kk[x,y]/(xy) = \kk[x,y,z]/(xy,z)$ can be built similarly by using decompositions $w = w_x x + w_z z$, $w = w_y y + w_z z$ and $w = w_{xy} xy + w_z z$ respectively with the appropriate coefficients depending on the type of~$w$.

The module $M_{xyz}$ cannot be presented as $\kk[x,y,z]/I$ where $I$ is generated by a regular sequence of elements. However, we notice that there is a short exact sequence $0 \to M_x(-\x) \xrightarrow{x} M_{xyz} \to M_{yz} \to 0$. Thus as soon as we have projective resolutions $\bull P \to M_x$ and $\bull Q \to M_{yz}$, we can use a standard procedure of homological algebra to build a projective resolution for $M_{xyz}$. Namely, we just take the direct sum $\bull P \oplus \bull Q$ and deform the differential. Applying this method to the resolutions of $M_x$ and $M_{yz}$ obtained above, we get the following:
\begin{gather*}
\bull P_{M_{xyz}} = \left( \raisebox{-1ex}{$
    \dots
    \longrightarrow
  \arrcscript{A(-\w-\x-\y) \oplus \\ A(-\w-\x-\z) \oplus \\ A(-\w-\x) \oplus \\ A(-\w-\y-\z)} \!
    \xrightarrow{\mat{y & z & -1 & 0 \\ -w_z & w_y & 0 & y^q \\ 0 & 0 & x & yz \\ 0 & 0 & -w'_{yz} & w'_x}}
  \arrcscript{A(-\w-\x) \oplus \\ A(-\x-\y-\z) \\ \oplus \, A(-\w) \, \oplus \\ A(-\x-\y-\z)} $} \right.
  \\
  \; \xrightarrow{\mat{w_y & -z & y^{q-1} & 0 \\ w_z & y & x^{p-1} & -y \\ 0 & 0 & w'_x & -yz \\ 0 & 0 & w'_{yz} & x}}
  \splitintwo{A(-\x-\y) \oplus A(-\x-\z)}{\oplus A(-\x) \oplus A(-\y-\z)}
    \xrightarrow{\mat{y & z & -1 & 0 \\ 0 & 0 & x & yz}}
  {\scriptstyle A(-\x) \oplus A} \Biggr)
    \xrightarrow{\mat{x & 1}}
  M_{xyz},
\end{gather*}
where $w = zx^p + xy^q + yz^r = w_y y + w_z z = w'_x x + w'_{yz} yz$, with $w_y = xy^{q-1}$, $w_z = x^p + yz^{r-1}$, $w'_x = zx^{p-1}+y^q$ and $w'_{yz} = z^{r-1}$, and the resolution becomes quasi-periodic on the left.

The rest of the steps of Recipe~\ref{recipe} are straightforward and can be done as in Example~\ref{example:morphisms-k[x]/(x^p)}. The morphisms between these objects in the categories of singularities $\dsgof{L_w}{\kk[x,y,z]/(w)}$ are summarized in Table~\ref{table:all-morphisms:n=3}, 
with Remarks~\ref{rem:app-morphisms:table-remark1}, \ref{rem:app-morphisms:table-remark2} and \ref{rem:app-morphisms:table-remark3} applying here as well.

\subsubsection{Exceptional collections}

Using the objects from \S\ref{sec:n=3:objects} and information about the morphisms between them from Table~\ref{table:all-morphisms:n=3}, we can build the exceptional collections in the graded categories of singularities $\dsgof{L_w}{\kk[x,y,z]/(w)}$ as in the following proposition.

\begingroup
\def\myA{{\kk[x,y,z]/(w)}}
\def\mydsg{{\dsgof{L_w}{\kk[x,y,z]/(w)}}}
\begin{prop}\label{prop:main:n=3}
  Let the ring be $R=\kk[x,y,z]$, the potential $w$ be one of the 5 types listed above, the grading group be $L_w$ and the graded $\myA$-modules $\kk$, $M_x$, $M_y$, $M_z$, $M_{xy}$ and $\Mxyz$ be as described in \S\ref{sec:n=3:objects}. Then the corresponding graded category of singularities $\mydsg$ admits a full strongly exceptional collection as follows:
  \begin{enumerate}[label=(\alph*)]
      \item \textbf{\splitsub a case {\normalfont(see Prop.~\ref{prop:futaki-ueda-split} for $n=3$)}:} If $w = x^p+y^q+z^r$, the collection consists of 
      \begin{itemize}[itemsep=0ex, topsep=0ex, parsep=0ex]
          \item $(p-1)(q-1)(r-1)$ shifts of $\kk$ of the form $\kk(-i\x-j\y-k\z)[i+j+k]$ for $0\le i \le p-2$, $0\le j \le q-2$, $0\le k\le r-2$,
      \end{itemize} arranged as on Figure~\ref{fig:n=3:split-split}.
      \item \textbf{\splitsub b case:} If $w = x^p+xy^q+z^r$, the collection consists of $(pq-q+1)(r-1)$ objects, namely
      \begin{itemize}[itemsep=0ex, topsep=0ex, parsep=0ex]
          \item $(p-1)(q-1)(r-1)$ shifts of $\kk$ as in (a),
          \item and $p(r-1)$ shifts of $M_y$ of the form $M_y(-i\x-k\z)[i+k]$ for $-1 \le i \le p-2$ and $0\le k\le r-2$,
      \end{itemize} arranged as on Figure~\ref{fig:n=3:chain-split}.
      \item \textbf{\splitsub c case:} If $w = yx^p+xy^q+z^r$, the collection consists of $pq(r-1)$ objects, namely
      \begin{itemize}[itemsep=0ex, topsep=0ex, parsep=0ex]
          \item $(p-1)(q-1)(r-1)$ shifts of $\kk$ as in (a),
          \item $(p-1)(r-1)$ shifts of $M_y$ of the form as in (b) for $0\le i \le p-2$ and $0\le k\le r-2$,
          \item $(q-1)(r-1)$ shifts of $M_x$ of the form $M_x(-j\y-k\z)[j+k]$ for $0\le j\le q-2$ and $0\le k\le r-2$,
          \item and $r-1$ shifts of $\Mxy$ of the form $M_{xy}(\x+\y-k\z)[-1+k]$ for $0\le k\le r-2$,
      \end{itemize} arranged as on Figure~\ref{fig:n=3:loop-split}.
      \item \textbf{Chain case:} If $w = x^p+xy^q+yz^r$, the collection consists of $pqr-qr+r-1$ objects, namely
      \begin{itemize}[itemsep=0ex, topsep=0ex, parsep=0ex]
          \item $(p-1)(q-1)(r-1)$ shifts of $\kk$ as in (a),
          \item $p(r-1)$ shifts of $M_y$ as in (b),
          \item and $(p-1)q$ shifts of $M_z$ of the form $M_z(-i\x-j\y)[i+j]$ for $0\le i\le p-2$, $0\le j\le q-2$, and for $-1\le i\le p-3$, $j=-1$,
      \end{itemize} arranged as on Figure~\ref{fig:n=3:3-chain}.
      \item \textbf{Loop case:} If $w = zx^p+xy^q+yz^r$, the collection consists of $pqr$ objects, namely
      \begin{itemize}[itemsep=0ex, topsep=0ex, parsep=0ex]
          \item $(p-1)(q-1)(r-1)$ shifts of $\kk$ as in (a),
          \item $p(r-1)$ shifts of $M_y$ as in (b),
          \item $(q-1)r$ shifts of $M_x$ of the form as in (c) for $0\le j\le q-2$ and $-1\le k\le r-2$,
          \item $(p-1)q$ shifts of $M_z$ of the form as in (d) for $0\le i \le p-2$ and $-1\le j\le q-2$,
          \item and one extra-object $\Mxyz(\x+\y+\z)[-2]$,
      \end{itemize} arranged as on Figure~\ref{fig:n=3:3-loop}.
  \end{enumerate}
  \textbf{Legend:} (a) On Figures~\ref{fig:n=3:split-split}--\ref{fig:n=3:3-loop}, the objects are placed in the 3-dimensional grid as follows. The object~$\kk$ is always placed at the origin. It is situated at the bottom-front-left corner of the red cube. All other objects are placed at the points with the following coordinates:
  \begin{itemize}[itemsep=0ex, topsep=0.5ex, parsep=0ex]
      \item $\kk(-i\x-j\y-k\z)[i+j+k]$\quad at\quad $(i,j,k)$ -- the red cube,
      \item $M_z(-i\x-j\y)[i+j]$\quad at\quad $(i,j,-1)$ -- the bottom/blue face,
      \item $M_y(-i\x-k\z)[i+k]$\quad at\quad $(i,-1,k)$ -- the front/green face,
      \item $M_x(-j\y-k\z)[j+k]$\quad at\quad $(-1,j,k)$ -- the left/yellow face,
      \item $M_{xy}(\x+\y-k\z)[-1+k]$\quad at\quad $(-1,-1,k)$ -- the front-left edge,
      \item and $M_{xyz}(\x+\y+\z)[-2]$\quad at\quad $(-1,-1,-1)$ -- the bottom-front-left point.
  \end{itemize}
  \vspace{0.5ex}
  (b) On these figures, we did not write the shifts $[\bullet]$ so that to not clutter up the space. However, it should be assumed that the objects are shifted as in the above listing.\\[0.5ex]
  (c) For the objects lying in the same unit cube, there are arrows between them in directions $(\eps_x, \eps_y, \eps_z)$ for $\eps_x, \eps_y, \eps_z \in \{0,1\}$, except for the cases indicated under each figure. Each arrow denotes a one-dimensional morphism space between the corresponding shifted objects except for one arrow from $M_{xyz}(\x+\y+\z)[-2]$ to $\kk$ in the loop case which denotes a two-dimensional morphism space.
\end{prop}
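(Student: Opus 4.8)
The plan is to mirror the two-part structure used for $n=2$ (Lemmas~\ref{lemma:main:n=2:checking-collections} and~\ref{lemma:main:n=2:checking-fullness}): first verify that each of the five collections is strongly exceptional with morphisms exactly as drawn on Figures~\ref{fig:n=3:split-split}--\ref{fig:n=3:3-loop}, and then prove fullness by generating enough shifts of $\kk$ to apply the generation criterion. I would package these two parts as two lemmas, analogous to Lemmas~\ref{lemma:main:n=2:checking-collections} and~\ref{lemma:main:n=2:checking-fullness}, and then assemble the proof of Proposition~\ref{prop:main:n=3} from them together with Lemma~\ref{lem:lwbar-elements:n=3}, Proposition~\ref{prop:gen-criterion-invertibles-collections} and Remark~\ref{remark:generation-up-to-quasi-periodicity}, exactly as in the proof of Proposition~\ref{prop:main:n=2}.

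For exceptionality and strongness I would repeat the degree-bookkeeping of Lemma~\ref{lemma:main:n=2:checking-collections}, now reading the ungraded morphism spaces off Table~\ref{table:all-morphisms:n=3} and the relations of $\lwbar$ off Table~\ref{table:max-grading-groups:n=3}. For each ordered pair of module types among $\kk, M_x, M_y, M_z, M_{xy}, M_{xyz}$ present in a given case, a nonzero morphism in $\dsgof{L_w}{\kk[x,y,z]/(w)}$ between two objects of the grid can occur only when one of a short list of $\lwbar$-elements vanishes (the three-variable analogues of the conditions $(*)$ and $(**)$). Reducing these conditions modulo the relations in Table~\ref{table:max-grading-groups:n=3} and feeding in the coordinate ranges of the grid, one checks that vanishing happens precisely along the unit-cube directions $(\eps_x,\eps_y,\eps_z)\in\{0,1\}^3$ listed in the legend (and at the origin, giving exceptionality), and nowhere else. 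The three split cases are the $n=2$ computation of Lemma~\ref{lemma:main:n=2:checking-collections} layered against the one-variable $z$-pattern of Example~\ref{ex:main:n=1}, so they demand little new work; the new bookkeeping is concentrated in the 3-chain and 3-loop figures, where one must additionally confirm the single two-dimensional morphism $M_{xyz}(\x+\y+\z)[-2]\to\kk$ predicted by Table~\ref{table:all-morphisms:n=3}.

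For fullness I would follow Lemma~\ref{lemma:main:n=2:checking-fullness}: introduce the truncations $M_{x,i}=\kk[x]/(x^i)$, $M_{y,j}$, $M_{z,k}$ and the short exact sequences relating them and the modules $M_x,M_y,M_z,M_{xy},M_{xyz}$ (together with the helper modules $M_{xz},M_{yz}$), use Lemma~\ref{lemma-perfect} to discard the truncations that vanish in $\dsgof{L_w}{\kk[x,y,z]/(w)}$, and distill generation rules. Reusing the planar $n=2$ rules in the $(x,y)$-, $(x,z)$- and $(y,z)$-directions and adjoining the one-variable rule in the $z$-direction, I would generate all $\kk(-i\x-j\y-k\z)$ for $0\le i\le p-1$, $0\le j\le q-1$, $0\le k\le r-1$. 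By Lemma~\ref{lem:lwbar-elements:n=3} these exhaust every class of $\lwbar$ in the split and chain cases, so Proposition~\ref{prop:gen-criterion-invertibles-collections} and Remark~\ref{remark:generation-up-to-quasi-periodicity} close those cases immediately.

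The main obstacle is the 3-loop case, where $\lwbar\simeq\ZZ/(pqr+1)\ZZ$ is largest and, by Lemma~\ref{lem:lwbar-elements:n=3}, the standard grid of shifts of $\kk$ misses exactly the class $\xx+\yy+\zz$ --- which is precisely why the extra generator $M_{xyz}(\x+\y+\z)[-2]$ is placed at the corner $(-1,-1,-1)$ and carries a two-dimensional map to $\kk$. The delicate step is to propagate generation through the loop-specific short exact sequences (such as $0\to M_x(-\x)\xrightarrow{x} M_{xyz}\to M_{yz}\to 0$ and its cyclic variants): first producing the required shifts of $M_{xy},M_{xz},M_{yz}$ and of $M_x,M_y,M_z$ from $M_{xyz}$ and from one another, and then combining them to generate the one remaining shift of $\kk$ at the missing class. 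Once that shift is in hand, the full $\lwbar$-orbit is covered and fullness follows, as before, from the generation criterion and quasi-periodicity, completing the proof of Proposition~\ref{prop:main:n=3}.
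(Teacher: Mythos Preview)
Your proposal is correct and follows essentially the same approach as the paper: the paper also splits the proof into an exceptionality check (Lemma~\ref{lemma:n=3:check}, argued pairwise from Tables~\ref{table:all-morphisms:n=3} and~\ref{table:max-grading-groups:n=3} exactly as you describe) and a generation part (Lemmas~\ref{lemma:prop:main:n=3:generates:a,b,c}--\ref{lemma:prop:main:n=3:generates:e}), then assembles them via Lemma~\ref{lem:lwbar-elements:n=3}, Proposition~\ref{prop:gen-criterion-invertibles-collections} and Remark~\ref{remark:generation-up-to-quasi-periodicity}. Your identification of the 3-loop case as the delicate one, the missing class $\xx+\yy+\zz$, and the need to route through auxiliary shifts of $M_{xz}$, $M_{yz}$ and $M_x,M_y,M_z$ before reaching $\kk(\x+\y+\z)$ all match the paper's argument.
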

\endgroup

\begin{remark}
\label{remark:thom-sebastiani:2->3}
The cases (a), (b) and (c) are split in a sense that $w$ can be decomposed as $w = v + z^r$, where $v$ is an invertible polynomial in $\kk[x,y]$. In these cases, our 3-dimensional collections can be thought as the ``Thom-Sebastiani products'' of the 2-dimensional collections of Proposition~\ref{prop:main:n=2} for $R=\kk[x,y]$, $w=v$, with the 1-dimensional collection of Example~\ref{ex:main:n=1} for $R=\kk[z]$, $w=z^r$. Namely, we see that all the horizontal slices of Figures~\ref{fig:n=3:split-split}--\ref{fig:n=3:loop-split} coincide with the 2-dimensional diagrams of Figures~\ref{fig:2split}--\ref{fig:2loop} shifted by $(-k\z)$, where the slices are indexed by $0\le k \le r-2$ starting from the bottom one, and where we treat modules over $\kk[x,y]/(v)$ as the modules over $\kk[x,y,z]/(w)$ with the trivial action of~$z$. Due to this fact, most of the arguments for the split cases reduce to the arguments made for the corresponding invertible polynomials in the case of $n=2$.
\end{remark}

\begingroup

\def\scri{\scriptstyle}
\def\bigcomma{\textstyle,}
\def\bico{\text{\normalsize,}}
\def\sep{\bico\\[1em]}
\newcommand{\arrcsmall}[1]{\footnotesize\arrc{#1}}

\newcommand{\twoshi}[2]{\arrcsmall{#1\sep#2}}

\newcommand{\dectwo}[1]{\text{\footnotesize$#1$}}
\newcommand{\simpledec}[2]{\twoshifts{\dectwo{#1}}{\dectwo{#2}}}

\newcommand{\caseformat}[1]{\text{\textit{\footnotesize #1}}}
\def\kcases{\caseformat{(all cases)}}
\def\Mycases{\caseformat{(cases (b,c,d,e))}}
\def\Mxcases{\caseformat{(cases (c,e))}}
\def\Mxycases{\caseformat{(case (c) only)}}
\def\Mzcases{\caseformat{(cases (d,e))}}
\def\Mxyzcases{\caseformat{(case (e) only)}}

\def\Myzcases{\caseformat{(no cases)}}

\begin{table}[p]
\centering
\scalebox{0.85}{
$
\begin{array}{|c||c|c|c|}
\hline
  \twoshialign{l}{\Hom(\downarrow,\rightarrow)}
  {\Hom(\downarrow,\rightarrow[1])}
  & \arrc{\kk\\\kcases} & \arrc{M_y\\\Mycases} & \arrc{M_x\\\Mxcases} \\
\hhline{|=#=|=|=|}
  \arrc{\kk\\\kcases} &
    \twoshi{\kk\oplus\kk(\x+\y-\w)\\{}\oplus\kk(\x+\z-\w)\\ {}\oplus\kk(\y+\z-\w)}{\kk(\x)\oplus\kk(\y)\oplus\kk(\z) \\{}\oplus\kk(\x+\y+\z-\w)} &
    \twoshi{\kk(\x+\y-\w)\oplus{}\\\kk(\y+\z-\w)}{\kk(\y)\oplus{}\\\kk(\x+\y+\z-\w)} &
    \twoshi{\kk(\x+\y-\w)\oplus{}\\\kk(\x+\z-\w)}{\kk(\x)\oplus{}\\\kk(\x+\y+\z-\w)} \\
\hline
  \arrc{M_y\\\Mycases} &
    \twoshifts{\kk\oplus \kk(\x+\z-\w)}{\kk(\x)\oplus\kk(\z)} &
    \twoshifts{\kk[y]/(y^q)}{\kk[y]/(y^q)(\z)
    } &
    \twoshifts{\kk(\x+\z-\w)}{\kk(\x)}
    \\
\hline
  \arrc{M_x\\\Mxcases} &
    \twoshifts{\kk\oplus\kk(\y+\z-\w)}{\kk(\y)\oplus\kk(\z)} &
    \twoshifts{\kk(\y+\z-\w)}{\kk(\y)} &
    \twoshifts{\kk[x]/(x^p)}{\kk[x]/(x^p)(\varepsilon)\\[-0.5ex]\!\!\Bigl[\!\!\arrc{\scri\text{where $\varepsilon=\z$ for \textit{(c)},}\\[-1ex]\scri\text{and $\varepsilon=\y$ for \textit{(e)}}}\!\!\Bigr]\!\!} \\
\hline
  \arrc{M_{xy}\\\Mxycases} &
    \twoshifts{\dectwo{\kk\oplus\kk(\x+\y+\z-\w)}}
    {\dectwo{\kk(\z)\oplus\kk(\x+\y)}} &
    \twoshifts{\kk[y]/(y^{q-1})}{\kk[y]/(y^{q-1})(\z)} &
    \twoshifts{\kk[x]/(x^{p-1})}{\kk[x]/(x^{p-1})(\z)} \\
\hline
  \arrc{M_z\\\Mzcases} &
    \twoshifts{\kk\oplus \kk(\x+\y-\w)}{\kk(\x)\oplus\kk(\y)} &
    \twoshifts{\kk(\x+\y-\w)}{\kk(\y)} &
    \twoshifts{\kk(\x+\y-\w)}{\kk(\x)} \\
\hline
  \arrc{M_{xyz}\\\Mxyzcases} &
    \twoshi{\kk\op{}\\\kk(\x+\y+\z-\w)^{\op2}}{\kk(\x+\y)\op\kk(\x+\z)\\{}\op\kk(\y+\z)} &
    \twoshi{\kk[y]/(y^{q-1})\op{}\\\kk(\x+\y+\z-\w)}{\kk[y]/(y^{q})(\y+\z)} &
    \twoshi{\kk[x]/(x^{p-1})\op{}\\\kk(\x+\y+\z-\w)}{\kk[x]/(x^{p})(\x+\y)} \\
\hline
\end{array}
$}
\\[2em]
\scalebox{0.85}{
$
\begin{array}{|c||c|c|c|}
\hline
  \twoshialign{l}{\Hom(\downarrow,\rightarrow)}
  {\Hom(\downarrow,\rightarrow[1])}
  & \arrc{M_{xy}\\\Mxycases} & \arrc{M_z\\\Mzcases} & \arrc{M_{xyz}\\\Mxyzcases} \\
\hhline{|=#=|=|=|}
  \arrc{\kk\\\kcases} &
    \twoshi{\kk(\z-\w)\op{}\\\kk(\x+\y-\w)}{\kk\op{}\\\kk(\x+\y+\z-\w)} &
    \twoshi{\kk(\x+\z-\w)\oplus{}\\\kk(\y+\z-\w)}{\kk(\z)\oplus{}\\\kk(\x+\y+\z-\w)} &
    \twoshi{\kk(\x-\w)\op\kk(\y-\w)\\{}\op\kk(\z-\w)}{\kk^{\op2}\op{}\\\kk(\x+\y+\z-\w)} \\
\hline
  \arrc{M_y\\\Mycases} &
    \twoshifts{\dectwo{\kk[y]/(y^{q-1})(-\y)}}{\dectwo{\kk[y]/(y^{q-1})(\z-\y)}} &
    \twoshifts{\kk(\x+\z-\w)}{\kk(\z)} &
    \simpledec{\kk[y]/(y^q)(-\y)}{\kk\op\kk[y]/(y^{q-1})(\z-\y)} \\
\hline
  \arrc{M_x\\\Mxcases} &
    \simpledec{\kk[x]/(x^{p-1})(-\x)}{\kk[x]/(x^{p-1})(\z-\x)} &
    \twoshifts{\kk(\y+\z-\w)}{\kk(\z)} &
    \simpledec{\kk[x]/(x^p)(-\x)}{\kk\op\kk[x]/(x^{p-1})(\y-\x)}
     \\
\hline
  \arrc{M_{xy}\\\Mxycases} &
    \twoshi{\kk[x](x^{p-1})\op{}\\\kk[y]/(y^{q-1})(-\y)}{\kk[x](x^{q-1})(\z)\op{}\\\kk[y]/(y^{q-1})(\z-\y)} & & \\
\hline
  \arrc{M_z\\\Mzcases} &
     &
    \twoshifts{\kk[z]/(z^r)}{\kk[z]/(z^r)(\x)} &
    \simpledec{\kk[z]/(z^r)(-\z)}{\kk\op\kk[z]/(z^{r-1})(\x-\z)} \\
\hline
  \arrc{M_{xyz}\\\Mxyzcases} &
    &
    \twoshi{\kk[z]/(z^{r-1})\op{}\\\kk(\x+\y+\z-\w)}{\kk[z]/(z^{r})(\x+\z)} &
    \twoshi{\kk[x]/(x^p)\op{}\\\kk[y]/(y^{q-1})(-\y)\op{}\\\kk[z]/(z^{r-1})(-\z)}{\kk[x]/(x^p)(\y)\op{}\\\kk[y]/(y^{q-1})(\z)\op{}\\\kk[z]/(z^{r-1})(\x)}  \\
\hline
\end{array}
$}
\caption{Morphisms between the objects in $\dsgof{}{\kk[x,y,z]/(w)}$ for the five types of invertible polynomials $w$ listed in the beginning of \S\ref{sec:n=3}}
\label{table:all-morphisms:n=3}
\end{table}

\endgroup

\begin{figure}[p]
\centering
\CubeThreeDimSplit
\caption{\textbf{(\threea{})} Exceptional collection in $\dsgof{L_w}{\kk[x,y,z]/(x^p+y^q+z^r)}$}
\label{fig:n=3:split-split}
\end{figure}

\begin{figure}[p]
\centering
\CubeThreeDimSplitChain
\\[2ex]
There are no arrows in directions $(1,0,*)$ inside the front/green face, where $*=0,1$.
\caption{\textbf{(\threeb)} Exceptional collection in $\dsgof{L_w}{\kk[x,y,z]/(x^p+xy^q+z^r)}$}
\label{fig:n=3:chain-split}
\end{figure}

\begin{figure}[p]
\centering
\CubeThreeDimSplitLoop
\\[2ex]
There are no arrows in directions $(0,1,*)$ inside the left/yellow face and no arrows in directions $(1,0,*)$ inside the front/green face, where $*=0,1$.
\caption{\textbf{(\threec)} Exceptional collection in $\dsgof{L_w}{\kk[x,y,z]/(yx^p+xy^q+z^r)}$}
\label{fig:n=3:loop-split}
\end{figure}

\begin{figure}[p]
\centering
\scalebox{0.95}{
\CubeThreeDimChainNotStrong
}
\\[2ex]
There are no arrows in directions $(1,0,*)$ inside the front/green face and no arrows in directions $(*,1,0)$ inside the bottom/blue face, where $*=0,1$.
\caption{\textbf{(3-chain, \textit{not strong})} Exceptional collection in $\dsgof{L_w}{\kk[x,y,z]/(w)}$ for $w=x^p+xy^q+yz^r$, which is not strong for $p>2$ due to an extra-arrow from $\Mz(-(p-2)\x+\y)[p-3]$ to $\My(\x)[-1]$ of degree~$p-2$}
\label{fig:n=3:3-chain-not-strong}
\end{figure}

\begin{figure}[p]
\centering
\CubeThreeDimChain
\\[2ex]
There are no arrows in directions $(1,0,*)$ inside the front/green face and no arrows in directions $(*,1,0)$ inside the bottom/blue face, where $*=0,1$.
\caption{\textbf{(3-chain)} Exceptional collection in $\dsgof{L_w}{\kk[x,y,z]/(x^p+xy^q+yz^r)}$}
\label{fig:n=3:3-chain}
\end{figure}

\begin{figure}[p]
\centering
\scalebox{0.95}{
\CubeThreeDimLoop
}
\\[2ex]
There are no arrows in directions $(1,0,*)$ inside the front/green face, no arrows in directions $(*,1,0)$ inside the bottom/blue face and no arrows in directions $(0,*,1)$ inside the left/yellow face, where $*=0,1$. 
\caption{\textbf{(3-loop)} Exceptional collection in $\dsgof{L_w}{\kk[x,y,z]/(zx^p+xy^q+yz^r)}$}
\label{fig:n=3:3-loop}
\end{figure}

\subsubsection{Verifying exceptionality}

Similarly to the case of $n=2$, in order to prove Proposition~\ref{prop:main:n=3} we firstly need to check that the listed collections are indeed exceptional and strong and have the morphisms exactly as described on the corresponding figures. We do this in the following lemma by using information about the morphisms from Table~\ref{table:all-morphisms:n=3} and about the group~$\lwbar$ from Table~\ref{table:max-grading-groups:n=3}.

\begin{lemma}
\label{lemma:n=3:check}
  The collections described in Proposition~\ref{prop:main:n=3} are strongly exceptional and the morphisms are as depicted on the corresponding figures.
\end{lemma}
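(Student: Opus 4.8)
The plan is to follow verbatim the template of Lemma~\ref{lemma:main:n=2:checking-collections} and reduce the statement to a finite list of elementary checks in the finite group $\lwbar$. For each ordered pair of object-types $(X,Y)$ among $\kk$, $\Mx$, $\My$, $\Mz$, $\Mxy$, $\Mxyz$ that are well-defined for the given $w$, Table~\ref{table:all-morphisms:n=3} records the ungraded spaces $\Hom_{\dsgof{}{\kk[x,y,z]/(w)}}(X,Y[\bullet])$ together with their inherent $L_w$-grading. Applying Step~4 of Recipe~\ref{recipe} (Proposition~\ref{prop:gr/ungr-dsg}), each one-dimensional summand $\kk(d)$ of such a table entry contributes a non-zero graded morphism $X\to Y(l)[\bullet]$ for exactly one value of $l$; after the grading-shifts $(a)$, $(b)$ of the collection objects are taken into account, $\Hom_{\dsgof{L_w}{\kk[x,y,z]/(w)}}(X(a),Y(b)[\bullet])$ is non-zero precisely when $b-a$ equals that value, i.e.\ when a single relation of the form ``$\,\overline{b-a}+(\text{degree from the table})=0$ in $\lwbar$'' holds. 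Quasi-periodicity $[2]\simeq(\w)$ (Lemma~\ref{lemma:quasi-period}, already folded into the table by Remark~\ref{rem:app-morphisms:table-remark1}) then fixes the homological degree of the morphism, so the entire pattern of morphisms is governed by these vanishing conditions in $\lwbar$.

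Next I would substitute the explicit grid positions $(i,j,k)$ of the collection objects from Proposition~\ref{prop:main:n=3} into the relation above and solve it using the presentations of $\lwbar$ in Table~\ref{table:max-grading-groups:n=3}, writing every class in terms of a single generator (for instance $\zz$ in the $3$-chain, $\xx$ in the $3$-loop). For a pair with no arrow in the figure I must show the relation is never satisfied in the prescribed index ranges; as in the $n=2$ chain computation this always reduces to an impossible divisibility, such as $q\mid(j+1)$ or $r\mid(k+1)$ with $0\le j\le q-2$, $0\le k\le r-2$. For a pair carrying arrows I must show the relation holds exactly for the drawn lattice directions $(\eps_x,\eps_y,\eps_z)\in\{0,1\}^3$, with the exclusions listed under each figure. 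The specialization $X=Y$ with $(\eps_x,\eps_y,\eps_z)=(0,0,0)$ gives exceptionality of each object. Since every drawn arrow points in a non-negative lattice direction, a lexicographic order makes the sequence semiorthogonal; and strongness follows from the same computation, because the label $[k]$ on each arrow is by construction equal to the difference of the homological shifts of the two genuine (shifted) objects it joins, so that the only morphisms between actual collection objects are the drawn ones and they all sit in homological degree~$0$. The verification is uniform but voluminous (five types of $w$, up to $6^2=36$ ordered pairs in the $3$-loop case), so I would carry out one or two representative computations in full and indicate that the rest are identical.

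The main obstacle is the $3$-chain case, where the alignment of shifts is delicate. The $\Mz$-objects carry homological shift $[i+j]$, which along the row $j=-1$ is one step lower than that of the neighbouring $\My$-objects; consequently a figure-degree-$0$ morphism between the unshifted modules $\Mz(-(p-2)\x+\y)$ and $\My(\x)$ lands in homological degree $p-2$ between the genuinely shifted objects $\Mz(-(p-2)\x+\y)[p-3]$ and $\My(\x)[-1]$. This is exactly the extra morphism that makes the naive arrangement of Figure~\ref{fig:n=3:3-chain-not-strong} fail to be strong for $p>2$. The crux of the proof is therefore to check that the \emph{shifted} bottom-face arrangement of Proposition~\ref{prop:main:n=3}(d) — in which the $\Mz$-row at $j=-1$ runs over $-1\le i\le p-3$ instead of $0\le i\le p-2$, so that $\Mz(-(p-2)\x+\y)$ is excluded and $\Mz(\x+\y)$ is included — removes precisely this degree-$(p-2)$ morphism while creating no new backward or higher-degree morphisms, yielding the genuinely strong collection of Figure~\ref{fig:n=3:3-chain}. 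A final point requiring separate attention is the unique two-dimensional morphism space from $\Mxyz(\x+\y+\z)[-2]$ to $\kk$ in the $3$-loop case, which must be read off from the last column of Table~\ref{table:all-morphisms:n=3} and confirmed to lie in homological degree~$0$, all other arrows being one-dimensional.
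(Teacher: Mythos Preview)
Your proposal is correct and follows essentially the same approach as the paper: both reduce the statement to a finite case-by-case check of vanishing conditions in $\lwbar$, using the morphism data of Table~\ref{table:all-morphisms:n=3} and the group presentations of Table~\ref{table:max-grading-groups:n=3}, following verbatim the template of Lemma~\ref{lemma:main:n=2:checking-collections}. One small slip: in the $3$-loop case only five object types ($\kk$, $M_x$, $M_y$, $M_z$, $M_{xyz}$) appear, giving $25$ ordered pairs rather than $36$; the paper further reduces this to $11$ via the cyclic $x$-$y$-$z$ symmetry (and the $16$ pairs in the \threec{} case to $10$ via the $x$-$y$ symmetry), a bookkeeping shortcut you may wish to adopt.
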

\begin{proof}
  The argument is the same as in the proof of Lemma~\ref{lemma:main:n=2:checking-collections}. For each type of $w$ we need to see which modules we are using in our collection among $\kk$, $M_z$, $M_y$, $M_x$, $M_{xy}$ and $M_{xyz}$ and we should consider all different pairs of such modules. When we fixed the first and the second module, we should use information about the morphisms between them from Table~\ref{table:all-morphisms:n=3} and properties of the grading group from Table~\ref{table:max-grading-groups:n=3} in order to conclude that the morphisms between the shifts of the first module to the shifts of the second module are only the ones depicted on the corresponding figures.
  
  In the \threea{} case, only the module~$\kk$ is used, so we should consider only one pair~$(\kk,\kk)$. From the structure of the group~$\lwbar$ and of the morphisms from~$\kk$ to the shifts of~$\kk$, it follows that the only morphisms that may occur on Figure~\ref{fig:n=3:split-split} are the morphisms to the adjacent objects in directions $(\eps_x,\eps_y,\eps_z)\in\{0,1\}^3$. Similarly, in the \threeb{} case, we have two modules~$\kk$ and~$M_y$, hence four possible pairs. In the \threec{} case, we have four objects~$\kk$, $M_y$, $M_x$ and $M_{xy}$, hence 16 pairs. However, due to the $x$-$y$-symmetry, it is enough to consider only 10 different pairs in this case. In the 3-chain case, three modules are used, so we have $9$ possible pairs. In the 3-loop case, five modules are used, so we have 25~different pairs. However, due to a cyclic $x$-$y$-$z$-symmetry, we do not need to consider all of them. Indeed, there are 4~pairs made out of objects~$\kk$ and~$M_{xyz}$ which are symmetrical to themselves. The rest 21~pairs decompose into 7~orbits of length~$3$ under the cyclic permutation of~$x$, $y$ and~$z$. This gives us only $4+7=11$ pairs that we need to consider in this case.
  
  The computations in each case for each pair of objects are straightforward and are done exactly as in the proof of Lemma~\ref{lemma:main:n=2:checking-collections}, so we omit them here.
\end{proof}

\begin{remark}
  Similarly, one can check that the non-strong collection for the 3-chain potential depicted on Figure~\ref{fig:n=3:3-chain-not-strong} is also exceptional and has the morphisms as described.
\end{remark}


\begin{remark}
  Due to Remark~\ref{remark:thom-sebastiani:2->3}, the computations in the three split cases, when $w=v+z^r$, will mostly repeat the ones performed for the corresponding polynomials~$v$ in the case~$n=2$. Indeed, in these cases, the group $\lwbar$ decomposes as $\lvbar\oplus \ZZ/r\ZZ$, and one may notice that the morphisms between the modules for~$w$ from Table~\ref{table:all-morphisms:n=3} are related to the morphisms between the modules for~$v$ from Table~\ref{table:all-morphisms:n=2} as follows:
  \vspace{-1ex}
  \begin{gather*}
  \Hom_{\dsgof{}{w}}(X,Y) = \Hom_{\dsgof{}{v}}(X,Y) \oplus \Hom_{\dsgof{}{v}}(X,Y[1])(\z-\w),\\
  \Hom_{\dsgof{}{w}}(X,Y[1]) = \Hom_{\dsgof{}{v}}(X,Y[1]) \oplus \Hom_{\dsgof{}{v}}(X,Y)(\z),
  \end{gather*}
  where $\w=r\z$, $\dsgof{}{w}$ and $\dsgof{}{v}$ denote the ungraded categories of singularities $\dsgof{}{\kk[x,y,z]/(w)}$ and $\dsgof{}{\kk[x,y]/(v)}$ respectively, and $X$ and $Y$ are any of the modules $\kk$, $M_x$, $M_y$ and $M_{xy}$ considered as modules over both $\kk[x,y]/(v)$ and $\kk[x,y,z]/(w)$. In particular, this implies that for any two objects~$X$ and~$Y$ of the collection for $\dsgof{L_v}{\kk[x,y]/(v)}$, the morphisms in $\dsgof{L_w}{\kk[x,y,z]/(w)}$ from $X(-k_1\z)$ to $Y(-k_2\z)[\bullet]$ (from the $k_1$-th to the $k_2$-th slice) exist only when $k_2 = k_1$ or $k_2 = k_1+1$ and when the morphisms from $X$ to $Y[\bullet]$ existed in $\dsgof{L_v}{\kk[x,y]/(v)}$.
\end{remark}

\subsubsection{Generating shifts of \texorpdfstring{$\kk$}{k}}
\label{sec:n=3:generating-shifts-of-k}

Here, we generate shifts of $\kk$ from the objects of the given collections similarly to the case of $n=2$. We do this in Lemmas~\ref{lemma:prop:main:n=3:generates:a,b,c}, \ref{lemma:prop:main:n=3:generates:d} and \ref{lemma:prop:main:n=3:generates:e} for the three split cases, the chain case and the loop case respectively.

\begin{lemma}
\label{lemma:prop:main:n=3:generates:a,b,c}
  The collections described in Proposition~\ref{prop:main:n=3}(a,b,c) generate the modules $\kk(-i\x-j\y-k\z)$ for all $0\le i\le p-1$, $0\le j\le q-1$ and $0\le k\le r-1$.
\end{lemma}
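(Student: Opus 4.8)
The plan is to exploit the Thom-Sebastiani structure of the split cases recorded in Remark~\ref{remark:thom-sebastiani:2->3}. Writing $w = v + z^r$ with $v$ one of the three two-variable invertible polynomials of \S\ref{sec:n=2}, the collection decomposes into $r-1$ horizontal slices, where the $k$-th slice (for $0\le k\le r-2$) is exactly the two-dimensional collection of Proposition~\ref{prop:main:n=2} for $v$, shifted by $(-k\z)$ and carrying the trivial $z$-action. I would therefore generate the required shifts in two stages: first generate everything inside each individual slice using the two-dimensional argument, and then fill in the remaining top slice $k=r-1$ by a one-dimensional induction in the $z$-direction.

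The key compatibility making the first stage work is that $z$ acts by zero on every module in play, so on such modules the action of $w$ coincides with the action of $v$; consequently all short exact sequences used in Lemma~\ref{lemma:main:n=2:checking-fullness} (those appropriate to the relevant subcase) remain short exact sequences of $\kk[x,y,z]/(w)$-modules after letting $z=0$, and a module $K/wK$ that was perfect over $\kk[x,y]/(v)$ stays perfect over $\kk[x,y,z]/(w)$ by the same application of Lemma~\ref{lemma-perfect}. Thus the generation rules of the two-dimensional proof apply verbatim after shifting by $(-k\z)$, and Lemma~\ref{lemma:main:n=2:checking-fullness} yields $\kk(-i\x-j\y-k\z)$ for all $0\le i\le p-1$ and $0\le j\le q-1$, for each fixed $0\le k\le r-2$.

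For the second stage I would run the one-dimensional generation in the $z$-variable exactly as in the derivation of \eqref{eq:exc-coll:n=2-gen-rule-k-simple-x}. Introduce the modules $M_{z,k}=\kk[z]/(z^k)$ (with trivial $x,y$-actions) together with the short exact sequences $0\to\kk(-k\z)\xrightarrow{z^k}M_{z,k+1}\xrightarrow{1}M_{z,k}\to 0$ for $1\le k\le r-1$. The crucial point is that $M_{z,r}$ is a zero object: taking $K=\kk[z]$ with trivial $x,y$-actions, the element $w$ acts on $K$ as multiplication by $z^r$, which is injective, so $K/wK=M_{z,r}$ is perfect by Lemma~\ref{lemma-perfect}. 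Telescoping these sequences (shifted by an arbitrary $l\in L_w$) gives the $z$-analogue of \eqref{eq:exc-coll:n=2-gen-rule-k-simple-x},
$$ \kk(-(r-1)\z+l)\in\Bigl\langle \kk(l),\dots,\kk(-(r-2)\z+l)\Bigr\rangle, $$
and applying it with $l=-i\x-j\y$ for each $0\le i\le p-1$, $0\le j\le q-1$ generates the top-slice shifts $\kk(-i\x-j\y-(r-1)\z)$ from the shifts already produced in the first stage. Together the two stages cover all $0\le k\le r-1$, as required.

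The main obstacle I anticipate is purely bookkeeping: one must check, subcase by subcase among (a), (b) and (c), that the passage from $v$ over $\kk[x,y]$ to $w=v+z^r$ over $\kk[x,y,z]$ with trivial $z$-action really does preserve both the exactness of the auxiliary sequences and the vanishing (resp.\ non-vanishing) of the perfect modules that drive the two-dimensional rules. Once this is confirmed, the statement follows by combining the two-dimensional fullness result within each slice with the elementary $z$-induction across slices.
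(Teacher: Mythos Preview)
Your proposal is correct and follows essentially the same approach as the paper: first invoke Remark~\ref{remark:thom-sebastiani:2->3} to run the two-dimensional generation argument of Lemma~\ref{lemma:main:n=2:checking-fullness} slice by slice for $0\le k\le r-2$, then observe that $M_{z,r}=\kk[z]/(z^r)$ is perfect via Lemma~\ref{lemma-perfect} and use the resulting $z$-direction rule \eqref{eq:exc-coll:n=2-gen-rule-k-simple-z} to fill in the slice $k=r-1$. Your write-up in fact spells out more carefully than the paper why the two-dimensional short exact sequences and perfectness arguments transfer to the three-variable setting (via the trivial $z$-action), which is a welcome clarification.
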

\begin{proof}
  By Remark~\ref{remark:thom-sebastiani:2->3}, the horizontal slices of the 3-dimensional collections coincide with the corresponding 2-dimensional collections shifted by $(-k\z)$ for the $k$-th slice. Thus we can repeat the proof of Lemma~\ref{lemma:main:n=2:checking-fullness} verbatim for each of the slices. In this way, we show that the objects of the $k$-th slice generate $\kk(-i\x-j\y-k\z)$ for all $0\le i\le p-1$ and $0\le j\le q-1$, where $0\le k\le r-2$.
  
  After that, similarly to the proof of Lemma~\ref{lemma:main:n=2:checking-fullness}, we notice that in each of our three split cases, the object $M_{z,r} = \kk[z]/(z^r)$ is zero in $\dsgof{L_w}{\kk[x,y,z]/(w)}$ by Lemma~\ref{lemma-perfect}, hence we have a generation rule analogous to \eqref{eq:exc-coll:n=2-gen-rule-k-simple-x} and \eqref{eq:exc-coll:n=2-gen-rule-k-simple-y} for the $z$-direction (for each $l\in L_w$):
  \vspace{-1ex}
  \begin{equation}
    \label{eq:exc-coll:n=2-gen-rule-k-simple-z}
    \kk(-(r-1)\z+l) \in \Bigl\langle \kk(l), \dots, \kk(-(r-2)\z+l) \Bigr\rangle.
  \end{equation}
  \\[-3ex]
  Applying this rule for $l = -i\x-j\y$, where $0\le i\le p-1$ and $0\le j\le q-1$, we generate the modules $\kk(-i\x-j\y-(r-1)\z)$ for all such $i$ and $j$. Summarizing, we see that we already got all the shifts of type $\kk(-i\x-j\y-k\z)$ for all $0\le i\le p-1$, $0\le j\le q-1$ and $0\le k\le r-1$ as needed.
\end{proof}

\begin{lemma}
\label{lemma:prop:main:n=3:generates:d}
  The collection described in Proposition~\ref{prop:main:n=3}(d) generates the modules $\kk(-i\x-j\y-k\z)$ for all $0\le i\le p-1$, $0\le j\le q-1$ and $0\le k\le r-1$.
\end{lemma}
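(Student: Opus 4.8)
The plan is to follow the same two-part scheme as in Lemma~\ref{lemma:main:n=2:checking-fullness} and Lemma~\ref{lemma:prop:main:n=3:generates:a,b,c}: set up short exact sequences of $\kk[x,y,z]/(w)$-modules, convert them into generation rules, and apply these rules in a carefully chosen order. The essential new feature — and the reason this case is genuinely harder than the three split cases — is that $w=x^p+xy^q+yz^r$ is \emph{not} a Thom--Sebastiani sum in which $z$ decouples (cf. Remark~\ref{remark:thom-sebastiani:2->3}): the monomial $yz^r$ couples $y$ and $z$. Concretely, on $\Mz=\kk[z]$ the potential acts by $0$ (every monomial of $w$ contains $x$ or $y$), so $M_{z,r}=\kk[z]/(z^r)$ is no longer perfect, in contrast with the split cases where it vanishes. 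Hence the top $z$-slice cannot be crossed ``for free'', and filling it forces us to produce many shifts of $\Mz$ first.

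\emph{Phase 1 (lower slices).} For each fixed $k$ with $0\le k\le r-2$, the collection objects sitting at height $k$ are exactly the $2$-chain collection for $x^p+xy^q$ — the $\kk$-block $\kk(-i\x-j\y-k\z)$ together with the $\My$-row $\My(-i\x-k\z)$ — shifted by $-k\z$. Since the identities used in the $2$-chain argument survive in the $3$-chain ring, namely $M_{x,p}=\kk[x]/(x^p)$ is perfect by Lemma~\ref{lemma-perfect} and $M_{y,q}$ is generated by $\My$ and $\My(\x)\simeq\My(-q\y)[2]$ because $\x+q\y=\w$, I would apply the proof of Lemma~\ref{lemma:main:n=2:checking-fullness} verbatim in each such slice. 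This generates all $\kk(-i\x-j\y-k\z)$ for $0\le i\le p-1$, $0\le j\le q-1$ and $0\le k\le r-2$.

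\emph{Phase 2 (the $\Mz$-box and the top slice).} As in \eqref{eq:exc-coll:n=2-gen-rule-k-simple-z}, the $z$-tower $M_{z,k}=\kk[z]/(z^k)$ gives short exact sequences $0\to\kk(-k\z)\to M_{z,k+1}\to M_{z,k}\to0$; but now the top module $M_{z,r}$ is generated, via $0\to\Mz(-r\z)\to\Mz\to M_{z,r}\to0$ and $\Mz(-r\z)\simeq\Mz(\y)[-2]$ (using $\y+r\z=\w$ and quasi-periodicity, Lemma~\ref{lemma:quasi-period}), by $\Mz$ and $\Mz(\y)$. This yields a rule expressing $\kk(-i\x-j\y-(r-1)\z)$ through the already-generated lower slices together with $\Mz(-i\x-j\y)$ and $\Mz(-i\x-(j-1)\y)$. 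I therefore need every $\Mz(-i\x-j\y)$ with $0\le i\le p-1$ and $-1\le j\le q-1$.

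The main obstacle is producing these $\Mz$-shifts, and here the coupling bites: because $y$ annihilates $\Mz$, there is no $y$-multiplication short exact sequence translating $\Mz$ in the $\y$-direction, unlike $\My$, on which $y$ acts faithfully. The resolution is the quasi-periodic wrap $\Mz(-q\y)\simeq\Mz(\x)[-2]$ coming from $\x+q\y=\w$; more precisely, for the extra shifted row on the blue face one computes $\Mz(-i\x+\y)\simeq\Mz(-(i+1)\x-(q-1)\y)[2]$, so that this row \emph{is} exactly the missing $j=q-1$ data of the $\Mz$-box. This is precisely why the collection carries that shifted $\Mz$-row (Figure~\ref{fig:n=3:3-chain}). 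Combining the wrap with the $x$-direction rule for $\Mz$ — legitimate because $\kk[x,z]/(x^p)=\kk[x,z]/w\kk[x,z]$ is perfect by Lemma~\ref{lemma-perfect}, the potential acting on $\kk[x,z]$ as injective multiplication by $x^p$ — lets me fill in the whole box $\Mz(-i\x-j\y)$ from the given block and row, and feeding this into the $z$-rule then generates the top slice, completing the list. I expect the only delicate point to be the bookkeeping at the boundary $i=p-1$ and $j\in\{-1,q-1\}$; crucially the argument is \emph{not} circular, precisely because the wrap isomorphism supplies the $j=q-1$ data directly from the blue row without ever invoking the not-yet-constructed top-slice copies of $\kk$.
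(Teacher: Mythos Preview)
Your proposal is correct and takes essentially the same approach as the paper: both rely on the $x$-direction rule for $\Mz$ (via perfectness of $\kk[x,z]/(x^p)$ from Lemma~\ref{lemma-perfect}), the quasi-periodic wraps coming from $p\x=\x+q\y=\y+r\z=\w$, and the $z$-rule~\eqref{eq:exc-coll:n=2-gen-rule:k(-kz)+Mz+Mz(y)=>k+} to reach the top slice. The only difference is organizational---you run the 2-chain argument slice by slice and then fill the $\Mz$-box, whereas the paper $x$-expands $\kk$, $\My$ and $\Mz$ in parallel, then $y$-expands, then $z$-expands---and the one boundary step you should make explicit is that $\Mz(-(p-2)\x+\y)$ comes from the $x$-rule with $l=\x+\y$ applied to the shifted row.
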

\begin{proof}
  Similarly to the proofs of Lemma~\ref{lemma:main:n=2:checking-fullness} and Lemma~\ref{lemma:prop:main:n=3:generates:a,b,c}, we will use the given objects to add more shifts of $\kk$ in 3 steps:
  \begin{enumerate}[label={\textit{\arabic*)}}, itemsep=0ex, parsep=0ex, topsep=0.5ex]
      \item expand in $x$-direction by adding $\kk(-(p-1)\x-j\y-k\z)$ for different $j$, $k$;
      \item expand in $y$-direction by adding $\kk(-i\x-(q-1)\y-k\z)$ for different $i$, $k$;
      \item expand in $z$-direction by adding $\kk(-i\x-j\y-(r-1)\z)$ for different $i$, $j$.
  \end{enumerate}

  To perform $x$-expansion for $\kk$, we just notice that $M_{x,p}=\kk[x]/(x^p)$ is perfect by Lemma~\ref{lemma-perfect}, hence we can use relation \eqref{eq:exc-coll:n=2-gen-rule-k-simple-x} for $l=-j\y-k\z$ to generate all $\kk(-(p-1)\x-j\y-k\z)$ for $0\le j\le q-2$ and $0\le k \le r-2$.
  
  To prepare for the later $y$- and $z$-expansions for $\kk$, we also need to perform $x$-expansion for the auxiliary modules~$M_y$ and~$M_z$. Namely, we consider the modules $M_{z,(x,i)} = \kk[x,z]/(x^i)$, for $1\le i\le p$, where $M_{z,(x,1)} = M_z$ and $M_{z,(x,p)}$ is perfect. Similarly to \eqref{eq:exc-coll:n=2-ses-M_xi}, we consider the following short exact sequences for $1\le i\le p-1$:
  \vspace{-1ex}
  \begin{equation}
    \label{eq:exc-coll:n=3-ses-M_z_xi}
    0\to M_z(-i\x) \xrightarrow{x^i} M_{z,(x,i+1)} \xrightarrow{1} M_{z,(x,i)} \to 0.
  \end{equation}
  Similarly to \eqref{eq:exc-coll:n=2-gen-rule-k-simple-x}, these sequences give rise to the relation
  $$ M_z(-(p-1)\x+l)\in \Bigl< M_z(l),\dots, M_z(-(p-2)\x+l) \Bigr>. $$
  Using the already provided shifts of $M_z$ and this relation, we generate $M_z(-(p-1)\x-j\y)$ for $0\le j\le q-2$ by setting $l=-j\y$, and generate $M_z(-(p-2)\x+\y)$ by setting $l=\y+\x$. Finally, let us note that we already have $M_z(-(p-1)\x+\y)=M_z(\x+\y-\w)=M_z(\x+\y)[-2]$ and $M_y(-(p-1)\x-k\z) = M_y(\x-\w-k\z) = M_y(\x-k\z)[-2]$ for $0\le k\le r-2$.
  
  Now, in order to perform $y$-expansion for~$\kk$, we just use the relations \eqref{eq:exc-coll:n=2-gen-rule-y} for $l=-i\x-k\z$ and the already obtained shifts of $\kk$ and $M_y$ to get $\kk(-i\x-(q-1)\y-k\z)$ for all $0\le i\le p-1$ and $0\le k\le r-2$. (Note that we were able to increase the bound for $i$ from $p-2$ to $p-1$ due to the previous $x$-expansion for~$\kk$ and~$M_y$.) Also, we note that we already have $M_z(-i\x-(q-1)\y) =M_z(-i\x-\w+\x+\y)=M_z(-(i-1)\x+\y)[-2]$ for all $0\le i\le p-1$.
  
  Finally, to perform $z$-expansion for $\kk$, we notice that similarly to \eqref{eq:exc-coll:n=2-gen-rule:k(-ix)+Mx+Mx(y)=>k+} and \eqref{eq:exc-coll:n=2-gen-rule:k(-jy)+My+My(x)=>k+}, we get the following relation in the 3-chain case (since $\y+r\z=\w$):
  \begin{equation}
  \label{eq:exc-coll:n=2-gen-rule:k(-kz)+Mz+Mz(y)=>k+}
    \kk(-(r-1)\z+l) \in \Bigl\langle \kk(l), \dots, \kk(-(r-2)\z+l), M_z(l), M_z(\y+l) \Bigr\rangle.
  \end{equation}
  Using this relation for $l=-i\x-j\y$ and the previously obtained shifts of~$\kk$ and~$M_z$, we get $\kk(-i\x-j\y-(r-1)\z)$ for all $0\le i\le p-1$ and $0\le j\le q-1$.
  
  Summarizing, we got $\kk(-i\x-j\y-k\z)$ for all $0\le i\le p-1$, $0\le j\le q-1$ and $0\le k\le r-1$. Hence the proof.
\end{proof}
  
\begin{lemma}
\label{lemma:prop:main:n=3:generates:e}
  The collection described in Proposition~\ref{prop:main:n=3}(e) generates the modules $\kk(-i\x-j\y-k\z)$ for all $0\le i\le p-1$, $0\le j\le q-1$ and $0\le k\le r-1$, and the module $\kk(\x+\y+\z)$.
\end{lemma}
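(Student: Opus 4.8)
The plan is to follow the same three-step scheme used in Lemmas~\ref{lemma:main:n=2:checking-fullness} and~\ref{lemma:prop:main:n=3:generates:d}: first record a supply of short exact sequences of $\kk[x,y,z]/(w)$-modules, then distill them into \emph{generation rules}, and finally apply those rules in a fixed order to produce every required shift. Throughout I would exploit the cyclic $x$-$y$-$z$ symmetry of the loop potential $w=zx^p+xy^q+yz^r$, which permutes $(p,q,r)$ and the modules $M_x,M_y,M_z$ as well as $M_{xy},M_{yz},M_{xz}$; this cuts the bookkeeping by roughly a factor of three.

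For Step~1 I would assemble: the filtration sequences $0\to\kk(-i\x)\to M_{x,i+1}\to M_{x,i}\to0$ and their $y$- and $z$-analogues; the ``cap'' sequences $0\to M_x(-p\x)\xrightarrow{x^p}M_x\to M_{x,p}\to0$ (cyclically also for $M_{y,q}$, $M_{z,r}$); the sequences $0\to M_x(-\x)\xrightarrow{x}M_x\to\kk\to0$; the gluing sequences $0\to M_x(-\x)\to M_{xy}\to M_y\to0$ and $0\to M_y(-\y)\to M_{xy}\to M_x\to0$ (cyclically for $M_{yz}$, $M_{xz}$); and the sequences $0\to M_x(-\x)\to M_{xyz}\to M_{yz}\to0$ relating the corner object to the two-variable modules (cyclically for $M_{xz}$, $M_{xy}$). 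The decisive difference from the chain case of Lemma~\ref{lemma:prop:main:n=3:generates:d} is that here $w$ acts by zero on each $\kk[x_i]$, so $M_{x,p}$, $M_{y,q}$, $M_{z,r}$ are \emph{not} perfect; instead, using the loop relations $\w=\z+p\x=\x+q\y=\y+r\z$ together with quasi-periodicity $(\w)\simeq[2]$, the cap sequences express $M_{x,p}$ through $M_x$ and $M_x(\z)$, and cyclically. In Step~2 these sequences yield the loop analogues of the rules \eqref{eq:exc-coll:n=2-gen-rule:k(-ix)+Mx+Mx(y)=>k+}--\eqref{eq:exc-coll:n=2-gen-rule-loop:Mx+k(-ix)=>Mx(x+y)} in each of the three directions.

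For Step~3 I would first generate the auxiliary modules: using the corner object $M_{xyz}(\x+\y+\z)$ and the gluing sequences, produce the needed shifts of $M_{yz}$ and $M_{xz}$, and then the shifts of $M_{xy}$ and the boundary shifts of $M_x$, $M_y$, $M_z$ that lie just outside the ranges present in the collection (this is the loop-case amplification of the step in Lemma~\ref{lemma:main:n=2:checking-fullness} that first produces $M_y(\x)$ and $M_x(\y)$). With these inputs available, I would expand $\kk$ successively in the $x$-, $y$-, and $z$-directions exactly as in Lemma~\ref{lemma:prop:main:n=3:generates:d}, obtaining $\kk(-i\x-j\y-k\z)$ for $0\le i\le p-1$, $0\le j\le q-1$, $0\le k\le r-1$. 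Finally, to reach the one element of $\lwbar$ missed by the cube (see Lemma~\ref{lem:lwbar-elements:n=3}), I would peel off the augmentation of the collection object via the sequence
\[
0\to M_x(\y+\z)\oplus M_y(\x+\z)\oplus M_z(\x+\y)\to M_{xyz}(\x+\y+\z)\to\kk(\x+\y+\z)\to0,
\]
which comes from the surjection $M_{xyz}\to\kk$ with kernel $M_x(-\x)\oplus M_y(-\y)\oplus M_z(-\z)$, so that $\kk(\x+\y+\z)$ is generated once the three module shifts on the left have been produced in the previous step.

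The step I expect to be the main obstacle is the first part of Step~3. Unlike the chain case, none of $M_{x,p}$, $M_{y,q}$, $M_{z,r}$ vanishes in the singularity category, so each expansion of $\kk$ must be fed by auxiliary module shifts that are themselves not in the collection; these depend, through the gluing sequences, on the single three-variable object $M_{xyz}(\x+\y+\z)$ and on the two-variable modules $M_{yz}$, $M_{xz}$ which do not appear in the collection at all. Arranging the order of generation so that every module shift is available when it is needed, and keeping careful track of the $\w$-shift identifications that move shifts across the boundary of the cube, is where the real work lies; once the auxiliary modules are in place, the $\kk$-expansions and the final shift $\kk(\x+\y+\z)$ follow quickly.
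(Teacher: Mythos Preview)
Your outline is right in spirit and your final sequence for $\kk(\x+\y+\z)$ matches the paper's exactly, but there is a genuine gap precisely where you flag it: the first part of Step~3. The sequences you list do not break the circularity. To extract $M_{yz}(\x+\y+\z)$ or $M_{xz}(\x+\y+\z)$ from the corner object $M_{xyz}(\x+\y+\z)$ via your gluing sequences $0\to M_x(-\x)\to M_{xyz}\to M_{yz}\to0$, you would already need the boundary shift $M_x(\y+\z)$ (respectively $M_y(\x+\z)$); these are not in the collection, and one checks in $\lwbar$ that they are not $[2]$-shifts of any $M_x(-j\y-k\z)$ in the collection either. Conversely, your route to those boundary shifts goes through the two-variable modules. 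Saying ``arrange the order carefully'' does not resolve this: with only the sequences you wrote down, neither side can be produced first.

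The missing idea is a new family of \emph{perfect} two-variable modules. In the $3$-loop case $w=zx^p+xy^q+yz^r$, the element $w$ acts on $K=\kk[x,z]$ (with trivial $y$-action) as multiplication by $zx^p$, which is injective; hence by Lemma~\ref{lemma-perfect} the module $\kk[x,z]/(zx^p)$ is perfect, i.e.\ zero in $\dsgof{L_w}{\kk[x,y,z]/(w)}$. The paper uses this together with the short exact sequence
\[
0\to M_{xz}(-(p-1)\x)\xrightarrow{x^{p-1}}\kk[x,z]/(zx^p)\to M_{z,(x,p-1)}\to0
\]
and the filtration of $M_{z,(x,p-1)}$ by shifts of $M_z$, to generate $M_{xz}(\x+\y+\z)$ \emph{directly} from $M_z(\y),\dots,M_z(-(p-2)\x+\y)$, all of which lie in the collection (here $-(p-1)\x=\x+\z-\w$). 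Only after $M_{xz}(\x+\y+\z)$ is available does the sequence $0\to M_y(\x+\z)\to M_{xyz}(\x+\y+\z)\to M_{xz}(\x+\y+\z)\to0$ yield $M_y(\x+\z)$, and then cyclic symmetry gives $M_x(\y+\z)$ and $M_z(\x+\y)$. This is the genuine replacement for the perfect one-variable modules $M_{x,p}$, $M_{y,q}$, $M_{z,r}$ of the chain case --- not more gluing, but new perfect objects coming from the two-variable rings. Once you add this step, the rest of your plan (the three expansions of $\kk$ and the final sequence for $\kk(\x+\y+\z)$) goes through exactly as you describe.
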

\begin{proof}
\def\Mxyz{M_{xyz}}
  Let us follow the same steps as in the proof of Lemma~\ref{lemma:prop:main:n=3:generates:d}. For the $x$-expansion, similarly to \eqref{eq:exc-coll:n=2-gen-rule:k(-ix)+Mx+Mx(y)=>k+}, we have the following relation: 
  $$ \kk(-(p-1)\x+l) \in \Bigl\langle \kk(l), \dots, \kk(-(p-2)\x+l), M_x(l), M_x(\z+l) \Bigr\rangle.$$
  (The difference from \eqref{eq:exc-coll:n=2-gen-rule:k(-ix)+Mx+Mx(y)=>k+} is only in the last shift, since we have $\z+p\x=\w$ in this case, instead of $\y+p\x=\w$.) We apply this relation for $l=-j\y-k\z$ and obtain $\kk(-(p-1)\x-j\y-k\z)$ for all $0\le j\le q-2$ and $0\le k\le r-2$.
  
  In order to prepare for the $y$- and $z$-expansions for the shifts of~$\kk$, we also need to perform $x$-expansion for the shifts of $M_z$, similarly to how we did it in the proof of Lemma~\ref{lemma:prop:main:n=3:generates:d}. Namely, we consider the modules $M_{z,(x,i)} = \kk[x,z]/(x^i)$, for $1\le i\le p$, where $M_{z,(x,1)}=M_z$, and also an extra-module $M_{z,(\overline{x,p})} = \kk[x,z]/(zx^p)$ which is perfect by Lemma~\ref{lemma-perfect} (instead of $M_{z,(x,p)}$). In addition to \eqref{eq:exc-coll:n=3-ses-M_z_xi}, we will use the following short exact sequences of modules:\vspace{-1ex}
  \begin{gather*}
      0 \to M_{xz}(-(p-1)\x) \xrightarrow{x^{p-1}} M_{z,(\overline{x,p})} \xrightarrow{1} M_{z,(x,p-1)} \to 0,\\
      0 \to M_x(-p\x) \xrightarrow{x^p} M_{z,(\overline{x,p})} \xrightarrow{1} M_{z,(x,p)} \to 0.
  \end{gather*}
  These sequences and \eqref{eq:exc-coll:n=3-ses-M_z_xi} for $1 \le i \le p-1$ give rise to the following relations:\vspace{-1ex}
  \begin{gather*}
   M_{xz}(\x+\z+l)\in \Bigl< M_z(l),\dots, M_z(-(p-2)\x+l)\Bigr>, \\
   M_z(-(p-1)\x+l)\in \Bigl< M_z(l),\dots, M_z(-(p-2)\x+l),\; M_x(\z+l) \Bigr>,
  \end{gather*}
  \par\vspace{-1ex}\noindent
  where we use that $-(p-1)\x = \x+\z-\w$ and $-p\x = \z-\w$. Using the second relation for $l=-j\y$ and the available shifts of $M_z$ and $M_x$, we generate $M_z(-(p-1)\x-j\y)$ for all $0\le j\le q-2$. Using the first relation for $l=\y$, we generate $M_{xz}(\x+\y+\z)$. After that, we use the module $M_{xyz}(\x+\y+\z)$ from our collection and the sequence $$ 0\to M_y(\x+\z)\xrightarrow{y} M_{xyz}(\x+\y+\z) \xrightarrow{1} M_{xz}(\x+\y+\z) \to 0 $$ in order to generate $M_y(\x+\z)$ as well. Repeating the argument for a cyclic permutation of variables $x$, $y$, $z$ and $p$, $q$, $r$, we also get $M_x(\y+\z)$ and $M_z(\x+\y)$.
  
  Now, we can perform $y$-expansion for $\kk$ in a straightforward way. Firstly, we note that we have got all the modules $M_y(-(p-1)\x-k\z)= M_y(x-(k-1)\z-\w)\simeq M_y(\x-(k-1)\z)[-2]$ for $0\le k\le r-1$ (for $k>0$ they were all present in the initial collection, and for $k=0$ we have just obtained $M_y(\x+\z)$ in the previous step). We also already had $M_y(-i\x-k\z)$ for $0\le i\le p-2$ and $0\le k\le r-2$. Thus, we can use \eqref{eq:exc-coll:n=2-gen-rule:k(-jy)+My+My(x)=>k+} for $l=-i\x-k\z$ in order to generate $\kk(-i\x-(q-1)\y-k\z)$ for all $0\le i\le p-1$ and $0\le k\le r-2$.
  
  Now, we can perform $z$-expansion for $\kk$ by using~\eqref{eq:exc-coll:n=2-gen-rule:k(-kz)+Mz+Mz(y)=>k+}. For that, we have all the required shifts of $M_z$ except for $M_z(\y-(p-1)\x)=M_z(\x+\y+\z-\w)\simeq M_z(\x+\y+\z)[-2]$. The latter shift can be obtained from the following exact sequence:
  \vspace{-1ex}
  $$ 0\to M_x(\y+\z)\oplus M_y(\x+\z) \xrightarrow{(x,y)} \Mxyz(\x+\y+\z) \xrightarrow{1} M_z(\x+\y+\z)\to 0, $$
  since we already have $\Mxyz(\x+\y+\z)$, $M_x(\y+\z)$ and $M_y(\x+\z)$. After this, we use \eqref{eq:exc-coll:n=2-gen-rule:k(-kz)+Mz+Mz(y)=>k+} for $l=-i\x-j\y$ and get the shifts $\kk(-i\x-j\y-(r-1)\z)$ for all $0\le i\le p-1$ and $0\le j\le q-1$.
  
  Summarizing, we have obtained $\kk(-i\x-j\y-k\z)$ for all $0\le i\le p-1$, $0\le j\le q-1$ and $0\le k\le r-1$. Now in order to finish the proof, we also need to show how to generate $\kk(\x+\y+\z)$. For that, we use the $(\x+\y+\z)$-shift of the short exact sequence
  \vspace{-1ex}
  $$ 0\to M_x(-\x)\oplus M_y(-\y)\oplus M_z(-\z) \xrightarrow{(x,y,z)} \Mxyz \xrightarrow{1} \kk\to 0, $$
  and the fact that we have already obtained $\Mxyz(\x+\y+\z)$, $M_x(\y+\z)$, $M_y(\x+\z)$ and $M_z(\x+\y)$. This concludes the proof of the lemma.
\end{proof}

\subsubsection{Proof of Proposition~\ref{prop:main:n=3} and concluding remarks}

Let us conclude the proof.

\begin{proof}[Proof of Proposition~\ref{prop:main:n=3}]
  In Lemma~\ref{lemma:n=3:check}, we checked that all our collections are indeed exceptional and strong. In Lemmas~\ref{lemma:prop:main:n=3:generates:a,b,c}, \ref{lemma:prop:main:n=3:generates:d} and \ref{lemma:prop:main:n=3:generates:e}, we showed that these collections generate the shifts $\kk(-i\x-j\y-k\z)$ for all $0\le i\le p-1$, $0\le j\le q-1$ and $0\le k\le r-1$ in all cases, and also the shift $\kk(\x+\y+\z)$ in the 3-loop case. Thus, by Lemma~\ref{lem:lwbar-elements:n=3}, the generation criterion of Proposition~\ref{prop:gen-criterion-invertibles-collections} and Remark~\ref{remark:generation-up-to-quasi-periodicity}, we see that the collections are full. This concludes the proof.
\end{proof}

\begin{remark}
Similarly to Remark~\ref{remark:quiver-compositions:n=2}, it is possible to explicitly describe the quivers defined by our collections on Figures~\ref{fig:n=3:split-split}--\ref{fig:n=3:3-loop} by making the generators out of standard short exact sequences of the given modules.

For example, for the morphisms between the shifts of $\kk$, we can use the same generators $e_x$, $e_y$ in the $x$- and $y$-directions as in Remark~\ref{remark:quiver-compositions:n=2}, and an analogous generator~$e_z$ for the morphisms in $z$-direction. The relations between these can be described as:
\vspace{-1ex}
\begin{gather*}
e_x^2 = e_y^2 = e_z^2 = 0,\qquad  e_y e_x = -e_x e_y \ne 0,\qquad e_z e_x = -e_x e_z \ne 0,\\ e_z e_y = -e_y e_z \ne 0,\qquad e_x e_y e_z \ne 0.
\end{gather*}

Let us also see what happens with the morphisms in the bottom-left-front corner of the collection for the 3-loop case on Figure~\ref{fig:n=3:3-loop}. There we have arrows $f_x\:M_x\to\kk$, $f_y\:M_y\to\kk$ and $f_z\:M_z\to\kk$ defined as in Remark~\ref{remark:quiver-compositions:n=2}. We also have an arrow $g_x\:M_x(\z)\to M_z[1]$ defined by the short exact sequence $(0\to M_z\xrightarrow{z}M_{xz}(\z)\xrightarrow{1}M_x(\z)\to 0)$ and analogous arrows $g_y\:M_y(\x)\to M_x[1]$ and $g_z\:M_z(\y)\to M_y[1]$. The arrow $h_x$ from $M_{xyz}(\x+\y+\z)$ to $M_z(\y)[1]$ can be defined by the sequence $(0\to M_z(\y)\xrightarrow{xz} N(\x+\y+\z)\xrightarrow{1}M_{xyz}(\x+\y+\z)\to 0)$, where $N=\kk[x,y,z]/(xy,yz,zx^2)$. Similarly, we define $h_y\:M_{xyz}(\x+\y+\z)\to M_x(\z)[1]$ and $h_z\:M_{xyz}(\x+\y+\z)\to M_y(\x)[1]$. Then we can form non-zero compositions $f_y g_z h_x$, $f_x g_y h_z$ and $f_z g_x h_y$ from $M_{xyz}(\x+\y+\z)$ to $\kk[2]$. These three morphisms generate the 2-dimensional space of morphisms from $M_{xyz}(\x+\y+\z)$ to $\kk[2]$ and are related by a single linear relation:
\vspace{-1ex}
$$ f_y g_z h_x + f_x g_y h_z + f_z g_x h_y = 0. $$
\end{remark}

\begin{remark}
\label{remark:geometric-3}
  Similarly to Remark~\ref{remark:geometric-2}, there are straightforward counterparts for our modules in the categories of equivariant sheaves over $\Aff3$. Namely, the module $\kk$ again corresponds to the structure sheaf of the origin, $M_x$, $M_y$ and $M_z$ to the structure sheaves of $x$-, $y$- and $z$-axes respectively, $M_{xy}$ to the structure sheaf of the union of $x$- and $y$-axes, and $M_{xyz}$ to the structure sheaf of the union of all three axes. Also, the modules are well-defined over $\kk[x,y,z]/(w)$ if and only if the corresponding supports lie on the hypersurface $\{w=0\}$ in $\Aff3$.
\end{remark}

\subsection{General pattern and higher dimensions}
\label{sec3:higher-n}

Here, we summarize observations on our explicit collections 
for $n\le 3$ constructed above and formulate a conjecture on how such collections should look like for arbitrary~$n$.

We see that the collections built for $n\le 3$ follow a certain pattern. First of all, in geometric language of Remarks~\ref{remark:geometric-2} and~\ref{remark:geometric-3}, all the modules used in our collections correspond to the structure sheaves of the origin (like $\kk$), of the coordinate axes (like $M_x$, $M_y$, $M_z$), or their unions (like $M_{xy}$ and $M_{xyz}$). In order for these objects to be well-defined in the corresponding categories, the corresponding coordinate axes must lie in the given hypersurface $\{w=0\}\subseteq \Affn$.

In the split cases, our collections can be described as Thom-Sebastiani products of the collections for the constituent polynomials (see Remark~\ref{remark:thom-sebastiani:2->3}). In general, such products can be defined as follows.

\begin{definition}
  Consider a \textit{split} invertible polynomial $w=w_1 + \ldots + w_m\in R$, where each $w_i$ is an invertible polynomial in the ring with its own variables $R_i=\kk[x_{i,1},\dots,x_{i,n_i}]$, $1\le i\le m$, and $ R = \otimes_i R_i =\kk[x_{i,j}]_{1\le i\le m, 1\le j\le n_i}$. Let the categories $\dsgof{L_{w_i}}{R_i/(w_i)}$ have exceptional collections $(E_{i,j})_j$ consisting of the shifts of modules over $R_i/(w_i)$. Then we define the \textit{Thom-Sebastiani product} of these collections as the collection in $\dsgof{L_w}{R/(w)}$, consisting of the tensor products $\bigotimes_i E_{i,j_i}$ of the corresponding modules over~$\kk$ (shifted accordingly).
\end{definition}

We also see that the non-strong collection for the 3-chain polynomial $w=x^p+xy^q+yz^r$ on Figure~\ref{fig:n=3:3-chain-not-strong} can be obtained as the union of the collections for the two split-chain polynomials $x^p+(y^q+yz^r)$ and $(x^p+xy^q)+z^r$. The first collection provides the shifts of $\kk$ and $M_z$, while the second one provides the same shifts of $\kk$ and the shifts of $M_y$. In order to achieve strongness, we also need to adjust the shift for one of the objects, namely we should replace $M_z(-(p-2)\x+\y)$ by $M_z(\y+\x)$. Thus we obtain the strong collection depicted on Figure~\ref{fig:n=3:3-chain}.

Also, the collections for the loop polynomials can be obtained from the collections for the corresponding chain polynomials with an addition of one extra-object. For example, for the 2-loop polynomial $w=yx^p+xy^q$, the collection on Figure~\ref{fig:2loop} can be thought as the union of the collection from Figure~\ref{fig:2chain} for the corresponding chain polynomial $w=x^p+xy^q$ and of the symmetrical collection for $w=yx^p+y^q$. The first collection provides the shifts of $\kk$ and $M_y$, while the second one provides the same shifts of $\kk$ and the shifts of $M_x$. In the bottom-left corner, where two shifts of $M_x$ and $M_y$ collide, we need to replace them by a single object which is a shift of $M_{xy}$.
Similarly, for the 3-loop polynomial $w=zx^p+xy^q+yz^r$, the collection on Figure~\ref{fig:n=3:3-loop} can be obtained as the union of the collections for the corresponding chain polynomials $\underline{x^p}+xy^q+yz^r$, $zx^p+\underline{y^q}+yz^r$ and $zx^p+xy^q+\underline{z^r}$ with an addition of one extra-object which is a shift of $M_{xyz}$.

The above observations suggest that we can build exceptional collections for arbitrary invertible polynomials either by using the collections coming from the lower dimensions (in the split cases), or by using the collections for the reduced versions of the polynomial (in the chain and loop cases). Let us summarize these observations as the following conjecture.

\begin{conjecture}
\label{conj:collection-general-n}
    If $w\in\kk[x_1,\dots,x_n]$ is an invertible polynomial, then the category of singularities $\dsgof{L_w}{\kk[x_1,\dots,x_n]/(w)}$ admits a full strongly exceptional collection whose objects are the shifts of modules corresponding to the structure sheaves of coordinate subspaces in $\Affn$ (and their unions) lying in the hypersurface $\{w=0\}$ as follows:
    \begin{enumerate}[label=(\alph*)]
        \item In the \textbf{split} cases, the exceptional collection can be obtained from the Thom-Sebastiani products of the constituent collections. It will be strong, if the constituent collections were strong.
        \item In the \textbf{chain} case $w=x_1^{p_1}+\ldots+x_{n-2}x_{n-1}^{p_{n-1}}+x_{n-1} x_n^{p_n}$, where $n\ge 3$, the exceptional collection can be obtained as the union of the collections for the two split-chain polynomials $(x_1^{p_1}+\ldots+x_{n-2}x_{n-1}^{p_{n-1}})+ x_n^{p_n}$ and $(x_1^{p_1}+\ldots+x_{n-3}x_{n-2}^{p_{n-2}})+(x_{n-1}^{p_{n-1}}+x_{n-1} x_n^{p_n})$. In order to make it strong, one also needs to additionally shift some of the objects via $L_w$-grading.
        \item In the \textbf{loop} case $w=x_n x_1^{p_1}+\ldots+x_{n-2}x_{n-1}^{p_{n-1}}+x_{n-1} x_n^{p_n}$, where $n\ge 2$, the strong exceptional collection can be obtained from the union of the collections for the corresponding chain polynomials obtained by replacing one of $n$ monomials $x_{i-1} x_i^{p_i}$ by $x_i^{p_i}$. If $n$ is odd, then we need to add one extra-object corresponding to the structure sheaf of the union of the maximal coordinate subspaces of dimension $\frac{n-1}2$ lying in the hypersurface $\{w=0\}$. If $n$ is even, then we need to remove two objects corresponding to the structure sheaves of the coordinate subspaces $x_1 x_3 \dots x_{n-1}$ and $x_2 x_4\dots x_n$ of dimensions $\frac n2$ and replace them by the structure sheaf of the union of these two subspaces.
    \end{enumerate}
\end{conjecture}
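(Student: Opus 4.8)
The plan is to follow the same three-part strategy used for $n\le 3$ — set up an explicit candidate collection for each atomic type, verify exceptionality and strongness by computing the relevant morphism spaces via Recipe~\ref{recipe}, and verify fullness by generating all the shifts $\kk(l)$ and invoking Proposition~\ref{prop:gen-criterion-invertibles-collections} together with Remark~\ref{remark:generation-up-to-quasi-periodicity} — but now organized as an induction on $n$ that bottoms out in the split, chain and loop cases. The objects are always shifts of modules $M_S$ representing the structure sheaves of the coordinate subspaces $S\subseteq\Affn$ lying in $\{w=0\}$; each such $M_S$ is either a quotient of $\kk[x_1,\dots,x_n]$ by a regular sequence or, for the unions, built by deforming a direct sum of Koszul resolutions exactly as the resolution of $M_{xyz}$ was built in \S\ref{sec:app:explicit-morphisms:n=3}. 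Hence Recipe~\ref{recipe} applies verbatim and produces the morphism spaces together with their $L_w$-grading.

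For part (a) I would first establish a Thom--Sebastiani/Künneth formula: for a split $w=\sum_i w_i$ with $R=\bigotimes_i R_i$, the morphisms between tensor products of modules factor as $\Hom_{\dsgof{}{R/(w)}}(\bigotimes_i M_i,\bigotimes_i N_i[\bullet]) \cong \bigotimes_i \Hom_{\dsgof{}{R_i/(w_i)}}(M_i,N_i[\bullet])$, tracking the induced grading, where $L_w$ is the pushout of the $L_{w_i}$ identifying their degrees $\w_i$ with the common degree $\w$. The relations displayed after the proof of Lemma~\ref{lemma:n=3:check} are exactly this formula for one factor equal to $z^r$, and they should generalize by the same resolution bookkeeping. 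Granting it, exceptionality and strongness of the tensor-product collection hold termwise: $\Hom(\bigotimes M_i,\bigotimes M_i)=\bigotimes\kk=\kk$ while every off-diagonal or nonzero-degree morphism vanishes because it vanishes in some factor; a compatible total order is any linear extension of the product order; and fullness follows since $\lwbar$ splits as $\bigoplus_i \overline{L}_{w_i}$, so generating each $\kk_i(l_i)$ in the factors and tensoring the generation triangles produces every $\kk(l)$.

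For parts (b) and (c) I would build the candidate collections recursively as prescribed — the $n$-chain collection as the union of the two split-chain collections (each already produced by part (a) applied to chains of length $<n$), and the $n$-loop collection as the union of the $n$ chain collections obtained by breaking each monomial $x_{i-1}x_i^{p_i}$ into $x_i^{p_i}$, adjusted by the parity-dependent extra or merged object. The sub-collections share precisely the shifts of $\kk$ and each contributes its own family of $M_S$-shifts. The real work is twofold: (i) the morphism computations between the distinct families $M_S,M_{S'}$, which I would record once in closed form from the Koszul/mapping-cone resolutions and then decompose by $L_w$-grading via Proposition~\ref{prop:gr/ungr-dsg}, extending Table~\ref{table:all-morphisms:n=3}; and (ii) the generation argument, extending Lemmas~\ref{lemma:prop:main:n=3:generates:d} and~\ref{lemma:prop:main:n=3:generates:e} by an induction that performs the $x_1$-, then $x_2$-, $\dots$, then $x_n$-expansions for $\kk$, at each stage first expanding the auxiliary modules $M_S$ and then applying the short exact sequences $0\to M_{\{i\}}(-\x_i)\to M_S\to M_{S\setminus\{i\}}\to 0$ (with $M_S$ the structure sheaf of the union of the axes in $S$) and their graded variants to manufacture the required shifts.

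The main obstacle will be strongness in the chain and loop cases. The naive union of two strong collections need not be strong: new morphisms appear between the two $M_S$-families and, as Figure~\ref{fig:n=3:3-chain-not-strong} already exhibits for the 3-chain, an unwanted morphism of positive degree can occur, cured only by re-shifting certain objects via the $L_w$-grading (there, replacing $M_z(-(p-2)\x+\y)$ by $M_z(\x+\y)$). Determining, for general $n$, exactly which objects must be re-shifted and proving that after these finitely many adjustments all negative- and higher-degree morphisms vanish is the crux; this demands a clean structural description of $\Ext^\bullet(M_S,M_{S'})$ together with control of the generator coordinates in the increasingly intricate finite groups $\lwbar$. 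A secondary difficulty is closing the generation induction — verifying, by a \mbox{Lemma~\ref{lem:lwbar-elements:n=2}}-style count, that the generated shifts exhaust $\lwbar$, and in the even-$n$ loop case correctly merging the two colliding maximal coordinate subspaces $x_1x_3\cdots x_{n-1}$ and $x_2x_4\cdots x_n$ into a single module $M_S$.
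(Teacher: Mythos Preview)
This statement is a \emph{conjecture} in the paper, not a theorem: the paper does not supply a proof and explicitly leaves the general-$n$ case open, offering only the explicit constructions for $n\le 3$ and the pattern described in \S\ref{sec3:higher-n} as evidence. There is therefore no ``paper's own proof'' to compare against.

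Your proposal is not a proof either, and you are candid about this. What you have written is a reasonable and well-organized \emph{plan of attack} that faithfully extrapolates the paper's $n\le 3$ methods: the Thom--Sebastiani factorization for split cases, the Koszul/mapping-cone resolutions feeding into Recipe~\ref{recipe}, and the step-by-step generation of $\kk(l)$ via short exact sequences. The two obstacles you single out --- (i) controlling strongness after taking the union of sub-collections, which already for $n=3$ requires an ad hoc grading re-shift (Figure~\ref{fig:n=3:3-chain-not-strong} vs.\ Figure~\ref{fig:n=3:3-chain}), and (ii) closing the generation argument over the increasingly complicated groups $\lwbar$ --- are exactly the points where the paper's case-by-case verification does not obviously scale, and are presumably why the author states this as a conjecture rather than a theorem. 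Your K\"unneth formula for part (a) is the most plausible piece to make rigorous first; the chain and loop inductions in (b) and (c) remain genuinely open.
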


\begin{remark}
  It is expected that the total number of objects in the full exceptional collection for $\dsgof{L_w}{R/(w)}$ should be equal to the Milnor number of the dual singularity (for example, see Conjecture~1.4 in \cite{hirano-ouchi}). This fact can be used to quickly assess whether a given exceptional collection is a valid candidate for being full or not.
\end{remark}

\begin{example}
\label{ex:4-chain}
  In the case of 4-chain polynomial $w=x^p+xy^q+yz^r+zt^s$, the strong exceptional collection should consist of:
  \begin{itemize}[itemsep=0ex, topsep=0.5ex, parsep=0ex]
      \item $(p-1)(q-1)(r-1)(s-1)$ shifts of $\kk$,
      \item $p(r-1)(s-1)$ shifts of $M_y=\kk[y]$,
      \item $(p-1)q(s-1)$ shifts of $M_z=\kk[z]$,
      \item $(p-1)(q-1)r$ shifts of $M_t=\kk[t]$,
      \item and $pr$ shifts of $M_{[yt]}=\kk[y,t]$,
  \end{itemize}
  \vspace{0.5ex}
for the total of $(pqrs-qrs+rs-s+1)$ objects (which agrees with the Milnor number of the dual polynomial~$x^py+y^qz+z^rt+t^s$). Here, the first corresponding split-chain polynomial~$(x^p+xy^q+yz^r)+t^s$ provides the shifts of~$\kk$, $M_y$ and~$M_z$. The second polynomial~$(x^p+xy^q)+(z^r+zt^s)$ provides the same shifts of~$\kk$ and~$M_y$, and the shifts of $M_t$ and $M_{[yt]}$, where the object $M_{[yt]}$ is obtained as the tensor product of the modules $M_y$ and $M_t$ for the constituent 2-chain polynomials $x^p+xy^q$ and $z^r+zt^s$. The object $M_{[yt]}$ also corresponds to the structure sheaf of the coordinate $yt$-plane, which is lying in $\{w=0\}$ because $w$ belongs to the ideal $(x,z)$ defining this plane.
\end{example}

\begin{example}
\label{ex:4-loop}
  In the case of 4-loop polynomial $w=tx^p+xy^q+yz^r+zt^s$, the strong exceptional collection should consist of:
  \begin{itemize}[itemsep=0ex, topsep=0.5ex, parsep=0ex]
      \item $(p-1)(q-1)(r-1)(s-1)$ shifts of $\kk$,
      \item $(q-1)(r-1)s$ shifts of $M_x=\kk[x]$,
      \item $p(r-1)(s-1)$ shifts of $M_y=\kk[y]$,
      \item $(p-1)q(s-1)$ shifts of $M_z=\kk[z]$,
      \item $(p-1)(q-1)r$ shifts of $M_t=\kk[t]$,
      \item $qs-1$ shifts of $M_{[xz]}=\kk[x,z]$,
      \item $pr-1$ shifts of $M_{[yt]}=\kk[y,t]$,
      \item and one additional shift of $M_{[xz][yt]}=\kk[x,y,z,t]/(xy,xt,yz,yt)$,
  \end{itemize}
  for the total of $pqrs$ objects (which agrees with the Milnor number of the dual polynomial~$x^py+y^qz+z^rt+t^sx$), where the object $M_{[xz][yt]}$ corresponds to the structure sheaf of the union of the coordinate $xz$- and $yt$-planes. This collection can be obtained from the union of the collections for the corresponding chain polynomials $\underline{x^p}+xy^q+yz^r+zt^s$, $tx^p+\underline{y^q}+yz^r+zt^s$, $tx^p+xy^q+\underline{z^r}+zt^s$ and $tx^p+xy^q+yz^r+\underline{t^s}$ by replacing two shifts of~$M_{[xz]}$ and~$M_{[yt]}$ with a shift of~$M_{[xz][yt]}$.
\end{example}

\begin{example}
  Similarly to the previous examples, in the case of $5$-chain polynomial $w = x^p+xy^q+yz^r+zt^s+tu^e$, the collection should consist of the shifts of the modules $\kk$, $M_y$, $M_z$, $M_t$, $M_u$, $M_{[yt]}$, $M_{[yu]}$ and $M_{[zu]}$. In the case of $5$-loop polynomials one should also add the shifts of $M_x$, $M_{[xz]}$, $M_{[xt]}$ and one extra-object corresponding to the structure sheaf of the union of the coordinate planes $xz$, $xt$, $yt$, $yu$ and $zu$.
\end{example}

\section{Block decompositions}
\label{sec:blocks}

Let us clarify the definitions used in \hyperref[conj:orlov]{Orlov's Conjecture}.

\begin{definition}
\label{def:block}
  We say that an exceptional collection $(E_1,\dots,E_n)$ is a \textit{block}, if all its objects are pairwise orthogonal, that is if $\bull\Ext(E_i,E_j)=0$ for any $i\ne j$.
\end{definition}

\begin{definition}
\label{def:n-blocks}
  We say that an exceptional collection \textit{consists of $n$ blocks}, if it has the form $ (\BB_1,\dots,\BB_n) $, where each subcollection $\BB_i$ is a block.
\end{definition}

We prove Theorem~\ref{thm2} by using the classification of invertible polynomials from Example~\ref{ex:invertible-classification-n<=3} and the explicit exceptional collections constructed in Section~3 for the corresponding 9 types of invertible polynomials for $n\le 3$.
More precisely, we prove the following.

\begin{theorem}[{=Theorem~\ref{thm2} for general~$\kk$}]
\label{theorem:orlov's-conjecture-for-n<=3}
  For the 9 different types of invertible polynomials $w$ from Example~\ref{ex:invertible-classification-n<=3}, the category $\dsgof{L_w}{\kk[x_1,\dots,x_n]/(w)}$ admits a full strongly exceptional collection consisting of at most $n+1$ blocks.
\end{theorem}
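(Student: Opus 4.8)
The plan is to take the explicit full strongly exceptional collections already produced in Section~\ref{sec3} (Example~\ref{ex:main:n=1} for $n=1$, Proposition~\ref{prop:main:n=2} for $n=2$, Proposition~\ref{prop:main:n=3} for $n=3$) and to reorganize each of them, by means of mutations, into at most $n+1$ blocks in the sense of Definition~\ref{def:block}. The organizing device is the \emph{level function} $\ell$ assigning to an object placed at the grid point $(i,j,k)$ on Figures~\ref{fig:n=3:split-split}--\ref{fig:n=3:3-loop} (with the convention that the boundary modules $M_x,M_y,M_z,M_{xy},M_{xyz}$ occupy points having some coordinates equal to $-1$) the sum $\ell=i+j+k$ of its coordinates. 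By the legend of Proposition~\ref{prop:main:n=3}, every arrow points in a direction $\eps\in\{0,1\}^n\setminus\{0\}$, so each nonzero morphism raises $\ell$ by an amount between $1$ and $n$; hence two objects whose levels differ by a multiple of $n+1$ carry no drawn arrow between them. This is the first indication that grouping the objects of the collection by the residue $\ell \bmod (n+1)$ should yield $n+1$ candidate blocks, and it pins down the target count.

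The first task is to verify that each residue class is genuinely orthogonal in \emph{all} internal degrees, not merely free of drawn arrows. For this I would reuse the morphism computations of Tables~\ref{table:all-morphisms:n=2} and~\ref{table:all-morphisms:n=3} together with quasi-periodicity (Lemma~\ref{lemma:quasi-period}): orthogonality of two shifts depends only on the class in $\lwbar$ of the difference $v$ of their $L_w$-degrees, and $\bull\Ext$ fails to vanish exactly when that class lies in the finite ``hook'' set of $\eps$'s with $\eps\in\{0,1\}^n$ (and its negative), every element of which has level in $\{-n,\dots,n\}$. Thus a same-class pair can fail to be orthogonal only through a wrap-around coincidence in the finite group $\lwbar$, and ruling these out is the same kind of elementary congruence computation as in the proofs of Lemmas~\ref{lemma:main:n=2:checking-collections} and~\ref{lemma:n=3:check}, to be carried out uniformly for all $p,q,r\ge 2$. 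In the split cases, where $\lwbar$ splits as a product, these coincidences simply do not occur inside the cube and the residue classes are automatically blocks; in the chain and loop cases $\lwbar$ is cyclic and small, so here the check is delicate and in some places the mutations below must also be used to \emph{adjust the objects} so that the classes become orthogonal.

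The genuine difficulty, and the main obstacle, is the \emph{ordering}. Within a residue class there are no morphisms, but between classes the morphisms are cyclic: a direction with $|\eps|=s$ sends residue $c$ to residue $c+s \bmod (n+1)$, so already for $n\ge 1$ every class maps to every other one in some direction, and the $n+1$ classes cannot be linearly ordered into an exceptional sequence as they stand. This is precisely where mutations enter. The plan is to apply a controlled sequence of left and right mutations $\Lmut{\bullet}$, $\Rmut{\bullet}$ that ``unwraps'' this cyclic quiver: one repeatedly mutates the objects of the lower residue classes past those of the higher ones, replacing each moved object by the appropriate cone, until all surviving morphisms point forward with respect to a single linear order of the $n+1$ classes. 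I would track these mutations directly on the cube diagrams, checking at each step that the mutated objects remain $\Ext$-finite, that mutation preserves the within-class orthogonality established above, and that no new backward morphism is created. I expect the loop cases to require the most careful bookkeeping, because of the extra object $M_{xyz}(\x+\y+\z)[-2]$ and the two-dimensional morphism space from it to $\kk$.

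Finally I would organize the nine types of Example~\ref{ex:invertible-classification-n<=3} economically using the split/chain/loop reductions of \S\ref{sec3:higher-n}. In the three split cases the collection is a Thom--Sebastiani product (Remark~\ref{remark:thom-sebastiani:2->3}), so the level function is the total of the levels of the constituent one- and two-variable collections and the block decomposition factors through them; here the mutation analysis can be imported slice by slice from the lower-dimensional cases, and the $n+1$ blocks arise as the total-level grading of the product. The chain and loop cases, built as unions of split-chain collections with at most one additional module object, require the mutation argument to be carried out directly on the cube. Assembling the orthogonality verifications and the unwrapping mutations for all nine types then shows that at most $n+1$ blocks suffice, which is the assertion of Theorem~\ref{theorem:orlov's-conjecture-for-n<=3}.
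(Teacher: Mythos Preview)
Your starting point---using the level function $\ell=\sum a_i$ on the grid and observing that arrows raise $\ell$ by at most $n$---is exactly right, and is what the paper does. But you then take a detour that creates an artificial difficulty: you group objects by the \emph{residue} $\ell\bmod(n+1)$, which immediately destroys the linear order and leaves you with a cyclic quiver that you must ``unwrap'' by ad hoc mutations. Your plan for this unwrapping (``track these mutations directly on the cube diagrams, checking at each step\dots'') is not an argument; it is a promise to compute, and you give no mechanism ensuring that strongness is preserved or that no new backward morphisms appear.

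The paper avoids this problem by grouping objects by the \emph{actual} level value $\ell$, not its residue. This yields many blocks $\BB_c,\BB_{c+1},\dots,\BB_d$, but they are already linearly ordered and satisfy $\BB_i\perp\BB_j$ whenever $|i-j|>n$. The reduction to $n+1$ blocks is then a clean induction (Lemma~\ref{lemma:decrease-number-of-blocks}): the last block $\BB_N$ consists of sinks, and a general ``inverting sink arrows'' lemma (Lemma~\ref{lemma:mutations:main-lemma}) shows that right-mutating the preceding $n$ blocks over $\BB_N$ yields another \emph{strong} collection in which $\BB_N[-1]$ sits adjacent to $\BB_{N-n-1}$; since these are orthogonal, they merge into one block, reducing $N$ by one. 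The crucial content is that this sink-inversion preserves strongness, which follows from short functoriality lemmas about mutations (Lemmas~\ref{lemma:mutation-of-strong-is-strong}--\ref{lemma:strong-mutation:basic-2}). That is the structural ingredient your proposal is missing.

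A smaller point: your ``first task'' of re-verifying full orthogonality within residue classes, worrying about wrap-around coincidences in $\lwbar$, is unnecessary. Lemmas~\ref{lemma:main:n=2:checking-collections} and~\ref{lemma:n=3:check} already establish that the figures depict \emph{all} nonzero morphisms between the objects of the (strong) collection, so ``no arrow'' already means ``orthogonal in all degrees''.
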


The idea of the proof is as follows. We notice that our explicit collections from Section~3 in all 9~cases already have a nice block structure. Indeed, due to them being arranged into an $n$-dimensional lattices and due to arrows going only in nonnegative directions, the blocks can be formed by uniting the objects lying in the layers/hyperplanes orthogonal to the vector $(1,1,\dots,1)$ (see the left picture in Example~\ref{example:block-mutations}). However, the number of blocks in this decomposition depends on the exponents $p_i$ of the polynomial~$w$ and can be arbitrarily large for the given~$n$.

In order to reduce the number of blocks in the above decomposition, we apply the well-chosen \textit{mutations} to our collections (see~\S\ref{sec:mutations} for the preliminaries on mutations). We apply them in a way that preserves both the strongness of the collections and the overall structure of the corresponding graph/quiver. Essentially, each mutation will just invert some arrows in the graph. After doing so, we will be able to rearrange the objects into the smaller number of blocks by uniting some of the older blocks together. (See Example~\ref{example:block-mutations}.)

In \S\ref{sec:strong-mutations}, we investigate how mutations act on the strong collections. In \S\ref{sec4:inverting-arrows}, we formulate the procedure of inverting the arrows. Then we proceed with the proof of Theorem~\ref{theorem:orlov's-conjecture-for-n<=3} in \S\ref{sec4:proof}.

\subsection{Mutations of strong collections}
\label{sec:strong-mutations}

\begin{lemma}
\label{lemma:mutation-of-strong-is-strong}
\begin{enumerate}[itemsep=0em, label={(\alph*)}, labelwidth=\widthof{ (a)}, leftmargin=!, topsep=1ex]
    \item If in the exceptional collection $(E,F_1,\dots,F_n)$, the subcollection $(F_1,\dots,F_n)$ is strong, then the mutated collection $(\LL_E F_1,\dots,\LL_E F_n)$ is strong as well.
    \item If the subcollection $(E_1,\dots,E_n)$ in the exceptional collection $(E_1,\dots,E_n,F)$ is strong, so is the mutated collection $(\RR_F E_1,\dots,\RR_F E_n)$.
\end{enumerate}
\end{lemma}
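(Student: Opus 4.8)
The plan is to show that each mutation transports the relevant graded morphism complexes \emph{verbatim}, so that strongness is inherited automatically; I treat (a) in detail, as (b) is entirely dual. Throughout I use the defining triangles of the mutations from \S\ref{sec:mutations} together with two orthogonality facts that are built into the mutation formalism.

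For (a), write $V_i := \bull\Ext(E,F_i)$ for the graded $\Hom$-complex and recall the defining triangle of the left mutation,
$$ V_i \otimes E \xrightarrow{\ev} F_i \to \LL_E F_i \to (V_i\otimes E)[1]. $$
I would first record the two vanishings I need. Applying $\bull\Ext(E,-)$ to this triangle and using that $E$ is exceptional gives $\bull\Ext(E,V_j\otimes E)=V_j\otimes\bull\Ext(E,E)=V_j$, and since $\ev$ induces the canonical identity $V_j=\bull\Ext(E,F_j)\to\bull\Ext(E,F_j)$, the long exact sequence forces $\bull\Ext(E,\LL_E F_j)=0$; this is just the statement that the mutated pair $(\LL_E F_j,E)$ is exceptional. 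Secondly, since $E$ stands first in the collection $(E,F_1,\dots,F_n)$, we have $\bull\Ext(F_i,E)=0$.

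The heart of the argument is then two long exact sequence chases. Applying $\bull\Ext(-,\LL_E F_j)$ to the triangle defining $\LL_E F_i$ and using $\bull\Ext(E,\LL_E F_j)=0$ yields $\bull\Ext(\LL_E F_i,\LL_E F_j)\simeq\bull\Ext(F_i,\LL_E F_j)$. Next, applying $\bull\Ext(F_i,-)$ to the triangle defining $\LL_E F_j$ and using $\bull\Ext(F_i,E)=0$ yields $\bull\Ext(F_i,\LL_E F_j)\simeq\bull\Ext(F_i,F_j)$. Combining the two gives
$$ \bull\Ext(\LL_E F_i,\LL_E F_j)\simeq\bull\Ext(F_i,F_j), $$
which is concentrated in degree $0$ for all $i,j$ precisely because $(F_1,\dots,F_n)$ is strong. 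Hence $(\LL_E F_1,\dots,\LL_E F_n)$ is strong. For (b) I would run the mirror computation using the right-mutation triangle $\RR_F E_i\to E_i\xrightarrow{\coev}W_i^*\otimes F\to\RR_F E_i[1]$ with $W_i:=\bull\Ext(E_i,F)$, the vanishing $\bull\Ext(\RR_F E_i,F)=0$ (exceptionality of $(F,\RR_F E_i)$) and $\bull\Ext(F,E_j)=0$ ($F$ being last), to obtain $\bull\Ext(\RR_F E_i,\RR_F E_j)\simeq\bull\Ext(E_i,E_j)$, again concentrated in degree $0$.

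I do not expect a serious obstacle here: the whole content is that left/right mutation acts as an equivalence on the mutated subcollection and therefore preserves $\bull\Ext$. The only points requiring care are verifying that $\ev$ (resp.\ $\coev$) really induces an isomorphism on the relevant one-dimensional-in-degree-zero $\Hom$-spaces, so that the two vanishings hold exactly, and keeping the shift conventions of \S\ref{sec:mutations} consistent so that no stray shift $[k]$ sneaks into the final isomorphism — for $i=j$ both sides must equal $\kk$ in degree $0$, which pins the shift down. These are routine bookkeeping within the standard mutation calculus.
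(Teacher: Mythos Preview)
Your proof is correct and rests on the same idea as the paper's: the isomorphism $\bull\Ext(\LL_E F_i,\LL_E F_j)\simeq\bull\Ext(F_i,F_j)$, which immediately transfers strongness. The only difference is packaging --- the paper simply invokes the functoriality statement of Lemma~\ref{lemma:mutations-are-functors} (which already records this $\bull\Ext$-preservation), whereas you reprove that equality inline via the two long exact sequence chases; you even note this yourself in the final paragraph.
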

\begin{proof}
  Follows immediately from the functoriality property of Lemma~\ref{lemma:mutations-are-functors} and from the definition of strong collections.
\end{proof}

\begin{lemma}
\label{lemma:mutated-morphisms-basic}
  If $(E,F)$ is an $\Ext$-finite exceptional pair, then the morphisms in the mutated pairs $(\LL_E F, E)$ and $(F,\RR_F E)$ are as follows:
  $$ \bull\Ext(\LL_E F, E) = 
  \bull\Ext(F,\RR_F E) = (\bull\Ext(E,F))^*. $$
\end{lemma}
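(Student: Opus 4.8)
The plan is to apply the contravariant functor $\bull\Ext(-,E)$ to the defining triangle of $\LL_E F$ and the covariant functor $\bull\Ext(F,-)$ to the defining triangle of $\RR_F E$, and in each case to read off the answer from the resulting long exact sequence using the exceptionality of $E$ and $F$ together with the pair condition $\bull\Ext(F,E)=0$. Throughout I write $V := \bull\Ext(E,F)$, which is a finite-dimensional graded vector space by the $\Ext$-finiteness hypothesis; this finiteness is exactly what makes the mutation triangles and the dual $V^*$ well defined.

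Recall from \S\ref{sec:mutations} that $\LL_E F$ sits in the distinguished triangle
$$ \LL_E F \to V\otimes E \xrightarrow{\ev} F \to \LL_E F[1], $$
where $\ev$ is the evaluation morphism. Applying the cohomological functor $\bull\Ext(-,E)$ turns this into an exact triangle relating $\bull\Ext(F,E)$, $\bull\Ext(V\otimes E,E)$ and $\bull\Ext(\LL_E F,E)$. First I would note that $\bull\Ext(F,E)=0$ since $(E,F)$ is an exceptional pair, so the connecting maps force an isomorphism $\bull\Ext(\LL_E F,E)\simeq \bull\Ext(V\otimes E,E)$. Then, pulling the finite-dimensional graded space $V$ out of the first argument and using that $E$ is exceptional, i.e.\ $\bull\Ext(E,E)=\kk$, I would identify $\bull\Ext(V\otimes E,E)\simeq V^*\otimes\bull\Ext(E,E)=V^*$. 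This gives $\bull\Ext(\LL_E F,E)\simeq (\bull\Ext(E,F))^*$.

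The right-mutation case is dual. Here $\RR_F E$ sits in the triangle
$$ E \xrightarrow{\coev} V^*\otimes F \to \RR_F E \to E[1], $$
with $\coev$ the coevaluation morphism. Applying the covariant cohomological functor $\bull\Ext(F,-)$ and again invoking $\bull\Ext(F,E)=0$ yields an isomorphism $\bull\Ext(F,\RR_F E)\simeq \bull\Ext(F,V^*\otimes F)$; since $F$ is exceptional this equals $V^*\otimes\bull\Ext(F,F)=V^*$, as desired. I would remark that the functoriality of mutations (Lemma~\ref{lemma:mutations-are-functors}) can be used to make the identifications of the surviving terms canonical rather than merely abstract isomorphisms.

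I do not expect a serious obstacle: the argument is a direct application of the long exact sequence together with the three vanishing/normalization facts built into the notion of an exceptional pair. The only points requiring care are the bookkeeping of the shift and the direction of the connecting morphism when applying the contravariant functor $\bull\Ext(-,E)$, and confirming that the two occurrences of $V$ (as $V$ in the left-mutation triangle, and as $V^*$ in the right-mutation triangle) are dualized consistently, so that both computations land on the same space $(\bull\Ext(E,F))^*$.
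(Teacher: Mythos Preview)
Your proof is correct and follows essentially the same approach as the paper: apply $\bull\Ext(-,E)$ (resp.\ $\bull\Ext(F,-)$) to the defining triangle, use $\bull\Ext(F,E)=0$ to isolate the surviving term, and then invoke Lemma~\ref{lemma:Ext-computation-with-tensor-products} together with $\bull\Ext(E,E)=\kk$ to identify it with $V^*$. Your aside about Lemma~\ref{lemma:mutations-are-functors} is not needed here, and the paper omits it.
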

\begin{proof}
  Applying the functor $\Hom(-,E)$ to the distinguished triangle defining $\LL_E F$ and using the fact that $\bull\Ext(F,E) = 0$, we get the following equalities:
  \begin{multline*}
    \bull\Ext(\LL_E F, E) = \bull\Ext(\bull\Ext(E,F)\otimes E, E) = \text{[by Lemma~\ref{lemma:Ext-computation-with-tensor-products}]} = \\ = (\bull\Ext(E,F))^* \otimes \bull\Ext(E,E) = (\bull\Ext(E,F))^* \otimes \kk = (\bull\Ext(E,F))^*.
  \end{multline*}
  Similarly, one can prove that $\bull\Ext(F,\RR_F E) = (\bull\Ext(E,F))^*$ by applying the functor $\Hom(F,-)$ to the triangle defining $\RR_F E$.
\end{proof}

\begin{lemma}
  Let $(E,F,G)$ be an $\Ext$-finite exceptional collection such that $E$ is orthogonal to $G$. Then in the mutated collection $(E,G,\RR_G F)$, the morphisms are as follows:
  $$ \bull\Ext(E, \RR_G F) = \bull\Ext(E,F)[1]. $$
\end{lemma}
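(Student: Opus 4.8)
The plan is to apply the functor $\Hom(E,-)$ to the distinguished triangle that defines the right mutation $\RR_G F$ and to use the orthogonality of $E$ and $G$ to annihilate the resulting middle term. The whole argument runs parallel to the computations in Lemma~\ref{lemma:mutated-morphisms-basic}.

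First I would recall the triangle defining the right mutation of $F$ through $G$ for the exceptional pair $(F,G)$, namely
$$ F \xrightarrow{\coev} \bull\Ext(F,G)^* \otimes G \to \RR_G F \to F[1], $$
where the first arrow is the canonical coevaluation map. This is the same triangle used implicitly for $\RR_F E$ in the proof of Lemma~\ref{lemma:mutated-morphisms-basic}, with $E$, $F$ replaced by $F$, $G$.

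Next I would apply $\bull\Ext(E,-)$ to this triangle to obtain the associated long exact sequence, which I can organize as a triangle of $\ZZ$-graded vector spaces
$$ \bull\Ext(E,F) \to \bull\Ext(F,G)^*\otimes\bull\Ext(E,G) \to \bull\Ext(E,\RR_G F) \to \bull\Ext(E,F)[1], $$
where I have pulled the graded vector space $\bull\Ext(F,G)^*$ out of the middle $\Ext$ via Lemma~\ref{lemma:Ext-computation-with-tensor-products}. The key point is then immediate: since $E$ is orthogonal to $G$ we have $\bull\Ext(E,G)=0$, so the middle term of the sequence vanishes. The connecting morphism therefore becomes an isomorphism $\bull\Ext(E,\RR_G F)\simeq\bull\Ext(E,F)[1]$, which is exactly the claimed identity; equivalently, reading the long exact sequence off degree by degree gives $\Ext^m(E,\RR_G F)\simeq\Ext^{m+1}(E,F)$ for every $m$.

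I do not anticipate a genuine obstacle, since this is a direct diagram chase rather than a computation. The only points that require care are fixing the correct variance and homological shift in the right-mutation triangle, so that the middle term is genuinely the one built from $\bull\Ext(E,G)$ (and not from $\bull\Ext(G,E)$), and keeping track of the single shift $[1]$ introduced by the connecting morphism.
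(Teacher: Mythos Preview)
Your proof is correct and follows the same approach as the paper: apply $\Hom(E,-)$ to the distinguished triangle defining $\RR_G F$ and use the orthogonality $\bull\Ext(E,G)=0$ to obtain the shift, exactly parallel to Lemma~\ref{lemma:mutated-morphisms-basic}. The paper simply writes ``The same as in Lemma~\ref{lemma:mutated-morphisms-basic}.''
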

\begin{proof}
  The same as in Lemma~\ref{lemma:mutated-morphisms-basic}.
\end{proof}

\begin{lemma}
\label{lemma:strong-mutation:basic-1}
  If $(E,F,G)$ is a strong exceptional collection such that $E$ and $G$ are orthogonal, then the collection $(E,G[-1],\RR_G F[-1])$ is strong as well.
\end{lemma}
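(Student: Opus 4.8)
The plan is to prove strongness of the candidate collection $(E,\,G[-1],\,\RR_G F[-1])$ by a direct inspection of all of its graded morphism spaces, showing each is concentrated in cohomological degree $0$. First I would record two reductions that make the collection automatically exceptional, so that only the degree-$0$ concentration requires work. On the one hand, right mutation of an exceptional collection is again exceptional, so $(E,\,G,\,\RR_G F)$ is exceptional. On the other hand, shifting individual objects preserves semiorthogonality, since $\bull\Ext(X[a],Y[b]) = \bull\Ext(X,Y)[b-a]$ vanishes exactly when $\bull\Ext(X,Y)$ does; hence $(E,\,G[-1],\,\RR_G F[-1])$ is exceptional as well. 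Among the six ordered pairs of objects, the three ``backward'' ones vanish by this semiorthogonality, and the three diagonal ones equal $\kk$ in degree $0$ because $E$, $G[-1]$ and $\RR_G F[-1]$ are each exceptional. So the entire content is the three forward off-diagonal pairs.

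Next I would compute those three spaces using the shift identity above together with the two preceding morphism lemmas. For the pair $(E,\,G[-1])$ one has $\bull\Ext(E,G[-1]) = \bull\Ext(E,G)[-1]$, which vanishes because $E$ and $G$ are orthogonal. For the pair $(G[-1],\,\RR_G F[-1])$ the two shifts cancel, giving $\bull\Ext(G[-1],\RR_G F[-1]) = \bull\Ext(G,\RR_G F)$; applying Lemma~\ref{lemma:mutated-morphisms-basic} to the exceptional pair $(F,G)$ identifies this with $(\bull\Ext(F,G))^*$, which is concentrated in degree $0$ since $(E,F,G)$ is strong and dualization preserves the degree. For the pair $(E,\,\RR_G F[-1])$ I would invoke the preceding lemma, which uses the orthogonality of $E$ and $G$ to compute $\bull\Ext(E,\RR_G F) = \bull\Ext(E,F)[1]$; the extra $[-1]$ then yields $\bull\Ext(E,\RR_G F[-1]) = \bull\Ext(E,F)[1][-1] = \bull\Ext(E,F)$, again concentrated in degree $0$ by strongness of the original collection. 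This exhausts all pairs and completes the verification.

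I do not expect a genuine obstacle here, as the argument is pure bookkeeping with shifts; the one subtle point to get right is precisely the role of the two $[-1]$ shifts. Without them, the pair $(E,\,\RR_G F)$ would contribute $\bull\Ext(E,F)[1]$, which lives in degree $-1$, so the unshifted mutated collection $(E,\,G,\,\RR_G F)$ would fail to be strong. Shifting $G$ and $\RR_G F$ down by one simultaneously repairs the $(E,\RR_G F)$ entry while leaving the $(G,\RR_G F)$ entry untouched (both factors shift equally) and keeping the $(E,G)$ entry zero. I would therefore foreground the identity $\bull\Ext(X[a],Y[b]) = \bull\Ext(X,Y)[b-a]$ as the bookkeeping device that makes all three checks transparent, and treat Lemma~\ref{lemma:mutation-of-strong-is-strong} as the general principle sitting behind these explicit computations rather than as a black box, since the single mutation and shift here are most cleanly handled by the direct morphism identifications above.
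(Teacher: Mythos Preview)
Your proof is correct and follows exactly the approach the paper intends: the paper's own proof is the one-line ``Follows from the previous lemma and from Lemma~\ref{lemma:mutated-morphisms-basic},'' and you have simply unpacked this by checking the three forward $\bull\Ext$-spaces using those two lemmas together with the shift bookkeeping. Your discussion of why the $[-1]$ shifts are needed is also accurate and matches the underlying logic.
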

\begin{proof}
  Follows from the previous lemma and from Lemma~\ref{lemma:mutated-morphisms-basic}.
\end{proof}

\begin{lemma}
  Let $(F,G,H)$ be an $\Ext$-finite exceptional collection such that $G$ is orthogonal to $H$. Then in the mutated collection $(G,\RR_G F, H)$ the morphisms are as follows:
  $$ \bull\Ext(\RR_G F, H) = \bull\Ext(F,H)[-1]. $$
\end{lemma}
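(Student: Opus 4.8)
The plan is to run the same argument as in Lemma~\ref{lemma:mutated-morphisms-basic} and the immediately preceding lemma, the only structural change being that the relevant $\Hom$-functor is now \emph{contravariant}; this is exactly what will force the shift to appear as $[-1]$ here rather than as $[+1]$.

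First I would write down the distinguished triangle defining the right mutation,
\[ F \xrightarrow{\coev} \bull\Ext(F,G)^* \otimes G \longrightarrow \RR_G F \longrightarrow F[1], \]
which is the very triangle (with $F,G$ playing the roles of $E,F$) that underlies the proof of Lemma~\ref{lemma:mutated-morphisms-basic}. Then I would apply the contravariant functor $\Hom(-,H)$ to it. Since $\Hom(-,H)$ reverses arrows, this yields the long exact sequence of $\bull\Ext$-spaces encoded by the distinguished triangle
\[ \bull\Ext(\RR_G F, H) \longrightarrow \bull\Ext\bigl(\bull\Ext(F,G)^*\otimes G,\, H\bigr) \longrightarrow \bull\Ext(F, H) \longrightarrow \bull\Ext(\RR_G F, H)[1]. \]

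Next I would evaluate the middle term with Lemma~\ref{lemma:Ext-computation-with-tensor-products}, obtaining $\bull\Ext(F,G)^{**}\otimes\bull\Ext(G,H)$; the double dual of $\bull\Ext(F,G)$ appearing here is harmless and collapses back to $\bull\Ext(F,G)$ by $\Ext$-finiteness. By the hypothesis that $G$ is orthogonal to $H$ we have $\bull\Ext(G,H)=0$, so this middle term vanishes. The triangle then degenerates into an isomorphism $\bull\Ext(F,H)\simeq\bull\Ext(\RR_G F, H)[1]$, that is $\bull\Ext(\RR_G F, H)\simeq\bull\Ext(F,H)[-1]$, which is the claim.

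I expect the only delicate point to be the bookkeeping of the shift, and this is precisely where the proof departs from the covariant computation of the preceding lemma: because $\Hom(-,H)$ rotates the triangle, the connecting morphism deposits the shift onto $\bull\Ext(F,H)$ rather than onto $\bull\Ext(\RR_G F,H)$, producing $[-1]$ where the covariant argument produced $[+1]$. Everything else—the evaluation of the tensor term and the vanishing coming from orthogonality—is routine and parallels Lemma~\ref{lemma:mutated-morphisms-basic} essentially verbatim.
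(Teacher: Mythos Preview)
Your argument is correct and is precisely what the paper intends: it too says ``The same as in Lemma~\ref{lemma:mutated-morphisms-basic},'' meaning one applies $\Hom(-,H)$ to the defining triangle of $\RR_G F$, uses $\bull\Ext(G,H)=0$ to kill the middle term, and reads off the isomorphism with the resulting $[-1]$ shift.
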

\begin{proof}
  The same as in Lemma~\ref{lemma:mutated-morphisms-basic}.
\end{proof}

\begin{lemma}
\label{lemma:strong-mutation:basic-2}
  If $(F,G,H)$ is a strong exceptional collection such that $G$ and $H$ are orthogonal, then the collection $(G[-1],\RR_G F[-1],H)$ is strong as well.
\end{lemma}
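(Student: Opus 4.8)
The plan is to prove this lemma in exactly the same way as its mirror image, Lemma~\ref{lemma:strong-mutation:basic-1}, interchanging the roles of the left and right mutations and of the orthogonal pair. First I would record that $(G,\RR_G F,H)$ is the exceptional collection obtained from $(F,G,H)$ by right-mutating the adjacent pair $(F,G)$ and leaving $H$ fixed, so it is again exceptional. Applying the shift $[-1]$ to the first two terms changes neither the exceptionality of the individual objects nor the semiorthogonality relations (a shift of an exceptional object is exceptional, and $\bull\Ext(E_j[a],E_i[b])=\bull\Ext(E_j,E_i)[b-a]$ still vanishes whenever $\bull\Ext(E_j,E_i)=0$). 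Hence $(G[-1],\RR_G F[-1],H)$ is an exceptional collection, and it only remains to verify strongness, i.e.\ that $\bull\Ext$ of each forward-ordered pair is concentrated in cohomological degree $0$; the backward pairs vanish by exceptionality and the diagonal entries are $\kk$ in degree $0$.

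There are exactly three forward pairs to examine. For the pair $(G[-1],\RR_G F[-1])$ the two shifts cancel, so $\bull\Ext(G[-1],\RR_G F[-1])=\bull\Ext(G,\RR_G F)$; applying Lemma~\ref{lemma:mutated-morphisms-basic} to the exceptional pair $(F,G)$ identifies this with $(\bull\Ext(F,G))^*$, which lies in degree $0$ because $(F,G,H)$ is strong and dualizing a space concentrated in degree $0$ keeps it there. For the pair $(G[-1],H)$ I would simply invoke the orthogonality hypothesis: since $G$ and $H$ are orthogonal, $\bull\Ext(G,H)=0$, and therefore any shift, in particular $\bull\Ext(G[-1],H)$, also vanishes; this pair is trivially concentrated in degree $0$.

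The only pair that genuinely uses the preceding lemma is $(\RR_G F[-1],H)$. Here I would compute degreewise: $\Ext^k(\RR_G F[-1],H)=\Ext^{k+1}(\RR_G F,H)$, and the preceding lemma gives $\bull\Ext(\RR_G F,H)=\bull\Ext(F,H)[-1]$, so that $\Ext^{k+1}(\RR_G F,H)=\Ext^{k}(F,H)$. By strongness of $(F,G,H)$ this vanishes for $k\neq 0$, hence $\bull\Ext(\RR_G F[-1],H)$ is concentrated in degree $0$ as well. With all three forward pairs controlled, the collection $(G[-1],\RR_G F[-1],H)$ is strong.

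The one thing to be careful about is the bookkeeping of shifts, which is where the lemma's precise choice of $[-1]$'s matters: the $[-1]$ on $\RR_G F$ is exactly what absorbs the degree shift $[-1]$ produced by the preceding lemma's morphism computation, while the $[-1]$ on $G$ is harmless against the orthogonal pair $(G,H)$. I do not expect any genuine obstacle beyond tracking these shifts; no new input is needed besides Lemma~\ref{lemma:mutated-morphisms-basic}, the preceding lemma, the orthogonality of $G$ and $H$, and the strongness of the original collection. In short, the argument is formally dual to that of Lemma~\ref{lemma:strong-mutation:basic-1}, and I would phrase the final write-up as a one-line reference to those two lemmas together with the degree check above.
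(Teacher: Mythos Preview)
Your proof is correct and is exactly the approach the paper takes: the paper's own proof is the one-liner ``Follows from the previous lemma and from Lemma~\ref{lemma:mutated-morphisms-basic},'' and you have simply unpacked that reference by checking the three forward $\bull\Ext$'s via Lemma~\ref{lemma:mutated-morphisms-basic}, the immediately preceding morphism lemma, and the orthogonality of $G$ and $H$.
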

\begin{proof}
  Follows from the previous lemma and from Lemma~\ref{lemma:mutated-morphisms-basic}.
\end{proof}

\begin{remark}
\label{remark:shift-of-collection-preserves-properties}
  The shift functors $[i]$ applied to the collections preserve their properties of being exceptional, full and strong.
\end{remark}

\subsection{Inverting sink arrows}
\label{sec4:inverting-arrows}

Here we prove two technical facts about the strong exceptional collections. If we consider strong collections as the graphs (or quivers) in which the vertices are the objects of the collection and the arrows stand for the non-trivial morphisms between the corresponding objects, then our statements mean that in a strong collection we can simultaneously invert all the arrows going into \textit{sinks}, that is into the vertices with no outgoing arrows.

Firstly, we show that we can simultaneously invert all arrows going into a single sink.

\begin{lemma}
\label{lemma:mutations:main-lemma-auxiliary}
  Let $(\AA,\BB,E,\DD)$ be a strong exceptional collection consisting of three subcollections $\AA$, $\BB$, $\DD$ and one object $E$, such that $\AA$ is orthogonal to $\DD$ and $E$ is orthogonal to both $\AA$ and $\DD$. Then the mutated collection $(\AA, E[-1], \RR_E \BB [-1], \DD)$ is also strong.
\end{lemma}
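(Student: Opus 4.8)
The plan is to check directly that the mutated collection $(\AA, E[-1], \RR_E\BB[-1], \DD)$ is exceptional and strong, by showing that $\bull\Ext(X,Y)$ is concentrated in degree $0$ for every ordered pair $(X,Y)$ with $X$ preceding $Y$. Exceptionality needs little work: passing from $(\AA,\BB,E,\DD)$ to $(\AA, E, \RR_E\BB, \DD)$ is the right mutation of the block $\BB$ through the object $E$ (i.e. moving $E$ to the left past $\BB$), which is exceptional by the general theory of mutations, and then applying $[-1]$ to the two middle pieces keeps it exceptional: shifting individual objects only shifts each $\bull\Ext(E_i,E_j)$ by a fixed degree and leaves the diagonal groups equal to $\kk$, in the spirit of Remark~\ref{remark:shift-of-collection-preserves-properties}. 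Thus the real content is strongness.

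First I would clear away the pairs whose $\bull\Ext$ is either inherited unchanged or forced to vanish. Pairs with both entries in $\AA$, or both in $\DD$, are untouched, hence remain in degree $0$. The pairs $(\AA, E[-1])$, $(E[-1], \DD)$ and $(\AA, \DD)$ all have vanishing $\bull\Ext$ by the orthogonality hypotheses $E\perp\AA$, $E\perp\DD$ and $\AA\perp\DD$ respectively (a degree shift of $0$ is still $0$). This leaves precisely the pairs involving the mutated objects, which is where the shift $[-1]$ does its work.

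For $A\in\AA$ and $B\in\BB$ the triple $(A,B,E)$ is exceptional with $A$ orthogonal to $E$, so the lemma computing $\bull\Ext(E',\RR_G F)$ for a triple with orthogonal ends gives $\bull\Ext(A,\RR_E B)=\bull\Ext(A,B)[1]$; applying $[-1]$ to the target yields $\bull\Ext(A,\RR_E B[-1])=\bull\Ext(A,B)$, which lies in degree $0$ by strongness of the original collection. Dually, for $B\in\BB$ and $D\in\DD$ the triple $(B,E,D)$ is exceptional with $E$ orthogonal to $D$, so the companion lemma computing $\bull\Ext(\RR_G F,H)$ gives $\bull\Ext(\RR_E B,D)=\bull\Ext(B,D)[-1]$, and the shift $[-1]$ on the source cancels this to $\bull\Ext(\RR_E B[-1],D)=\bull\Ext(B,D)$, again in degree $0$. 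For the pair $(E[-1],\RR_E B[-1])$ the two shifts cancel and Lemma~\ref{lemma:mutated-morphisms-basic} gives $\bull\Ext(E,\RR_E B)=(\bull\Ext(B,E))^*$, which sits in degree $0$ because $\bull\Ext(B,E)$ does. Finally, for two objects taken from the mutated block, Lemma~\ref{lemma:mutation-of-strong-is-strong}(b) applied to the strong subcollection $(\BB,E)$ shows that $\RR_E\BB$ is itself a strong collection, so $\bull\Ext(\RR_E B_i[-1],\RR_E B_j[-1])=\bull\Ext(\RR_E B_i,\RR_E B_j)$ is concentrated in degree $0$ and equals $\kk$ on the diagonal.

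The only genuine subtlety, and the point to get right, is the bookkeeping of shifts: one must pair the $[1]$ produced by mutating past $E$ on the $\AA$-side with a shift of the target, the $[-1]$ produced on the $\DD$-side with a shift of the source, and use the cancelling shifts for the central $E$–$\RR_E\BB$ pair, so that the single $[-1]$ attached to $E$ and to $\RR_E\BB$ lands every relevant $\bull\Ext$ back in degree $0$ rather than in degree $\pm1$. Once this matching is arranged, the verification is routine and reduces to the morphism computations of the preceding lemmas together with the three orthogonality hypotheses; assembling the cases then gives strongness of the whole mutated collection.
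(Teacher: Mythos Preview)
Your proof is correct and follows essentially the same approach as the paper. The paper's own proof is a one-liner citing Lemmas~\ref{lemma:mutation-of-strong-is-strong}, \ref{lemma:strong-mutation:basic-1}, \ref{lemma:strong-mutation:basic-2} and Remark~\ref{remark:shift-of-collection-preserves-properties}; you have simply unpacked those citations, invoking directly the underlying morphism computations (Lemma~\ref{lemma:mutated-morphisms-basic} and the two unnamed lemmas preceding \ref{lemma:strong-mutation:basic-1} and \ref{lemma:strong-mutation:basic-2}) and checking each ordered pair of the mutated collection explicitly rather than grouping them into the triple-at-a-time statements the paper packages.
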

\begin{proof}
  Follows from Lemmas~\ref{lemma:mutation-of-strong-is-strong}, \ref{lemma:strong-mutation:basic-1}, \ref{lemma:strong-mutation:basic-2} and Remark~\ref{remark:shift-of-collection-preserves-properties}.
\end{proof}

\begin{remark}
  Using graphs, we can illustrate this lemma by the following picture, where the objects of each subcollection are grouped together:
  $$ \begin{tikzcd}
   \AA \ar[d] & E  \\
   \BB \ar[ru] \ar[r] & \DD
  \end{tikzcd}
  \quad\;\;{\raisebox{-1.5ex}{\text{\Huge$\rightsquigarrow$}}}\;\;\quad
  \begin{tikzcd}
  \AA \ar[d] & E[-1] \ar[ld] \\
   \RR_E \BB[-1] \ar[r] & \DD
  \end{tikzcd}
  $$
 Here, $E$ is a sink object and the given mutation inverts all the arrows going from the objects of $\BB$ into $E$ while preserving all other morphisms/arrows both between the groups~$\AA$, $\BB$ and~$\DD$ and inside them.
\end{remark}

Now, we just apply the previous fact to simultaneously invert all the arrows going into a block consisting of sinks.

\begin{lemma}
\label{lemma:mutations:main-lemma}
  Let $(\AA,\BB,\CC)$ be a strong $\Ext$-finite exceptional collection consisting of three subcollections $\AA$, $\BB$ and $\CC$. Assume that $\AA$ is orthogonal to $\CC$ and that the objects in $\CC$ are pairwise orthogonal. Then, the mutated exceptional collection $(\AA,\CC[-1],\RR_\CC \BB [-m])$ is strong as well, where $m$ is the number of objects in $\CC$.
\end{lemma}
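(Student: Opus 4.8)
The plan is to prove the statement by induction on $m$, the number of objects of $\CC$, reducing it at each stage to the single-sink case already established in Lemma~\ref{lemma:mutations:main-lemma-auxiliary}. Write $\CC=(C_1,\dots,C_m)$. The idea is to transport $C_1,\dots,C_m$ leftward past $\BB$ one object at a time, each time letting a single $C_k$ play the role of the sink object $E$ in the auxiliary lemma, while the objects already moved are absorbed into the left part and the objects not yet moved form the right part.

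First I would set up the inductive step. Suppose that after moving $C_1,\dots,C_{k-1}$ we have reached a strong $\Ext$-finite exceptional collection
$$ \bigl(\AA,\; C_1[-1],\dots,C_{k-1}[-1],\; \BB_{k-1},\; C_k,\dots,C_m\bigr), $$
where $\BB_{k-1}=\RR_{C_{k-1}}\cdots\RR_{C_1}\BB[-(k-1)]$ is the successively mutated and shifted copy of $\BB$ (with $\BB_0=\BB$). To move $C_k$, I apply Lemma~\ref{lemma:mutations:main-lemma-auxiliary} with $\AA$ replaced by $(\AA,C_1[-1],\dots,C_{k-1}[-1])$, with $\BB$ replaced by $\BB_{k-1}$, with the sink $E=C_k$, and with $\DD=(C_{k+1},\dots,C_m)$. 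The three orthogonality hypotheses all follow from the block property of $\CC$ together with $\AA$ being orthogonal to $\CC$: indeed $C_k$ is orthogonal to $\AA$ and to every $C_i$ with $i\neq k$, and shifts preserve orthogonality, so $C_k$ is orthogonal to the enlarged left part and to $\DD$, while the enlarged left part is orthogonal to $\DD$. (For $k=m$ the part $\DD$ is empty and the conditions involving it are vacuous.) The conclusion of the auxiliary lemma produces the strong collection
$$ \bigl(\AA,\; C_1[-1],\dots,C_k[-1],\; \RR_{C_k}\BB_{k-1}[-1],\; C_{k+1},\dots,C_m\bigr). $$
Since mutation functors are exact and therefore commute with the shift (Lemma~\ref{lemma:mutations-are-functors}), one has $\RR_{C_k}\BB_{k-1}[-1]=\RR_{C_k}\cdots\RR_{C_1}\BB[-k]=\BB_k$, which is exactly the form required at the next stage; $\Ext$-finiteness persists because each mutation replaces the relevant $\Ext$-spaces by their duals or shifts, hence by finite-dimensional spaces.

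After $m$ steps the collection reads $\bigl(\AA,\;C_1[-1],\dots,C_m[-1],\;\RR_{C_m}\cdots\RR_{C_1}\BB[-m]\bigr)$. To finish I would invoke the standard decomposition of mutation through a subcollection: as $\CC=(C_1,\dots,C_m)$ is exceptional, right mutation through it factors as the composition $\RR_\CC=\RR_{C_m}\circ\cdots\circ\RR_{C_1}$, so the last object is precisely $\RR_\CC\BB[-m]$ and the whole collection is $(\AA,\CC[-1],\RR_\CC\BB[-m])$, as claimed.

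The main obstacle is not any single deep step but the careful bookkeeping. One must check that the orthogonality hypotheses of the auxiliary lemma genuinely survive each mutation — they do, because the already-moved objects $C_i[-1]$ and the not-yet-moved objects $C_j$ interact only through the block relations and neither is altered by a mutation that touches only the middle block $\BB_{k-1}$ and the current $C_k$ — and that the shifts accumulate correctly to $[-m]$ while the individual right mutations compose, in the correct order, into $\RR_\CC$. Confirming that the composition order is $\RR_{C_m}\cdots\RR_{C_1}$ rather than the reverse (which follows from moving $C_1$ first, so that it lands as the leftmost of the transported block) is the one point that genuinely requires attention.
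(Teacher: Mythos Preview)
Your proposal is correct and takes essentially the same approach as the paper: induction on $m$, peeling off one object of $\CC$ at a time and applying Lemma~\ref{lemma:mutations:main-lemma-auxiliary} at each step. The paper phrases the induction slightly more compactly (apply the auxiliary lemma to $E_1$, then invoke the induction hypothesis on $\AA'=(\AA,E_1[-1])$, $\BB'=\RR_{E_1}\BB[-1]$, $\CC'=(E_2,\dots,E_m)$), whereas you unroll the same process explicitly, but the content is identical.
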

\begin{proof}
  Let us prove this statement by induction on the number of objects in the collection $\CC$. If $\CC$ is empty, the statement is trivial. If $m\ge 1$, let us denote the objects as $\CC = (E_1,\dots,E_m)$. The right mutation over $\CC$ then can be done in two steps by firstly mutating over $E_1$ and then over the tail $\DD :=(E_2,\dots,E_m)$. Applying Lemma~\ref{lemma:mutations:main-lemma-auxiliary} for $\AA$, $\BB$, $E=E_1$ and $\DD$, we see that the exceptional collection $(\AA,E_1[-1],\RR_{E_1}\BB[-1],\DD)$ obtained after the first step is strong. Noting that $\AA':=(\AA,E_1[-1])$, $\BB':=\RR_{E_1}\BB[-1]$ and $\CC':=\DD$ satisfy the initial conditions of our lemma and that $\DD$ has $m-1$ elements, we can apply induction and conclude the proof.
\end{proof}

\begin{remark}
  This statement can be illustrated by the following picture:
  $$
  \begin{tikzcd}
   \AA \ar[d] & \CC  \\
   \BB \ar[ru] & 
  \end{tikzcd}
  \quad\;\;{\raisebox{-1.5ex}{\text{\Huge$\rightsquigarrow$}}}\;\;\quad
  \begin{tikzcd}
  \AA \ar[d] & \CC[-1] \ar[ld] \\
   \RR_\CC \BB [-m] &
  \end{tikzcd}
  $$
  Here, $\CC$ consists of the sink objects (therefore, does not have any arrows inside the group) and the mutation inverts all the incoming arrows for $\CC$, while preserving all other morphisms/arrows inside and between the groups~$\AA$ and $\BB$.
\end{remark}

\subsection{The proof}
\label{sec4:proof}

Let us firstly prove the following technical lemma.

\begin{lemma}
\label{lemma:decrease-number-of-blocks}
  Let an $\Ext$-finite strong exceptional collection consist of $N$ blocks $(\BB_1,\dots,\BB_N)$ and satisfy an additional condition that the blocks $\BB_i$ and $\BB_j$ are orthogonal as soon as $\left|i-j\right|>n$. Then, there is a mutation of this collection making it into a strong exceptional collection consisting of at most $n+1$ blocks.
\end{lemma}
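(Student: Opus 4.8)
The plan is to induct on the number of blocks $N$, decreasing it by one at each step while preserving both strongness and the banded orthogonality hypothesis, until we reach $n+1$ blocks. The base case $N\le n+1$ requires nothing. For the inductive step, assume $N\ge n+2$ and split the collection as $(\AA,\BB,\CC)$ with $\AA=(\BB_1,\dots,\BB_{N-n-1})$, $\BB=(\BB_{N-n},\dots,\BB_{N-1})$ and $\CC=\BB_N$. For every $i\le N-n-1$ we have $|i-N|=N-i\ge n+1>n$, so the hypothesis makes $\AA$ orthogonal to $\CC$; moreover $\CC=\BB_N$ is a block, hence its objects are pairwise orthogonal. Thus the hypotheses of Lemma~\ref{lemma:mutations:main-lemma} are met, and the mutated collection $(\AA,\,\BB_N[-1],\,\RR_{\BB_N}\BB[-m])$ is strong, where $m=|\BB_N|$.

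Next I would re-group this mutated collection into $N-1$ blocks by merging the last block $\BB_{N-n-1}$ of $\AA$ with the newly inserted $\BB_N[-1]$, which now sit adjacent. This merge is legitimate: exceptionality kills $\bull\Ext(\BB_N[-1],\BB_{N-n-1})$, while $\bull\Ext(\BB_{N-n-1},\BB_N[-1])=\bull\Ext(\BB_{N-n-1},\BB_N)[-1]=0$ since $|(N-n-1)-N|=n+1>n$. Hence $\BB'_{N-n-1}:=(\BB_{N-n-1},\BB_N[-1])$ is a block, and (each $\RR_{\BB_N}\BB_j[-m]$ being again a block by the functoriality of mutations) the collection reads as the $N-1$ blocks $\BB_1,\dots,\BB_{N-n-2}$, then $\BB'_{N-n-1}$, then $\RR_{\BB_N}\BB_{N-n}[-m],\dots,\RR_{\BB_N}\BB_{N-1}[-m]$. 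Regrouping does not alter the underlying ordered sequence of objects, so strongness and exceptionality are untouched.

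The only real work — and the main obstacle — is to check that this new $(N-1)$-block collection again satisfies the banded hypothesis, so that the induction can proceed. The key preliminary observation is that the mutation preserves the orthogonality pattern: by the morphism computations of \S\ref{sec:strong-mutations} and the functoriality of mutations (Lemma~\ref{lemma:mutations-are-functors}), all the relevant $\bull\Ext$ spaces between $\AA$, $\BB_N[-1]$ and $\RR_{\BB_N}\BB$ are obtained from the original ones only by shifts and dualization, neither of which affects vanishing. Writing the new blocks as $\BB'_1,\dots,\BB'_{N-1}$, the blocks $\BB'_i$ with $i\le N-n-2$ and $\BB'_j$ with $N-n\le j\le N-1$ keep their original indices, so the banded condition for any pair not involving $\BB'_{N-n-1}$ is inherited verbatim (and pairs inside $\RR_{\BB_N}\BB$ sit at distance $\le n-1$, imposing nothing). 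The genuine case is a pair $(\BB'_i,\BB'_{N-n-1})$ with $i\le N-n-2$ and $(N-n-1)-i>n$: here orthogonality to $\BB_{N-n-1}$ follows from $(N-n-1)-i>n$, and orthogonality to $\BB_N$ from $N-i\ge n+2>n$, so both constituents of the merged block are orthogonal to $\BB'_i$. Pairs involving $\BB'_{N-n-1}$ from the right sit at distance $\le n$, imposing no condition. This verifies the hypothesis for the smaller collection, and iterating the step $N-(n+1)$ times produces a strong exceptional collection with at most $n+1$ blocks.
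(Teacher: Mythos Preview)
Your proof is correct and follows essentially the same route as the paper's: induct on $N$, split as $(\AA,\BB,\CC)=(\BB_1,\dots,\BB_{N-n-1};\,\BB_{N-n},\dots,\BB_{N-1};\,\BB_N)$, apply Lemma~\ref{lemma:mutations:main-lemma}, merge $\BB_{N-n-1}$ with $\BB_N[-1]$, and verify the banded orthogonality persists. You spell out the verification of the banded hypothesis more carefully than the paper does (the paper compresses it to a one-line remark about $\RR_{\BB_N}\BB_{N-i}$ remaining orthogonal to $\BB_j$), but the argument is the same.
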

\begin{proof}
  If $N \le n+1$, then this holds automatically. Otherwise, let us denote by $\AA:=(\BB_1,\dots, \BB_{N-n-1})$, $\BB:=(\BB_{N-n},\dots,\BB_{N-1})$ and $\CC:=\BB_N$, the subcollections consisting of the first $(N-n-1)$ blocks, the next $n$ blocks, and the last block respectively. Then our condition implies that $\AA$ is orthogonal to $\CC$. This means that we can apply Lemma~\ref{lemma:mutations:main-lemma} to get another strong exceptional collection, namely $(\AA,\CC[-1],\RR_\CC \BB[-m])$, where $m$ is the number of objects of $\CC=\BB_N$. The latter collection also consists of $N$ blocks as follows:
  $$ (\BB_1,\dots,\BB_{N-n-1},\quad\BB_{N}[-1],\quad\RR_{\BB_N} \BB_{N-n}[-m],\dots,\RR_{\BB_N} \BB_{N-1}[-m]). $$
  However, we notice that after this mutation, the orthogonal blocks $\BB_{N-n-1}$ and $\BB_N$ became adjacent, so we can unite them into one block:
  $$ (\BB_1,\dots,\BB_{N-n-2},\quad\BB_{N-n-1}\cup\BB_{N}[-1],\quad\RR_{\BB_N} \BB_{N-n}[-m],\dots,\RR_{\BB_N} \BB_{N-1}[-m]). $$
  The resulting collection consists of $N-1$ blocks. Moreover, we can check that it still satisfies the original orthogonality condition. This follows from the fact that if $\BB_N$ and $\BB_{N-i}$ were both orthogonal to some block $\BB_j$ for $i\le n$ and $j\le N-n-1$, then $\RR_{\BB_N} \BB_{N-i}$ will be also orthogonal to $\BB_j$. Thus, we can conclude the proof by induction on $N$.
\end{proof}

Now we are ready to prove our main statement.

\begin{proof}[Proof of Theorem~\ref{theorem:orlov's-conjecture-for-n<=3}]
\def\eps{\varepsilon}
  In Example~\ref{ex:main:n=1} and Propositions~\ref{prop:main:n=2} and~\ref{prop:main:n=3}, we have explicitly described full strongly exceptional collections in $\dsgof{L_w}{R/(w)}$ for the given types of $w$. These collections are arranged into $n$-dimensional lattices, where we can think of the objects as sitting at the points with integer coordinates $(a_1,\dots,a_n)\in\ZZ^n$ and where the non-zero morphisms from the object at $(a_1,\dots,a_n)$ may only go to the objects at $(a_1+\eps_1,\dots,a_n+\eps_n)$ for $\eps_i\in\{0,1\}$.
  Then, for each $b\in\ZZ$, let us consider the objects lying on the hyperplane $\{(a_1,\dots,a_n)\in\ZZ^n\mid \sum_i a_i = b\}$. These objects are pairwise orthogonal, so we can arrange them into a block which we denote by $\BB_b$.
  
  After this, we see that our exceptional collection consists of the blocks $(\BB_c,\BB_{c+1},\dots, \BB_{d-1},\BB_d)$, where the integers $c$ and $d$ are chosen in such a way that all blocks $\BB_i$ are empty for $i$ outside of the interval $[c,d]$. Moreover, we notice that the non-zero morphisms between the blocks may exist only from the objects of $\BB_b$ to the objects of $\BB_{b+i}$ for $0\le i\le n$. Thus, we can apply Lemma~\ref{lemma:decrease-number-of-blocks} to mutate this collection into the collection consisting of $n+1$ blocks only. The resulting strong exceptional collection will be full as well, since the original collection was such. Hence the proof.
\end{proof}

\begin{example}
\label{example:block-mutations}
  Let us demonstrate how the above procedure works for the 2-dimensional loop-type polynomial $w = yx^4 + xy^4\in \kk[x,y]$. Firstly, we decompose the strong exceptional collection from Figure~\ref{fig:2loop} into $7$ blocks as on the left picture below. Then at each step, we apply Lemma~\ref{lemma:mutations:main-lemma} for inverting the arrows going into the last block and after that we unite it with the block whose index is less by 3. So, firstly we invert arrows going into the 7-th block and unite it with the 4-th block, then we invert arrows going into the 6-th block and unite it with the 3-th block and so on, until we get only 3 blocks left. The final configuration is described on the right picture below (the arrows which ended up being inverted are dashed).
  $$ \begin{tikzcd}
   4 \ar[r] & 5 \ar[r] & 6 \ar[r] & 7 \\
   3 \ar[r] \ar[ru] & 4 \ar[r] \ar[u] \ar[ru] & 5 \ar[r] \ar[u] \ar[ru] & 6  \ar[u] \\
   2 \ar[r] \ar[ru] & 3 \ar[r] \ar[u] \ar[ru] & 4 \ar[r] \ar[u] \ar[ru] & 5 \ar[u] \\
   1 \ar[ru] & 2 \ar[u] \ar[ru] & 3 \ar[u] \ar[ru] & 4 \ar[u]
  \end{tikzcd}
  \quad\;\;{\raisebox{-1.5ex}{\text{\Huge$\rightsquigarrow$}}}\;\;\quad
  \begin{tikzcd}
   1 \ar[r] & \ar[ld, dashed] 2 \ar[r] & 3 & \ar[l, dashed] \ar[ld, dashed] \ar[d, dashed] 1 \\
   3 & \ar[l, dashed] \ar[ld, dashed] \ar[d, dashed] 1 \ar[r] \ar[u] \ar[ru] & \ar[ld, dashed] 2 \ar[r] \ar[u] & 3 \\
   2 \ar[r] & 3 & \ar[l, dashed] \ar[ld, dashed] \ar[d, dashed] 1 \ar[r] \ar[u] \ar[ru] & \ar[ld, dashed] 2 \ar[u] \\
   1 \ar[ru] & 2 \ar[u] & 3 & 1 \ar[u]
  \end{tikzcd}
  $$
\end{example}

\appendix

\section{Exceptional collections}

In this appendix, we recall the necessary definitions and facts about exceptional collections used throughout the paper. We are working in a triangulated category $\TT$ defined over a field $\kk$. All subcategories are assumed to be full and triangulated.

\subsection{Definitions and notations}

\begin{definition}
  An object $E$ in $\TT$ is called \textit{exceptional} if $\Hom_\TT(E,E) = \kk$ and $\Hom_\TT(E,E[i]) = 0$ for $i \ne 0$. Similarly, a collection of objects $(E_1,\dots,E_n)$ in $\TT$ is called \textit{exceptional}, if all $E_i$ are exceptional and $\Hom_\TT(E_i,E_j[k]) = 0$ for $i>j$ and $k \in \ZZ$.
\end{definition}

\begin{definition}
\label{def:full/strong-collection}
  An exceptional collection $(E_1,\dots,E_n)$ in $\TT$ is called:
  \begin{enumerate}[label=(\alph*), itemsep=0ex]
      \item \textit{full}, if it generates the whole category $\TT$, that is $ \TT = \< E_1, \dots, E_n \> $;
    \item \textit{strong}, if $\Hom_\TT(E_i,E_j[k])=0$ for $k \ne 0$ and all $i,j$.
  \end{enumerate}
\end{definition}

\begin{definition}
  Let $(X_\alpha)_{\alpha\in\mathcal{I}}$ be some collection of objects (or subcategories) of $\TT$. We say that an object $Y\in \TT$ is \textit{generated} by this collection, if $Y$ belongs to the minimal triangulated subcategory of $\TT$ containing all of $X_\alpha$. The latter subcategory is also denoted as $ \<X_\alpha\>_{\alpha \in \mathcal{I}} $.
\end{definition}

\begin{remark}
  That any given object $Y$ of $\TT$ is generated by the given collection $(X_\alpha)_{\alpha\in\mathcal{I}}$ can be checked explicitly. For that we just need to sequentially enlarge the collection by adding the shifts of its objects and the cones of its morphisms until we obtain $Y$.
\end{remark}

\begin{definition}
  Two objects $X$ and $Y$ are called \textit{orthogonal}, if $\Hom(X,Y[k]) = 0$ and $\Hom(Y,X[k])=0$ for all $k\in\ZZ$.
\end{definition}


\begin{definition}
  By a \textit{splitting of an idempotent} $e\:X\to X$, $e^2=e$, $X\in\TT$, we mean an isomorphism $X\simeq Y\oplus Z$ in $\TT$, under which $e$ is presented as the matrix $\left(\begin{smallmatrix} \id_Y & 0 \\ 0 & 0\end{smallmatrix}\right)\:Y\oplus Z \to Y\oplus Z$.
\end{definition}

\begin{definition}
  A category $\TT$ is called \textit{idempotent complete} (or \textit{Karoubian}) if every idempotent in $\TT$ splits. For any triangulated category $\TT$, we denote by $\overline{\TT}$ its \textit{idempotent completion}, that is a ``minimal'' idempotent complete triangulated category containing $\TT$. (See \cite{idemp-completion} for the details.)
\end{definition}

\begin{definition}
  We will say that a triangulated subcategory $\AA$ of $\TT$ is \textit{thick}, if for any $X$ and $Y$ in $\TT$, $X\oplus Y\in\AA$ implies that $X, Y\in\AA$.
\end{definition}


\begin{definition}
  We will denote by $\bull\Ext_\TT(X,Y) := \Hom_\TT(X,Y[\bullet])$ a $\ZZ$-graded vector space with the components defined as\\[-2ex]
  $$\Ext_\TT^k(X,Y) := \Hom_\TT(X,Y[k]).$$
\end{definition}

\begin{definition}
  A category $\TT$ is called \textit{$\Ext$-finite}, if $\bigoplus_{k\in\ZZ}\Hom_\TT(X,Y[k])$ is a finite-dimensional vector space for all $X$ and $Y$ in $\TT$. Similarly, a collection $(X_1,\dots,X_n)$ of objects in $\TT$ is called \textit{$\Ext$-finite}, if $\bigoplus_{k\in\ZZ}\Hom_\TT(X_i,X_j[k])$ is a finite-dimensional vector space for all $i$ and $j$.
\end{definition}

\subsection{Important statements}

\newcommand{\rorthog}[1]{{#1}^\bot}
\newcommand{\lorthog}[1]{{}^\bot {#1}}

\begin{lemma}\label{lemma:except-collect=>idemp-complete}
  Let $\TT$ be an $\Ext$-finite triangulated category admitting a full exceptional collection. Then $\TT$ is idempotent complete.
\end{lemma}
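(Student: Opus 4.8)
The plan is to argue by induction on the length $n$ of the full exceptional collection $(E_1,\dots,E_n)$, reducing the statement to a gluing principle for semiorthogonal decompositions. The collection exhibits $\TT$ as $\langle\AA,\BB\rangle$ with $\AA=\langle E_1,\dots,E_{n-1}\rangle$ and $\BB=\langle E_n\rangle$: exceptionality gives $\Hom_\TT(\BB,\AA[k])=0$ for all $k$, and $\Ext$-finiteness makes $\BB$ admissible, since $\bull\Ext(E_n,X)$ is finite-dimensional, so the evaluation map $\bull\Ext(E_n,X)\otimes E_n\to X$ is defined and its cone lies in $\AA$ by fullness. This produces a \emph{functorial} decomposition triangle $b_X\xrightarrow{\iota}X\xrightarrow{\pi}a_X\xrightarrow{\delta}b_X[1]$ with $b_X\in\BB$ and $a_X\in\AA$. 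For the base case $n=1$, the subcategory $\langle E\rangle$ is equivalent to $\Perf(\kk)$ (its objects are the finite sums $\bigoplus_i E[i]^{\oplus m_i}$, since $\Hom(E[i],E[j])$ is $\kk$ for $i=j$ and $0$ otherwise), which is idempotent complete; the same holds for $\BB$, while $\AA$ carries the shorter full exceptional collection $(E_1,\dots,E_{n-1})$, is $\Ext$-finite, and so is idempotent complete by induction. Thus everything reduces to the gluing lemma below.

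\emph{Gluing lemma.} If $\TT=\langle\AA,\BB\rangle$ with $\AA,\BB$ idempotent complete, then $\TT$ is idempotent complete. To prove it I would take an idempotent $e\colon X\to X$ and apply the functorial decomposition triangle to get idempotents $a_e$ on $a_X$ and $b_e$ on $b_X$, fitting into a morphism of triangles $(b_e,e,a_e)$. Splitting $a_e$ and $b_e$ in $\AA$ and $\BB$ yields $a_X=A_0\op A_1$ and $b_X=B_0\op B_1$ on which $a_e,b_e$ are the evident projectors. The commuting square $\delta\circ a_e=(b_e[1])\circ\delta$ then kills the off-diagonal blocks of $\delta\colon A_0\op A_1\to(B_0\op B_1)[1]$, so $\delta=\delta_0\op\delta_1$ with $\delta_0\colon A_0\to B_0[1]$ and $\delta_1\colon A_1\to B_1[1]$.

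The crux, which I expect to be the main obstacle, is to turn this block structure into a genuine splitting of $e$ rather than a bare isomorphism $X\cong Y\op Z$. I would set $Y=\Cone(\delta_0)[-1]$, giving a triangle $B_0\xrightarrow{j}Y\xrightarrow{q}A_0\xrightarrow{\delta_0}B_0[1]$, and use the axiom TR3 together with the inclusions and projections of the summands $A_0,B_0$ to build maps $\mu\colon Y\to X$ and $\nu\colon X\to Y$. The key computation is that $\nu\mu-\id_Y$ equals $j\,s$ with $s j=0$ --- any correction term lies in $\Hom(\BB,\AA[-1])=0$ --- so $\nu\mu$ is unipotent, hence invertible; rescaling $\nu$ to $\nu'=(\nu\mu)^{-1}\nu$ makes $f:=\mu\nu'$ a split idempotent on $X$ with image $Y$, and one checks $\pi f=a_e\pi$ and $f\iota=\iota b_e$, exactly the identities satisfied by $e$. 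Consequently $e-f$ is annihilated by $\pi$ on the left and kills $\iota$ on the right, so it factors as $\iota m\pi$; since $\pi\iota=0$ this difference is square-zero, $\id-(e-f)^2=\id$ is invertible, and the unit $fe+(\id-f)(\id-e)$ conjugates $e$ to $f$. As conjugation preserves splitting and $f$ splits, $e$ splits, completing the lemma and the induction. This final rigidity step --- extracting an honest splitting from the semiorthogonality vanishing rather than settling for an abstract isomorphism --- is the delicate point; the remaining steps are routine manipulations of distinguished triangles.
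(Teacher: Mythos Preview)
Your proof is correct, but it takes a genuinely different route from the paper's.

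The paper's argument is three lines: pass to the idempotent completion $\overline{\TT}\supseteq\TT$, observe that $(E_1,\dots,E_n)$ is still exceptional there, invoke \cite[Theorem~3.2(a)]{bondal} to get that $\TT=\langle E_1,\dots,E_n\rangle$ is admissible in $\overline{\TT}$, hence $\overline{\TT}=\langle\TT^\perp,\TT\rangle$; but $\TT^\perp=0$ since every object of $\overline{\TT}$ is a summand of one in $\TT$, so $\overline{\TT}=\TT$.

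Your approach instead inducts on $n$ and isolates the content as a gluing lemma for semiorthogonal decompositions, which you prove by an explicit construction culminating in the ring-theoretic fact that idempotents $e,f$ with $(e-f)^2=0$ are conjugate via the unit $u=fe+(1-f)(1-e)$ (indeed $uv=vu=1-(e-f)^2$ for $v=ef+(1-e)(1-f)$, which clarifies your remark that ``$\id-(e-f)^2=\id$ is invertible''). The paper's proof is slicker but imports the idempotent-completion formalism and Bondal's admissibility theorem as black boxes; yours is longer and more hands-on but self-contained, and produces a reusable gluing statement.

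One small point worth making explicit: when you say the cone of the evaluation map ``lies in $\AA$ by fullness,'' you are compressing the standard argument that the class of $X$ admitting a triangle $b\to X\to a$ with $b\in\BB$, $a\in\AA$ is closed under cones and shifts and contains every $E_i$, hence equals $\TT$; one then checks that for $C\in\BB^\perp$ the component $b$ must vanish. This is routine, and it is essentially the content of the Bondal reference the paper invokes, but it is the one place where your write-up leans on something not fully spelled out.
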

\begin{proof}
  Let $(E_1,\dots,E_n)$ denote the given exceptional collection. Consider the idempotent completion $\overline{\TT} \supseteq \TT$. Then, $(E_1,\dots,E_n)$ forms an exceptional collection in $\overline{\TT}$ as well, though \textit{a priori} it is not full in $\overline{\TT}$. By \cite[Theorem~3.2(a)]{bondal}, the subcategory $\TT = \<E_1,\dots,E_n\>$ is admissible in $\overline{\TT}$. Thus we have a decomposition $ \overline{\TT} = \<\rorthog{\TT}, \TT \>$. However, it follows from definitions that $\rorthog\TT=0$ in $\overline{\TT}$. Hence, $\overline{\TT}=\TT$, that is $\TT$ is idempotent complete.
\end{proof}

\begin{lemma}
\label{lemma:exc-collection:summand-thick=>full}
  Let $(E_1,\dots,E_n)$ be an $\Ext$-finite exceptional collection in a triangulated category $\TT$. Assume that every object in $\TT$ is a direct summand of an object generated by $(E_1,\dots,E_n)$. Then the collection is full:\\[-1ex]
  $$ \TT = \la E_1, \dots, E_n \ra. $$
\end{lemma}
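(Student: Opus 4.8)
The plan is to show that the triangulated subcategory $\AA := \<E_1,\dots,E_n\>$ generated by the collection already contains every object of $\TT$, whence $\TT=\AA$. The key tool is Lemma~\ref{lemma:except-collect=>idemp-complete}: inside $\AA$ the collection $(E_1,\dots,E_n)$ is by construction a full exceptional collection, so once we know that $\AA$ is $\Ext$-finite, that lemma tells us $\AA$ is idempotent complete. The hypothesis that every object of $\TT$ is a direct summand of an object of $\AA$ will then let us conclude.

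First I would verify that $\AA$ is $\Ext$-finite. This is routine: every object of $\AA$ is obtained from the $E_i$ by finitely many shifts and cones, and finite-dimensionality of the graded Hom spaces is preserved under shifts and under forming cones, via the long exact sequences produced by applying $\Hom_\TT(-,Z)$ and $\Hom_\TT(Z,-)$ to the defining triangles. Since $\bull\Ext_\TT(E_i,E_j)$ is finite-dimensional for all $i,j$ by hypothesis, an induction on the number of building steps shows that $\bull\Ext_\TT(X,Y)$ is finite-dimensional for all $X,Y\in\AA$. Note that this uses only $\Ext$-finiteness of the collection, not of the ambient $\TT$.

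Next I would exploit idempotent completeness. Let $X$ be an arbitrary object of $\TT$. By hypothesis $X$ is a direct summand of some $Y\in\AA$, say $Y\simeq X\oplus Z$ in $\TT$, and the associated projection onto the first factor is an idempotent $e\in\Hom_\TT(Y,Y)$. Because $\AA$ is a full subcategory and $Y\in\AA$, we have $\Hom_\AA(Y,Y)=\Hom_\TT(Y,Y)$, so $e$ is an idempotent of $\AA$. As $\AA$ is idempotent complete, $e$ splits inside $\AA$, producing $Y\simeq X'\oplus Z'$ in $\AA$ under which $e$ is the projection onto $X'$. Since the splitting of an idempotent is unique up to isomorphism, $X'\simeq X$, and therefore $X\simeq X'\in\AA$. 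As $X$ was arbitrary, $\TT=\AA=\<E_1,\dots,E_n\>$, which is exactly the assertion of fullness.

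The only genuinely delicate point is ensuring that the idempotent realizing $X$ as a summand is split \emph{within} $\AA$ rather than merely in $\TT$; this is precisely where fullness of $\AA$ (so that $e$ is a morphism of $\AA$) combines with its idempotent completeness (supplied by Lemma~\ref{lemma:except-collect=>idemp-complete}). The $\Ext$-finiteness check, though required to invoke that lemma, is purely formal.
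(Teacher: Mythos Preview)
Your proof is correct and follows essentially the same approach as the paper: define $\AA=\langle E_1,\dots,E_n\rangle$, note that $\Ext$-finiteness of the collection makes $\AA$ an $\Ext$-finite category with a full exceptional collection, apply Lemma~\ref{lemma:except-collect=>idemp-complete} to deduce that $\AA$ is idempotent complete, and conclude that $\AA$ is thick in $\TT$ so that the summand hypothesis forces $\AA=\TT$. The paper compresses your explicit idempotent-splitting argument into the single phrase ``this also automatically implies that $\AA$ is a thick subcategory in $\TT$,'' but the content is the same.
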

\begin{proof}
  Consider the subcategory $\AA:=\<E_1,\dots,E_n\>$ in $\TT$. That the collection $(E_1,\dots,E_n)$ is $\Ext$-finite implies that $\AA$ is $\Ext$-finite as well. By definition, $(E_1,\dots,E_n)$ forms a full exceptional collection in $\AA$, hence $\AA$ is idempotent complete by Lemma~\ref{lemma:except-collect=>idemp-complete}. This also automatically implies that $\AA$ is a thick subcategory in $\TT$. Then our initial assumption means that $\AA = \TT$, that is that our collection is full in $\TT$.
\end{proof}

\subsubsection{Reconstruction from strong collections}

Starting from as far as Bondal in \cite[Theorem~6.2]{bondal}, people were trying to describe triangulated categories in terms of their exceptional collections. Different variations of such reconstruction statements exist in the literature. For example, the argument in \cite[Corollary~1.9]{orlov-reconstruction} shows that as soon as a triangulated category admits a \dg enhancement
, it can be uniquely recovered from the data of its full strongly exceptional collection. There is a stronger version of this statement stating that not only the triangulated category but also its \dg enhancement can be uniquely recovered from such data. Here, we give an explicit proof to this well-known statement.

\begin{prop}
\label{prop:strong-coll=>unique-dg-enhanc}
  Let $\TT$ be an idempotent complete triangulated category admitting a full strongly exceptional collection. Then any \dg enhancement of $\TT$ is unique and can be determined from the morphisms of the collection and their compositions.
\end{prop}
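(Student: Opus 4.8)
The plan is to pass to a convenient dg-model built directly from the strong exceptional collection and then invoke the rigidity of such models. Let $(E_1,\dots,E_n)$ be the full strong exceptional collection, and let $\AA$ be any dg enhancement of $\TT$, so that $\HH^0(\AA)\simeq\TT$. First I would lift each $E_i$ to an object $\widetilde E_i$ of $\AA$ and form the dg endomorphism algebra $\Lambda_\AA := \bigoplus_{i,j}\Homb_\AA(\widetilde E_i,\widetilde E_j)$ of the direct sum of these lifts. The key structural input is strongness: because the collection is strong, $\HH^k(\Lambda_\AA) = \bigoplus_{i,j}\Ext^k_\TT(E_i,E_j) = 0$ for all $k\ne 0$. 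Thus the dg algebra $\Lambda_\AA$ is \emph{cohomologically concentrated in degree $0$}, i.e. its cohomology is the ordinary $\kk$-algebra $\Lambda := \bigoplus_{i,j}\Hom_\TT(E_i,E_j)$ with multiplication given by composition. A dg algebra whose cohomology lives in a single degree is \emph{formal}: it is quasi-isomorphic to $\HH^\bullet(\Lambda_\AA) = \Lambda$, viewed as a dg algebra with zero differential. (Concretely, one produces a zig-zag of quasi-isomorphisms $\Lambda_\AA \leftarrow \tau^{\le 0}\Lambda_\AA \to \HH^0 = \Lambda$ using the truncation functors.) This identifies the relevant dg data with the purely algebraic datum $\Lambda$ of the collection together with its composition law.

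**Next I would** upgrade this to a statement about the whole enhancement. Since the collection is full, $\widetilde E_1,\dots,\widetilde E_n$ generate $\AA$ as a pretriangulated dg category, so the Yoneda-type functor $\AA \to \mathrm{Perf}(\Lambda_\AA)$ sending an object to the dg module $\Homb_\AA(\bigoplus_i\widetilde E_i, -)$ is a quasi-equivalence onto perfect modules. Composing with the formality quasi-isomorphism $\Lambda_\AA \simeq \Lambda$ yields a quasi-equivalence $\AA \simeq \mathrm{Perf}(\Lambda)$, where on the right $\Lambda$ is the ordinary (degree-$0$) finite-dimensional algebra recording the morphisms of the collection and their compositions. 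The crucial point is that $\mathrm{Perf}(\Lambda)$ depends \emph{only} on $\Lambda$ and not on the chosen enhancement $\AA$: every enhancement is forced to be quasi-equivalent to this single model. Idempotent completeness of $\TT$ guarantees that the essential image of $\HH^0(\mathrm{Perf}(\Lambda))$ is all of $\TT$ (rather than a dense subcategory), so no completion mismatch arises on the triangulated level.

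**The main obstacle** is justifying the formality step rigorously without appealing to the full $A_\infty$-machinery that the paper has not set up: I must either cite the standard fact that a dg algebra with cohomology in a single degree is intrinsically formal, or produce the truncation zig-zag explicitly and check it consists of quasi-isomorphisms of dg algebras (the subtlety being that $\tau^{\le 0}$ must be made multiplicative, which it is for the good truncation of a connective-after-shift complex concentrated in nonpositive degrees). A secondary subtlety is the lifting of the $E_i$ to $\AA$ and the compatibility of the Yoneda functor with the formality quasi-isomorphism; this is routine once one works with a cofibrant replacement of $\Lambda_\AA$, but it should be stated carefully. Finally I would conclude that since $\AA \simeq \mathrm{Perf}(\Lambda)$ for \emph{any} enhancement $\AA$, all enhancements are mutually quasi-equivalent, and the enhancement is determined by $\Lambda$, i.e. by the morphism spaces $\Hom_\TT(E_i,E_j)$ of the collection and their compositions. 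This is exactly the assertion of Proposition~\ref{prop:strong-coll=>unique-dg-enhanc}, so the proof is complete.
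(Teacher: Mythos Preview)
Your proposal is correct and follows essentially the same strategy as the paper: establish formality of the dg endomorphism algebra of the collection (using strongness), then recover the full enhancement as the category of perfect modules over that algebra, invoking idempotent completeness so that the perfect-module model matches $\TT$ on the nose. The only notable difference is in how formality is obtained: the paper appeals to Kadeishvili's minimal $A_\infty$-model theorem and then observes that the higher products vanish for degree reasons, whereas you propose the more elementary truncation zig-zag $\Lambda_\AA \hookleftarrow \tau^{\le 0}\Lambda_\AA \twoheadrightarrow \HH^0(\Lambda_\AA)$, which avoids the $A_\infty$ machinery entirely and is arguably cleaner here (your worry about multiplicativity of $\tau^{\le 0}$ is unfounded: since both factors lie in nonpositive degrees, any product landing in degree~$0$ comes from two degree-$0$ cocycles and is again a cocycle). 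For the generation step, the paper cites \cite{bfk13} and \cite{toen-dg-lectures}/\cite{orlov-dg} for the equivalence $\TT^\dgonly \simeq \widehat{(\AA^\dgonly)}_{\mathrm{pe}}$ and the homotopy invariance of $\widehat{(-)}_{\mathrm{pe}}$, which is exactly your Yoneda argument phrased in their language.
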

\begin{proof}
  Let $(E_1,\dots,E_n)$ denote the given full strongly exceptional collection in~$\TT$ and let $\TT^\dgonly$ be some \dg enhancement of $\TT$. Also, let us denote by~$\AA$ and~$\AA^\dgonly$ the full subcategories of $\TT$ and $\TT^\dgonly$ respectively, consisting of the objects $E_i$ only. We will also treat $\AA$ as a \dg category whose morphisms are situated in degree~$0$ and have trivial differentials.
  
  By the minimality theorem of Kadeishvili (see Theorem~1 in \cite{kadeishvili}), there is a minimal \ainf model $\BB$ on $\Hb(\AA^\dgonly)$ which is quasi-equivalent to $\AA^\dgonly$. 
  We also have $\Hb(\AA^\dgonly)=\HH^0(\AA^\dgonly)$ due to the strongness of the collection. Thus, all higher \ainf products of $\BB$ are trivial for the degree reasons. In other words, $\BB=\AA$ considered as \dg categories, so $\AA$ is quasi-equivalent to $\AA^\dgonly$.
  
  Now, since $\TT$ is generated by $\AA$, we can use results and terminology of \cite{bfk13} to show that $\TT^\dgonly$ can be recovered from $\AA^\dgonly$, hence from~$\AA$. Indeed, $\TT^\dgonly$ is a pre-thick \dg category, because $\TT$ is idempotent complete by assumption, hence it is automatically thick in $\D[(\TT^\dgonly)^{\textrm{op}}\textrm{-Mod}]$. Then by Lemma~2.29 of \cite{bfk13} applied for the subcategory $\AA^\dgonly$ in $\TT^\dgonly$, we have the quasi-equivalence $\TT^\dgonly\simeq \widehat{(\AA^\dgonly)}_{\textrm{pe}}$, where the latter category is the category of perfect \dg modules over $\AA^\dgonly$. However, the operation $\widehat{(-)}_{\textrm{pe}}$ is a homotopical invariant of \dg categories (see, for example, Proposition~4.4.2 in \cite{toen-dg-lectures} or Proposition~2.6 in \cite{orlov-dg}). Hence, $\AA^\dgonly\simeq\AA$ implies that $\widehat{(\AA^\dgonly)}_{\textrm{pe}} \simeq \widehat{\AA}_{\textrm{pe}}$. Therefore, the \dg enhancement $\TT^\dgonly\simeq \widehat{\AA}_{\textrm{pe}}$ is determined uniquely up to quasi-equivalence by the data of $\AA$ alone, the latter consisting of the morphisms of the collection $(E_1,\dots,E_n)$ and their compositions. Hence the proof.
\end{proof}

\begin{remark}
  The condition of being idempotent complete holds automatically for the $\Ext$-finite collections/categories by Lemma~\ref{lemma:except-collect=>idemp-complete}. In particular, it holds for the maximally graded categories of singularities of invertible polynomials studied in this paper, since the corresponding explicit collections are all $\Ext$-finite. However, it seems that this condition can be removed altogether by replacing $\widehat{(-)}_{\textrm{pe}}$ with another invariant of \dg categories, the \textit{pretriangulated hull} (see definition in \cite{orlov-dg}), and by adjusting the argument in \cite{bfk13} accordingly.
\end{remark}

\subsection{Mutations}
\label{sec:mutations}

In this section, we recall the notion of mutations of exceptional collections following \cite{bondal}. We also provide few extra-details necessary for our computations in Section~4. The basic facts here are Lemma~\ref{lemma:mutated-collections} stating that the class of exceptional collections is closed under mutations, and Lemma~\ref{lemma:mutations-are-functors} stating that the mutations are functorial. We continue working in a $\kk$-linear triangulated category~$\TT$.

\begin{definition}
  Let $\bull V$ be a finite-dimensional $\ZZ$-graded vector space over $\kk$ and $X$ an object of $\TT$. We define the tensor product $\bull V\otimes X$ as the following object in $\TT$:
  \vspace{-1ex}
   $$ \bull V \otimes X := \bigoplus_{i\in\ZZ} X^{\oplus \dim V^i} [-i]. $$
\end{definition}

\begin{remark}
\label{remark:def-of-otimes-by-vector-spaces}
  In this definition, it is crucial to assume that $\bull V$ is finite-dimensional. Indeed, the infinite direct sums might not exist in an arbitrary triangulated category, so the object $\bull V\otimes X$ might not be well-defined without this assumption.
\end{remark}

\begin{definition}
  For a finite-dimensional $\ZZ$-graded vector space $\bull V$, the dual $\ZZ$-graded vector space $(\bull V)^*$ is defined by setting its $i$-th component to be
  $$ \Bigl((\bull V)^*\Bigr)^i := (V^{-i})^* = \Hom_\kk(V^{-i},\kk). $$
\end{definition}

\begin{lemma}
\label{lemma:Ext-computation-with-tensor-products}
For any finite-dimensional $\ZZ$-graded vector space $\bull V$ and any two objects $X$ and $Y$ in $\TT$, we have the following isomorphisms of the graded vector spaces:
\vspace{-2ex}
\begin{align*}
  \bull\Ext_\TT(X, \bull V \otimes Y) & = \bull V \otimes \bull\Ext_\TT(X,Y),\\
  \bull\Ext_\TT(\bull V \otimes X, Y) & = (\bull V)^* \otimes \bull\Ext_\TT(X,Y),
\end{align*}
where the tensor product of the graded vector spaces is defined as $(\bull V \otimes \bull W)^n := \bigoplus_{i+j=n} V^i\otimes W^j$.
\end{lemma}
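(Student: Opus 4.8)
The plan is to prove both isomorphisms by directly unfolding the definition of $\bull V\otimes(-)$ as a finite direct sum of shifts and then applying the additivity of $\Hom_\TT$ in each argument. Throughout, the finite-dimensionality of $\bull V$ is what makes this legitimate: it guarantees that $\bull V\otimes X$ and $\bull V\otimes Y$ are genuine finite direct sums (cf. Remark~\ref{remark:def-of-otimes-by-vector-spaces}), so that $\Hom_\TT$ out of and into them decomposes as the corresponding finite direct sum of $\Hom$-groups.

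For the first isomorphism, I would compute the $n$-th graded component of the left-hand side. By definition $\bull V\otimes Y=\bigoplus_i Y^{\oplus\dim V^i}[-i]$, so
$$\Ext_\TT^n(X,\bull V\otimes Y)=\Hom_\TT\Bigl(X,\bigoplus_i Y^{\oplus\dim V^i}[n-i]\Bigr)=\bigoplus_i V^i\otimes\Ext_\TT^{n-i}(X,Y),$$
using that $\Hom_\TT(X,-)$ commutes with the finite direct sum and that a multiplicity-$\dim V^i$ sum of copies of $\Hom_\TT(X,Y[n-i])$ is canonically $V^i\otimes\Ext_\TT^{n-i}(X,Y)$. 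Re-indexing with $j=n-i$, this is exactly $\bigoplus_{i+j=n}V^i\otimes\Ext_\TT^{j}(X,Y)$, which is the $n$-th component of $\bull V\otimes\bull\Ext_\TT(X,Y)$ by the stated convention for the tensor product of graded vector spaces. Hence the two graded spaces agree degreewise.

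For the second isomorphism the argument is the same, except that $\Hom_\TT(-,Y)$ is contravariant, so a multiplicity-$\dim V^i$ sum of copies of $X$ in the source contributes the dual factor $(V^i)^*$. Concretely, $\Ext_\TT^n(\bull V\otimes X,Y)=\bigoplus_i(V^i)^*\otimes\Ext_\TT^{n+i}(X,Y)$, and re-indexing via $a=-i$ together with the definition $\bigl((\bull V)^*\bigr)^a=(V^{-a})^*$ identifies this with the $n$-th component $\bigoplus_{a+b=n}(V^{-a})^*\otimes\Ext_\TT^{b}(X,Y)$ of $(\bull V)^*\otimes\bull\Ext_\TT(X,Y)$.

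There is no genuinely hard step here; the content is entirely bookkeeping of the homological shift against the internal grading of $\bull V$. The only points requiring care are (i) keeping the shift conventions consistent so that the indices $n-i$ and $n+i$ come out correctly in the covariant and contravariant cases, and (ii) invoking finite-dimensionality at the precise moment when $\Hom$ is pulled through the direct sum — in the contravariant case $\Hom$ out of a direct sum is a priori a product, and it is finiteness that lets us identify it with a direct sum and thereby with the dual-twisted tensor product. I would also note that all the isomorphisms produced are natural in $X$ and $Y$, which is exactly what is needed for the applications to mutations in Lemma~\ref{lemma:mutated-morphisms-basic}.
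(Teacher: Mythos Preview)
Your proof is correct; it is the straightforward unwinding of the definition of $\bull V\otimes(-)$ together with additivity of $\Hom_\TT$, and your care about finiteness in the contravariant case is exactly the point. The paper in fact states this lemma without proof, treating it as routine bookkeeping, so there is nothing to compare against beyond noting that your argument is the expected one.
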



\begin{definition}
  For an $\Ext$-finite pair of objects $(X,Y)$ in $\TT$, let us define the \textit{evaluation map}
  $$\ev\: \bull\Ext(X,Y)\otimes X \to Y$$
  as follows. Let us fix a homogeneous $\kk$-basis $(e_j)$ in a graded vector space $\bull V := \bull\Ext(X,Y)$. Let us identify the basis elements of degree $i$ in $\bull V$ with the summands of type $X[-i]$ in the definition of $\bull V \otimes X$. Thus, a basis element $e_j\in V^i=\Hom(X,Y[i])$ corresponds to a summand $X[-i]$. We define the restriction of $\ev$ on the latter summand to be $e_j[-i]\:X[-i]\to Y$.
\end{definition}

\begin{definition}
  Similarly to the previous definition, for an $\Ext$-finite pair of objects $(X,Y)$ in $\TT$, we can define a \textit{coevaluation map}
  $$\coev\:X \to (\bull\Ext(X,Y))^* \otimes Y$$
  as follows. For a basis $(e_j)$ defined for $\bull V:=\bull\Ext(X,Y)$ as in the previous definition, we consider a dual basis $(e_j^*)$ in $(\bull V)^*$. For $e_j\in V^i = \Hom(X,Y[i])$, the dual basis element $e_j^*$ has degree $-i$ and we identify it with a summand $Y[i]$ in $(\bull V)^* \otimes Y$. We define the projection of $\coev$ onto this summand to be exactly $e_j\:X\to Y[i]$.
\end{definition}


\begin{definition}
\label{definition:mutation}
  Let $(E,F)$ be an $\Ext$-finite exceptional pair in $\TT$. We define the \textit{left mutation} $\LL_E F$ of $F$ over $E$ and the \textit{right mutation} $\RR_F E$ of $E$ over $F$ as the objects of $\TT$ inscribed in the following distinguished triangles:
  $$ \LL_E F \to \bull\Ext(E,F) \otimes E \xrightarrow{\ev} F \to \LL_E F[1],$$
  $$ \RR_F E[-1] \to E \xrightarrow{\coev} (\bull\Ext(E,F))^* \otimes F \to \RR_F E. $$
\end{definition}

\begin{remark}
  In the original definition in \cite{bondal}, the $\Ext$-finiteness condition is not mentioned, but is used implicitly (see Remark~\ref{remark:def-of-otimes-by-vector-spaces}).
\end{remark}

\begin{example}
  Let $(E,F)$ be an $\Ext$-finite exceptional pair. If $E$ and $F$ are orthogonal, then $\LL_E F \simeq F[-1]$ and $\RR_F E \simeq E[1]$.
\end{example}

Now let us formulate the basic facts about mutations.

\begin{lemma}[see \S2 in \cite{bondal}]
\label{lemma:mutated-collections}
  For an $\Ext$-finite exceptional pair $(E,F)$ in $\TT$, the mutated pairs $(F,\RR_F E)$ and $(\LL_E F, E)$ are also exceptional.
  More generally, for an $\Ext$-finite exceptional collection $(E_1,\dots,E_i,E_{i+1},\dots,E_n)$ in $\TT$, the following mutated collections are also exceptional and generate the same subcategory in $\TT$:
  \vspace{-1ex}
  $$ (E_1,\dots,E_{i-1},\quad E_{i+1},\;(\RR_{E_{i+1}} E_i),\quad E_{i+2},\dots, E_n),$$
  $$ (E_1,\dots,E_{i-1},\quad (\LL_{E_i} E_{i+1}),\; E_i,\quad E_{i+2},  \dots, E_n).$$
\end{lemma}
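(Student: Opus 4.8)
The plan is to prove the statement first for an exceptional pair $(E,F)$ and then bootstrap to the general collection, using that a single mutation alters only two adjacent objects. Throughout I would repeatedly apply the cohomological functors $\Hom(E,-)$, $\Hom(-,E)$, $\Hom(F,-)$, $\Hom(-,F)$ to the two defining triangles of Definition~\ref{definition:mutation} and read off the resulting long exact sequences, simplifying the ``$\otimes$'' terms by Lemma~\ref{lemma:Ext-computation-with-tensor-products}. I would treat the left mutation $\LL_E F$ in detail; the right mutation $\RR_F E$ is entirely dual (replace $\ev$ by $\coev$ and interchange the roles of the two arguments), so I would only indicate the symmetric computation.

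To show $(\LL_E F, E)$ is exceptional I must verify three things: that $E$ is exceptional (given), that $\bull\Ext(E, \LL_E F) = 0$, and that $\LL_E F$ is exceptional. For the second, apply $\Hom(E,-)$ to the triangle $\LL_E F \to \bull\Ext(E,F)\otimes E \xrightarrow{\ev} F \to$; by Lemma~\ref{lemma:Ext-computation-with-tensor-products} the middle term becomes $\bull\Ext(E,F)\otimes\bull\Ext(E,E) = \bull\Ext(E,F)$, and the induced map to $\bull\Ext(E,F)$ is, by the very definition of $\ev$ on a chosen homogeneous basis, the canonical identification. Since this map is an isomorphism, the long exact sequence forces $\bull\Ext(E,\LL_E F) = 0$. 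For exceptionality of $\LL_E F$, I would first record $\bull\Ext(\LL_E F, E) = (\bull\Ext(E,F))^*$ (apply $\Hom(-,E)$, using $\bull\Ext(F,E)=0$ and $\bull\Ext(E,E)=\kk$, exactly as in Lemma~\ref{lemma:mutated-morphisms-basic}) together with $\bull\Ext(\LL_E F, F)$ (apply $\Hom(-,F)$); then apply $\Hom(\LL_E F,-)$ to the defining triangle and combine these two computations to collapse $\bull\Ext(\LL_E F,\LL_E F)$ down to $\kk$ concentrated in degree $0$.

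The generation claim is immediate: the defining triangle exhibits $\LL_E F$ as a cone (up to shift) built from $E$ and $F$, so $\LL_E F \in \la E,F\ra$, and rotating the same triangle exhibits $F$ as built from $E$ and $\LL_E F$, whence $\la \LL_E F, E\ra = \la E,F\ra$; the right-mutation case is symmetric. For the general collection I would use that $\LL_{E_i}E_{i+1}$ (resp.\ $\RR_{E_{i+1}}E_i$) fits in a triangle involving only $E_i$, $E_{i+1}$ and a graded vector space, so for any outer object $E_j$ the groups $\bull\Ext(E_j, \LL_{E_i}E_{i+1})$ and $\bull\Ext(\LL_{E_i}E_{i+1}, E_j)$ are controlled, via the long exact sequence, by $\bull\Ext(E_j, E_i)$, $\bull\Ext(E_j, E_{i+1})$ and their duals. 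The original exceptionality of $(E_1,\dots,E_n)$ makes the relevant ones vanish: for $j>i+1$ the groups $\bull\Ext(E_j, E_i)=\bull\Ext(E_j,E_{i+1})=0$, and for $j<i$ the groups $\bull\Ext(E_i,E_j)=\bull\Ext(E_{i+1},E_j)=0$. This supplies the orthogonality needed to insert the mutated pair in the order prescribed in the statement, so the full mutated collection is exceptional and, by the local generation argument, generates the same subcategory.

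The main obstacle is the exactness bookkeeping in the pair case — specifically verifying that the map induced by $\ev$ on $\Hom(E,-)$ is genuinely the identity isomorphism $\bull\Ext(E,F)\otimes\bull\Ext(E,E)\to\bull\Ext(E,F)$ rather than merely some map, since this is precisely what annihilates $\bull\Ext(E,\LL_E F)$, and then correctly assembling the two auxiliary computations of $\bull\Ext(\LL_E F,E)$ and $\bull\Ext(\LL_E F,F)$ into the single conclusion $\bull\Ext(\LL_E F,\LL_E F)=\kk$. Both points hinge on unwinding the basis-dependent definitions of $\ev$ and $\coev$; once the evaluation map is identified with the canonical pairing, the remaining long exact sequences are routine.
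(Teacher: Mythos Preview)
Your approach is correct and is essentially the standard argument from Bondal's paper. Note, however, that the paper itself does not give a proof of this lemma: it is stated with the attribution ``see \S2 in \cite{bondal}'' and no proof is supplied, so there is nothing in the paper to compare your argument against beyond that citation. Your sketch reproduces precisely the computations carried out in Bondal's original treatment (applying $\Hom(E,-)$, $\Hom(-,E)$, $\Hom(-,F)$ and $\Hom(\LL_E F,-)$ to the defining triangle, identifying the map induced by $\ev$ with the canonical pairing, and then handling the outer objects $E_j$ by the same long exact sequence trick), so it is entirely in line with the cited source.
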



\begin{lemma}[Functoriality]
\label{lemma:mutations-are-functors}
  (a) If $(E,F_1,\dots,F_n)$ is an $\Ext$-finite exceptional collection, then the left mutation $\LL_E$ extends to an equivalence functor of the following subcategories:
  \vspace{-1ex}
  $$ \LL_E\: \<F_1,\dots,F_n\> \to \<\LL_E F_1,\dots,\LL_E F_n\>. $$
  In particular, $\bull\Ext(\LL_E F_i, \LL_E F_j) = \bull\Ext(F_i,F_j)$ for all $1\le i,j\le n$.
  
  (b) If $(E_1,\dots,E_n,F)$ is an $\Ext$-finite exceptional collection, then the right mutation $\RR_F$ extends to an equivalence functor
  \vspace{-1ex}
  $$ \RR_F\: \<E_1,\dots,E_n\> \to \<\RR_F E_1,\dots, \RR_F E_n\>, $$
  and $\bull\Ext(\RR_F E_i, \RR_F E_j) = \bull\Ext(E_i,E_j)$ for all $i,j$.
\end{lemma}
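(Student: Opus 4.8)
The plan is to identify each mutation functor with a projection functor attached to a semiorthogonal decomposition, so that functoriality comes for free, and then to establish full faithfulness by a short computation with the defining triangles. I treat part~(a) in detail; part~(b) is proved by the symmetric argument (or by passing to $\TT^{\mathrm{op}}$, under which right mutations over $F$ become left mutations over $F$).

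First I would put $\AA := \<E,F_1,\dots,F_n\>$ and $\BB := \<F_1,\dots,F_n\>$. Since the collection is $\Ext$-finite and $E$ is exceptional, the subcategory $\<E\>$ is admissible in $\AA$ by \cite[Theorem~3.2]{bondal}, so $\AA$ carries the semiorthogonal decomposition $\AA = \<\rorthog E, \<E\>\>$, where $\rorthog E$ is the right orthogonal of $E$ taken inside $\AA$. Applying $\Hom(E,-)$ to the defining triangle $\LL_E X \to \bull\Ext(E,X)\otimes E \xrightarrow{\ev} X \to \LL_E X[1]$ and using Lemma~\ref{lemma:Ext-computation-with-tensor-products} together with $\bull\Ext(E,E)=\kk$, I would check that $\ev$ induces an isomorphism $\bull\Ext(E,\bull\Ext(E,X)\otimes E)\xrightarrow{\sim}\bull\Ext(E,X)$, whence $\bull\Ext(E,\LL_E X)=0$, i.e. $\LL_E X\in\rorthog E$. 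Thus the defining triangle exhibits $X$ as an extension of $\LL_E X\in\rorthog E$ by $\bull\Ext(E,X)\otimes E\in\<E\>$; that is, it is the canonical decomposition triangle of $X$ for the above semiorthogonal decomposition, and $\LL_E$ agrees with the projection of $\AA$ onto $\rorthog E$, which is a genuine exact functor by the general theory of \cite{bondal}. This is the step that turns $\LL_E$ into an honest functor. Finally, exceptionality gives $\bull\Ext(F_i,E)=0$, so $\BB$ is contained in the left orthogonal $\lorthog E$, and by fullness $\BB = \lorthog E \cap \AA$.

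Next I would show $\LL_E$ is fully faithful on $\BB$. For $Y,Y'\in\BB$, applying $\Hom(-,\LL_E Y')$ to the defining triangle of $\LL_E Y$ and using Lemma~\ref{lemma:Ext-computation-with-tensor-products} with $\bull\Ext(E,\LL_E Y')=0$ makes the term $\bull\Ext(\bull\Ext(E,Y)\otimes E,\LL_E Y')$ vanish in every degree, giving $\bull\Ext(\LL_E Y,\LL_E Y')\cong\bull\Ext(Y,\LL_E Y')$. Then applying $\Hom(Y,-)$ to the defining triangle of $\LL_E Y'$ and using $\bull\Ext(Y,E)=0$ (as $Y\in\lorthog E$) makes the term $\bull\Ext(Y,\bull\Ext(E,Y')\otimes E)$ vanish, giving $\bull\Ext(Y,Y')\cong\bull\Ext(Y,\LL_E Y')$. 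Tracing the connecting maps shows that these isomorphisms are the ones induced by $\LL_E$, so composing them yields $\bull\Ext(\LL_E Y,\LL_E Y')\cong\bull\Ext(Y,Y')$ in all degrees; specializing to $Y=F_i$, $Y'=F_j$ gives the asserted identity $\bull\Ext(\LL_E F_i,\LL_E F_j)=\bull\Ext(F_i,F_j)$.

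To conclude, a fully faithful exact functor restricts to an equivalence onto the triangulated subcategory generated by the images of a generating set, so $\LL_E$ restricts to the desired equivalence $\BB = \<F_1,\dots,F_n\> \xrightarrow{\sim} \<\LL_E F_1,\dots,\LL_E F_n\>$. I expect the only real obstacle to be the functoriality in the second paragraph: cones are not functorial in a triangulated category, so $\LL_E$ must be exhibited as the semiorthogonal projection, and this is the unique point where admissibility (hence $\Ext$-finiteness, via \cite{bondal}) is genuinely used. Everything afterward is formal, relying only on Lemma~\ref{lemma:Ext-computation-with-tensor-products} and the two orthogonality relations $\bull\Ext(E,\LL_E Y')=0$ and $\bull\Ext(Y,E)=0$.
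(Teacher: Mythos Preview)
Your approach is essentially the same as the paper's: both identify $\LL_E$ with the projection functor coming from the semiorthogonal decomposition $\AA=\langle E^\perp,\langle E\rangle\rangle$ furnished by Bondal's admissibility theorem, which is exactly what the paper's citation of \cite[Theorem~3.2(a)]{bondal} amounts to; you have simply unpacked that citation and carried out the full-faithfulness check by hand. One small slip to fix: with the paper's convention $\LL_E Y\to\bull\Ext(E,Y)\otimes E\to Y\to\LL_E Y[1]$, each of your two long-exact-sequence computations actually produces an isomorphism with a degree shift, namely $\bull\Ext(\LL_E Y,\LL_E Y')\cong\bull\Ext(Y,\LL_E Y')[1]$ and $\bull\Ext(Y,Y')\cong\bull\Ext(Y,\LL_E Y')[1]$, and it is the cancellation of these two shifts that yields the stated identity---so the conclusion stands, but the intermediate displayed isomorphisms should each carry a $[1]$.
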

\begin{proof}
  To prove (a), we can consider the subcategory $\AA = \<E,F_1,\dots,F_n\>$ generated by the given exceptional collection. This will be an $\Ext$-finite category and we can use Definition~\ref{definition:mutation} to define $\LL_E X$ for any object $X$ generated by $(F_1,\dots,F_n)$ in $\AA$. The argument in the proof of Theorem~3.2(a) in \cite{bondal} shows that the so-defined $\LL_E$ becomes a functor of triangulated subcategories. Item (b) is proved similarly.
\end{proof}

\printbibliography[heading=bibintoc]

\bigskip
\begin{tabular}{@{}l@{}}%
    Department of Mathematics, Columbia University\\
    2990 Broadway\\
    New York, NY 10027, USA\\[0.5ex]
    \textit{e-mail}: \href{mailto:okravets@math.columbia.edu}{\nolinkurl{okravets@math.columbia.edu}}
  \end{tabular}

\end{document}